\documentclass[12pt, a4paper, parskip=half, abstracton]{scrartcl}

\usepackage{array}
\usepackage{marginnote}
\usepackage{xcolor}
\usepackage{amscd,amssymb,amsfonts,amsmath,latexsym,amsthm}
\usepackage{hyperref}
\usepackage[all,cmtip]{xy}
\usepackage{mathrsfs}
\usepackage{graphicx}
\usepackage{bm} 
\usepackage[nameinlink, capitalise]{cleveref}

\usepackage{etoolbox}

\def\hB{\hspace*{\fill}$\qed$}

\usepackage[nottoc]{tocbibind} 

\usepackage{defs_pp1}
\usepackage{slashed}
\usepackage[utf8]{inputenc}
\usepackage{microtype}
\usepackage[english]{babel}
\usepackage{mathtools}
\usepackage{bm}
\usepackage{esvect}

\title{$E$-theory  of $X$-$C^{*}$-algebras and functor formalisms}
\author{
Ulrich Bunke\thanks{Fakult{\"a}t f{\"u}r Mathematik,
Universit{\"a}t Regensburg,
93040 Regensburg,
ulrich.bunke@mathematik.uni-regensburg.de} 
}

\numberwithin{equation}{section}
\newtheorem{theorem}{Theorem}[section] 
\newtheorem{prop}[theorem]{Proposition}
\newtheorem{lem}[theorem]{Lemma}
\newtheorem{constr}[theorem]{Construction}

\newtheorem{ddd}[theorem]{Definition}
\newtheorem{kor}[theorem]{Corollary}

\newtheorem{fact}[theorem]{Fact}
\theoremstyle{remark}
\theoremstyle{definition}

\newtheorem{ex}[theorem]{Example}
\newtheorem{rem}[theorem]{Remark}
 
\newcommand{\papr}{\mathrm{pprfr}}

\newcommand{\rmB}{\mathrm{B}}
\newcommand{\pt}{\mathrm{pt}}
\newcommand{\Ani}{\mathbf{Ani}}

\newcommand{\CH}{\mathbf{CH}}

\newcommand{\CK}{c\mathcal{K}}

\newcommand{\CoSh}{\mathrm{CoShv}}

\newcommand{\EC}{\mathcal{EC}}
\newcommand{\RelCat}{\mathbf{RelCat}}
\newcommand{\nucl}{\mathrm{nucl}}

\newcommand{\alg}{\mathrm{alg}}
\newcommand{\CoShv}{\mathrm{CoShv}}
\newcommand{\Shv}{\mathrm{Shv}}

\newcommand{\ee}{\mathrm{e}}

\newcommand{\All}{ \mathcal{A}\mathrm{ll}}

\newcommand{\EE}{\mathrm{E}}
\newcommand{\sepa}{\mathrm{sep}}
\newcommand{\LCH}{\mathbf{LCH}}

\newcommand{\nCalg}{C^{*}\mathbf{Alg}^{\mathrm{nu}}}

\newcommand{\ho}{\mathrm{ho}}

\newcommand{\cN}{\mathcal{N}}

\newcommand{\gd}{\mathrm{gd}}

\newcommand{\Cofib}{\mathrm{Cofib}}

\newcommand{\Fib}{{\mathrm{Fib}}}

\newcommand{\incl}{\mathrm{incl}}

\newcommand{\coker}{\mathrm{coker}}

\newcommand{\cP}{\mathcal{P}}

\newcommand{\cK}{\mathcal{K}}

\newcommand{\CAlg}{{\mathbf{CAlg}}}

\newcommand{\cW}{{\mathcal{W}}}

\newcommand{\cD}{{\mathcal{D}}}

 \newcommand{\Cat}{{\mathbf{Cat}}}

\newcommand{\Open}{{\mathbf{Open}}}

\newcommand{\reg}{\mathrm{reg}}
\newcommand{\Prim}{\mathrm{Prim}}

\newcommand{\Calg}{{\mathbf{C}^{\ast}\mathbf{Alg}}}

\renewcommand{\Pr}{\mathbf{Pr}}

\newcommand{\op}{\mathrm{op}}

\newcommand{\exa}{\mathrm{ex}}

\renewcommand{\LCH}{\mathrm{LCH}}

 \newcommand{\uc}{\mathrm{le}}
  \newcommand{\rmc}{\mathrm{c}}
\newcommand{\Corr}{\mathrm{Corr}}

\newcommand{\Locale}{\mathbf{Locale}}
\newcommand{\Frame}{\mathbf{Frame}}
 \newcommand{\Poset}{\mathbf{Poset}}

\newcommand{\st}{\mathrm{st}}
\renewcommand{\Open}{\mathrm{Open}}

 \newcommand{\homol}{\mathrm{homol}}

\newcommand{\pac}{\mathrm{pac}}
\newcommand{\ac}{\mathrm{ac}}

\newcommand{\pfr}{\mathrm{pfr}}
\newcommand{\prfr}{\mathrm{prfr}}
\newcommand{\afr}{\mathrm{afr}}
\newcommand{\pafr}{\mathrm{pafr}}

\newcommand{\puc}{\mathrm{ple}}

\begin{document}  \maketitle 
  
  \begin{abstract}  
  We show that  $E$-theory for locally compact Hausdorff spaces 	
  constitutes a six-functor formalism   which is equivalent  
to the six-functor formalism of $\EE$-valued sheaves.  
We furthermore show that the $E$-theory category for locales that can be written as  unions of finite open sublocales is equivalent to the category of $\EE$-valued cosheaves. \end{abstract} 
  \tableofcontents
  
  \section{Introduction}
  
To every locale $X$ one can  functorially associate   the $E$-theory category $E(X)$. It is a presentable stable $\infty$-category defined as the target of the universal
 homological functor on the category of $X$-$C^{*}$-algebras -- $C^{*}$-algebras equipped with a morphism of locales
 $I(A)\to X$, where $I(A)$ is the locale of closed $*$-ideals of $A$. Our first main result describes the category $E(X)$ 
 for "sufficiently finite" locales in homotopy-theoretic terms:
    \begin{theorem}[\cref{hgitgjetrgetogerpigt}] 
For a locale $X$  that can be written as  a  union of finite open sublocales, there is   a natural equivalence
$$E(X)\simeq \CoShv(X,\EE)\ .$$  
\end{theorem}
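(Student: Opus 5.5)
I would build the equivalence in two steps: first for a finite locale, then for a union by gluing along the open subsets.

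\textbf{Finite case.} Let $X$ be finite, i.e.\ a finite poset of points with the Alexandrov topology. For each point $x$ let $U_x$ be its minimal open neighbourhood. Define a functor
$$\Phi\colon E(X)\to \mathrm{Fun}(\mathrm{Open}(X),\EE),\qquad A\mapsto \bigl(U\mapsto A(U)\bigr),$$
where $A(U)$ is the ideal of $A$ supported on $U$, viewed as an object of $\EE$ via the functor $e\colon C^{*}\mathbf{Alg}\to \EE$.

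I then check that $\Phi(A)$ is a cosheaf. For opens $U,V$, the ideals satisfy $A(U\cup V)=A(U)+A(V)$ and $A(U\cap V)=A(U)\cap A(V)$. This gives a pullback/pushout square of $C^{*}$-algebras, namely $A(U\cap V)\to A(U)\to A(U\cup V)/A(V)\cong A(U)/A(U\cap V)$. Excision in $E$-theory then turns it into a pushout square in $\EE$. Since $X$ is finite, cosheaves are determined by finite covers, so this Mayer--Vietoris property suffices. The functor $U\mapsto A(U)$ factors through $E(X)$ by the universal property of $E(X)$, since it is homological in $A$.

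Next, $\CoShv(X,\EE)$ for finite $X$ is equivalent to $\mathrm{Fun}(P_X,\EE)$, where $P_X$ is the poset of points with $x\le y$ iff $U_x\subseteq U_y$, via restriction to the basis $\{U_x\}$. I would identify $E(X)$ with the same category. The objects $C_x:=\mathbb{C}$ placed at $x$ with support $U_x$ (the skyscraper-type $X$-algebras induced from the point) should form a set of compact generators. Their endomorphism data are computed by the $E$-theory analogue of a Kasparov/Kirchberg-type computation: $\mathrm{map}_{E(X)}(C_x,A)\simeq e(A(U_x))$. This corepresentability is the key lemma, proved by adjunction $i_{x,!}\dashv i_x^{*}$ between $E(\ast)=\EE$ and $E(X)$. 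Given it, $\Phi$ is fully faithful on generators and preserves colimits, so Schwede--Shipley/Lurie recognition gives $E(X)\simeq \mathrm{Fun}(P_X^{\op},\EE)^{\op\text{-variance fixed}}\simeq\CoShv(X,\EE)$.

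\textbf{General case.} Write $X=\bigcup_i X_i$ as a filtered union of finite open sublocales. Both sides should take this union to a limit along restriction, or equivalently a colimit in $\Pr^L$ along extension-by-zero $j_!$:
$$E(X)\simeq \operatorname{colim}_i E(X_i),\qquad \CoShv(X,\EE)\simeq\operatorname{colim}_i\CoShv(X_i,\EE).$$
For cosheaves this is standard descent. For $E$-theory I would use that every $X$-algebra $A$ is the filtered colimit of its ideals $A(X_i)$, and that $E$ commutes with such colimits, which holds as $E(X)$ is compactly generated by algebras supported in finite opens. Naturality of $\Phi$ under $j_!$ then gives the equivalence.

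\textbf{Main obstacle.} I expect the main obstacle to be the corepresentability $\mathrm{map}_{E(X)}(C_x,A)\simeq e(A(U_x))$. It needs an adjunction between restriction to the open $U_x$ and the point inclusion. It also needs the fact that, inside $U_x$, the point $x$ is closed with $U_x\setminus\{x\}$ open. Excision then reduces everything to the homotopy invariance of $E$-theory.
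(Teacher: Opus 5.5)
\textbf{Verdict: genuine gap.} The finite case does not go through as written, and the step that carries its real content is missing.

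The failing step is the claim that ``the objects $C_x$ should form a set of compact generators''. It fails for two reasons.

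First, the family is wrong. You have $C_x=i_{x,!}\mathbb{C}$. Already for $X=\mathrm{pt}$, the localizing subcategory generated by $C_{\mathrm{pt}}=\ee(\mathbb{C})$ is the bootstrap class $\rmB\simeq\mathrm{Mod}_{KU}(\mathrm{Sp})$. So a Schwede--Shipley argument with these objects yields at best $\mathrm{Fun}(P_X,\mathrm{Mod}_{KU}(\mathrm{Sp}))$. Reaching $\EE$-coefficients would require $\rmB=\EE$, i.e.\ every $C^{*}$-algebra lying in the bootstrap class, which you cannot use. Compactness of $C_x$ would also require compactness of the unit of $\EE$, which is unclear, and in any case compactness is not needed. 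You must instead use $i_{x,!}M$ for all $M$ in $\EE$ and argue $\EE$-linearly.

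Second, and more seriously, even for that corrected family you only assert generation, and this is where the real content of the finite case lies. Generation is equivalent to joint conservativity of the functors $A\mapsto\ee(A(U_x))$ on $E(X)$. Since $E(X)$ is defined by forcing a universal property, the only available input is exactness of $\ee_X$. One way to supply the argument:
\begin{enumerate}
\item Order the points so that each $V_k=\{x_1,\dots,x_k\}$ is open.
\item For $A$ in $X\nCalg$, the $X$-algebras $A_k:=(U\mapsto A(U\cap V_k))$ fit into exact sequences $0\to A_{k-1}\to A_k\to Q_k\to 0$ in $X\nCalg$.
\item Continuity of $A$ gives $Q_k\cong i_{x_k,!}\bigl(A(V_k)/A(V_{k-1})\bigr)$.
\item Hence $\ee_X(A)$ is a finite iterated extension of objects $i_{x,!}\ee(B)$. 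Since the objects $\ee_X(A)$ generate $E(X)$ under colimits, so do the $i_{x,!}\ee(B)$.
\end{enumerate}
This d\'evissage is exactly what the paper does, organised as an induction on $|\cP(X)|$. It splits off the open sublocale given by a minimal non-minimal $p$ (whose frame is $[1]$) together with its closed complement. It then uses the recollements for $E$ and for $\CoShv(-,\EE)$, and concludes because $s$ is a map of cofibre sequences whose outer components are equivalences by induction. That argument needs neither $\CoShv(X,\EE)\simeq\mathrm{Fun}(P_X,\EE)$ nor generators.

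Conversely, the step you flag as the main obstacle is formal. Since $U_x$ is the smallest open containing $x$, one has $\mathrm{Hom}_{X\nCalg}(i_{x,!}B,A)\cong\mathrm{Hom}(B,A(U_x))$. The functor $A\mapsto A(U_x)$ is pullback along the partial frame morphism $\{0<1\}\to\Open(X)$, $1\mapsto U_x$. This map is left adjoint to the frame map of the point, so $A\mapsto A(U_x)$ is a morphism of $E$-theory contexts. Because $\EE$ is a 2-functor, you get $i_{x,!}\dashv\ee((-)(U_x))$ on $E(X)$, with no excision or homotopy-invariance argument. Note also that $\mathrm{map}_{E(X)}(C_x,A)$ is a spectrum, namely $K(A(U_x))$, not $\ee(A(U_x))$; the correct statement is this $\EE$-valued adjunction.

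With the adjunction and the generation statement in hand, the finite case closes as follows. $s_X i_{x,!}$ is extension from the point on cosheaves, which has the same right adjoint, ``evaluate at $U_x$''. So $s_X$ is colimit-preserving and fully faithful on a generating family, hence an equivalence; no compactness or recognition theorem is required.

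Your general case agrees with the paper's: $E(X)\simeq\lim_i E(X_i)$ along restrictions, using $A\cong\mathrm{colim}_i A(X_i)$, which follows from continuity and distributivity. The appeal to compact generation there is unnecessary, since $\ee_X$ preserves filtered colimits by construction.
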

Turning to the topological setting, our second result extends this homotopy-theoretic perspective to locally compact Hausdorff spaces:\begin{theorem}[\cref{zzopjrtzjrztjrth}]
The assignment $X\mapsto E(X)$ is part of a six-functor formalism
on $\LCH$ that is equivalent to the $\EE$-valued sheaf-theoretic six-functor formalism
$X\mapsto \Shv(X,\EE) $, where    $\EE:=E(*)$.
 \end{theorem}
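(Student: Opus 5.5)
The plan is to reduce the comparison to the first theorem (\cref{hgitgjetrgetogerpigt}) together with covariant Verdier duality, and then to check that the resulting equivalences are compatible with the operations. I am not certain every step goes through as stated; I flag the doubtful one below.

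\textbf{Step 1: the operations on $X\mapsto E(X)$.} For a map $f\colon X\to Y$ in $\LCH$, pushing forward the structure map $I(A)\to X\to Y$ sends $X$-$C^{*}$-algebras to $Y$-$C^{*}$-algebras. This preserves the relations defining $E$, so it induces a colimit-preserving functor $f_{\sharp}\colon E(X)\to E(Y)$. Let $f^{\flat}$ be its right adjoint. For an open inclusion $j\colon U\to X$ with closed complement $i\colon Z\to X$, I will prove a localization sequence
\[
E(U)\xrightarrow{j_{\sharp}} E(X)\xrightarrow{i^{*}} E(Z)\ .
\]
The first functor should be fully faithful, and $i^{*}$ should be induced by $A\mapsto A/A(U)$. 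The exactness of this sequence comes from the fact that $E$ is the universal homological functor: the extension $0\to A(U)\to A\to A/A(U)\to 0$ of $X$-algebras is sent to a fibre sequence. Using these sequences, define for every map the compactly supported operations in the usual way: factor $f$ as an open embedding followed by a proper map, and take $f_{!}$ to be $j_{\sharp}$ on open embeddings and $p_{\sharp}$ on proper maps. Then verify base change for proper maps against pullbacks. Compactifying, it suffices to do this for proper maps between compact spaces, where it reduces to a statement about quotients of $X$-algebras. By the Liu--Zheng / Mann construction of six-functor formalisms from a suitable class of open embeddings and proper maps, this yields a six-functor formalism.

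\textbf{Step 2: the comparison equivalence $E(X)\simeq \CoShv(X,\EE)$.} Construct a natural comparison functor $\CoShv(X,\EE)\to E(X)$ by sending the generator associated to an open $U\subseteq X$ to the class of $C_{0}(U)\otimes K$, viewed as an $X$-algebra. Both sides satisfy localization sequences, and both turn the union of an increasing exhaustion by relatively compact opens into a colimit in $\Pr^{L}$. Hence it suffices to prove the equivalence for compact Hausdorff $X$. For such $X$, write the underlying locale as a cofiltered limit of finite $T_{0}$ quotient spaces $X_{i}$, obtained from finite closed covers (via nerves in the metrizable case, and by a further cofiltered limit in general). By \cref{hgitgjetrgetogerpigt} we have $E(X_{i})\simeq \CoShv(X_{i},\EE)$, and these identifications are compatible with the transition functors $(\cdot)_{\sharp}$.

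The main obstacle is the continuity statement
\[
E(X)\simeq \operatorname{colim}_{i} E(X_{i})\quad\text{in }\Pr^{L}\ .
\]
Its cosheaf analogue is standard. I would first establish it for $E$ by showing two things: that the compact generators $C(X)\otimes K$-type objects restricted to basic opens are detected at a finite stage, and that mapping spectra in $E(X)$ are filtered colimits of those at the stages $X_{i}$. The second point is where separability and the nuclearity-type assumptions implicit in the definition of $E$ must be used. This is the step I am least sure of, and it is where I would start.

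\textbf{Step 3: identify the formalism with sheaves.} Apply Lurie's covariant Verdier duality $\Shv(X,\EE)\simeq\CoShv(X,\EE)$, valid for $X$ locally compact Hausdorff and $\EE$ stable. Under this duality, pushforward of cosheaves corresponds to $f_{!}$ of sheaves, and the localization sequences correspond to the recollement $j_{!},i_{*}$. The composite equivalence $E(X)\simeq\Shv(X,\EE)$ is then natural for $(\cdot)_{!}$. Since a six-functor formalism on $\LCH$ is determined by its restriction along open embeddings and proper maps together with these identifications, the two formalisms agree. Compatibility with $\otimes$ can be checked on generators using $C_{0}(U)\otimes C_{0}(V)\cong C_{0}(U\cap V)$ for $X$-algebras.
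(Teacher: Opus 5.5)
There is a genuine gap, and it is the step you flagged yourself. The continuity statement $E(X)\simeq\colim_i E(X_i)$ for finite $T_0$ quotients $X_i$ of a compact Hausdorff $X$ is false. So is the cosheaf analogue you call standard.

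Read the right-hand side as a limit along the $f_{i,\sharp}$ (equivalently, a colimit in $\Pr^L$ along their left adjoints). Transported through \cref{hgitgjetrgetogerpigt}, it becomes $\lim_i\CoShv(X_i,\EE)$, which is the category of cosheaves on the locale $\lim_i X_i$. A cofiltered limit of finite locales is coherent, and a compact Hausdorff coherent locale is Stone. Hence $X\to\lim_iX_i$ is not an isomorphism for connected $X$; compare the remark following \cref{kohprthergetrgetrg}.

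Concretely, when the $\Open(X_i)$ exhaust $\Open(X)$, the limit is the category of finitely additive functors $F\colon\Open(X)\to\EE$. These satisfy $F(\emptyset)\simeq 0$ and Mayer--Vietoris, but no condition on filtered unions, so the category is strictly larger than $\CoShv(X,\EE)$. For example, the prime ideal theorem gives a prime filter $\mathcal{P}$ on $\Open([0,1])$ containing all $(0,\epsilon)$ but no $(1/n,1]$. The functor equal to $1_{\EE}$ on $\mathcal{P}$ and $0$ elsewhere is finitely additive, but it fails the cosheaf condition for $(0,1]=\bigcup_n(1/n,1]$.

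Since $s_X$ lands in $\CoShv(X,\EE)$ by \cref{gweoirgjoergeferggr}, the functor $E(X)\to\lim_iE(X_i)$ cannot be essentially surjective. Finite approximation only shows that this functor is fully faithful, i.e.\ that $s_X$ is conservative. That is all the paper extracts from it (\cref{herthetrgertgetrg}).

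The missing idea is an input that sees compact Hausdorff topology rather than finite combinatorics. Your comparison functor is, up to Verdier duality, the paper's $\mathcal{B}$ from \cref{hrtgetgretget}; the problem is proving it is an equivalence. The paper never proves $E(X)\simeq\CoShv(X,\EE)$ directly for infinite $X$; that is deduced afterwards from the main theorem and Verdier duality. Instead the paper:
\begin{enumerate}
\item shows that $E$ is a coefficient system;
\item converts conservativity of $s_X$ into section-determinedness (\cref{hkopeththregte});
\item proves that $\Gamma^E(X)\simeq\ee(C(X))$ is finitary, using Gelfand duality for cofiltered limits in $\CH$ and the fact that $\ee$ preserves filtered colimits (\cref{tkohperthretgertg});
\item applies the characterization theorem of \cite{buvo} (\cref{thkoperthertegrtger}), which also needs dualizability of $\EE$.
\end{enumerate}
None of this appears in your outline.

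Your Step 1 also does not yet give a six-functor formalism. The right adjoint $f^{\flat}$ of $f_{\sharp}$ is $f^{!}$, not $f^{*}$. The Liu--Zheng/Mann construction needs as input a symmetric monoidal contravariant functor $X\mapsto E(X)$. The paper has to build $f^{*}$ and $\otimes_X$ on $X$-$C^{*}$-algebras (via Fell bundles and the maximal balanced tensor product). It then proves they are $E$-admissible (via the regularization $c$ and \cref{gjweroijgorefwerf}) and checks base change and projection formulas (\cref{okpherthrtgetgetg}). Your closing remark on $C_0(U)\otimes C_0(V)$ presupposes all of this.
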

We will give more precise statements below after 
 providing a historical introduction to the field.

Non-commutative homotopy theory in the sense of the present paper  is the study of $C^{*}$-algebras up to homotopy invariance and $K$-stability.
A fundamental  invariant in  this field is the topological  $K$-theory of $C^{*}$-algebras 
which  plays a central role in the classification of $C^{*}$-algebras, index theory, algebraic topology, mathematical physics and other fields.  By   axiomatizing core properties of $K$-theory through  the notion of a homological functor, one is led to study universal homological functors, specifically    $KK$-theory or $E$-theory. 
These concepts extend beyond plain $C^{*}$-algebras to   categories of $C^{*}$-algebras with additional structures like actions by groups,   groupoids, tensor categories, as well as   parametrization over a space.
The primary objective  of the present paper is to describe the $E$-theory category $E(X)$ of the category $X\nCalg$ of $X$-$C^{*}$ algebras -- that is, $C^{*}$-algebras parametrized by a topological space or, more generally, a locale $X$.

  $KK$-theory  was introduced   in  \cite{kasparovinvent} as a bivariant functor $(A,B)\mapsto KK(A,B)$ on $C^{*}$-algebras sending
$A,B$ to the group of equivalence classes of Kasparov modules (see \cite{blackadar} for a comprehensive account). Through the  Kasparov product, 
  this  bivariant  functor can be interpreted  as a functor from $C^{*}$-algebras to an additive category.
The $E$-theory functor was first  constructed in \cite{higson} as the  universal homotopy invariant, $K$-stable 
and exact functor from the category of $C^{*}$-algebras to an additive category.
 The fundamental distinction  between $KK$-and $E$-theory lies in their respective exactness properties: while $KK$-theory is exact for
 exact sequences of $C^{*}$-algebras admitting a completely positive contractive split, the 
 $E$-theory functor  is exact for all exact sequences. 
For  separable $C^{*}$-algebras, $E$-theory was subsequently reformulated 
   in \cite{zbMATH04182148} using  homotopy classes of asymptotic morphisms.
This framework was later extended to  non-separable and equivariant settings  in \cite{Guentner_2000}, although in those cases exactness  remains tied to separability assumptions.

For a  locally compact Hausdorff space  $X$, the additive $E$-theory category   $E(X)$ for   $C_{0}(X)$-algebras was introduced in \cite{Park_2000} for the separable case, and subsequently extended   in \cite{Popescu_2004}, also
 to include the groupoid-equivariant setting. 
These  works further developed the maximal balanced tensor product $\otimes_{X}$ of $C_{0}(X)$-algebras, the induced tensor  bifunctor on the $E$-theory category, and the contravariant functoriality for maps  between   locally compact Hausdorff spaces. Note that $KK$-theory for $C_{0}(X)$-algebras was already  introduced in the 
  in the seminal paper \cite{kasparovinvent}.

 An alternative characterization of $C_{0}(X)$-algebras, which does  not explicitly  involve  the algebra $C_{0}(X)$, was  provided  in  
  \cite{Dadarlat_2012}. This new definition can be interpreted for arbitrary  topological spaces  --    
or even locales --  giving rise to the   notion of  $X$-$C^{*}$-algebras.  The paper   \cite{Dadarlat_2012}  extends the construction of the additive $E$-theory category
$E(X)$  for separable $X$-$C^{*}$-algebras from 
second countable locally compact Hausdorff  spaces to  general second countable
topological spaces, using the framework of asymptotic morphisms. 
 This generalization facilitates the  study of the $E$-theory category $E(X)$ for finite topological spaces, 
 which is more amenable    to explicit homotopy-theoretic calculations.

%
%
%
%

It was observed in \cite{MR2193334}   that  universal homological
functors on $C^{*}$-algebras naturally take values in triangulated categories, a fact that  also applies to the  $E$-theory category $E(X)$.
In \cite{Dadarlat_2012}   the dependence of $E(X)$ on the space $X$ was further investigated, revealing  
  adjunctions for open and closed embeddings. This work also initiated the explicit  calculation of the mapping groups
 of  $E(X)$. A key result is
  \cite[Thm. 3.2]{Dadarlat_2012} which expresses   the  mapping  groups of $E(X)$ in terms of the   mapping groups   in $E(Y_{i})$ for a family 
  of finite quotients $(X\to Y_{i})_{i}$.  Consequently 
   the collection of evaluation functors  $$(\ev_{U}:  E(X)\to \Ab)_{U\in \Open(X)}$$ is shown to be jointly conservative
  in  \cite[Thm. 3.10]{Dadarlat_2012}.  This result  further   motivates  the  problem of describing the   mapping  groups 
  of $E(Y)$ for finite topological spaces $Y$.  While initial results were obtained in \cite{zbMATH06347396}, \cite{zbMATH06404312} (see also \cite{filtrated} where the same principles were applied in the case of $KK$-theory), 
  complete calculations remain restricted to   spaces with   very  few points -- typically fewer than  three -- 
 or to posets   $\Open(Y)$ with a very simple structure, such as   $[n]$. From the  perspective of the present paper the primary  reason for these  limitations is that  working solely with   triangulated categories neglects crucial
 higher structural information.

 Most   known triangulated categories   arise as the homotopy categories of stable $\infty$-categories, a 
  principle  that also applies to $KK$- and $E$-theory.
Building on developments for $KK$-theory in     \cite{LN}, \cite{KKG} in \cite{Bunke:2023aa}
the $E$-theory  for $C^{*}$-algebras was redefined as the universal
functor to a cocomplete stable $\infty$-category which is  homotopy invariant,
$K$-stable, exact and s-finitary. This universal functor can be constructed by iteratively forcing these universal properties.   

An alternative characterization and construction of
$E$-theory as a universal homotopy invariant, $K$-stable, exact and filtered colimit-preserving functor 
 \begin{equation}\label{vasdcasdcadscasca}  \ee:\nCalg\to \EE\end{equation} from $C^{*}$-algebras  to a cocomplete stable $\infty$-category 
was given in     \cite{budu}  (including  the equivariant case for discrete groups).
 In \cite{Bunke:2023aa},  it was  further verified -- by comparing the universal property of its
homotopy category as an additive category with the universal property of the $E$-theory from     \cite[Thm. 3.6]{higson} -- that the composition $$\ho\circ \ee:\nCalg\to \EE\to \ho(\EE) $$
is equivalent to the classical $E$-theory functor.

The  reason that   the aforementioned references  at certain points restrict to separable $C^{*}$-algebras,  and in consequence  to second countable topological spaces, 
lies in the details of the construction of the composition of asymptotic morphisms following \cite{zbMATH04182148},
and the  Connes-Higson construction  of asymptotic morphisms  from   exact sequences.

  A functor which is   homotopy invariant, $K$-stable, and preserves both exact sequences  and filtered colimits will be called homological (see  \cref{kogwegergfw}).   
In order to axiomatize this,    we introduce   in \cref{lphertgerthe} the concept of an $E$-theory context as a category  for which we can formulate the conditions of a homological functor. We further define morphisms
between $E$-theory contexts such  that precompositon with such a morphism preserves homological functors.
This gives rise to the category $\EC$ of $E$-theory contexts. 

In  \cref{koprtheggrtge}, following the general path  for the construction of universal homological functors
as in \cite{zbMATH01596160},   \cite{zbMATH05771937}, \cite{MR3070515}, we     show that there  exists a functor
\begin{equation}\label{gerwgewurgowergwergwerfe}  \EE:\EC\to \Pr_{\st}^{L}\end{equation}
  from the category of $E$-theory contexts  $\EC$ to the category $\Pr^{L}_{\st}$ of presentable stable $\infty$-categories
  and left-adjoint functors. This functor is equipped 
 with a natural transformation $\ee:\id\to \EE$ (where $\EE$ is regarded as $\Cat_{\infty}$-valued),
such that, for every  $E$-theory context $\cA$ in $\EC$, 
the functor $$\ee_{\cA}:\cA\to \EE(\cA)$$ is the universal homological functor for $\cA$.
In \cref{kohpehtregetrg} we further extend this construction to incorporate lax symmetric 
monoidal structures.

Note that our construction of the $E$-theory functor is designed to yield the universal homological functor.  It does not involve any explicit description of the mapping spaces  in terms of asymptotic morphisms or similar constructions. 
However, as explained in \cite[Sec. 14]{Bunke:2023aa} one can use asymptotic morphisms to represent points in the mapping spaces.  In cases where a classical construction exists  -- for example, if $\cA=X\nCalg$ for a topological space $X$ --
a comparison of the mapping  groups of $\ho(\EE(\cA))$ with the mapping groups of the  classical triangulated category 
could in principle be achieved by comparing the universal properties of the categories. While we expect that    the $E$-theory groups constructed in the present paper coincide  with the classical   $E$-theory groups in the cases where both are defined,   we will not  provide a formal proof. One reason is the lack of formulated sufficiently general universal properties of the classical constructions  -- analogous   to  \cite[Thm. 3.6]{higson} --
  which could be compared with the ones of our construction.

%
%
%
%

  In  this paper we consider $E$-theory contexts $\cA$ consisting  of $C^{*}$-algebras  with  additional structure,  and where morphisms
   are structure-preserving homomorphisms between $C^{*}$-algebras. We focus  particularly  on 
   systems of ideals parametrized by a poset $P$ admiting a maximal element, leading to the $E$-theory context   of $P$-$C^{*}$-algebras $P\nCalg$. 
  Within this framework, we may   impose   further conditions on the
  parametrization, most notably :
  \begin{enumerate}
  \item left exact:  non-empty finite meets are sent to intersections of ideals
  \item regular:   non-empty finite meets are sent to intersections and filtered joins are sent to closures of unions of ideals    \item continuous:    finite meets are sent  to intersections and  joins are sent to closures of sums of ideals    \end{enumerate}
  These conditions lead to  a sequence of sub-$E$-theory contexts \begin{equation}\label{hrtgrtgerge}P^{\rmc}\nCalg\subseteq P^{\reg}\nCalg\subseteq P^{\uc}\nCalg\subseteq P\nCalg\ .
\end{equation} For a topological space $X$ the $E$-theory context
  $\Open(X)^{\rmc}\nCalg$ recovers the definition of the category of $X$-$C^{*}$-algebras given in    \cite{Dadarlat_2012}.
    \cref{gergwergwerioug9erwgwregw} is devoted to the study of  categorical properties of these subcategories. 
  Their understanding is  crucial   because a homological functor  on an $E$-theory context $\cA$  must    preserve those filtered colimits and must  send those exact sequences to fibre sequences which exist in $\cA$. Filtered colimits and exact sequences (i.e., bifibre sequences) are characterized in terms of the  categorical language of $\cA$. To verify that a given functor is homological we must therefore identify these structures explicitly.
 As indicated in \eqref{hrtgrtgerge}, the $E$-theory context $\cA$  usually arises  as a subcategory of an ambient  category of $C^{*}$-algebras with structures in which  
 limits and colimits are well-understood.
In this situation we   must  know  that there are no "unexpected" filtered colimits or exact sequences
  in $\cA$ -- that is, structures that do not coincides with those  in the ambient category.
  
  In 
   homotopy theory, the presentability of a category is a very convenient property, as it implies the existence of all limits and colimits and allows for straightforward  applications of adjoint functor theorems. 
Using standard arguments, we observe that the
  presentablility of $\nCalg$  implies that of the  
   categories $P^{\uc}\nCalg$ and $P\nCalg$, when viewed as subcategories of the functor category $\Fun(P,\nCalg)$. The regular and the continuous cases are more involved. The following result, which establishes their presentability under additional assumptions,  may   be of independent interest. 
      \begin{theorem}
   \mbox{} \begin{enumerate}
   \item (\cref{jioretherthertgetrgertg}) If $P$ is a stably locally compact frame, then $P^{\reg}\nCalg$ is presentable.
   \item (\cref{okprherhrtgertge}) If $P=\Open(X)$ for a locally compact Hausdorff space $X$, then $P^{c}\nCalg$ is presentable.
   \end{enumerate}
   \end{theorem}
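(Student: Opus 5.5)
The plan is to exhibit both categories as accessible, colimit-closed (or reflective) subcategories of a presentable ambient category. We then invoke the standard fact that such subcategories are presentable: a full subcategory of a presentable category that is closed under colimits and under $\kappa$-filtered colimits, and is accessibly embedded, is presentable. Equivalently, an accessible localization of a presentable category is presentable.

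The ambient category will be $P^{\uc}\nCalg$, or $P\nCalg \subseteq \Fun(P,\nCalg)$, which are already known to be presentable. $P^{\reg}\nCalg$ and $P^{\rmc}\nCalg$ are cut out of it by conditions on joins: a filtered join should go to the closure of the union of ideals, respectively an arbitrary join to the closed sum.

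For part 1, where $P$ is a stably locally compact frame, I would first construct a left adjoint $R$ to the inclusion $P^{\reg}\nCalg\to P^{\uc}\nCalg$.

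\textbf{Construction of $R$.} Given $(A,(A(U))_{U})$, replace the ideal system by
$$A'(U):=\overline{\bigcup_{V\ll U}A(V)}\ .$$
Here $\ll$ is the way-below relation. Since $U=\bigvee_{V\ll U}V$ is a directed join in a continuous frame, this is the natural "regularization".

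\textbf{Verifications.}
\begin{enumerate}
\item $U\mapsto A'(U)$ is left exact. This uses that $\ll$ is stable under finite meets, which is exactly the stability assumption, together with $\overline{\bigcup I_i}\cap \overline{\bigcup J_j}=\overline{\bigcup (I_i\cap J_j)}$ for directed families of closed ideals.
\item It sends filtered joins to closures of unions. This uses the interpolation property of $\ll$ and the fact that $V\ll\bigvee_i U_i$ for a directed join forces $V\le U_i$ for some $i$.
\item It is coreflective or reflective in the correct direction, and it is accessible.
\end{enumerate}
One has to be careful with the direction. $A'(U)\subseteq A(U)$, so $(A,A')\to(A,A)$ is a morphism, since morphisms send $A(U)$ into $B(U)$. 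This suggests that the regularization is a right adjoint, i.e.\ $P^{\reg}\nCalg$ is coreflective. A coreflective full subcategory of a presentable category is closed under colimits. It is presentable as soon as it is accessible, which holds if $R$ preserves $\kappa$-filtered colimits.

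So the key step is checking that $R$ commutes with $\kappa$-filtered colimits for suitable $\kappa$. Colimits in $P^{\uc}\nCalg$ are computed on the underlying algebra, with ideals given by closures of images. The condition then follows from compatibility of closures of images with directed unions. I expect this to be a routine though fiddly estimate on norms in the colimit $C^{*}$-algebra.

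For part 2, with $P=\Open(X)$ for $X$ locally compact Hausdorff, I would use the Dadarlat / $C_0(X)$-algebra description. An object of $P^{\rmc}\nCalg$ is the same as a $C^{*}$-algebra $A$ with a nondegenerate central homomorphism $C_0(X)\to ZM(A)$, with $A(U)=C_0(U)A$. This identifies $P^{\rmc}\nCalg$ with a category of algebraic structures on $C^{*}$-algebras, namely $C^{*}$-algebras with a nondegenerate commuting action of $C_0(X)$.

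Presentability then follows by realizing this category as the category of modules (algebras) for an accessible monad on $\nCalg$. Alternatively, one can show directly that it is closed under colimits in $P\nCalg$ (a closed sum of the images $C_0(U)A_i$ is again of this form) and that it is generated under filtered colimits by a set of objects with separable underlying algebra. The latter uses that $X$ has a basis of cardinality bounded in terms of $X$.

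The main obstacle I expect is the identification step: showing that continuity (arbitrary joins go to closed sums, finite meets to intersections) forces the ideal system to come from a $C_0(X)$-action, so that the equivalence with $C_0(X)$-algebras holds functorially. Here I would rely on Dadarlat's theorem for locally compact Hausdorff $X$ and on the fact that the action is recovered via the Dauns–Hofmann map $C_0(\Prim A)\to ZM(A)$ composed with a continuous map $\Prim A\to X$.
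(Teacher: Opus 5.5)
Your part~1 is essentially the paper's proof. The paper uses the same coreflector $R(A)(q)=\bigvee_{p\ll q}A(p)$, gets left exactness from stable local compactness and regularity from interpolation, and deduces presentability from $R$ commuting with filtered colimits. It makes the accessibility step explicit by noting that $R$ preserves $\kappa$-compact objects once $\kappa>|P|$.

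Part~2, however, has a genuine gap: neither of your two routes works.

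\emph{The monad route.} For non-compact $X$ the forgetful functor $X\nCalg\to\nCalg$ does not preserve infinite products, so it is not a right adjoint and cannot be monadic. Take $X=\nat$ (discrete) and $A_i=C_0(\nat)$ for $i\in\nat$. The only $C_0(\nat)$-action on $\prod_iA_i$ making the projections $C_0(\nat)$-linear is the diagonal one, and it is degenerate. Indeed, $a=(\delta_i)_i$ satisfies $\|a-\chi_{\{0,\dots,n\}}a\|=1$ for all $n$, so $a\notin\overline{C_0(\nat)\cdot\prod_iA_i}$. The product in $X\nCalg$ is this smaller essential part. Relatedly, $B\mapsto C_0(X)\otimes B$ is not left adjoint to the forgetful functor: $\id_{C_0(\nat)}$ does not arise from any homomorphism $\C\to C_0(\nat)$.

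\emph{The closure route.} $X\nCalg$ is not closed under colimits in $P\nCalg$. There, a colimit has as underlying algebra the colimit in $\nCalg$, with generated ideals. Take $X=\{0,1\}$ discrete and two copies of $C_0(X)=\C e_0\oplus\C e_1$. Their coproduct in $P\nCalg$ is $\C^2*\C^2$. The element $e_0e_1'$ is nonzero: send $e_0,e_1'$ to two non-orthogonal rank-one projections and $e_1,e_0'$ to $0$. It lies in the ideals attached to both $\{0\}$ and $\{1\}$, so the result is not even left exact. The actual coproduct in $X\nCalg$ is $(\C*\C)\oplus(\C*\C)$.

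By the $\nat$-example, $X\nCalg$ is also not closed under products in $P\nCalg$ or $P^{\uc}\nCalg$, since limits there are pointwise. So you get neither a reflection nor a coreflection in these ambients, and no cocompleteness. Separately, separable generators do not suffice when $X$ is not second countable. For example, $C([0,1]^{\aleph_1})$ has no nonzero separable $X$-subalgebras, so the density bound must depend on the weight of $X$.

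The missing idea is to use part~1 as the ambient. In $\Open(X)^{\reg}\nCalg$, limits are $R$ applied to pointwise limits, which in the $\nat$-example returns exactly the essential part. The paper shows that $X\nCalg$ is \emph{reflective} there, with explicit reflector $c(A)=(C_0(X)\otimes A)/I$. Here $I$ consists of the $\phi\in C_0(X,A)$ with $\phi(x)\in A(X\setminus\{x\})$ for all $x$.
\begin{itemize}
\item The counit is an isomorphism because, for $A$ in $X\nCalg$, the map $A\to\prod_xA(x)$ is injective (via $\Prim(A)\to X$), whence $I=\ker(m)$.
\item The unit sends $a\in A(V)$ with $V\Subset U$ to $[\phi\otimes a]$ for a witness $\phi$; this is where regularity enters.
\end{itemize}
Since the inclusion preserves filtered colimits, $X\nCalg$ is an accessible localization of a presentable category.

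So the identification with $C_0(X)$-algebras, which you single out as the main obstacle, is the standard part. The real work is finding an ambient in which $X\nCalg$ is reflective. Alternatively, you could repair your second route by proving completeness of $X\nCalg$ directly, with products being the essential parts $\overline{C_0(X)\cdot\prod_iA_i}$, together with accessibility. Then use that an accessible complete category is locally presentable.
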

 
There is an adjunction $F:\Poset\leftrightarrows \Frame:\incl$,
where $F(Q)$ is the free frame on the poset $Q$. The functor $F$ preserves the property of having a maximal element.
We then have $Q\nCalg\simeq F(Q)^{\rmc}\nCalg$ (see \cite[Sec. 2.9]{zbMATH05657129}). Therefore $P^{\rmc}\nCalg$ is presentable if $P\cong F(Q)$ for some
poset $Q$. 
It remains an interesting question whether,  in these statements, the assumptions on $P$ can be further relaxed, beyond these obvious cases. Our current proof of  \cref{okprherhrtgertge}  heavily relies on the specific structure of a locally compact Hausdorff space and the  explicit use of the algebra $C_{0}(X)$.
        
  Following    \cite{Dadarlat_2012},  we associate 
to any topological space $X$ the $E$-theory context
 $$X\nCalg:=\Open(X)^{\rmc}\nCalg\ .$$
 This definition naturally extends to the setting of locales, yielding  a functor
 \begin{equation}\label{gergwerferwfwerfw}  (-)\nCalg:\Locale\to \EC\ , \quad X\mapsto X\nCalg\ , \quad f\mapsto f_{!}\end{equation} 
 from locales to $E$-theory contexts (see \cref{lkohperghtrgertgertg9}).
 By composing this assignment with the $E$-theory functor   defined in \eqref{gerwgewurgowergwergwerfe} we obtain
 the functor
 $$E:\Locale\to \Pr^{L}_{\st}\ , \quad X\mapsto E(X)\ , \quad  f\mapsto f_{!}\ ,$$
 see  \cref{ohkperrtgertgertg}. 
   
    We  return   to the problem of describing the category $E(X)$ explicitly.
    Our first main theorem  provides a complete homotopy-theoretic characterization of  $E(X)$ for "sufficiently small" locales $X$. The definition of cosheaves $\CoShv(X,\cD)$ with values in a presentable stable $\infty$-category $\cD$ on a locale $X$ is recalled in \cref{ohipeorthretgrtgrteg}.
 For every $A$ in $E(X)$ and open $U$ in $X$ we have an evaluation
 $A(U)$ in $\EE$. The assignment $U\mapsto A(U)$ turns out to be an $\EE$-valued cosheaf    
    on $X$ which we will denoted by $s_{X}(A)$.
    \begin{theorem}[\cref{gweoirgjoergeferggr}.\ref{jiogwrtgergwerfwerf1} and \cref{khoperthergtrge}]\label{hgitgjetrgetogerpigt}
If $X$ is a  locale that can be written as a    filtered union of finite open sublocales, then  there is an equivalence 
$$s_{X}:E(X)\stackrel{\simeq}{\to} \CoShv(X,\EE)\ .$$   
 
\end{theorem}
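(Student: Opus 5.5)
The plan has three parts: first reduce to a finite locale $X$, then prove the equivalence there, and finally pass to filtered unions.

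\textbf{Finite case.} For a finite locale, $\Open(X)$ is a finite distributive lattice. Its join-irreducible opens $U_x$ correspond to the points $x$ of the associated finite $T_0$-space, and $U_x$ is the smallest open containing $x$.

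First I show that $s_X$ is well defined. The evaluation $A\mapsto A(U)$ is homological on $X\nCalg$, so it factors through $E(X)$. The cosheaf condition then reduces to two facts in $\EE$. Evaluation at the empty open gives $0$. For two opens $U,V$, the sequence $A(U\cap V)\to A(U)\oplus A(V)\to A(U\cup V)$ should be a pushout. This follows from the short exact sequence of $C^{*}$-algebras $0\to A(U\cap V)\to A(U)\oplus A(V)\to A(U\cup V)\to 0$, which is exact because $A(U\cup V)=A(U)+A(V)$ and $A(U)\cap A(V)=A(U\cap V)$ by continuity. Exactness of $\ee$ then turns it into a fibre sequence in $\EE$. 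For finite $X$, binary Mayer--Vietoris gives the full cosheaf condition.

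Next I construct a left adjoint, or better an explicit inverse. Cosheaves on a finite locale are left Kan extended from the join-irreducibles: restriction gives $\CoShv(X,\EE)\simeq\Fun(P_X,\EE)$, where $P_X$ is the poset of points with $x\le y$ iff $U_x\subseteq U_y$. On the other side, for each point $x$ there is the functor $\iota_x:\nCalg\to X\nCalg$ sending $B$ to the algebra supported at $U_x$, namely $B(U)=B$ if $x\in U$ and $0$ otherwise. This is a pushforward $f_!$ along the point $x\to X$, so it induces $\EE\to E(X)$. The objects $\iota_x(\ee(\CC))$, or rather $\ee(\CC)$ tensored with these, form a family of compact generators of $E(X)$. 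Generation holds because evaluations detect equivalences (as in \cite[Thm. 3.10]{Dadarlat_2012}, proved here via the universal property). Concretely, $\iota_x$ is left adjoint to the evaluation at $U_x$ on the level of $X\nCalg$, and this adjunction passes to $E$-theory by the functoriality of $\EE$ on $\EC$.

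So $E(X)$ is generated by the images of the $\iota_x$, and $\Map(\iota_x B,\iota_y C)\simeq\Map_\EE(B,(\iota_yC)(U_x))$. This is $\Map_{\EE}(B,C)$ if $x\in U_y$ and $0$ otherwise. On the cosheaf side, the corepresented cosheaves $\EE$-tensored with $U_x$ have exactly the same mapping spaces. Hence $s_X$ sends generators to generators and is fully faithful on them. Since $s_X$ preserves colimits (evaluations do), it is an equivalence.

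\textbf{Filtered unions.} Write $X=\bigcup_i X_i$ as a filtered union of finite open sublocales. Then $X\nCalg$ is the filtered colimit of the $X_i\nCalg$ along the $j_!$, since each $A$ is the closure of the union of the $A(X_i)$. So $E(X)\simeq\colim_i E(X_i)$ in $\Pr^L_{\st}$. Likewise $\CoShv(X,\EE)\simeq\colim_i\CoShv(X_i,\EE)$ along extension by zero, using the cosheaf condition for the cover. The maps $s_{X_i}$ are compatible, which finishes the argument.

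\textbf{Main obstacle.} I expect the hardest step to be the adjunction/generation claim in the finite case. Showing that the adjunction $\iota_x\dashv\ev_{U_x}$ survives to $E$-theory requires checking that $\ev_{U_x}$ is a morphism of $E$-theory contexts. Generation requires that a fibre of evaluations being trivial forces the object to be zero. I would attack this by filtering $A$ through the ideals $A(U)$ and their subquotients. I also expect the filtered colimit comparison in $\Pr^L_{\st}$ to need care.
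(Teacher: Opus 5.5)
\textbf{Gap: generation of $E(X)$.} The decisive step in your finite case is the claim that the essential images of the functors $\EE(\iota_x):\EE\to E(X)$ generate $E(X)$. This step is not established, and the justification you give for it does not work.

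\begin{itemize}
\item \cite[Thm. 3.10]{Dadarlat_2012} concerns classical separable $E$-theory. That theory is not identified with the $E(X)$ of this paper.
\item In this paper, joint conservativity of evaluations is never proved independently. For finite locales it is a corollary of the very equivalence you want to prove. For $\LCH$ it is deduced from it in \cref{herthetrgertgetrg}. Invoking it is therefore circular.
\item ``Via the universal property'' does not supply an argument. The universal property controls functors out of $E(X)$, not mapping spaces in $E(X)$. Control of maps in $E(X)$ has to come from adjunctions or from fibre sequences of endofunctors, as in \cref{kohpertghertgtrege}.
\item The claim that the $\iota_x\ee(\C)$ are compact generators should be dropped. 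It is not needed, and for $X=\pt$ it would amount to $\EE$ coinciding with its bootstrap class.
\end{itemize}

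The adjunction part, which you listed as an obstacle, is fine. Both $\iota_x=x_!$ and $\ev_{U_x}=p_!j^*$ (for $j:U_x\to X$ and $p:U_x\to\pt$) are morphisms in $\EC$, so $\iota_x\dashv\ev_{U_x}$ passes to $E$-theory.

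\textbf{How to close the gap.} Do the d\'evissage you allude to, but in $X\nCalg$ rather than on objects of $E(X)$.
\begin{itemize}
\item Choose a maximal chain of opens $\emptyset=V_0<\dots<V_n=X$. Then $V_k=V_{k-1}\vee U_{x_k}$ for a point $x_k$.
\item For $A$ in $X\nCalg$, the subobjects $A_k:U\mapsto A(U\wedge V_k)$ form a chain of ideals with exact subquotients.
\item If $x_k\notin U$, then $U\wedge V_k=U\wedge V_{k-1}$. If $x_k\in U$, then $U\wedge V_k=(U\wedge V_{k-1})\vee U_{x_k}$.
\item Continuity then gives $A_k(U)/A_{k-1}(U)\cong A(U_{x_k})/A(U_{x_k}\wedge V_{k-1})$ for $U\ni x_k$, independently of $U$. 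Hence $A_k/A_{k-1}\cong\iota_{x_k}\bigl(A(V_k)/A(V_{k-1})\bigr)$.
\item Exactness of $\ee_X$ places $\ee_X(A)$ in the stable subcategory generated by the images of the $\EE(\iota_x)$. The objects $\ee_X(A)$ generate $E(X)$ as a localizing subcategory, so generation follows.
\end{itemize}

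\textbf{Two further repairs.}

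First, full faithfulness on non-compact generators plus colimit preservation does not give full faithfulness. Instead, use that $s_X\iota_xB$ corepresents evaluation at $U_x$ on $\CoShv(X,\EE)$. This gives $\map(\iota_xB,A)\simeq\map_{\EE}(B,s_X(A)(U_x))\simeq\map(s_X\iota_xB,s_XA)$ for \emph{all} $A$. The unit of $s_X\dashv G$ is then inverted by the jointly conservative family $\map(\iota_xB,-)$. Note also that $\map(\iota_xB,\iota_yC)\simeq\map_{\EE}(B,C)$ holds when $y\in U_x$, not when $x\in U_y$.

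Second, the filtered-union step as written is not justified. The statement ``$X\nCalg$ is the colimit of the $X_i\nCalg$'' is not a colimit that $\EE$ is known to preserve. What works is your own observation $\colim_i j_{i,!}j_i^*A\cong A$, extended to $E(X)$ by the universal property and combined with $j_i^*j_{i,!}\simeq\id$. This is exactly \cref{okphterthertg}.

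\textbf{Comparison.} Once repaired, your route differs from the paper's. The paper inducts on $\cP(X)$ by comparing the recollements of \cref{kohpertghertgtrege} and \cref{jiohrthetrhterhreehg93q} for the open sublocale of an atom and its closed complement. Your route gives explicit generators and mapping spaces. It additionally needs the fact that cosheaves on a finite locale are left Kan extended from join-irreducibles, which the paper's induction avoids.
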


For interesting infinite topological spaces, even those as simple as the unit interval $[0,1]$, 
 finding an explicit description of the category 
$E(X)$   has  remained an open problem, although some calculations of  the 
morphism groups $\pi_{0}\map_{E([0,1])}(A,B)$  for elementary $C([0,1])$-algebras
$A$ and $B$ are known \cite{zbMATH06347396}.

It turns out that considering the entire functor $X\mapsto E(X)$ at once simplifies the problem considerably, at least for locally compact Hausdorff spaces.  By  specialization  to locally compact Hausdorff spaces the functor
\eqref{gergwerferwfwerfw} gives rise to a functor
\begin{equation}\label{gewrferfewrgv} (-)\nCalg:\LCH\to \EC\ , \quad X\mapsto X\nCalg\ , \quad f \mapsto f_{!}\ . \end{equation}
In \cref{ijiopgwergrgwferfw} we equip the assignment
$$\LCH\ni X\mapsto X\nCalg\in \EC$$ with a contravariant functoriality and an external tensor product  derived from the maximal tensor product of $C^{*}$-algebras. Parts of these structures have  already  been considered in  \cite{zbMATH01213731},
 \cite{Park_2000} and \cite{Popescu_2004}.
\begin{prop}[\cref{ijiopgwergrgwferfw}]\label{hokpjpezjtzjrtzjrztj}
There is a  lax symmetric monoidal functor to $E$-theory contexts (in the sense of \cref{tohpertgertgrteg})
$$(-)\nCalg:\LCH^{\op}\to \Cat\ , \quad X\mapsto X\nCalg\ , \quad f\mapsto f^{*}\ .$$
\end{prop}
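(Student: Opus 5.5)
Concretely, I would work with the Park–Popescu model. For $X$ in $\LCH$ an $X$-$C^{*}$-algebra $A$, with ideals $A(U)$ for $U$ in $\Open(X)$, corresponds to a $C_{0}(X)$-algebra, i.e.\ a nondegenerate central homomorphism $C_{0}(X)\to ZM(A)$ with $A(U)=C_{0}(U)A$. The plan has four steps: define $f^{*}$ and the external product, check functoriality, check that each $f^{*}$ is a morphism of $E$-theory contexts, and assemble the symmetric monoidal coherences.

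\textbf{Pullback.} For a continuous map $f:X\to Y$ of locally compact Hausdorff spaces I would set
\[
f^{*}A:=C_{0}(X)\otimes_{C_{0}(Y)}A .
\]
This is the balanced maximal tensor product: the quotient of $C_{0}(X)\otimes_{\max}A$ by the ideal generated by $(h\circ f)g\otimes a-g\otimes ha$. Its $C_{0}(X)$-structure comes from the first factor. For an open $V\subseteq X$ one gets $(f^{*}A)(V)$ as the closure of $C_{0}(V)\cdot f^{*}A$, and this assignment is continuous in the sense of \cref{hrtgrtgerge}. Functoriality in $A$ is clear. For a composite $X\xrightarrow{f}Y\xrightarrow{g}Z$ I would use associativity of balanced tensor products,
\[
C_{0}(X)\otimes_{C_{0}(Y)}\bigl(C_{0}(Y)\otimes_{C_{0}(Z)}A\bigr)\cong C_{0}(X)\otimes_{C_{0}(Z)}A ,
\]
together with $C_{0}(Y)\otimes_{C_{0}(Y)}B\cong B$. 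These isomorphisms are canonical and satisfy the cocycle condition, so they give a pseudofunctor $\LCH^{\op}\to\Cat$. It can be strictified, or read directly as a functor to the $(2,1)$-category and hence to $\Cat$, by choosing $f^{*}A$ as a specified quotient.

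\textbf{Exactness.} Here I must verify the conditions of \cref{tohpertgertgrteg}: $f^{*}$ preserves homotopies, $K$-stabilisation, exact sequences and filtered colimits in $X\nCalg$, as these were identified in \cref{gergwergwerioug9erwgwregw}.
\begin{itemize}
\item Homotopies and $K$-stability are respected because $f^{*}(A\otimes B)\cong f^{*}A\otimes B$ for a plain $C^{*}$-algebra $B$ such as $C([0,1])$ or $K$. This follows from associativity of maximal tensor products.
\item Filtered colimits are preserved because $C_{0}(X)\otimes_{\max}-$ and quotients commute with colimits.
\item Exact sequences are preserved because $C_{0}(X)\otimes_{\max}-$ is exact, $C_{0}(X)$ being nuclear, and balancing is a quotient by an ideal. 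The point to check is that the image of the ideal $B\subseteq A$ is exactly the kernel ideal computed in $f^{*}A$.
\end{itemize}

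\textbf{Exterior product.} I would define
\[
\boxtimes:X\nCalg\times X'\nCalg\to(X\times X')\nCalg ,\qquad A\boxtimes A':=A\otimes_{\max}A' ,
\]
with the structure map $C_{0}(X\times X')\cong C_{0}(X)\otimes C_{0}(X')\to ZM(A\otimes_{\max}A')$. The unit is $\mathbb{C}$ over the point. The symmetry and associativity come from those of $\otimes_{\max}$. Compatibility with pullback is a natural isomorphism
\[
(f\times f')^{*}(A\boxtimes A')\cong f^{*}A\boxtimes f'^{*}A' ,
\]
which again follows from rearranging balanced tensor products. Here $\LCH^{\op}$ carries the cartesian (coproduct in the opposite category) monoidal structure. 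The lax structure maps are these isomorphisms, and they are natural in all variables, so by the standard construction they assemble into a lax symmetric monoidal functor via the Grothendieck construction. Finally, $\boxtimes$ preserves the E-theory-context structure in each variable separately, by the same argument as for $f^{*}$.

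The main obstacle I expect is the exactness step and, in particular, checking that the continuity of the parametrisation is preserved. For a non-closed or merely maximal balanced product, closure of sums of ideals could fail. I would reduce this to the $C_{0}(X)$-algebra description, where continuity is automatic, and appeal to the equivalence between $X\nCalg$ and $C_{0}(X)$-algebras for locally compact Hausdorff $X$.
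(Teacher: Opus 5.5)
\textbf{There is a genuine gap: exactness of $f^{*}$ is asserted but not proved.}

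Your route differs from the paper's. You define $f^{*}A$ as the balanced product $C_{0}(X)\otimes_{C_{0}(Y)}A$ and assert coherence of the canonical isomorphisms. The paper instead defines $f^{*}A$ as a subalgebra of $\prod_{x\in X}A(f(x))$ and inherits all lax symmetric monoidal data from $\Phi(X)=\prod_{X}\nCalg$ along the faithful transformation $N$. Your alternative is legitimate, but the step carrying the real content is missing.

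Exactness of $C_{0}(X)\otimes_{\max}-$ holds for every algebra; nuclearity plays no role. It gives an exact sequence $0\to C_{0}(X)\otimes B\to C_{0}(X)\otimes A\to C_{0}(X)\otimes Q\to 0$. Your $f^{*}$ then divides each term by its balancing ideal $I_{B},I_{A},I_{Q}$, and the quotient sequence is exact if and only if $0\to I_{B}\to I_{A}\to I_{Q}\to 0$ is.

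Surjectivity of $I_{A}\to I_{Q}$ is easy, since the generators $(h\circ f)g\otimes q-g\otimes hq$ lift. So exactness at $f^{*}A$ and $f^{*}Q$ is fine. Injectivity of $f^{*}B\to f^{*}A$, however, requires $I_{A}\cap (C_{0}(X)\otimes B)=I_{B}$. Knowing $I_{A}$ only through generators gives no control over this intersection. ``Balancing is a quotient by an ideal'' does not address it. Your proposed remedy, that continuity is automatic in the $C_{0}(X)$-algebra picture, only yields the easy fact that $f^{*}A$ lies in $X\nCalg$.

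The missing idea is a pointwise description of the balancing ideal:
$I_{A}=\{\phi\in C_{0}(X,A)\mid \phi(x)\in A(Y\setminus\{f(x)\})\text{ for all }x\in X\}$,
equivalently that $f^{*}A$ has fibres $A(f(x))$. Given this, injectivity is immediate, because $B\cap A(Y\setminus\{y\})=B(Y\setminus\{y\})$ for an exact sequence in $Y\nCalg$ (\cref{kophrtgretgggrethrthe}).

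Showing that the ideal generated by relations equals the pointwise ideal is a genuine approximation argument. In the paper it is the statement that regular objects are good (\cref{kophtrtegertgertgte}, via a partition of unity). This feeds into \cref{gjweroijgorefwerf} and then \cref{hlepertgertger}, through $f^{*}\simeq c\circ f^{\sharp}\circ\incl$ for proper $f$; open embeddings are handled separately as right adjoints of $f_{!}$.

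Within your framework you could close the gap differently:
\begin{itemize}
\item Observe that $I_{A}=(C_{0}(X)\boxtimes A)(X\times Y\setminus\Gamma_{f})$. The functions $g(x)e(y)\bigl(h(f(x))-h(y)\bigr)$ have common zero set exactly the graph $\Gamma_{f}$, so they generate $C_{0}(X\times Y\setminus\Gamma_{f})$.
\item Then $f^{*}$ is $C_{0}(X)\boxtimes-$ followed by restriction to the closed graph.
\item Both steps are exact, because exact sequences in $(X\times Y)\nCalg$ are pointwise exact.
\end{itemize}
Either way, an argument of this kind is needed.

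Two smaller omissions:
\begin{itemize}
\item $E$-admissibility of $A\boxtimes-$ is not ``the same argument as for $f^{*}$''. It needs that exactness and filtered colimits in $(X\times Y)\nCalg$ are detected on underlying algebras (\cref{kophrtgretgggrethrthe}), as in \cref{khrptohertgertgrtge}.
\item Preservation of finite products is not addressed at all.
\end{itemize}
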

To  formally describe the interaction of   the covariant functoriality  with the contravariant functoriality and the tensor products
 we employ the language of functor formalisms.  Following \cite{zbMATH06780490}, \cite{zbMATH07666924}, \cite{arXiv:2410.13038}, \cite{arXiv:2412.15780}, \cite{arXiv:2510.26269}, \cite{arXiv:2507.13537} we
  adopt the concept of a functor formalism on the Nagata context $(\LCH,I,P)$ of locally compact Hausdorff spaces, open inclusions, and proper maps.  We refer to \cref{kojpjrtzjzhrtzh} for the notion of a three-functor formalism on $(\LCH,I,P)$. 
 The three functors are $f^{*}$ and $f_{!}$ for all morphisms in $\LCH$ and 
 the tensor product $\otimes_{X}$ for all $X$ in $\LCH$. To constitute   a three-functor formalism,  these must satisfy various compatibility   conditions detailed  in \cref{kojpjrtzjzhrtzh}.
 \begin{theorem}[\cref{okpherthrtgetgetg}]
 The lax symmetric monoidal functor  
 from \cref{hokpjpezjtzjrtzjrztj}  is a three-functor formalism on the Nagata context $(\LCH,I,P)$. \end{theorem}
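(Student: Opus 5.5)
We have to show that the lax symmetric monoidal functor $X\mapsto X\nCalg$, $f\mapsto f^{*}$, from \cref{hokpjpezjtzjrtzjrztj}, is a three-functor formalism on $(\LCH,I,P)$. We will use the standard recognition criterion for three-functor formalisms on a Nagata context, as recalled in \cref{kojpjrtzjzhrtzh}. It reduces the construction to the following separate checks:
\begin{enumerate}
\item for open embeddings $j$, the pullback $j^{*}$ has a left adjoint $j_{!}$ satisfying base change along all morphisms and the projection formula;
\item for proper maps $p$, the pullback $p^{*}$ has a right adjoint $p_{*}=p_{!}$ satisfying base change along all morphisms and the projection formula;
\item the two are compatible on pullback squares of an open embedding against a proper map.
\end{enumerate}
So the plan is to produce these adjunctions at the level of the categories $X\nCalg$ and then verify the relevant Beck--Chevalley and projection maps.

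\textbf{Open embeddings.} Let $j:U\to X$ be an open embedding. We take $j_{!}$ to be the covariant functoriality from \eqref{gewrferfewrgv}: an $U$-$C^{*}$-algebra $A$ is viewed as an $X$-$C^{*}$-algebra with ideal function $V\mapsto A(V\cap U)$. We take $j^{*}A:=A(U)$ with the induced parametrization. Then $j^{*}j_{!}\cong\id$, and the unit/counit come from the ideal inclusion $A(U)\to A$, which gives the adjunction $j_{!}\dashv j^{*}$.

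\textbf{Proper maps.} For a proper map $p:Y\to X$, the functor $p_{!}$ from \eqref{gewrferfewrgv} should coincide with pushforward $p_{*}$, and we check $p^{*}\dashv p_{!}$. The pullback $p^{*}B=C_{0}(Y)\otimes_{C_{0}(X)}B$ is given by the balanced maximal tensor product. The counit $p^{*}p_{!}A\to A$ is induced by the $C_{0}(Y)$-module structure of $A$; properness is used here to ensure that $C_{0}(X)\to C_{b}(Y)$ lands in $C_{0}(Y)$, i.e.\ $p^{*}C_{0}(X)\cong C_{0}(Y)$ nondegenerately. The unit $B\to p_{!}p^{*}B$ comes from $C_{0}(X)\to C_{0}(Y)$.

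\textbf{Base change and projection formulas.} These reduce to natural isomorphisms of balanced tensor products:
\[
C_{0}(Y')\otimes_{C_{0}(X')}\bigl(C_{0}(X')\otimes_{C_{0}(X)}A\bigr)\cong C_{0}(Y')\otimes_{C_{0}(Y)}\bigl(C_{0}(Y)\otimes_{C_{0}(X)}A\bigr),
\]
together with $(p^{*}B)\otimes_{Y}A\cong B\otimes_{X}p_{!}A$ for the projection formula. We verify them on fibres, or equivalently by checking the universal properties of the maximal balanced tensor product. For open embeddings they are essentially restriction to ideals. Since everything happens at the level of $C^{*}$-algebras (categories), the $\infty$-categorical coherences in the criterion collapse to checking that certain canonical transformations are isomorphisms.

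\textbf{Main obstacle.} I expect the proper base change along arbitrary (non-proper) $g:X'\to X$ to be the hardest step, because it requires the maximal balanced tensor product to commute with the relevant quotients and restrictions. I will first try to reduce it to the case where $g$ is a closed embedding, using the factorization $Y'=Y\times_{X}X'\hookrightarrow Y\times X'$ and exactness of $-\otimes_{\max}C_{0}(X')$. Continuity of the resulting fields, as required in the continuous setting $\Open(X)^{\rmc}$, may need the results of \cref{gergwergwerioug9erwgwregw}.
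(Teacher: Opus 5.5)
Your list of conditions (I-la, I-bc, I-pf, P-ra, P-bc, P-pf, mBC) is the same skeleton as the paper's proof, and so is your choice of $j_{!}$ and $p_{*}=p_{!}$ as the covariant functoriality. The problem is that the steps carrying the content are either misidentified or not carried out.

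\textbf{Base change is misidentified.} The isomorphism you display under ``base change'' is $q^{*}g^{*}A\cong g'^{*}p^{*}A$, where $q:Y'\to X'$ and $g':Y'\to Y$ are the projections. This is just a composite of the pseudofunctoriality isomorphisms of the given functor. It holds for every commutative square and does not involve $p_{*}$ at all. P-bc asks instead that the Beck--Chevalley map $g^{*}p_{!}\to q_{!}g'^{*}$ be invertible. Concretely, for $A$ in $Y\nCalg$ the canonical map
\[
C_{0}(X')\otimes_{C_{0}(X)}A\longrightarrow C_{0}(Y')\otimes_{C_{0}(Y)}A
\]
must be an isomorphism of $X'$-$C^{*}$-algebras. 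Similarly, I-bc concerns $j'_{!}f'^{*}\to f^{*}j_{!}$ for the base change $j'$ of an open $j$ along $f:X'\to X$. This is exactly where cartesianness of the square enters, and your argument never uses it.

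\textbf{The hard steps are deferred, not done.} You postpone precisely this statement to the ``main obstacle'', where the reduction is only announced. Carrying it out would still require:
\begin{enumerate}
\item identifying the given $i^{*}$ for a closed embedding $i:Z\to X$ with the quotient $B\mapsto B/B(X\setminus Z)$;
\item identifying $\pi^{*}$ for a projection $\pi:X'\times X\to X$ with $C_{0}(X')\boxtimes -$;
\item showing that Beck--Chevalley maps are compatible with pasting of squares.
\end{enumerate}
The projection formula is likewise only asserted. The mBC condition is listed but not argued. In the paper it is immediate, because the right adjoints of proper pullbacks and the left adjoints of open pullbacks are all instances of the strictly functorial covariant $(-)_{!}$.

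\textbf{What the paper uses instead.} The checks become tractable in the paper through the Fell-bundle picture (\cref{hetrertgrtgrtegetg}).
\begin{itemize}
\item $f^{*}A$ is \emph{defined} as the subalgebra of $\prod_{x}A(f(x))$ generated by $C_{0}(X)$-multiples of constant sections, so its fibres are $A(f(x))$ by \cref{jgwrieogjwefwefwrf}.
\item The fibres of $f_{!}A$ are described in \cref{kookphertgtrgertg}.
\item Every unit, counit, Beck--Chevalley map and projection map is the restriction of an explicit map between products of fibres.
\item For a cartesian square, the bijection $q^{-1}(x')\cong p^{-1}(g(x'))$ induced by $g'$ turns the Beck--Chevalley map into a reindexing identity. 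One then only checks that constant sections and the $C_{0}$-action are preserved (\cref{joihetrgertgertgerg}, \cref{oigjworiegrefwerfewfref}).
\item For the projection formula, injectivity is checked fibrewise using injectivity of $A\to\prod_{x}A(x)$ (\cref{lpherhrtgert}, \cref{okhoperhtrgertgetg}).
\item \cref{okhoperhtrgertgetg1} and its open-inclusion analogue identify the \emph{canonical} maps, built from units, counits and the lax monoidal structure, with these isomorphisms.
\end{itemize}
An isomorphism obtained from a universal property of $\otimes_{C_{0}(X)}$ would still have to be matched with the canonical map. Your ``verify on fibres'' needs exactly these inputs. It also needs the criterion that a morphism in $X\nCalg$ which is an isomorphism on all fibres is an isomorphism. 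None of this is supplied.

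\textbf{Two smaller points.}
\begin{itemize}
\item $j^{*}$ is already fixed by the functor of \cref{hokpjpezjtzjrtzjrztj}. Setting $j^{*}A:=A(U)$ therefore requires showing the two agree, which is the content of \cref{kopherhrtgertge}.
\item Properness is needed for the unit $B\to p_{!}p^{*}B$, to have $p^{*}C_{0}(X)\subseteq C_{0}(Y)$. It is not needed for the counit.
\end{itemize}
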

 
By composing the lax symmetric monoidal extension of the $E$-theory functor from   \eqref{gerwgewurgowergwergwerfe} with the functor from \cref{hokpjpezjtzjrtzjrztj} we get a lax symmetric monoidal functor
 \begin{equation}\label{gwergwerfwerfw}
 E: \LCH^{\op}\to \Pr^{L}_{\st}\ , \quad X\mapsto E(X)\ , \quad f\mapsto f^{*}\ .\end{equation}
  Exploiting   the fact that $E$ takes values in presentable $\infty$-categories
and left adjoint functors we arrive at:
 \begin{theorem}[\cref{tkphrethergertgertg}]
 The lax symmetric monoidal functor  $E$ in
  \eqref{gwergwerfwerfw}  is a presentable six-functor formalism on the Nagata context $(\LCH,I,P)$.  \end{theorem}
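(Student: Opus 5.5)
We have to show that the lax symmetric monoidal functor $E:\LCH^{\op}\to \Pr^{L}_{\st}$ of \eqref{gwergwerfwerfw} is a presentable six-functor formalism on $(\LCH,I,P)$. The plan is to push the three-functor formalism of \cref{okpherthrtgetgetg} along the universal homological functor and then obtain the remaining functors as right adjoints. In order:

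\textbf{Step 1.} Apply the lax symmetric monoidal $E$-theory functor $\EE:\EC\to \Pr^{L}_{\st}$ to the three-functor formalism $X\mapsto X\nCalg$ from \cref{okpherthrtgetgetg}. Every datum of that formalism has to be a morphism of $E$-theory contexts, so that $\EE$ applies to it:
\begin{itemize}
\item the pullbacks $f^{*}$;
\item the exceptional pushforwards $f_{!}$ for $f$ in $I$ (open inclusions) and in $P$ (proper maps);
\item the external tensor products $\otimes_{X}$.
\end{itemize}
The structural equivalences then carry over by functoriality:
\begin{itemize}
\item base change for pullback squares along $I$ and $P$;
\item the projection formulas;
\item the gluing $f_{!}=p_{!}j_{!}$ of the Nagata context.
\end{itemize}
Since $\EE$ is a functor of $(\infty,1)$-categories with lax symmetric monoidal structure, it sends the span-type (correspondence) description of a three-functor formalism to a corresponding one with values in $\Pr^{L}_{\st}$. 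The step needing care is that the lax monoidal structure of $\EE$ becomes strong on the relevant tensor products. I expect \cref{kohpehtregetrg} to give this once the tensor product bifunctors are shown to be homological in each variable.

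\textbf{Step 2.} Since all functors now take values in $\Pr^{L}_{\st}$, the adjoint functor theorem provides:
\begin{itemize}
\item right adjoints $f_{*}$ of $f^{*}$;
\item right adjoints $f^{!}$ of $f_{!}$;
\item internal homs $\underline{\mathrm{Hom}}_{X}$ adjoint to $-\otimes_{X}A$.
\end{itemize}
The $\otimes_{X}$ come from the lax monoidal structure via pullback along diagonals. Presentability of $E(X)$ is built into the construction of $\EE$, and $f^{*}$, $f_{!}$ and $\otimes_{X}$ preserve colimits because they are images of morphisms in $\EC$ under $\EE$.

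\textbf{Step 3.} Verify the conditions singled out in \cref{kojpjrtzjzhrtzh} for a presentable six-functor formalism, following the criterion of \cite{arXiv:2412.15780}:
\begin{itemize}
\item for $j\in I$, $j_{!}$ is left adjoint to $j^{*}$, i.e.\ $j^{!}\simeq j^{*}$;
\item for $p\in P$, $p_{!}\simeq p_{*}$.
\end{itemize}
Both should already hold at the level of $E$-theory contexts.
\begin{itemize}
\item For an open embedding $j:U\to X$, restriction to $U$ and extension by zero form an adjunction of $C^{*}$-categories, and its unit and counit are compatible with the homological structure.
\item For a proper map $p$, the functor $p^{*}$ given by $C_{0}(X)\otimes_{C_{0}(Y)}-$ has a right adjoint at the level of $X\nCalg$ which agrees with $p_{!}$ (forgetting the structure to $Y$).
\end{itemize}
One then transports these unit/counit data through $\EE$. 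The triangle identities survive because $\EE$ is a functor into an $(\infty,2)$-categorical target once natural transformations are encoded homotopically, for instance via $[0,1]$-homotopies or $\Fun(\Delta^{1},-)$.

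\textbf{Main obstacle.} I expect Step 3 to be the main obstacle, specifically the proper case. The adjunction $p^{*}\dashv p_{*}$ need not be visible on the nose in $X\nCalg$, and the counit may only be an $E$-equivalence rather than an isomorphism. To handle this, I would first try to use the factorization of proper maps through closed embeddings into $Y\times K$ with $K$ compact. That reduces the problem to two cases:
\begin{itemize}
\item closed embeddings, where the exact sequence $0\to j_{!}j^{*}A\to A\to i_{*}i^{*}A\to 0$ is sent by $\EE$ to a fibre sequence and gives the recollement;
\item projections $Y\times K\to Y$, handled by the tensor product and base change.
\end{itemize}
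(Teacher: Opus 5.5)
Your Steps 1–3 are essentially the paper's argument. The three-functor formalism $(-)\nCalg$ of \cref{okpherthrtgetgetg} is built from morphisms of $E$-theory contexts: $f^{*}$, $A\boxtimes -$, $j_{!}$ for open inclusions, and the covariant $p_{!}$ as right adjoint of $p^{*}$ for proper $p$. So the 2-functor $\EE$ of \cref{kohpertgertgertge} transports the adjunctions, base change and projection formulas, and cocontinuity of $p_{*}\simeq \EE(p_{!})$ and of $\otimes_{X}$ is automatic in $\Pr^{L}_{\st}$. The ``main obstacle'' you anticipate does not arise: \cref{okhprethetgertgtge} gives $p^{*}\dashv p_{!}$ on the nose in $X\nCalg$, with the diagonal as unit and evaluation at $x'=x$ as counit (a counit need not be invertible anyway), so the detour through $Y\times K$ is unnecessary.
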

 The six functors are the functors $f^{*}$,  $f_{!}$ and $\otimes_{X}$ from above, their right-adjoints $f_{*}$, $f^{!}$, and the interal hom-functor $\Hom_{X}$. We refer to \cref{hertkprtgertgtregertg} for the complete definition of a six-functor formalism.
  Furthermore, the result of   \cite[Prop. 3.3.3]{arXiv:2410.13038} or  \cite[Thm 3.3]{arXiv:2412.15780} ensures that our definition of a six-functor  formalism is equivalent to the definition of a six-functor  formalism as a lax symmetric monoidal  functor defined on a span category associated to the geometric context $(\LCH,\All)$   \cite{zbMATH06780490},   \cite[Def. 2.4]{arXiv:2412.15780}, \cite[Def. 2.5]{arXiv:2510.26269}, \cite[Def. 1.2.4]{arXiv:2410.13038}, where the class of $!$-able maps, denoted by  $\All$,  consists of all maps.

%
%
%
%

The classical six-functor formalism on the Nagata context $(\LCH,I,P)$, that agrees with $\EE$   on the one-point space $\pt$,  is the sheaf-theoretic six-functor formalism
$$\Shv(-,\EE):\LCH^{\op}\to \Pr^{L}_{\st},  \quad X\mapsto  \Shv(X,\EE)\ ,  \quad f\mapsto \hat f^{*}$$
(see \cite{Volpe:2021aa} for a complete construction).  In \cite{buvo} we characterize $\Shv(-,\cD)$ for  $\cD$ in $\Calg(\Pr^{L}_{\st})$ with dualizable underlying  object    among  six-functor  formalisms $D$ with $D(\pt)=\cD$
  by universal properties. This  characterization is recalled here as \cref{thkoperthertegrtger} and rests on three conditions.

The first condition is that the six-functor formalism $D$ is a coefficient system \cite{arXiv:2507.13537} (see  also \cite{zbMATH07666924} for the origin  of this concept in algebraic geometry). Being a coefficient system is equivalent to  the canonical descent condition  described in \cref{kohperthertgertg}.
\begin{theorem}[\cref{thkoptrhergtrgege}]
The six-functor formalism  $E$ in \eqref{gwergwerfwerfw} is a coefficient system.
\end{theorem}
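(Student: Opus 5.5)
The statement to prove is that $E$ in \eqref{gwergwerfwerfw} is a coefficient system. By \cref{kohperthertgertg}, this amounts to the canonical descent condition. The plan is to verify the following for the six-functor formalism $E$ on $(\LCH,I,P)$:
\begin{enumerate}
\item For every $X$ in $\LCH$ and every open $U\subseteq X$ with closed complement $Z$, with inclusions $j:U\to X$ and $i:Z\to X$, the localization sequence
$$j_{!}j^{*}A\to A\to i_{*}i^{*}A$$
is a fibre sequence in $E(X)$.
\item The functor $E$ is homotopy invariant in the form $p^{*}:E(X)\to E(X\times[0,1])$ being fully faithful, or equivalently $p_{*}p^{*}\simeq \id$ for the projection $p$.
\item Descent holds for filtered unions of opens: $A\simeq \colim_{U} j_{U,!}j_{U}^{*}A$ for a filtered family of opens exhausting $X$.
\end{enumerate}

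I would establish each property first on the level of $X$-$C^{*}$-algebras and then transport it along the universal homological functor $\ee$. Both $f^{*}$ and $f_{!}$ on $E$ are induced from the corresponding functors of $E$-theory contexts, so each identity reduces to statements about $C^{*}$-algebras.

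For (1), take an $X$-$C^{*}$-algebra $A$. Then $j_{!}j^{*}A$ is the ideal $A(U)$, and $i_{*}i^{*}A$ is the quotient $A/A(U)$, since $i$ is proper and $i_{*}=i_{!}$. The sequence $0\to A(U)\to A\to A/A(U)\to 0$ is exact in $X\nCalg$, and homological functors send exact sequences to fibre sequences. The fibre sequence then holds on the image of $\ee$. Since that image generates $E(X)$ under colimits and all functors involved preserve colimits, it extends to all of $E(X)$.

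For (3), the argument is similar. The $C^{*}$-algebra $A$ is the closure of the union of the ideals $A(U)$, so $A\cong\colim_{U}A(U)$ as a filtered colimit in $X\nCalg$. This colimit is preserved by $\ee$ because $\ee$ is s-finitary/filtered-colimit preserving.

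Step (2) is the main obstacle. I would show that the unit $A\to p_{*}p^{*}A$ is an equivalence. Here $p^{*}A=A\otimes C([0,1])$, and $p_{*}=p_{!}$ because $p$ is proper; this right adjoint is identified with forgetting the $[0,1]$-structure. The composite $A\to C([0,1],A)$ is then the constant-function inclusion, which is a homotopy equivalence of $X$-$C^{*}$-algebras via evaluation at $0$ and the contraction of $[0,1]$. Homotopy invariance of $\ee$ makes it an equivalence.

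The delicate point is identifying the counit $p_{!}p^{*}\to\id$ with these maps. Here I would use the adjunction data from \cref{okpherthrtgetgetg}. I would also need to check that the homotopy used is a homotopy through $X$-$C^{*}$-algebras, i.e.\ that it is compatible with the $C_{0}(X)$-structure, which holds because all maps are $C_{0}(X)$-linear. If descent in \cref{kohperthertgertg} is phrased instead via proper base change plus excision, the same exact-sequence argument as in (1), together with proper base change from \cref{okpherthrtgetgetg}, gives the required conditions.
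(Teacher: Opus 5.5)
Your arguments for the two substantive conditions are the paper's own. For localization you check that $0\to A(U)\to A\to A/A(U)\to 0$ is exact in $X\nCalg$ and push it through $\ee_{X}$; this is \cref{kohpertghertgtrege}. For open exhaustions you use continuity of $A$ to get $\colim_{i} j_{i,!}j_{i}^{*}A\cong A$, together with the fact that $\ee_{X}$ preserves filtered colimits; this is \cref{okphterthertg}. The paper's proof of the theorem consists only of a pointer to these two results.

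Where you go wrong is the list of conditions. Canonical descent (\cref{kojghptrertgt}) consists of (zero), (localization) and (open exhaustions); homotopy invariance along $X\times[0,1]\to X$ is not among them. Your step (2), which you call the main obstacle, is therefore not needed. Your constant-function argument for it is fine, but it proves something that is not asked.

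Conversely, several required pieces are left implicit, though none is a real gap:
\begin{itemize}
\item (zero) holds because $\emptyset\nCalg$ contains only the zero algebra. The paper also passes over this.
\item A recollement needs, besides the fibre sequence $j_{!}j^{*}\to\id\to i_{*}i^{*}$, that $j_{!}$ and $i_{*}$ are fully faithful. This follows from the Bousfield localizations on the level of $E$-theory contexts (\cref{okhpzrhtzjrtzj}, \cref{okhpzrhtzjrtzj1}), which the $2$-functor $\EE$ preserves. Alternatively it follows from base change, since $j$ and $i$ are injective.
\item The colimit formula in (3) only gives full faithfulness of $E(X)\to\lim_{i}E(U_{i})$. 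For essential surjectivity you still need to set $A:=\colim_{i}j_{i,!}A_{i}$ and check $j_{k}^{*}A\simeq A_{k}$.
\item Your claim that the image of $\ee_{X}$ generates $E(X)$ under colimits needs that image to be closed under desuspension, e.g.\ via $\Omega\,\ee_{X}(A)\simeq\ee_{X}(C_{0}((0,1))\otimes A)$. The cleaner route, which the paper uses, is that $\ee_{X}^{*}$ in \eqref{hrtgrtgetghertherhtr} is an equivalence. Hence a transformation between colimit-preserving functors that is an equivalence after precomposition with $\ee_{X}$ is an equivalence.
\end{itemize}
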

The second condition for  \cref{thkoperthertegrtger} concerns the associated 
 cohomological functor $\Gamma^{D}$ (see   \cref{okphertgertgerg}). In the case of the six-functor formalism of sheaves $\Shv(-,\cD)$ it  sends  $X$ in $\LCH$  to the usual sheaf cohomology 
 $$\Gamma^{\Shv(-,\cD)}(X,1)\in \cD$$
 of  the constant sheaf on $X$
with value $1$, the tensor unit of $\cD$. In the case of the six-functor formalism $E$ from \eqref{gwergwerfwerfw} the associated cohomological functor sends 
$X$ in $\LCH$ to $$\Gamma^{E}(X)=\lim_{K\subseteq X} \ee(C(K))\in \EE\ ,$$
where the limit runs over all compact subsets of $X$. The second condition   requires   the associated cohomological functor
  to be  finitary in the sense of \cref{iugwoierwerfwefwef}, i.e., it must  send cofiltered systems of compact Hausdorff spaces to colimits.  While the  detailed  argument for the continuity of $\Gamma^{\Shv(-,\cD)}$  is quite involved
 (see \cite[Thm. 3.10]{NKP}),   the argument for the continuity of $\Gamma^{E}$ given in the proof of \cref{tkohperthretgertg} is  considerably simpler and merely uses Gelfand duality and  the fact (true by definition)  that the $E$-theory functor 
\eqref{vasdcasdcadscasca} preserves filtered colimits.
 
 The  third condition  for  \cref{thkoperthertegrtger} is that the six-functor formalism $D$ is section-determined (see \cref{lkpohejztjrtz}). For $E$ the classical analogue of this  property was verified  in   \cite[Thm. 3.10]{Dadarlat_2012}.   \begin{theorem}[\cref{trkopherthrgertg}]\label{trkopherthrrrrgertg}
The functor $E$  in \eqref{vsdfvsfsdr} is section-determined.
\end{theorem}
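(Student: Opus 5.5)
Throughout I read \emph{section-determined} in the sense of \cref{lkpohejztjrtz} as follows. For every $X$ in $\LCH$, the family of section functors
$$\ev_{U}:E(X)\to \EE\ , \quad A\mapsto A(U)\ , \qquad U\in \Open(X)\ ,$$
is jointly conservative. Equivalently, the localizing subcategory of $E(X)$ generated by the objects $j_{U,!}p_{U}^{*}B$ is all of $E(X)$, where $j_{U}:U\to X$ is the open inclusion, $p_{U}:U\to \pt$, and $B$ runs over $\EE$. The functor $B\mapsto j_{U,!}p_{U}^{*}B$ is represented on algebras by $B\mapsto C_{0}(U)\otimes B$, with the $X$-structure coming from the first factor, and it is left adjoint to $\ev_{U}$. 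I will prove the generation statement. Since $E(X)$ is generated under colimits by the objects $\ee_{X}(A)$ for $X$-$C^{*}$-algebras $A$, it suffices to show that each $\ee_{X}(A)$ lies in the localizing subcategory $\mathcal{L}_{X}$ generated by the objects $\ee_{X}(C_{0}(U)\otimes B)$.

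\textbf{Step 1: finite quotients.} I will use the classical idea of \cite[Thm. 3.10]{Dadarlat_2012}: approximate $X$ by finite quotients. A finite family of opens determines a finite $T_{0}$ quotient locale $q:X\to Y$, that is, a frame map $\Open(Y)\to \Open(X)$ with $\Open(Y)$ finite. The restriction $q_{!}A$ is a $Y$-$C^{*}$-algebra with the same underlying algebra. Since $Y$ is finite, \cref{hgitgjetrgetogerpigt} identifies $E(Y)$ with $\CoShv(Y,\EE)$. In the cosheaf category, every object is built by finitely many cofibres and colimits from cosheaves of the form $V\mapsto B$ if $V\supseteq W$ and $0$ otherwise. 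Under $s_{Y}$, these correspond to the objects $\ee_{Y}(A(W))$ supported on the minimal open $W$. So $q_{!}\ee_{X}(A)$ is built finitely out of pieces indexed by the opens of $Y$.

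\textbf{Step 2: lifting back to $X$.} Lifting these decompositions to $E(X)$ only gives $\ee_{X}(A)$ up to the failure of $q$ to be an equivalence. I will therefore write $A$ as a filtered colimit over the refinement system of finite open covers $\mathcal{U}$ of $X$, using partitions of unity subordinate to $\mathcal{U}$, which exist since $X$ is locally compact Hausdorff. The plan is to produce an $X$-linear asymptotic comparison between $A$ and the Mayer--Vietoris (Čech) realization
$$\colim_{\Delta^{\op}}\bigoplus_{U_{0}\cap\dots\cap U_{n}}C_{0}(U_{0}\cap\dots\cap U_{n})\otimes A(U_{0}\cap\dots\cap U_{n})\ .$$
Each term of this realization lies in $\mathcal{L}_{X}$, since it is of the form $C_{0}(U)\otimes B$. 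For this I will use the coefficient-system descent of \cref{thkoptrhergtrgege}, which gives the Mayer--Vietoris squares in $E(X)$. I will also use that $\ee$ preserves filtered colimits, so that passing to the colimit over the refinements $\mathcal{U}$ stays in $\mathcal{L}_{X}$.

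\textbf{Main obstacle.} The hard part is identifying the colimit over refinements with $\ee_{X}(A)$ itself. The comparison maps are not $X$-linear homomorphisms on the nose; they come from partitions of unity, so they are only asymptotically multiplicative. I would try to realize them through the diagonal. Consider $C_{0}(X)\otimes A$ as an $X\times X$-algebra via the external tensor product of \cref{hokpjpezjtzjrtzjrztj}. Then $A\simeq \Delta^{*}(C_{0}(X)\otimes A)$, where $\Delta:X\to X\times X$ is the diagonal, and $\Delta^{*}$ is computed as a filtered colimit over neighbourhoods of the diagonal. These neighbourhoods are cofinally unions of products $U_{i}\times U_{i}$ over finite covers, and each restriction is visibly in $\mathcal{L}_{X}$ after applying the descent of Step 2. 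Exactness and filtered-colimit preservation of $\ee$ then conclude the argument.
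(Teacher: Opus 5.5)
Your reduction to generation is right for the paper's definition, but the key step, that every $\ee_X(A)$ lies in $\mathcal{L}_X$, is never proved.

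\textbf{The reduction.} Since $j_{U,!}p_U^{*}$ is left adjoint to $\ev_U=p_{U,*}j_U^{*}$ (\cref{okhtperhertgrtgertgerg}), it suffices that the objects $\ee_X(C_0(U)\otimes B)$ generate $E(X)$ as a localizing subcategory. The functor you wrote down, $A\mapsto A(U)$, is not $\ev_U$. It is $p_{U,!}j_U^{*}=s_X(-)(U)$. Its joint conservativity is the different statement \cref{herthetrgertgetrg}, which the paper turns into section-determinedness only via Verdier duality in \cref{hkopeththregte}.

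\textbf{Why your three steps do not establish $\ee_X(A)\in\mathcal{L}_X$.}
Step 1 produces decompositions in $E(Y)$ for a finite quotient $q:X\to Y$, but you have no functor carrying them back to $E(X)$. In the paper this requires the left adjoint $f^{*}=c\circ f^{\sharp}$ of $f_!$ from \cref{rgrokpowergwergwerferwf}, together with the identification $\colim_I f_I^{*}f_{I,!}\simeq \mathrm{id}$.

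Step 2 rests on asymptotically multiplicative maps built from partitions of unity. These are not morphisms in $X\nCalg$. Since $E(X)$ is characterized only by its universal property, you have no mechanism to turn them into maps in $E(X)$.

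The diagonal paragraph is incorrect as stated, for two reasons:
\begin{itemize}
\item For the closed embedding $\Delta$, $\Delta_{*}\Delta^{*}$ is the cofibre of $j_!j^{*}\to\mathrm{id}$, with $j$ the inclusion of the complement (\cref{kohpertghertgtrege}). It is not a colimit of $j_!$-restrictions to shrinking neighbourhoods: the maps $j_{W,!}j_W^{*}\to j_{W',!}j_{W'}^{*}$ go from smaller to larger $W$.
\item For noncompact $X$, neighbourhoods of the diagonal are not cofinally of the form $\bigcup_i U_i\times U_i$ with finite covers. For $X=\mathbb{R}$ some $U_i$ is unbounded, so $U_i\times U_i\not\subseteq\{|x-y|<1\}$.
\end{itemize}

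\textbf{A repair.} Your diagonal instinct works if you use the \emph{complement} of the diagonal directly on algebras. This gives a shorter route than the paper's.
\begin{itemize}
\item By \cref{lkopherthertgetg} and \cref{kophrtgretgggrethrthe} there is an exact sequence $0\to I\to C_0(X)\otimes A\to A\to 0$ in $X\nCalg$. Here $C_0(X)\otimes A$ carries the structure from the first factor, i.e.\ it is $p_X^{*}$ of the underlying algebra of $A$, so its image lies in $\mathcal{L}_X$.
\item Since $A$ is regular, $I=J$ by \cref{kophtrtegertgertgte}. So $I$ is the closure of the union of finite sums of the ideals $C_0(U)\otimes A(V)$ with $U\cap V=\emptyset$.
\item By \cref{kohpertherggetrg}, each such ideal has induced structure $W\mapsto C_0(U\cap W)\otimes A(V)$, i.e.\ it is $j_{U,!}p_U^{*}A(V)$. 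The family is closed under intersections.
\item For ideals $S,K$ one has $K/(S\cap K)\cong (S+K)/S$. So exactness of $\ee_X$ sends the square of inclusions $S\cap K\to S,\,K\to S+K$ to a pushout.
\item Induction on the number of summands (using distributivity in the frame of ideals), followed by a filtered colimit, gives $\ee_X(I)\in\mathcal{L}_X$ and hence $\ee_X(A)\in\mathcal{L}_X$.
\end{itemize}

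This avoids both ingredients of the paper's proof: the finite approximation (finite subframes $\Open(Y_I)$, full faithfulness of $E(X)\to\lim_I E(Y_I)$, $E(Y_I)\simeq\CoShv(Y_I,\EE)$) and the Verdier-duality step of \cref{hkopeththregte}.
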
 Our proof of \cref{trkopherthrrrrgertg} relies on the same idea as  that of
 \cite[Thm. 3.10]{Dadarlat_2012}, but   technically it is completely independent.

We can now apply   \cref{thkoperthertegrtger}  in order to obtain the second main theorem of the present paper:
\begin{theorem}[\cref{trkohjperthretgertgertg}]\label{zzopjrtzjrztjrth}
There is a natural equivalence of six-functor formalisms
\begin{equation}\label{oijoagadsgasgg} \cB:\Shv(-,\EE)\stackrel{\simeq}{\to} E(-):\LCH^{\op}\to \Pr^{L}_{\st}\ . \end{equation}
\end{theorem}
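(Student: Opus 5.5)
The plan is to deduce the statement from the characterization theorem of \cite{buvo}, recalled as \cref{thkoperthertegrtger}. That theorem says: let $\cD$ be in $\Calg(\Pr^{L}_{\st})$ with dualizable underlying object, and let $D$ be a presentable six-functor formalism on $(\LCH,I,P)$ with $D(\pt)\simeq\cD$. If $D$ is a coefficient system, has finitary associated cohomological functor $\Gamma^{D}$, and is section-determined, then the canonical comparison $\Shv(-,\cD)\to D$ is an equivalence of six-functor formalisms. So the work is to check the hypotheses for $D=E$ and $\cD=\EE=E(\pt)$, and then read off $\cB$.

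The steps, in order, are as follows.

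First, by \cref{tkphrethergertgertg}, $E$ is a presentable six-functor formalism on $(\LCH,I,P)$. Its value on $\pt$ is $\EE(\nCalg)=\EE$, with the symmetric monoidal structure coming from the lax symmetric monoidal extension of the $E$-theory functor.

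Second, we need that $\EE$ is dualizable in $\Pr^{L}_{\st}$. I expect this to be available from the construction in \cref{koprtheggrtge}. There $\EE$ is obtained from a compactly generated (indeed $\omega_{1}$-presentable) category of $C^{*}$-algebras by localizations preserving compact generation, so $\EE$ is compactly generated and hence dualizable. If that is not literally stated, one argues that $\EE$ is a localization of $\Ind$ of a small stable category at a set of maps between compact objects.

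Third, the three conditions:
\begin{itemize}
\item $E$ is a coefficient system by \cref{thkoptrhergtrgege}.
\item $\Gamma^{E}$ is finitary by \cref{tkohperthretgertg}. Concretely, for a cofiltered system $(K_{i})$ of compact Hausdorff spaces with limit $K$, Gelfand duality gives $C(K)\cong\colim_{i}C(K_{i})$, and $\ee$ preserves filtered colimits.
\item $E$ is section-determined by \cref{trkopherthrgertg}.
\end{itemize}

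Finally, we apply \cref{thkoperthertegrtger} and obtain the natural equivalence $\cB$ of \eqref{oijoagadsgasgg}. On each $X$, $\cB$ is the essentially unique colimit-preserving functor $\Shv(X,\EE)\to E(X)$ that is compatible with $f_{!}$, $f^{*}$ and $\otimes$ and extends the identity on $\pt$. For example, it sends the sheaf $j_{!}1_{U}$ to $j_{!}\ee(C_{0}(U))$ for an open $U\subseteq X$.

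The main obstacle is that the hypotheses of \cref{thkoperthertegrtger} must match exactly the notions for which the cited results were proved. One point is that the monoidal unit and structure on $E(\pt)$ must agree with $\EE$ as an object of $\Calg(\Pr^{L}_{\st})$. Another is that the notion of ``finitary'' must be the one used in \cite{buvo}. The step I would scrutinize first is dualizability of $\EE$, since it is not among the listed results. I would handle it via compact generation as described above.
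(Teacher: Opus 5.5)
Your reduction is exactly the paper's proof of \cref{trkohjperthretgertgertg}. That proof verifies the hypotheses of \cref{thkoperthertegrtger} using \cref{tkphrethergertgertg}, \cref{thkoptrhergtrgege}, \cref{trkopherthrgertg} and \cref{tkohperthretgertg}, together with dualizability of $\EE$. The only place you depart from the paper is this last hypothesis, and there your argument fails. The paper simply cites \cite{budu} for it, and you should too.

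The compact-generation route does not work. Your parenthetical gets the implication backwards: $\aleph_{1}$-presentability, which is what is recorded for $\nCalg$, is weaker than compact generation. The real problem lies elsewhere.

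\begin{itemize}
\item The ambient category $\Fun(\cA^{\op},\Sp)$ in the proof of \cref{koprtheggrtge} is compactly generated for any small $\cA$. So everything hinges on whether the localization $L=L_{1}\circ L_{0}$ preserves compact generation, and it does not.
\item $L_{0}$ inverts the maps $\colim_{i}\Sigma^{\infty}_{+}y(A_{i})\to\Sigma^{\infty}_{+}y(\colim_{i}A_{i})$, whose sources are not compact.
\item The local objects must turn these filtered colimits into limits, so they need not be closed under filtered colimits. Hence $L$ has no reason to preserve compactness.
\end{itemize}

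In fact $\ee(A)$ is typically not compact. Take the universal UHF algebra $A=\colim_{n}M_{n!}(\C)$. Since $\ee$ preserves filtered colimits, $\ee(A)\simeq\colim_{n}\ee(M_{n!}(\C))$. If $\ee(A)$ were compact, the identity would factor through some stage, making $\ee(A)$ a retract of some $\ee(M_{n!}(\C))$. Now apply the colimit-preserving factorization of $K$-theory through $\EE$. This would make $\mathbb{Q}\cong K_{0}(A)$ a retract of $K_{0}(M_{n!}(\C))\cong\mathbb{Z}$, which is absurd.

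Your fallback, a localization of an $\mathrm{Ind}$-category at maps between compact objects, breaks down for the same reason. Dualizability of $\EE$ is a genuinely nontrivial theorem, proved in \cite{budu} by different means. With that citation in place of your argument, your proof coincides with the paper's.
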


This equivalence provides   complete homotopy-theoretic  characterization of $E(X)$ and its associated functorial operations. Furthermore it enables 
 the  transfer of established  results and constructions from sheaf theory to $E$-theory of $X$-$C^{*}$-algebras -- a direction which will be explored 
 in subsequent papers.

As an immediate consequence, we highlight the following result regarding dualizability. 
 Since $\EE$ is known to be a dualizable presentable stable $\infty$-category by \cite{budu}, and it has been established that this property extends to  $\Shv(X,\EE)$  for any locally compact Hausdorff space $X$
  \cite{Efimov:2024aa} (see also  \cite{NKP} for a detailed argument) we obtain:

\begin{kor} For every locally compact Hausdorff space the presentable stable $\infty$-category  
$E(X)$ is dualizable. 
\end{kor}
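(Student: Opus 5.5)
The corollary follows from the equivalence of \cref{zzopjrtzjrztjrth} together with known dualizability results on the sheaf side; no new $C^{*}$-algebraic input is needed. I would argue in three steps.

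\emph{Step 1 (reduce to sheaves).} Evaluating the natural equivalence of six-functor formalisms $\cB:\Shv(-,\EE)\stackrel{\simeq}{\to} E(-)$ at a fixed $X$ in $\LCH$ gives an equivalence
$$\cB_{X}:\Shv(X,\EE)\stackrel{\simeq}{\to} E(X)$$
in $\Pr^{L}_{\st}$. Dualizability in $\Pr^{L}_{\st}$, with respect to the Lurie tensor product, is invariant under equivalence. Only the underlying objects matter here, so no compatibility of $\cB_{X}$ with the monoidal structures $\otimes_{X}$ is needed. It therefore suffices to show that $\Shv(X,\EE)$ is dualizable.

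\emph{Step 2 (dualizability of $\EE$).} By \cite{budu}, $\EE=E(\pt)$ is a dualizable presentable stable $\infty$-category. I would take this as given. Its proof presumably exhibits $\EE$ as a retract, in $\Pr^{L}_{\st}$, of a compactly generated category, using that $\ee$ preserves filtered colimits.

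\emph{Step 3 (dualizability of $\Shv(X,\EE)$).} For $X$ locally compact Hausdorff, $\Shv(X,\mathrm{Sp})$ is dualizable by \cite{Efimov:2024aa}, with a detailed argument in \cite{NKP}. Moreover,
$$\Shv(X,\EE)\simeq \Shv(X,\mathrm{Sp})\otimes \EE,$$
because $\EE$ is dualizable, so $-\otimes \EE$ preserves the limits defining the sheaf condition. A tensor product of dualizable objects in the symmetric monoidal category $\Pr^{L}_{\st}$ is dualizable. Alternatively, one may cite \cite{Efimov:2024aa}/\cite{NKP} directly for coefficients in a dualizable category.

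The main point to check is the identification in Step 3 of $\Shv(X,\EE)$ with $\Shv(X,\mathrm{Sp})\otimes\EE$, or equivalently that the cited dualizability statement covers general dualizable coefficients. The rest is formal once \cref{zzopjrtzjrztjrth} is in hand.
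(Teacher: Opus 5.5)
Your proposal is correct and follows the paper's own argument. The paper transports dualizability along the equivalence $\cB_{X}:\Shv(X,\EE)\simeq E(X)$ from \cref{zzopjrtzjrztjrth}, using that $\EE$ is dualizable by \cite{budu} and citing \cite{Efimov:2024aa} (see also \cite{NKP}) for dualizability of $\Shv(X,\EE)$; your variant in Step 3 via $\Shv(X,\EE)\simeq\Shv(X,\mathrm{Sp})\otimes\EE$ also works, and that identification in fact holds for any presentable stable coefficients, so dualizability of $\EE$ is only needed for the final tensor-product step.
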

We note that the method  employed in  \cite{budu}  to prove the dualizability of $\EE$ does not admit a  direct generalization  to the case of $E(X)$.

If $F$ is in $\Shv(X,\EE)$, then we have the equivalence $\cB(F)(x)\simeq F_{x}$,
where for $A$ in $E(X)$ we let  $A(x):=  s_{X}(A)(X) /s_{X}(A)(X\setminus \{x\})$ denote the fibre  of $A$ at $x$, and
$F_{x}$ denotes the stalk of the sheaf $F$ at $x$. Recall that a locally compact Hausdorff space
$X$ is called hypercomplete if equivalences in $\Shv(X,\Ani)$ can be detected on stalks. For example, finite-dimensional spaces are hypercomplete. If $X$ is hypercomplete, then equivalences in any sheaf category taking  values in a dualizable target can be detected on stalks. In particular, this  applies to $\Shv(X,\EE)$.
The following consequence of \cref{zzopjrtzjrztjrth} is an analog of \cite[Thm. 1.1]{zbMATH05555716}:
\begin{kor}
If the locally compact Hausdorff space $X$ is hypercomplete, then
the collection of fibres $(E(X)\ni A\mapsto A(x)\in \EE)_{x\in X}$ is jointly conservative.
\end{kor}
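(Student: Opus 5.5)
Under the equivalence $\cB:\Shv(X,\EE)\stackrel{\simeq}{\to}E(X)$ of \cref{zzopjrtzjrztjrth}, the plan is to transport the stalk-detection property of sheaves on hypercomplete spaces. So it suffices to show that for $F$ in $\Shv(X,\EE)$ and $x\in X$ there is a natural equivalence $\cB(F)(x)\simeq F_{x}$, which is asserted in the text preceding the statement.

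For $\cB(F)(x)\simeq F_x$ we would argue as follows. The fibre is $A(x)=\Cofib\bigl(s_{X}(A)(X\setminus\{x\})\to s_X(A)(X)\bigr)$, where $s_X(A)(U)$ is the evaluation of $A$ on the open $U$. Since $\cB$ is an equivalence of six-functor formalisms, it commutes with $j_!$, $i^*$ and the pushforward $p_!$ to the point, and on the point it is the identity of $\EE$. The evaluation $s_X(A)(U)$ should correspond to $p_!j^*A$ for $j:U\to X$, because the $E$-theory pushforward along $U\to\pt$ is just forgetting the $U$-structure of the ideal $A(U)$. Applying $p_!$ to the localization cofibre sequence $j_!j^*\to\id\to i_*i^*$ for $i:\{x\}\to X$ and $j:X\setminus\{x\}\to X$ then identifies $A(x)$ with $p_!i_*i^*A=i^*A$. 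On the sheaf side, $i^*F=F_x$, so the two notions match.

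Now suppose $\phi:A\to B$ in $E(X)$ induces equivalences $A(x)\to B(x)$ for all $x$. Write $A\simeq\cB(F)$, $B\simeq\cB(G)$ and $\phi=\cB(\psi)$. Then $\psi_x:F_x\to G_x$ is an equivalence for every $x$. Since $X$ is hypercomplete and $\EE$ is dualizable \cite{budu}, equivalences in $\Shv(X,\EE)$ are detected on stalks, so $\psi$ is an equivalence, and hence so is $\phi$.

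The main obstacle is making the stalk-detection claim for $\EE$-valued sheaves precise. I would try to reduce it to the anima case via the dualizability of $\EE$: there is an identification $\Shv(X,\EE)\simeq\Shv(X,\mathrm{Sp})\otimes\EE$ compatible with stalks, and $\EE$ is a retract in $\Pr^L_{\st}$ of a compactly generated category. For compactly generated targets, detection follows by mapping out of compact generators into hypercomplete spectrum-valued sheaves, and detection then passes to retracts. The compatibility of $\cB$ with $p_!$ and $j^*$ on evaluations is the secondary point to check.
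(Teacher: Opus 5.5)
Your proposal is correct and is the paper's argument: transport stalk-detection in $\Shv(X,\EE)$ (for hypercomplete $X$ and dualizable $\EE$) through the equivalence $\cB$ of \cref{zzopjrtzjrztjrth}, using $\cB(F)(x)\simeq F_{x}$. Your identification $A(x)\simeq i^{*}A$ via the localization sequence, and your retract argument for stalk-detection, supply justifications for two facts the paper states without proof.
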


 Note that \cref{hgitgjetrgetogerpigt} and \cref{zzopjrtzjrztjrth} describe $E(X)$ modulo the knowledge of 
 the symmetric monoidal category  $\EE$ in $\CAlg(\Pr_{\st}^{L})$ itself. The latter contains a full subcategory, called the bootstrap class,  which is equivalent to the category $\Mod_{KU}(\Sp)$ known from homotopy theory. The 
 bootstrap class $\rmB\subseteq \EE$   is the image of the symmetric monoidal left-adjoint
 in the adjunction  \begin{equation}\label{goiewrugiowergerwferfw}  B:\Mod_{KU}(\Sp)\leftrightarrows \EE:K:=\map_{\EE}(1,-)\ ,\end{equation} 
 whose right adjoint is the lax symmetric monoidal $K$-theory functor.
Crucially,   the calculation $$\map_{\EE}(1,1)\simeq KU$$
  implies that $\EE$ is enriched in $\Mod_{KU}(\Sp)$. This, in turn, implies that
 the six-functor formalism $E$   \eqref{gwergwerfwerfw}  actually takes values in  $KU$-module enriched stable $\infty$-categories. If $A$ is in the bootstrap class of $\EE$, then
 the UCT and the K\"unneth formula
 $$\map_{\EE}(A,B)\simeq \map_{KU}(K(A),K(B)) \ ,\quad K(A\otimes B)\simeq K(A)\otimes_{KU} K(B)$$ 
 for all $B$ in $\EE$
  are formal consequences of the definitions.
 More generally we say that $A$ in $\EE$ belongs to the K\"unneth class if the canonical map
  $$K(A)\otimes_{KU} K(B)\to K(A\otimes B)$$ is an equivalence for every $B$ in $\EE$. 
  The problem of defining a bootstrap class in $E(X)$ and to obtain a UCT for finite spaces $X$   has already been considered    in
\cite{zbMATH05657129}. 
In view of \cref{kohpreherthrtge}, for locales $X$ that are unions of finite open sublocales we could define the bootstrap class $B(X)\subseteq E(X)$
as the preimage of the full subcategory
$$\CoShv(X,\Mod_{KU}(\Sp))\subseteq \CoShv(X,\EE)$$
under the equivalence $s_{X}$, where the inclusion above is realized by the pointwise application of the colimit-preserving functor $B$ from \eqref{goiewrugiowergerwferfw}. 
\begin{ddd}
We define the $K$-theory cosheaf  functor
$$\CK:(K\circ -)\circ s_{X}:E(X)\to \CoShv(X,\Mod_{KU}(\Sp))\ .$$
\end{ddd}
The following is the general version of the universal coefficient theorem for $E(X)$.
\begin{kor} \label{hrthrhtrgrtger} Assume that $X$ is a locale  that is the union of finite open sublocales.
If $A$ is in the bootstrap class $B(X)$, then we have
$$\map_{E(X)}(A,B)\simeq \map_{\CoShv(X,\Mod_{KU}(\Sp))}(\CK(A),\CK(B))$$
for all $B$ in $E(X)$.
\end{kor}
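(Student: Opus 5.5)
Via the equivalence $s_{X}$ of \cref{hgitgjetrgetogerpigt}, the statement becomes one about cosheaves: for $A$ in the bootstrap class $B(X)$, the comparison should follow from the adjunction \eqref{goiewrugiowergerwferfw} applied pointwise. First extend the adjunction $B\dashv K$ to cosheaf categories. The functor $B$ is a left adjoint, so postcomposition with $B$ preserves colimits and sends $\Mod_{KU}(\Sp)$-valued cosheaves to $\EE$-valued cosheaves, since the cosheaf condition is a colimit condition. Postcomposition with $K$ need not preserve cosheaves, however, because $K=\map_{\EE}(1,-)$ is only a right adjoint. I will therefore check that $K$ preserves colimits: the unit $1=\ee(\C)$ is compact in $\EE$, because $\ee$ preserves filtered colimits and $\map_{\EE}(1,-)$ commutes with filtered colimits. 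Granting this, $K$ preserves all colimits, being exact between stable categories. Hence $K\circ-$ sends $\CoShv(X,\EE)$ to $\CoShv(X,\Mod_{KU}(\Sp))$, which makes $\CK$ well defined. The pointwise adjunction then induces an adjunction
$$B\circ-:\CoShv(X,\Mod_{KU}(\Sp))\leftrightarrows \CoShv(X,\EE):K\circ-\ ,$$
because the unit and counit transformations are natural and compatible with postcomposition.

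Second, by definition of $B(X)$ we may write $s_{X}(A)\simeq B\circ G$ for some $G$ in $\CoShv(X,\Mod_{KU}(\Sp))$. Since $s_{X}$ is an equivalence,
$$\map_{E(X)}(A,B')\simeq \map_{\CoShv(X,\EE)}(B\circ G,s_{X}(B'))\simeq \map_{\CoShv(X,\Mod_{KU}(\Sp))}(G,\CK(B'))\ .$$
It then remains to identify $G$ with $\CK(A)=K\circ B\circ G$. This requires showing that the unit $\id\to K\circ B$ is an equivalence on $\Mod_{KU}(\Sp)$. Both sides are colimit-preserving, by the compactness of $1$ established above, and both are $KU$-linear. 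The comparison on the generator $KU$ is the map $KU\to\map_{\EE}(1,1)$, which is an equivalence by the calculation $\map_{\EE}(1,1)\simeq KU$. So $B$ is fully faithful and $G\simeq \CK(A)$ naturally, which finishes the argument once the identifications are checked to be compatible with the maps involved.

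The main obstacle is the first step: confirming that $K$ preserves colimits (equivalently, that $1$ is compact in $\EE$) so that $K\circ-$ preserves cosheaves. The compactness should follow from the construction of $\EE$ via s-finitary, filtered-colimit-preserving $\ee$ together with $\C$ being finitely presented. If this failed, I would instead use that $X$ is a filtered union of finite open sublocales. On a finite open sublocale, cosheaves are determined by values on the finite poset of basic opens, and the cosheaf condition involves only finite colimits, which the exact functor $K$ preserves. I would then pass to the union using the compatibility of $s_{X}$ with restrictions.
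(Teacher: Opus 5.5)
Your route is the one the paper has in mind: it states the corollary as a formal consequence of $s_{X}$ being an equivalence, the pointwise adjunction $B\dashv K$, and $K\circ B\simeq \id$. However, the one non-formal step, which you correctly single out, is not justified as written.

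The reason you give for the compactness of $1=\ee(\C)$ is circular, since ``$\map_{\EE}(1,-)$ commutes with filtered colimits'' is the definition of compactness. Continuity of $\ee$ together with continuity of $K$-theory of $C^{*}$-algebras only yields $\map_{\EE}(1,\ee(\colim_{i}A_{i}))\simeq\colim_{i}K(A_{i})$ for filtered diagrams of $C^{*}$-algebras and homomorphisms. Both places where you use compactness involve filtered colimits in $\EE$ that are not of this form:
\begin{itemize}
\item the cosheaf condition for $K\circ s_{X}(B')$ at a filtered join, for a general $B'$ in $E(X)$, which is only built from objects $\ee_{X}(B'')$ by colimits and shifts;
\item colimit-preservation of $K\circ B$, i.e.\ $K(\colim_{i}B(M_{i}))\simeq\colim_{i}M_{i}$ along arbitrary $KU$-module maps.
\end{itemize}
Dualizability of $\EE$ alone does not supply this, because the unit of a dualizable presentable stable category need not be compact (e.g.\ the constant sheaf in $\Shv(\mathbb{R},\Sp)$).

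Your fallback does not close the gap. If $X$ is a filtered union of finite $U_{i}$, a functor on $\cP(X)$ whose restrictions to all $U_{i}$ are cosheaves need not be a cosheaf on $X$. For the frame $\mathbb{N}\cup\{\infty\}$, the cosheaf condition at $\infty$ is $F(\infty)\simeq\colim_{n}F(n)$, and this is invisible on every finite $U_{n}$. Gluing the finite pieces would therefore produce the cosheaf with value $\colim_{n}K(s_{X}(A)(n))$ at $\infty$, not $(K\circ-)\circ s_{X}(A)$. You would then be proving the statement for a different $\CK$.

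Restricting to finite sublocales also does nothing for the identification $G\simeq K\circ B\circ G$. That identification needs $K(B(M))\simeq M$ for arbitrary, non-compact $KU$-modules $M$, i.e.\ full faithfulness of $B$, which rests on the same continuity input.

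There are two ways to repair this.
\begin{itemize}
\item Take as given what the paper itself presupposes. First, that $B:\Mod_{KU}(\Sp)\to\EE$ is fully faithful with image the bootstrap class, as asserted in the introduction. Second, that $\CK$ lands in $\CoShv(X,\Mod_{KU}(\Sp))$, which is built into its definition and hence into the statement. Granting these, your two displayed equivalences are exactly the proof.
\item Otherwise, give a genuine proof that $\ee(\C)$ is compact in $\EE$. This requires reducing arbitrary filtered colimits in $\EE$ to ones coming from $C^{*}$-algebras, and that reduction is not formal.
\end{itemize}
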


\begin{rem} For a locally compact Hausdorff space $X$
one can show that the composition $$\cV\circ \cK_{X}:E(X)\to \Shv(X,\EE)\to \CoShv(X,\EE)$$
of the Verdier duality functor and the right-adjoint (or inverse) of $\cB_{X}:\Shv(X,\EE)\to E(X)$ from 
\cref{zzopjrtzjrztjrth} is equivalent to the functor $s_{X}$ from \cref{gweoirgjoergeferggr}. 
It   follows from Verdier duality that
$$s_{X}:E(X)\to   \CoShv(X,\EE)$$ is an equivalence also for locally compact Hausdorff spaces.
Consequently, the above considerations and \cref{hrthrhtrgrtger} remains  valid for locally compact Hausdorff spaces $X$. We will discuss this and applications to the Künneth class in future work. 
\end{rem}

{\em Acknowledgements: The author would like to thank Benjamin Dünzinger,   Marco Volpe, and Christoph Winges   for many fruitful discussions regarding various aspects of this work.
}

  \section{$E$-theory contexts and $E$-theory}

  \subsection{$E$-theory contexts}

In this section, we introduce the concept of an $E$-theory context.
Consider the symmetric monoidal category $$\cN:=\nCalg_{\sepa, \nucl} $$ of separable nuclear  $C^{*}$-algebras    as an object in $\CAlg(\Cat)$ and form
the   category $\Mod_{\cN}(\Cat)_{*}$ of pointed $\cN$-modules.
Let $\cA$ be in $\Mod_{\cN}(\Cat)_{*}$  with  zero object $0$.
\begin{ddd}
A commutative square $$\xymatrix{A\ar[r]\ar[d] &B \ar[d] \\0 \ar[r] & C} $$ in $\cA$ is called an exact sequence if it is  both cartesian and cocartesian.
\end{ddd}

 \begin{ddd}\label{gwergerfwerfw}
 A morphism $f:\cA\to \cB$ in $\Mod_{\cN}(\Cat)_{*}$ is called $E$-admissible if it preserves finite products, filtered colimits   and exact sequences.
 \end{ddd}
 

  
We denote the $\cN$-module structure of $\cA$ by the tensor product $$-\otimes -:\cA\otimes \cN\to \cA\ .$$ 
\begin{ddd}\label{lphertgerthe}\mbox{}
\begin{enumerate} \item 
An object $\cA$ in $\Mod_{\cN}(\Cat)_{*}$ is called an $E$-theory context if, for every $N$ in $\cN$ and  every $A$ in $\cA$,
the functors $$A\otimes -:\cN\to \cA\ , \qquad  -\otimes N:\cA\to \cA$$ are $E$-admissible.
\item We let $\EC$ denote the subcategory of $\Mod_{\cN}(\Cat)_{*}$ consisting of $E$-theory contexts and $E$-admissible morphisms.
\end{enumerate}
\end{ddd}

Note that $\EC$ is a 2-category.
 \begin{ex}
 The category $\cN$ itself  is an $E$-theory context.    \hB
 \end{ex}
 
 In \cref{gergwergwerioug9erwgwregw} we will describe a variety of further examples of $E$-theory contexts.

 \begin{rem}
 Filtered colimits, products and exact sequences in $\cA$ are defined purely in terms of the category $\cA$.
 We do not require  an $E$-theory context to admit  all filtered colimits, finite products, kernels or quotients.
A morphism between  $E$-theory contexts is only required to preserve those 
filtered colimits, finite products or exact sequences which exist in the domain.

Most of our examples of $E$-theory contexts in  \cref{gergwergwerioug9erwgwregw} arise from diagrams of $C^{*}$-algebras. These examples have natural explicit candidates for filtered colimits, products or exact sequences. When working with  these $E$-theory contexts, it is crucial to verify that  these "natural" candidates  coincide 
with the 
  categorical constructions.
\hB
 \end{rem}

\subsection{The $E$-theory functor}
In this section we introduce the notion of a homological functor on an $E$-theory context. We then   construct the $E$-theory functor as the universal homological functor.

An $E$-theory context  $\cA$ carries a natural  topological enrichment such that 
for every    second countable  compact Hausdorff space
$W$,   there is a natural isomorphism
$$\Hom_{\Top}(W,\Hom_{\cA}(A,B))\cong \Hom_{\cA}(A,B\otimes C(W))\ .$$
The topological enrichment   induces a   notion of homotopy equivalence in $\cA$.

 Furthermore, we define a left upper corner inclusion for an object  $A$ in $\cA$ to be a morphism of the form     $$A\cong A\otimes \C\stackrel{\id\otimes i}{\to} A\otimes K\ , $$
where $i:\C\to K$ is the morphism in $ \cN $  from $\C$ into the algebra $K$ of compact operators on a separable Hilbert space  that sends $1$ in $\C$ to a minimal projection in $K$.

\begin{rem}
These definitions  rely on   the $\cN$-module structure of $\cA$  and the fact that both  $C(W)$ (for a second countable compact Hausdorff space) and $K$   belong to $\cN$.  
 \hB
\end{rem}

Let $\cA$ be an $E$-theory context and
  $F:\cA\to \cC$ be a functor  into a cocomplete stable $\infty$-category.

  \begin{ddd}\label{kogwegergfw}
 The functor $F:\cA\to \cC$ is called homological satisfies  the following properties:
 \begin{enumerate}
 \item $F$ preserves zero objects.
 \item  $F$ sends homotopy equivalences to equivalences.
 \item  $F$ sends left upper corner inclusions to equivalences.
 \item  $F$ sends exact sequences to fibre sequences.
 \item  $F$ preserves  finite products.
 \item  $F$  preserves  filtered colimits.
  \end{enumerate}
  \end{ddd}

 We denote by $\Fun^{\homol}(\cA,\cC)$   the full subcategory of $\Fun(\cA,\cC)$ consisting of homological functors.
 
 \begin{ex}
 The category $\nCalg$ of $C^{*}$-algebras carries the structure of an $E$-theory context, see also \cref{kophehrtrgertg} below. The $K$-theory functor
 $$K:\nCalg\to \Sp$$
 serves as the primary    motivating example of a homological functor. \hB
 \end{ex}

\begin{rem}
In order address   size issues,
 we fix two consecutive universes, referred to as the universes of small and large sets. We assume that all $C^{*}$-algebras
 and the index sets for the  filtered colimits in \cref{kogwegergfw} in the definitions of $E$-theory contexts and their morphisms belong to the small universe. In contrast, 
the functor categories,  the categories of spectra $\Sp$ and the test categories $\cC$  below  belong to the large universe.
\hB\end{rem}

 \begin{theorem}\label{koprtheggrtge} For every  $E$-theory context $\cA$,
 there exists a unique homological functor $$\ee_{\cA}:\cA\to \EE(\cA)$$  inducing an equivalence
\begin{equation}\label{hrtgrtgetghertherhtr}\ee_{\cA}^{*}:\Fun^{\colim}(\EE(\cA),\cC)\stackrel{\simeq}{\to}\Fun^{\homol}(\cA,\cC)
 \end{equation} for every cocomplete stable $\infty$-category $\cC$.
\end{theorem}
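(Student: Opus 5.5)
We construct $\EE(\cA)$ as a localization of presheaves, in the spirit of \cite{zbMATH01596160}, \cite{zbMATH05771937}, \cite{MR3070515}. Uniqueness is automatic: a functor characterized by a universal property is determined up to equivalence.

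The starting point is the Yoneda embedding $y:\cA\to \Fun(\cA^{\op},\Sp)$ followed by stabilization. The category $\Fun(\cA^{\op},\Sp)$ is the free cocomplete stable $\infty$-category on $\cA$. Since $\cA$ is only small relative to our universe convention, this is a large presentable stable $\infty$-category, and restricting along $\Sigma^{\infty}_{+}y$ identifies $\Fun^{\colim}(\Fun(\cA^{\op},\Sp),\cC)$ with $\Fun(\cA,\cC)$ for every cocomplete stable $\cC$.

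Each homological condition in \cref{kogwegergfw} is then rewritten as locality with respect to a small set $S$ of morphisms in $\Fun(\cA^{\op},\Sp)$. We use the following generators:
\begin{enumerate}
\item $y(0)\to 0$, for preservation of zero objects;
\item $y(A\otimes C([0,1]))\to y(A)$ induced by evaluation at $0$, for homotopy invariance;
\item the maps $y(\id_{A}\otimes i)$, for the left upper corner inclusions;
\item for each exact sequence $A\to B\to C$ in $\cA$, the comparison map $\Cofib(y(A)\to y(B))\to y(C)$;
\item $y(A)\oplus y(B)\to y(A\times B)$ for every finite product existing in $\cA$;
\item $\colim_{i} y(A_{i})\to y(\colim_{i}A_{i})$ for every filtered colimit existing in $\cA$, with small index category.
\end{enumerate}
Condition 2 needs care. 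Since the enrichment gives $\Hom(W,\Hom(A,B))\cong \Hom(A,B\otimes C(W))$, a homotopy $A\to B\otimes C([0,1])$ becomes an equivalence once we invert $B\otimes C([0,1])\to B$. Homotopic maps then become equal, so homotopy equivalences go to equivalences.

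We set $\EE(\cA):=S^{-1}\Fun(\cA^{\op},\Sp)$ and take $\ee_{\cA}$ to be the composite of $y$ with the localization $L$. For any cocomplete stable $\cC$, a colimit-preserving functor out of the presheaf category factors through $L$ exactly when it inverts $S$. In terms of the restricted functor $F:\cA\to\cC$, inverting $S$ says precisely that $F$ is homological; for the exact sequences this uses that, in a stable target, $F(A)\to F(B)\to F(C)$ is a fibre sequence iff the induced map $\Cofib(F(A)\to F(B))\to F(C)$ is an equivalence. Combining this with the universal property of presheaves gives \eqref{hrtgrtgetghertherhtr}.

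It remains to check that $\ee_{\cA}$ is itself homological. Each condition follows directly from the corresponding part of $S$. For example, preservation of filtered colimits holds because $L$ preserves colimits and inverts the maps in item 6.

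The main obstacle is set-theoretic: we need $S$ to be a small set, relative to the universe in which $\Fun(\cA^{\op},\Sp)$ is presentable, so that the Bousfield localization exists and is presentable. This holds because the filtered index categories, the objects of $\cA$ and the exact sequences all lie in the small universe, so everything is small relative to the large universe. I would first verify accessibility there. If presentability in the strict sense fails, I would instead form the localization of the cocomplete category directly by the Dwyer–Kan localization, which still yields the universal property for cocomplete $\cC$.
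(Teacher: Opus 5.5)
Your overall strategy is the paper's: localize $\Fun(\cA^{\op},\Sp)$ at a small set of generators and read off the universal property. However, item 4 of your set $S$ is not a morphism of $\Fun(\cA^{\op},\Sp)$, and this is exactly the problem the paper's two-step localization is built to avoid.

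A map $\Cofib(\Sigma^{\infty}_{+}y(A)\to\Sigma^{\infty}_{+}y(B))\to\Sigma^{\infty}_{+}y(C)$ requires a null-homotopy of the composite $\Sigma^{\infty}_{+}y(A)\to\Sigma^{\infty}_{+}y(C)$. This composite is $\Sigma^{\infty}_{+}y$ of the zero morphism $A\to C$. It therefore factors through $\Sigma^{\infty}_{+}y(0)$, which is the constant presheaf with value $\mathbb{S}$, not $0$. In fact the composite is not null at all: under Yoneda it corresponds to the basis element $[0]$ of $\pi_{0}\Sigma^{\infty}_{+}\Hom_{\cA}(A,C)\cong\mathbb{Z}[\Hom_{\cA}(A,C)]$. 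The vanishing of $y(0)$ that item 1 imposes only holds after localizing. So item 4 cannot be listed alongside item 1 in a one-shot localization.

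The paper first localizes at the analogues of your items 1, 2, 3, 5, 6, using all homotopy equivalences in place of your evaluation maps. This gives $L_{0}$ with $L_{0}\Sigma^{\infty}_{+}y(0)\simeq 0$. Only then does it invert $L_{0}\Sigma^{\infty}_{+}y(A)\to\Fib(L_{0}\Sigma^{\infty}_{+}y(B)\to L_{0}\Sigma^{\infty}_{+}y(C))$, which is now canonically defined.

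There are also two one-step repairs that work equally well:
\begin{enumerate}
\item Replace item 4 by the map $\Sigma^{\infty}_{+}y(B)\sqcup_{\Sigma^{\infty}_{+}y(A)}\Sigma^{\infty}_{+}y(0)\to\Sigma^{\infty}_{+}y(C)$ induced by the commutative square in $\cA$. Together with item 1, inverting it is equivalent to the fibre-sequence condition.
\item Use reduced suspension spectra of the presheaves $\Hom_{\cA}(-,A)$, pointed by the zero maps, so that $0$ goes to $0$ from the start.
\end{enumerate}
With either fix the rest of your argument goes through.

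Your $\mathrm{ev}_{0}$-generators are a legitimate economy over the paper's ``all homotopy equivalences''. For the direction ``homological implies $S$-local'', note that $\mathrm{ev}_{0}:A\otimes C([0,1])\to A$ is itself a homotopy equivalence. Its inverse is the constant inclusion, and the homotopy comes from the contraction $(s,t)\mapsto st$ of $[0,1]$. Finally, the Dwyer--Kan fallback is unnecessary, since $S$ is small relative to the large universe.
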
\begin{proof}
This proof follows the general procedure for constructing  universal homological functors \cite{zbMATH01596160},   \cite{zbMATH05771937}, \cite{MR3070515}.
Consider  the functor
$$\Sigma^{\infty}_{+}y:\cA\to \Fun(\cA^{\op},\Sp)$$
defined as  the composition of the  Yoneda embedding   with the suspension spectrum functor.
We identify a  small set of morphisms  in $\Fun(\cA^{\op},\Sp) $ consisting of:
\begin{enumerate}
 
\item $0\to \Sigma^{\infty}_{+}y(0)$
\item $\Sigma^{\infty}_{+}y(f)$ for all homotopy equivalences $f$ in $\cA$
\item $\Sigma^{\infty}_{+}y(f)$ for all left-upper corner embeddings $f$ in $\cA$
\item the comparison maps $\bigoplus_{i\in I}  \Sigma^{\infty}_{+}y( A_{i}) \to \Sigma^{\infty}_{+}y(\prod_{i\in I} A_{i})$ for all  finite  families $(A_{i})_{i\in I}$ in $\cA$ for which  the  product exists.
\item  the comparison maps  $\colim_{i\in I}  \Sigma^{\infty}_{+}y( A_{i})\to  \Sigma^{\infty}_{+}y(\colim_{i\in I} A_{i})$ for all   filtered diagrams $I\ni i\mapsto  A_{i}\in \cA$ in $\cA$  for which the colimit exists.
 \end{enumerate}
 
Let $$L_{0}: \Fun(\cA^{\op},\Sp) \leftrightarrows   \mathrm{LOC}:\incl$$
be the  left Bousfield localization  with respect to these morphisms,
where $ \mathrm{LOC}$ denotes the full subcategory of $\Fun(\cA^{\op},\Sp)$ of local objects.

In a second step
we form the left Bousfield localization 
$$L_{1}: \mathrm{LOC}\leftrightarrows \EE(\cA):\incl$$
which inverts the comparison morphisms 
\begin{equation}\label{greferfwerfwerfwerfrw}L_{0}\Sigma^{\infty}_{+}y(A)\to \Fib(L_{0}\Sigma^{\infty}_{+}y(B)\to L_{0}\Sigma^{\infty}_{+}y(C))
\end{equation}
for all exact sequences $0\to A\to B\to C\to 0$ in $\cA$. The first step in this two-step process 
ensures that $L_{0}(0)\simeq 0$, which is required for the comparison maps in \eqref{greferfwerfwerfwerfrw} to be well-defined.
 We then define \begin{equation}\label{refwerfwerfwrfefer}L:=L_{1}\circ L_{0}:\Fun(\cA,\Sp)\to \EE(\cA)
\end{equation}   and set 
  $$\ee_{\cA}:=  L\circ \Sigma^{\infty}_{+}y:\cA\to \EE(\cA)\ .$$
By construction, this functor has the desired universal property  \eqref{hrtgrtgetghertherhtr}.

Note that the $E$-theory functor $\ee_{\cA}:\cA\to \EE(\cA)$ is uniquely determined (up to equivalence) by the 
universal property \eqref{hrtgrtgetghertherhtr}.  
\end{proof}

\begin{rem}
It follows from the construction in the proof of \cref{koprtheggrtge}  that  $\EE(\cA)$  is an object of $\Pr^{L}_{\st}$.
Furthermore, using the existence of the Toeplitz extension $$0\to K\to \cT\to C(S^{1})\to 0$$
in $\cN$, one can show, following the arguments in  \cite{Bunke:2023aa},  that $\EE(\cA)$ satisfies  Bott periodicity,
i.e, that there is an  equivalence $\Omega^{2}\simeq \id$. \hB
\end{rem}

    \begin{lem}\label{kophertgretger9}
 A  morphism
   $f:\cA\to \cB$  of $E$-theory contexts   admits a unique colimit-preserving factoriation
$$\xymatrix{\cA\ar[r]^{f}\ar[d]^{\ee_{\cA}} &\cB \ar[d]^{\ee_{\cB}} \\ \EE(\cA)\ar@{..>}[r]^{\EE(f)} &\EE(\cB) }\ . $$ 
 \end{lem}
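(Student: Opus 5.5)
The plan is to apply the universal property of \cref{koprtheggrtge} for the context $\cA$ to the composite $\ee_{\cB}\circ f:\cA\to \EE(\cB)$. The target $\EE(\cB)$ is a cocomplete stable $\infty$-category, so it suffices to show that $\ee_{\cB}\circ f$ is homological in the sense of \cref{kogwegergfw}. Granting this, the equivalence \eqref{hrtgrtgetghertherhtr} provides a colimit-preserving functor $\EE(f):\EE(\cA)\to \EE(\cB)$ with $\EE(f)\circ \ee_{\cA}\simeq \ee_{\cB}\circ f$. This functor is unique up to equivalence, since \eqref{hrtgrtgetghertherhtr} is an equivalence of categories and in particular fully faithful on objects up to equivalence. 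This settles both existence and uniqueness.

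I would check the six conditions one at a time, using that $f$ is a morphism in $\Mod_{\cN}(\Cat)_{*}$ which is $E$-admissible.

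First, pointedness of $f$ gives $f(0)\simeq 0$, and $\ee_{\cB}$ preserves zero objects.

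Next, $f$ is $\cN$-linear, so $f(B\otimes C(W))\cong f(B)\otimes C(W)$ naturally. Hence $f$ is compatible with the topological enrichment: it sends the evaluation maps $B\otimes C([0,1])\to B$ to evaluation maps, and therefore sends homotopies to homotopies and homotopy equivalences to homotopy equivalences. These are then inverted by $\ee_{\cB}$.

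In the same way, linearity sends $\id_{A}\otimes i:A\otimes\C\to A\otimes K$ to $\id_{f(A)}\otimes i$, which is again a left upper corner inclusion and is inverted by $\ee_{\cB}$.

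Finally, $E$-admissibility of $f$ means that exact sequences, finite products and filtered colimits existing in $\cA$ are sent to exact sequences, finite products and filtered colimits in $\cB$. The functor $\ee_{\cB}$ sends these to fibre sequences, finite products and colimits respectively, so the remaining three conditions follow.

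The only point needing care is the homotopy-invariance step. There one has to verify that the topological enrichment, defined through the isomorphism $\Hom_{\Top}(W,\Hom(A,B))\cong\Hom(A,B\otimes C(W))$, is preserved by $f$. This reduces to naturality of the $\cN$-module structure maps $f(-\otimes N)\cong f(-)\otimes N$ in $N$, applied to $N=C([0,1])$ and its evaluation maps. That naturality is part of the data of a morphism of $\cN$-modules.
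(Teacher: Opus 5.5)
Your proposal is correct and follows the paper's proof. You apply the universal property of $\ee_{\cA}$ to the composite $\ee_{\cB}\circ f$ and check that it is homological: $\cN$-linearity of $f$ handles homotopy equivalences and left upper corner inclusions, and $E$-admissibility handles exact sequences, finite products and filtered colimits. Your remark that $f$ preserves homotopies by naturality of $f(-\otimes N)\cong f(-)\otimes N$ at $N=C([0,1])$ just spells out what the paper asserts in one line.
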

\begin{proof}
By the  universal property of $\ee_{\cA}$, it suffices  to show that
$\ee_{\cB}\circ f$ is a homological functor. Since $f$ is a morphism of $\cN$-modules, it necessarily preserves 
  homotopy equivalences  and left upper corner inclusions. 
Furthermore, since $f$ is $E$-admissible, it preserves   exact sequences,  finite products and  filtered colimits.  Consequently,    $\ee_{\cB}\circ f$  satisfies all conditions of \cref{kogwegergfw} and is thus a homological functor.     \end{proof}

\begin{theorem}\label{kohpertgertgertge}
There exists a  2-functor
\begin{equation}\label{bdsvsdfvsdvsdfs}\EE:\EC\to \Pr^{L}_{\st}\end{equation}   equipped with a natural transformation $\ee:\id\to \EE$ of $\Cat_{\infty}$-valued functors  making the square
$$\xymatrix{\EC\ar[r]^{\EE}\ar[d]  &\Pr^{L}_{\st} \ar[d]  \\\Cat \ar[r] & \Cat_{\infty}} $$ commute.
\end{theorem}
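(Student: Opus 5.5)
We promote the objectwise construction of \cref{koprtheggrtge} and the factorizations of \cref{kophertgretger9} to a coherent functor by realizing $\EE$ as a straightening of a cocartesian fibration, rather than by specifying coherence data by hand. The key point is that all choices are governed by universal properties that are characterized by mapping spaces, so the spaces of choices are contractible.

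\emph{Step 1: a fibration of homological functors.} Consider the functor
$$\EC^{\op}\times \Pr^{L}_{\st}\to \widehat{\Cat}_{\infty}\ ,\quad (\cA,\cC)\mapsto \Fun^{\homol}(\cA,\cC)\ .$$
It is functorial in $\cA$ by precomposition. This uses that precomposition with an $E$-admissible morphism preserves homological functors, which is the content of the proof of \cref{kophertgretger9}. It is functorial in $\cC$ by postcomposition with left adjoints. This uses that a colimit-preserving functor between stable categories preserves zero objects, fibre sequences, finite products and filtered colimits, hence preserves homological functors. The 2-cells of $\EC$ (natural transformations of $\cN$-module functors) act by whiskering. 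This makes the functor well defined as a functor of $(\infty,2)$-categories, or at least on the underlying $(\infty,1)$-categories. Here we regard $\EC$ through its nerve and $\Pr^{L}_{\st}$ as an $\infty$-category.

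\emph{Step 2: corepresentability.} By \cref{koprtheggrtge} and the equivalence \eqref{hrtgrtgetghertherhtr}, for each fixed $\cA$ the functor
$$\cC\mapsto \Fun^{\homol}(\cA,\cC)^{\simeq}$$
is corepresented in $\Pr^{L}_{\st}$ by $\EE(\cA)$, with universal element $\ee_{\cA}$. This uses $\Fun^{\colim}=\Fun^{L}$ for presentable targets, so the universal property restricted to presentable $\cC$ is exactly corepresentability.

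A functor $\EC^{\op}\times\Pr^{L}_{\st}\to\widehat{\Spc}$ that is objectwise corepresentable in the second variable corresponds, by the Yoneda lemma in the form of \cite{HTT} 5.2.1 or via unstraightening to a left fibration over $\EC\times(\Pr^{L}_{\st})^{\op}$, to an essentially unique functor $\EE:\EC\to\Pr^{L}_{\st}$. Concretely:

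1. Unstraighten to a left fibration $p$.
2. Take the full subcategory of initial objects in the fibres over each $\cA$.
3. This full subcategory maps by a trivial fibration onto $\EC$.
4. A section of it gives the pair $(\EE,\ee)$.

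The universal elements $\ee_{\cA}$ assemble to a natural transformation from $\id$ (composed with the inclusion $\EC\to\Cat\to\Cat_{\infty}$) to $\EE$, viewed as $\Cat_{\infty}$-valued. On morphisms, $\EE(f)$ is the factorization from \cref{kophertgretger9}, because it is the unique colimit-preserving map under $\ee_{\cA}$ and $\ee_{\cB}\circ f$.

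\emph{Step 3: commutativity of the square.} The composite $\EC\to\Pr^{L}_{\st}\to\Cat_{\infty}$ receives $\ee$ from the composite $\EC\to\Cat\to\Cat_{\infty}$. This is exactly the assertion of the square, read as the data of a lax commuting square whose 2-cell is $\ee$; I interpret the square in that sense. To upgrade from $(\infty,1)$ to 2-functoriality, repeat the argument with mapping categories $\Fun^{\homol}(\cA,\cC)$ in place of their cores. This uses the enriched Yoneda lemma together with the fact that \eqref{hrtgrtgetghertherhtr} is an equivalence of categories, not just of cores.

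\emph{Main obstacle.} The hard part is Step 1, the rigorous coherent construction of the bifunctor with its 2-categorical structure, and the size bookkeeping. The $\cA$ are large and $\EE(\cA)$ is presentable in the large universe, so the straightening must be done in the very large universe. My first attempt would be to work strictly: build $\Fun^{\homol}(-,-)$ as a strict 2-functor into simplicial categories, using strict precomposition and postcomposition, and then apply the homotopy coherent nerve. After that only the Yoneda and initial-object argument of Step 2 remains, and it is formal.
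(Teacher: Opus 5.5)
Your argument is correct in substance, modulo the fix in the second paragraph, but it follows a genuinely different route from the paper.

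The paper never uses the universal property to produce functoriality. It starts from the presheaf $2$-functor $\cA\mapsto \Fun(\cA^{\op},\Sp)$, which is functorial via left Kan extension. It uses \cref{kophertgretger9} only to see that $f_{!}$ sends the localizing class $W_{\cA}$ into morphisms inverted in $\EE(\cB)$. This gives a functor $\EC\to\mathbf{Rel}\Pr^{L}_{\st}$, $\cA\mapsto(\Fun(\cA^{\op},\Sp),W_{\cA})$, which is then composed with the localization $\mathbf{Rel}\Pr^{L}_{\st}\to\Pr^{L}_{\st}$. The transformation $\ee$ comes from the stabilized Yoneda embeddings $\Sigma^{\infty}_{+}y$ followed by the localizations.

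You instead read \eqref{hrtgrtgetghertherhtr} as objectwise corepresentability of $\cC\mapsto\Fun^{\homol}(\cA,\cC)^{\simeq}$ on $\Pr^{L}_{\st}$, and let the Yoneda lemma assemble $(\EE,\ee)$. Your version has three advantages: it is independent of how $\EE(\cA)$ was built, it makes essential uniqueness of $(\EE,\ee)$ automatic, and it identifies $\EE(f)$ with the factorization of \cref{kophertgretger9} by construction. The price is a Yoneda lemma in families, and an $(\infty,2)$-categorical one for the $2$-cells. The paper's version gets the $2$-cells from the $2$-functoriality of presheaf categories. More importantly, it is exactly the template reused in \cref{kohpehtregetrg}: there the lax symmetric monoidal refinement comes from lifting to relative categories and applying the symmetric monoidal localization. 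Your route would need a separate operadic Yoneda argument for that.

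\textbf{The fibration in Step~2.} Your explicit recipe uses the wrong fibration. Take the one-sided (left, resp.\ right) fibration classified by $F(\cA,\cC)=\Fun^{\homol}(\cA,\cC)^{\simeq}$. There, a morphism over $f\colon\cA\to\cB$ between fibrewise initial objects goes from $(\EE(\cB),\ee_{\cB})$ to $(\EE(\cA),\ee_{\cA})$. It amounts to a colimit-preserving $g$ with $g\circ\ee_{\cB}\circ f\simeq\ee_{\cA}$. This space need not be contractible: for the zero morphism $f$ it is empty unless $\EE(\cA)\simeq 0$. So the fibrewise initial objects do not form a trivial fibration over $\EC$.

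You need instead the fibration over $\EC$ that is cartesian, with fibre over $\cA$ the category of pairs $(\cC,\phi\colon\cA\to\cC)$ and transition $f^{*}(\cC,\phi)=(\cC,\phi\circ f)$. This is the correspondence picture of Sec.~5.2.1 of Lurie's Higher Topos Theory. Alternatively, simply use that the Yoneda embedding $(\Pr^{L}_{\st})^{\op}\to\Fun(\Pr^{L}_{\st},\widehat{\Ani})$ is fully faithful. In the cartesian fibration the mapping spaces over $f$ are $\mathrm{Map}\bigl((\EE(\cA),\ee_{\cA}),f^{*}(\EE(\cB),\ee_{\cB})\bigr)$, which are contractible since the source is initial. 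Composing the resulting section with the evident map to the arrow category $\Fun(\Delta^{1},\Cat_{\infty})$ then yields $\ee$ as an honest natural transformation.

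\textbf{Step~1.} This is not the hard part. $\Fun^{\homol}(-,-)$ is a full subfunctor of the internal-hom bifunctor $\Fun(-,-)$, restricted along $\EC\to\Cat_{\infty}$ and $\Pr^{L}_{\st}\to\Cat_{\infty}$. It is cut out by conditions stable under pre- and postcomposition, so no strictification is needed.
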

\begin{proof}
Recall that $\EE(\cA)$ is obtained as a localization of $\Fun(\cA^{\op},\Sp)$ at the set of morphisms
$W_{\cA}$ inverted by   $ L$ in \eqref{refwerfwerfwrfefer}.  In light of \cref{kophertgretger9}
the $2$-functor
$$\Fun((-)^{\op},\Sp):\EC\to \Pr_{\st}^{L}$$ refines to a $2$-functor into the category of relative presentable stable  categories
$$\Fun((-)^{\op},\Sp):\EC\to \mathbf{Rel}\Pr_{\st}^{L}\ ,\quad \cA\mapsto (\Fun(\cA^{\op},\Sp),W_{\cA})\ .$$ Applying  the localization functor
$ \mathbf{Rel}\Pr_{\st}^{L}\to  \Pr_{\st}^{L}$ yields the desired $2$-functor $\EE$.
%
%
\end{proof}

\subsection{Symmetric monoidal $E$-theory}

We do not possess  a symmetric monoidal category of $E$-theory contexts. If we model the symmetric monoidal category $\Cat$ by an operad $\Cat^{\times}$  as in \cite{HA}, then    $\EC$ can be viewed as a suboperad 
$\EC^{\otimes}$. However, the suboperad   lacks sufficient  cocartesian lifts to  constitute  a symmetric monoidal category.

Using the operad language,  one can understand lax symmetric monoidal functors into $\EC$ as operad maps into $\Cat^{\times}$ that happen to take values in $\EC^{\otimes}$. The following
definitions provide a translation  of this perspective into the language of    ordinary category theory. 
 
 Consider a lax symmetric  monoidal  functor
$$\cA:\cP \to \Cat$$ from a symmetric monoidal category $(\cP,\otimes_{\cP},1_{\cP})$.  \begin{ddd} \label{tohpertgertgrteg}\mbox{} \begin{enumerate} \item  
 We call $\cA$ a lax symmetric monoidal functor from $\cP$ to $E$-theory contexts if it satifies:
   \begin{enumerate}
\item \label{herhrtge}$\cA(1_{\cP}) $ receives a symmetric monoidal functor from $\cN$  and $\cA(p)$ is pointed for every $p$ in $P$.  
\item  \label{herhrt2ge}  For every morphism  $p\to q$ in $\cP$ the morphism 
$\cA(p)\to \cA(q)$ is $E$-admissible.  \item \label{kohpetrhertgtreg} For every $A$ in $\cA(p)$ and every  $q$ in $\cP$, the functor
$A\boxtimes  -:\cA(q)\to \cA(p\otimes_{\cP} q)$  is $E$-admissible.\end{enumerate}
 \item A natural transformation $\cA\to \cB $ between two such functors   is called  a natural transformation between lax symmetric monoidal functors of $E$-theory contexts if the morphism $\cA(p)\to \cB(p)$ is $E$-admissible for every $p$ in $\cP$.\end{enumerate}
 \end{ddd}
 

\begin{rem}
In Condition \cref{tohpertgertgrteg}.\ref{herhrtge} we use that $\cA(1_{\cP})$ is naturally a symmetric monoidal category.
\end{rem}

 \begin{rem}\label{oekpzjetzherhtr}
 If we take $\cP=*$ in  \cref{tohpertgertgrteg}, then we refer to $\cA$ as a symmetric monoidal $E$-theory context. \hB
 \end{rem}

 \begin{rem}
 Let $\cA$ be a lax symmetric monoidal functor from $\cP$ to $E$-theory contexts as defined in  \cref{tohpertgertgrteg}.
   Condition \ref{herhrtge}  of \cref{tohpertgertgrteg} 
 ensures that the values $\cA(p)$ belong to $\Mod_{\cN}(\Cat)_{*}$, while condition 
  \ref{kohpetrhertgtreg}  implies that $ \cA(p)$ is  indeed an $E$-theory context.
Consequently, by Condition \ref{herhrt2ge}, we obtain  a functor \begin{equation}\label{gwergwrferfw}\cA:\cP\to \EC\ .
\end{equation} 
   \hB \end{rem}

By composing the functor from \eqref{gwergwrferfw} with $\EE$ from \eqref{bdsvsdfvsdvsdfs}, we obtain the functor
\begin{equation}\label{ferwfwegeregwegergw}\EE(\cA):\cP\to \Pr^{L}_{\st}\ . \end{equation} 
 
\begin{theorem}\label{kohpehtregetrg} If $\cA$ is a lax symmetric monoidal functor to $E$-theory contexts, then
the functor  in \eqref{ferwfwegeregwegergw}  admits a canonical refinement to a lax symmetric monoidal functor.
\end{theorem}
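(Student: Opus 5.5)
The plan is to construct the lax symmetric monoidal structure on $\EE(\cA)$ in two stages: first on the presheaf level, then descend it through the localization $L=L_{1}\circ L_{0}$ from the proof of \cref{koprtheggrtge}.

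\emph{Stage one: presheaves.} The functor
$$\cP\ni p\mapsto \Fun(\cA(p)^{\op},\Sp)\in \Pr^{L}_{\st}$$
is canonically lax symmetric monoidal. Indeed, $\Fun(-^{\op},\Sp)\simeq \Sp\otimes \cP(-)$ is symmetric monoidal from $\Cat$ (with $\times$) to $\Pr^{L}_{\st}$ with the Lurie tensor product, and it sends products to tensor products. Precomposing with the lax symmetric monoidal functor $\cA:\cP\to \Cat$ gives the lax structure. Its structure maps
$$\Fun(\cA(p)^{\op},\Sp)\otimes \Fun(\cA(q)^{\op},\Sp)\to \Fun(\cA(p\otimes_{\cP} q)^{\op},\Sp)$$
are the colimit-preserving extensions of $(A,B)\mapsto \Sigma^{\infty}_{+}y(A\boxtimes B)$. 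The unit map $\Sp\to \Fun(\cA(1_{\cP})^{\op},\Sp)$ picks out $\Sigma^{\infty}_{+}y$ of the unit.

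\emph{Stage two: descent through the localization.} As in the proof of \cref{kohpertgertgertge}, I refine this to a lax symmetric monoidal functor into relative presentable categories,
$$p\mapsto \bigl(\Fun(\cA(p)^{\op},\Sp),W_{\cA(p)}\bigr).$$
The localization functor $\mathbf{Rel}\Pr^{L}_{\st}\to \Pr^{L}_{\st}$ is symmetric monoidal, by the universal property of $\Pr^{L}$-localizations: $\cC[W^{-1}]\otimes \cD[V^{-1}]\simeq (\cC\otimes \cD)[(W\boxtimes \cD\cup \cC\boxtimes V)^{-1}]$. Composing therefore yields the desired refinement of \eqref{ferwfwegeregwegergw}, and it is canonical because every choice was canonical. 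Concretely, the requirement is the following. For each $A$ in $\cA(p)$, the colimit-preserving functor $\Sigma^{\infty}_{+}y(A)\boxtimes -$ sends generators of $W_{\cA(q)}$ to $W_{\cA(p\otimes q)}$-equivalences, and symmetrically in the other variable. Since the generators are objects of the form $\Sigma^{\infty}_{+}y(-)$, it suffices to test against representables.

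\emph{Checking the generators.} This verification is the main obstacle, because it is where the axioms of \cref{tohpertgertgrteg} enter.
\begin{enumerate}
\item For the generators in $L_{0}$ (zero object, homotopy equivalences, left upper corner inclusions, finite products, filtered colimits), Condition \ref{kohpetrhertgtreg} gives that $A\boxtimes -$ is $E$-admissible. Hence it preserves zero objects, existing finite products and filtered colimits, and their comparison maps go to comparison maps in $W_{\cA(p\otimes q)}$.
\item Homotopy equivalences and corner inclusions are preserved because $A\boxtimes -$ is compatible with the $\cN$-module structure, via $A\boxtimes(B\otimes N)\cong (A\boxtimes B)\otimes N$. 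This is the one compatibility I would need to make explicit: that the $\cN$-action on $\cA(p)$ is induced through $\cN\to \cA(1_{\cP})$ and the lax structure.
\item For the exact-sequence generators of $L_{1}$, the image of \eqref{greferfwerfwerfwerfrw} is subtler, since these maps live in $\mathrm{LOC}$ and not on representables. I would handle this by doing the descent in two steps. First show that $\boxtimes$ descends to $L_{0}$-local objects, where it is exact in each variable because the categories are stable. Then note that $A\boxtimes -$ preserves exact sequences by $E$-admissibility. Consequently $L_{0}\Sigma^{\infty}_{+}y(A)\boxtimes -$ carries the map \eqref{greferfwerfwerfwerfrw} for $0\to B\to C\to D\to 0$ to the corresponding map for $0\to A\boxtimes B\to A\boxtimes C\to A\boxtimes D\to 0$, using that $\boxtimes$ commutes with $\Fib$.
\end{enumerate}
Finally, the natural transformation $\ee$ becomes lax symmetric monoidal as a consequence, since $\ee_{\cA(p)\otimes\cA(q)}$ is compatible with $\boxtimes$ by construction.
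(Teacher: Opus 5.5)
Your proposal is correct and takes essentially the paper's route. Both arguments lift $p\mapsto \Fun(\cA(p)^{\op},\Sp)$ to relative categories, reduce the compatibility of the external product with the localizing classes to representables $\Sigma^{\infty}_{+}y(A)$ and the generators from the proof of \cref{koprtheggrtge}, and conclude by $E$-admissibility of $A\boxtimes -$. The differences are minor: the paper localizes via $\RelCat_{\infty}\to \Cat_{\infty}$ and only afterwards factors through $\Pr^{L}_{\st}$, whereas you localize directly in $\Pr^{L}_{\st}$; and you spell out two points the paper calls immediate, namely the $\cN$-linearity of $A\boxtimes -$ (needed for homotopy equivalences and corner inclusions) and the fact that the exact-sequence generators of $L_{1}$ live in $\mathrm{LOC}$ rather than on representables.
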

\begin{proof}
The composition of  lax symmetric monoidal functors  
$$\Fun(\cA(-),\Sp):\cP\stackrel{\cA}{\to} \Cat\stackrel{\Fun(-,\Sp)}{\to} \Pr^{L}_{\st}\stackrel{\incl}{\to} \Cat_{\infty}$$  lifts to a lax symmetric monoidal functor into the $\infty$-category of relative $\infty$-categories
$$(\Fun(\cA(-),\Sp),\cW(-)):\cP\to \RelCat_{\infty}\ ,$$ 
where $\cW(p)$ denotes the set of morphisms inverted by 
$L:\Fun(\cA(p),\Sp)\to \EE(\cA(p))$.
To this end we must check
  for every $p,q$ in $\cP$, object $X$ in $\Fun(\cA(p),\Sp)$ and $f$ in $\cW(p)$, that
the morphism $X\boxtimes f$ is in $\cW(p\times q)$.
Since the external product $\boxtimes$ preserves colimits in each variable it suffices to show this for
objects of the form $X=\Sigma^{\infty}_{+}y(A)$ with $A$ in $\cA(p)$, and the generators $f$ of the localization 
listed in the proof of \cref{koprtheggrtge}. This follows immediately from the fact that the functor  
$A\boxtimes -$ is $E$-admissible by \cref{tohpertgertgrteg}.\ref{kohpetrhertgtreg}

We now compose with the symmetric monoidal localization functor
$$\RelCat_{\infty}\to \Cat_{\infty}$$  to obtain a lax symmetric monoidal functor
$$\EE(\cA):\cP\to \Cat_{\infty}\ .$$
Finally, we observe that this functor factorizes  through a lax symmetric monoidal  functor into $\Pr^{L}_{\st}$, as indicated 
 by the dotted arrow in the following diagram:
$$\xymatrix{\cP\ar[rr]^{\EE(\cA)}\ar@{..>}[dr]&&\Cat_{\infty}\\&\Pr^{L}_{\st}\ar[ur]&}\ .$$   \end{proof}

  \begin{ex}\label{kophehrtrgertg}
 The symmetric monoidal category $\nCalg$ of all $C^{*}$-algebras with   the maximal tensor product constitutes a symmetric monoidal $E$-theory context  in the sense of \cref{tohpertgertgrteg} and \cref{oekpzjetzherhtr}.  This follows from the fact, that  
 the  maximal tensor product in $\nCalg$ preserves finite products, filtered colimits  and exact sequences.

 By comparing universal properties,  the lax symmetric monoidal functor obtained in \cref{kohpehtregetrg} coincides with the $E$-theory functor  introduced in \cite{Bunke:2023aa}, \cite{budu}.  As verified in  \cite[Sec. 13]{Bunke:2023aa},   on the level of homotopy categories it recovers the classical $E$-theory of \cite{zbMATH04182148}, \cite{MR1068250}, \cite{Guentner_2000}. 
\hB
 \end{ex}

 \section{$E$-theory contexts associated to posets}\label{gergwergwerioug9erwgwregw}
 
 In this section, we introduce  the $E$-theory contexts in the chain \eqref{hrtgrtgerge}, along with several additional  ones, and describe their categorical properties. The general principle is to describe each of them as a subcategory of the preceding category. We then consider closure properties under categorical constructions, such as exact sequences or filtered colimits. In most cases the smaller category is a localization of the ambient one; and in this case it inherits relevant categorical properties -- for example   presentability.

 \subsection{Functor categories}

%
%
%
 
  Since $\nCalg$ is a symmetric monoidal category we have a 
  lax symmetric monoidal functor
  \begin{equation}\label{ferfewrvefweeeee}\Fun(-,\nCalg):\Cat^{\op}\to \Cat\ , \end{equation} 
  where $\Cat $ is equipped with the Cartesian monoidal structure.

  \begin{prop}\label{kohprthertgretget}
  The functor in \eqref{ferfewrvefweeeee} is  a lax symmetric monoidal functor to $E$-theory contexts.
 \end{prop}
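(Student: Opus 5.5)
We verify the three conditions of \cref{tohpertgertgrteg} for $\cP=\Cat^{\op}$ with its cartesian structure. The unit is $*$, and $\Fun(*,\nCalg)\cong\nCalg$. It receives the symmetric monoidal inclusion $\cN\to\nCalg$, and every $\Fun(I,\nCalg)$ is pointed by the constant zero diagram. This gives condition \ref{herhrtge}. The external product
\[
\boxtimes:\Fun(I,\nCalg)\times\Fun(J,\nCalg)\to\Fun(I\times J,\nCalg)
\]
is given by $(A\boxtimes B)(i,j)=A(i)\otimes_{\max}B(j)$. The $\cN$-module structure on $\Fun(I,\nCalg)$ is the special case $J=*$, i.e.\ objectwise tensoring with $N\in\cN$.

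The key preliminary step is to identify the relevant categorical structures in $\Fun(I,\nCalg)$. Since $\nCalg$ is complete and cocomplete, limits and colimits in $\Fun(I,\nCalg)$ exist and are computed pointwise. This applies in particular to finite products, filtered colimits, pullbacks and pushouts. A square in the functor category is therefore cartesian and cocartesian if and only if it is so pointwise. Hence exact sequences in $\Fun(I,\nCalg)$ are exactly the pointwise exact sequences, and the same holds for finite products and filtered colimits. Because these structures always exist, there are no unexpected ones to worry about.

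With this in hand, condition \ref{herhrt2ge} is immediate. A morphism $I\to J$ in $\Cat^{\op}$ is a functor $u:J\to I$, and it acts by restriction $u^{*}:\Fun(I,\nCalg)\to\Fun(J,\nCalg)$. Restriction commutes with all pointwise limits and colimits, so $u^{*}$ is $E$-admissible. For condition \ref{kohpetrhertgtreg}, fix $A\in\Fun(I,\nCalg)$. Evaluating $A\boxtimes -$ at $(i,j)$ gives the composite of evaluation at $j$ with $A(i)\otimes_{\max}-$. Since all three structures are detected pointwise, it suffices that $C\otimes_{\max}-:\nCalg\to\nCalg$ is $E$-admissible for a single $C^{*}$-algebra $C$. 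This is the fact recalled in \cref{kophehrtrgertg}: the maximal tensor product preserves finite products, filtered colimits and exact sequences. The last of these is exactness of $\otimes_{\max}$. We also need to check that in $\nCalg$ the cartesian-and-cocartesian squares with a zero corner are exactly the short exact sequences. This holds because pullbacks over $0$ are kernels and pushouts are quotients by closed ideals.

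The remaining points are coherence and the pointed $\cN$-module condition, and both are formal. The lax structure maps come from the lax structure of $\Fun(-,\nCalg)$. The $\cN$-action is compatible with these because it is the restriction of $\boxtimes$ along $\cN\to\Fun(*,\nCalg)$, and $\boxtimes$ preserves zero objects in each variable. The main obstacle, though a mild one, is the pointwise identification of exact sequences: bicartesian squares must be computed in the functor category rather than only objectwise. This is settled by the pointwise computation of limits and colimits described above.
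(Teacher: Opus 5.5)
Your proposal is correct and follows essentially the same route as the paper: check the three conditions of \cref{tohpertgertgrteg}, using that limits and colimits in $\Fun(P,\nCalg)$ are formed pointwise and that the maximal tensor product preserves finite products, filtered colimits and exact sequences. Your extra remarks, that bicartesian squares with a zero corner in $\nCalg$ are exactly the short exact sequences and that bicartesianness in the functor category is detected pointwise, only spell out what the paper leaves implicit.
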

 \begin{proof} 
 We verify the conditions from \cref{tohpertgertgrteg}.
 
Clearly, we  have $\Fun(\{*\},\nCalg) \simeq \nCalg \in \CAlg(\Mod_{\cN}(\Cat))$. 
 Furthermore, for every $P$ in $\Cat$ the category $\Fun(P,\nCalg)$ is pointed by the constant functor 
to  zero algebra.
 
 If $f:P\to Q$ is a morphism in $   \Cat$, then $$f^{*}: \Fun(Q,\nCalg)\to \Fun(P,\nCalg)$$
  is $E$-admissible since limits and colimits in the functor category are formed pointwise and are therefore preserved by $f^{*}$.
  
 For $A$ in $\Fun(P,\nCalg)$ and $Q$ in $\Cat$ the tensor product $$A\boxtimes -: \Fun(Q,\nCalg)\to 
  \Fun(P\times Q,\nCalg)$$
  sends $B$ in $\Fun(Q,\nCalg)$ to the functor
  $(p,q)\mapsto A(p)\otimes B(q)$. 
   Since limits and colimits in the functor category are formed pointwise and the maximal tensor product in $\nCalg$ preserves finite products,  filtered colimits  and exact sequences
  we conclude  that $A\boxtimes -$ is $E$-admissible.  
   \end{proof}
   
   Note that $\nCalg$ is a presentable category. It is $\aleph_{1}$-presentable,  and its subcategory of $\aleph_{1}$-compact objects is precisely the subcategory of separable $C^{*}$-algebras. 
  In particular,  $\nCalg$ admits all limits and colimits. \begin{kor} For every $P$ in $\Cat$
  the functor category $\Fun(P,\nCalg)$ is   presentable.    
\end{kor}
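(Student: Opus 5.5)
The plan is to deduce the corollary from the presentability of $\nCalg$, recalled just before the statement, together with the standard fact that functor categories into presentable categories are presentable. The one subtlety is that $P$ is an object of $\Cat$, which is presumably the category of small categories. Under the size conventions fixed earlier, $P$ lives in the small universe, while $\nCalg$ is presentable relative to that universe.

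\textbf{Step 1.} I will use that $\nCalg$ is locally $\aleph_{1}$-presentable. It is cocomplete, and it has a small set $G$ of $\aleph_{1}$-compact objects (representatives of isomorphism classes of separable $C^{*}$-algebras) such that every object is an $\aleph_{1}$-filtered colimit of objects of $G$.

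\textbf{Step 2.} I will invoke the general theorem that, for a small category $P$ and a locally $\kappa$-presentable category $\cC$, the category $\Fun(P,\cC)$ is locally $\lambda$-presentable for some regular $\lambda\ge\kappa$. This is Adámek--Rosický, Cor.~1.54, or in the $\infty$-categorical setting HTT Prop.~5.5.3.6, applied to the nerve of $P$ and to $\nCalg$ viewed as a $1$-category; the latter is presentable as an $\infty$-category since it is locally presentable. Alternatively, I would argue directly:
\begin{enumerate}
\item Colimits in $\Fun(P,\nCalg)$ exist and are computed pointwise, so the functor category is cocomplete.
\item Each evaluation $\ev_{p}$ has a left adjoint $L_{p}$, given by $A\mapsto \coprod_{\Hom_{P}(p,-)}A$, with coproducts formed in $\nCalg$ (free products).
\item The objects $L_{p}(A)$, for $p$ in $P$ and $A$ in $G$, form a small set of generators. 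If $\lambda$ is chosen large enough (bigger than $|\mathrm{Mor}(P)|$ and $\aleph_{1}$), then $\ev_{p}$ preserves $\lambda$-filtered colimits, hence each $L_{p}(A)$ is $\lambda$-compact.
\item Every functor is a colimit of such generators via the canonical coend/bar presentation, which is a small colimit.
\item A cocomplete category with a small strong generating set of $\lambda$-presentable objects is locally $\lambda$-presentable.
\end{enumerate}

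\textbf{Main obstacle.} The only point needing care is the strong-generation, or density, statement in the direct argument. I would avoid it by simply citing the general theorem in Step 2, so I do not expect real difficulty.
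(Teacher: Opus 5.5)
Your proposal is correct and follows the paper's implicit argument. The paper gives no separate proof; it deduces the corollary directly from the $\aleph_{1}$-presentability of $\nCalg$, noted just before the statement, together with the standard fact that functor categories from small categories into presentable categories are presentable.

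You are right to insist that $P$ be small. The paper leaves this hypothesis implicit, and it is needed, since the paper's $\Cat$ also contains large categories such as $X\nCalg$. In your direct argument, no enlargement of $\lambda$ is needed: each $\ev_{p}$ preserves all colimits, so $L_{p}(A)$ is already $\aleph_{1}$-compact.
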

Consequently, the category $\Fun(P,\nCalg)$  has all limits and colimits. 

 \subsection{$P$-$C^{*}$-algebras}
 
  A poset $P$ is called pointed if it admits a maximal element $\infty_{P}$. 
 We consider the full symmetric monoidal subcategory of pointed posets,  $\Poset_{(*)}\subseteq \Cat$. 
 The notation $(*)$ instead of $*$    indicates that morphisms in  $\Poset_{(*)}$ are not required to preserve the maximal elements.

Assume that $P$ is a pointed poset.
\begin{ddd}\mbox{} \begin{enumerate}\item
A $P$-$C^{*}$-algebra is an object $A$ of $\Fun(P,\nCalg)$
such that for every $p,q$ in $P$ with $p\le q$  the map $A(p)\to A(q)$ is an ideal inclusion.\item 
We let $P\nCalg$ denote the full subcategory of $\Fun(P,\nCalg)$  of $P$-$C^{*}$-algebras.  \end{enumerate}
\end{ddd}
 \begin{rem} We let   $I(A)$ denote the frame of  closed two-sided $*$-ideals  in a $C^{*}$-algebra $A$  with the inclusion relation.
 An object of $P\nCalg$ is a pair $(A,A(-):P\to I(A))$ consisting of a $C^{*}$-algebra and a poset morphism such that $A(\infty_{P})=A$.
 We call $A$ the underlying algebra.
 A morphism of $P$-$C^{*}$-algebras $(A,A(-))\to (B,B(-))$   is then morphism $f:A\to B$  of $C^{*}$-algebras such that  
  $f(A(p))\subseteq B(p)$ for all $p$ in $P$.
  \end{rem}

 \begin{prop}\label{khterthertgetrg9} For every pointed poset $P$ we have a left Bousfield localization
\begin{equation}\label{goierjgiowerfwrefrefw} L:\Fun(P,\nCalg)\leftrightarrows P\nCalg:\incl\ .
\end{equation} 
The category $P\nCalg$ is presentable, and the inclusion $\incl$ is $E$-admissible.      \end{prop}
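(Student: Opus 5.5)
We construct the left adjoint $L$ explicitly. For $A$ in $\Fun(P,\nCalg)$ set $B:=A(\infty_{P})$ and, for $p$ in $P$, let $LA(p)\subseteq B$ be the closed two-sided $*$-ideal generated by the image of the structure map $A(p)\to A(\infty_{P})$. If $p\le q$, the map $A(p)\to A(\infty_P)$ factors through $A(q)$, so $LA(p)\subseteq LA(q)$. Hence $LA$ is a $P$-$C^{*}$-algebra with underlying algebra $B$, and the structure maps give a natural transformation $\eta_{A}:A\to LA$.

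Next we check the universal property. Let $C$ be a $P$-$C^{*}$-algebra and $\phi:A\to C$ a natural transformation. The component $\phi_{\infty_{P}}:B\to C(\infty_{P})$ carries the image of $A(p)$ into $C(p)$, and $C(p)$ is a closed ideal, so it also carries $LA(p)$ into $C(p)$. Thus $\phi_{\infty_P}$ defines a morphism $LA\to C$ of $P$-$C^{*}$-algebras, and it extends $\phi$ along $\eta_A$. Uniqueness holds because $C(p)\to C(\infty_{P})$ is injective: a morphism $LA\to C$ is determined by its component at $\infty_{P}$, and that component is forced to be $\phi_{\infty_P}$. If $A$ is already a $P$-$C^{*}$-algebra, then $\eta_{A}$ is an isomorphism. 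So $P\nCalg$ is a reflective full subcategory, and $L\dashv\incl$ is a left Bousfield localization.

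For presentability we use that $\Fun(P,\nCalg)$ is presentable by the preceding corollary. Since $P\nCalg$ is a reflective full subcategory, it is presentable as soon as it is closed under $\kappa$-filtered colimits for some regular cardinal $\kappa$, i.e.\ as soon as $\incl$ is accessible. We will in fact show closure under all filtered colimits. For a filtered diagram $(A_{i})$ of $P$-$C^{*}$-algebras, the pointwise colimit has $\colim_{i} A_{i}(p)\to\colim_{i}A_{i}(\infty_{P})$. This map is injective, since a filtered colimit of injective $*$-homomorphisms is injective; this follows from the isometric description of filtered colimits of $C^{*}$-algebras as completions of quotients of the algebraic colimit by the null ideal of the limit seminorm. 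Its image is a closed ideal, because it is the closure of the union of the images of the ideals $A_{i}(p)$.

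Finally, $\incl$ is $E$-admissible. Filtered colimits are preserved by the closure argument just given. Limits in a reflective subcategory are computed in the ambient category, so $\incl$ preserves finite products and cartesian squares. For exact sequences we must also show that $\incl$ preserves the cocartesian part. Given an exact sequence $0\to A\to B\to C\to 0$ in $P\nCalg$, its pushout in $P\nCalg$ is $L$ applied to the pointwise pushout $D$. Since the square is cartesian, $A$ is the pointwise kernel, so $A(p)=\ker(B(p)\to C(p))$. We will show that $D(p)=B(p)/A(p)$ is already an ideal in $D(\infty_{P})=B/A$, namely the image of the ideal $B(p)$ under the surjection $B\to B/A$. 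Then $LD=D$, and because $P\nCalg\to\Fun(P,\nCalg)$ is full, $C\cong D$. Hence the sequence is pointwise exact.

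The main obstacle is this last step: showing that a square which is bicartesian in $P\nCalg$ is bicartesian pointwise. The key input is that ideals map to ideals under surjections. A secondary technical point is the injectivity of filtered colimits of injective $*$-homomorphisms.
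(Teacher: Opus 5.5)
Your proposal is correct and follows essentially the same route as the paper: the same explicit reflector $L$ via generated ideals, closure of $P\nCalg$ under pointwise filtered colimits, presentability as an accessible reflective localization, and, for exact sequences, the observation that $B(p)/A(p)\cong (B(p)+A(\infty_{P}))/A(\infty_{P})$ is an ideal in $B/A$. You should write out the injectivity of $B(p)/A(p)\to B/A$, i.e.\ $A(p)=\ker(B(p)\to C(p))=B(p)\cap A(\infty_{P})$, which holds because $C(p)\to C(\infty_{P})$ is injective; this is exactly where your use of the cartesian half of the exact square, with $C$ already in $P\nCalg$, is needed, and the paper's phrasing as closure under quotients leaves it implicit (for a general pointwise pushout it fails, e.g.\ $P=\{0<1\}$, $A=(0\subseteq\mathbb{C})$, $B=(\mathbb{C}=\mathbb{C})$, with $A\to B$ the identity at $1$).
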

     \begin{proof}
     We  first construct the functor $L$ and provide the unit and counit of the adjunction.
If $A$ is in $ \Fun(P,\nCalg) $, then 
we set $L(A)(\infty_{P}):=A(\infty_{P})$. For every $p$ in $P$
 we furthermore define $L(A)(p)$ as the ideal generated by the image of $A(p)\to A(\infty_{P})$.
We extend $L$ to a functor by defining it on morphisms in the canonical way. 

 The unit and counit  of the adjunction are given by the canonical morphism
 $A\to \incl\circ  L(A)$ and the identity $L\circ \incl(A)= A$.
 The triangle identities are straightforward to check.
 
We  now check that $P\nCalg$ is closed in $\Fun(P,\nCalg)$ under filtered colimits. In  
  $\Fun(P,\nCalg)$  they are calculated pointwise. We  employ the fact that  filtered colimits in   $\nCalg$ preserve   ideal inclusions (see \cref{hpzhjetophherth}). 

 Since $\incl$ preserves kernels, it remains to  show that $P\nCalg$ is closed in $\Fun(P,\nCalg)$ under quotients. We consider a  pushout   $$\xymatrix{A\ar[r]\ar[d] &B \ar[d] \\ 0\ar[r] & C} $$ in $\Fun(P,\nCalg)$
  and assume that $A$  and $B$ belong to $ P\nCalg$.  We can assume that $A\cong \ker(B\to C)$ without changing the quotient.  We must show that  
  $C$  also belongs to $  P\nCalg$.  Since quotients in the functor category are formed pointwise,
 we have $C(p)\cong B(p)/A(p)$ for every $p$ in $P$.   Then, 
 $B(p)/A(p)\cong (B(p)+A(\infty_{P}))/A(\infty_{P})  $
 is an ideal in $C(\infty_{P})$.
%
%

 Since $P\nCalg$ is an accessible left Bousfield localization of a presentable category, it is itself presentable.
\end{proof}

 \begin{rem}
 One can check that the Bousfield localization \eqref{goierjgiowerfwrefrefw} is symmetric monoidal, and that $\incl$ and $L$ are symmetric monoidal. \hB
 \end{rem}


\begin{prop}\label{koprtzjrtzhrzthr} We have a  lax symmetric monoidal functor
  \begin{equation}\label{ferfewrvef4r4r4r4r4weeeee}(-)\nCalg:\Poset_{(*)}\to \Cat\end{equation}  to $E$-theory contexts, with the structure of  a subfunctor of $\Fun(-,\nCalg)_{|\Poset_{(*)}}$.
\end{prop}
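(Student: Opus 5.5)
We deduce the statement from \cref{kohprthertgretget} by restricting $\Fun(-,\nCalg)$ along $\Poset_{(*)}\subseteq \Cat$ and checking that the subcategories $P\nCalg$ are preserved by all relevant structure. The restriction $\Fun(-,\nCalg)_{|\Poset_{(*)}}$ is again a lax symmetric monoidal functor to $E$-theory contexts, because the conditions of \cref{tohpertgertgrteg} only involve the values on objects, morphisms and the external products. It therefore suffices to show three things. First, for a morphism $f:P\to Q$ of pointed posets, the functor $f^{*}$ maps $Q\nCalg$ into $P\nCalg$. Second, the external product $A\boxtimes B$ of a $P$-$C^{*}$-algebra and a $Q$-$C^{*}$-algebra is a $P\times Q$-$C^{*}$-algebra. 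Third, the $E$-admissibility statements transfer from the ambient functor categories to the full subcategories.

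For the first point, note that $f$ need not preserve maximal elements. Still, $(f^{*}A)(p)=A(f(p))$, and for $p\le p'$ the map $A(f(p))\to A(f(p'))$ is an ideal inclusion, since $f$ is monotone and $A$ is a $Q$-$C^{*}$-algebra. So $f^{*}A$ lies in $P\nCalg$; its underlying algebra is $A(f(\infty_{P}))$, which is an ideal in $A(\infty_{Q})$. For the second point, we use that $A(p)\otimes B(q)\to A(p')\otimes B(q')$ is an ideal inclusion of maximal tensor products whenever $A(p)\subseteq A(p')$ and $B(q)\subseteq B(q')$ are ideals. This is the standard fact that the maximal tensor product preserves ideal inclusions, which follows from its exactness. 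The unit condition holds since $\{*\}\nCalg=\nCalg$, which receives $\cN$ symmetric monoidally, and each $P\nCalg$ is pointed by $0$.

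For the third point we use \cref{khterthertgetrg9}, which says that $\incl:P\nCalg\to\Fun(P,\nCalg)$ is a left Bousfield localization and $E$-admissible. We need $f^{*}$ and $A\boxtimes-$ restricted to the subcategories to preserve finite products, filtered colimits and exact sequences. Since $\incl$ has a left adjoint, limits in $P\nCalg$ are computed in the functor category, so finite products and kernels agree. Since $\incl$ is $E$-admissible, filtered colimits and cokernels of exact sequences in $P\nCalg$ also agree with the ambient ones. Consider an exact sequence, filtered colimit or finite product in $P\nCalg$. It is one in $\Fun(P,\nCalg)$, and the ambient functor from \cref{kohprthertgretget} preserves it there. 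The image lies in the full subcategory $Q\nCalg$ (respectively $(P\times Q)\nCalg$). A cartesian and cocartesian square, colimit or product in the ambient category whose vertices lie in a full subcategory remains one there. Hence the restricted functors are $E$-admissible.

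I expect the main obstacle to be this last point: confirming that the full subcategory has no exotic filtered colimits or exact sequences differing from the ambient ones. That is exactly what the localization and $E$-admissibility of $\incl$ in \cref{khterthertgetrg9} provide. The remaining check, that $\otimes_{\max}$ preserves ideal inclusions, I would handle by citing exactness of the maximal tensor product.
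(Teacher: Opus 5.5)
Your proof is correct and follows essentially the same route as the paper: you check that $f^{*}$ and $\boxtimes$ preserve the subcategories of $P$-$C^{*}$-algebras, then transfer $E$-admissibility from \cref{kohprthertgretget} using \cref{khterthertgetrg9}. For the $\boxtimes$ step the paper cites \cref{ihrfiwuoghewergwreg}, whose proof rests on the same fact you use (the maximal tensor product preserves ideal inclusions); your full-subcategory argument for exact sequences, filtered colimits and finite products just spells out what the paper leaves implicit.
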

\begin{proof}
 If $f:P\to Q$ is a morphism in $\Poset_{(*)}$, then
 $f^{*}$ sends $A$ in $Q\nCalg$ to the functor 
 $ p\mapsto A(f(p))$, which again  belongs to $P\nCalg$.   
 
 For $A$ in $P\nCalg$ and $B$ in $Q\nCalg$ one checks  using \cref{ihrfiwuoghewergwreg} that
 the tensor product
 $A\boxtimes B$  belongs to $(P\times Q)\nCalg$.
  
 By \cref{khterthertgetrg9} the category $P\nCalg$ is closed in $\Fun(P,\nCalg)$ under  finite products and   filtered colimits, and 
 the inclusion $P\nCalg  \to\Fun(P,\nCalg) $ is $E$-admissible.     

  It therefore follows from \cref{kohprthertgretget} that $(-)\nCalg$ is a  lax symmetric monoidal functor to $E$-theory contexts, and that  the inclusion  is a natural transformation between lax symmetric monoidal functors to $E$-theory contexts $(-)\nCalg\to \Fun(-,\nCalg)_{|\Poset_{(*)}} $.
\end{proof}

\subsection{Left exact $P$-$C^{*}$-algebras}
A morphism between posets is called partially  left-exact if it preserves all  non-empty  finite meets.
So "partially" means that we do not require the preservation of the maximal element. 
We let $\Poset_{(*)}^{\puc}$ denote the wide subcategory of $\Poset_{(*)}$ consisting of partially  left-exact morphisms. A morphism in $\Poset_{(*)}^{\puc}$ is
left-exact if it also preserves  empty meets, i.e., maximal elements.

Assume that $P$ is a pointed poset. \begin{ddd}
\mbox{}
\begin{enumerate}
\item
A   $P$-$C^{*}$-algebra is called left-exact  if the functor $A:P\to I(A(\infty_{P}))$ is left-exact.   \item
We let $P^{\uc}\nCalg$ denote the full subcategory of $P\nCalg$ consisting of left-exact $P$-$C^{*}$-algebras.
\end{enumerate}
\end{ddd}

 
 \begin{prop}\label{phertgertgetr}
 For every pointed poset $P$ we have a left Bousfield localization
 \begin{equation}\label{fqwfqwedqewdqw}
 L:P\nCalg\leftrightarrows P^{\uc}\nCalg:\incl\ .
\end{equation}
The category $P^{\uc}\nCalg$ is presentable, and the inclusion $\incl$ is $E$-admissible.  \end{prop}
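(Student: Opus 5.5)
The plan follows the pattern of the proof of \cref{khterthertgetrg9}. I will construct the left adjoint $L$ explicitly, check that $P^{\uc}\nCalg$ is closed in $P\nCalg$ under filtered colimits, finite products, kernels and quotients, and then deduce presentability from accessibility.

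\emph{Construction of $L$.} Let $A$ be in $P\nCalg$ with underlying algebra $A(\infty_{P})$. A left-exact structure with the same underlying algebra need not exist over $A(\infty_P)$: we need $A'(p\wedge q)=A'(p)\cap A'(q)$ and $A'(\infty_{P})=A'$. So I will change the underlying algebra by a quotient. Let $J\subseteq A(\infty_{P})$ be the smallest closed ideal such that in $\bar A:=A(\infty_{P})/J$ the image ideals $\bar A(p)$ satisfy
\[ \bar A(p)\cap \bar A(q)=\bar A(p\wedge q) \]
for all $p,q$ for which $p\wedge q$ exists, and $\bar A(\infty_P)=\bar A$. I would build $J$ by transfinite iteration: set $J_{0}:=0$, and let $J_{\alpha+1}$ be generated by $J_{\alpha}$ together with all preimages of $(\bar A_{\alpha}(p)\cap\bar A_{\alpha}(q))$ modulo $\bar A_{\alpha}(p\wedge q)$. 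Take closures of unions at limit stages. The process stabilizes by size reasons. Then $L(A):=(\bar A,\bar A(-))$.

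Any morphism $A\to B$ with $B$ left-exact kills $J$. By induction, $f(J_\alpha)=0$: if $x\in A(p)$ and $y\in A(q)$ have the same image modulo $J_\alpha$, then $f(x)\in B(p)\cap B(q)=B(p\wedge q)$. This gives the universal property, so $L\dashv\incl$, with the unit being the quotient map and the counit the identity.

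A cleaner alternative is to avoid the iteration and use the adjoint functor theorem. I would show that $P^{\uc}\nCalg$ is closed under all limits in $P\nCalg$ and is accessible, i.e.\ closed under $\kappa$-filtered colimits. Limits are computed pointwise on ideals, and intersections commute with limits (products and equalizers), so closure under limits is straightforward. I will pursue this route, keeping the explicit construction as a backup description.

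\emph{Closure properties and $E$-admissibility.} Finite products are preserved because meets of ideals in $A\times B$ are computed componentwise. Kernels follow from the limit closure. Filtered colimits are the key check. By \cref{hpzhjetophherth}, filtered colimits preserve ideal inclusions. I also need that in $\colim_{i}A_{i}$ one has $\colim A_{i}(p)\cap\colim A_{i}(q)=\colim (A_{i}(p)\cap A_{i}(q))$. I would use the standard $C^{*}$-fact that for ideals $I,J$ one has $I\cap J=IJ$ (the closed product). The product of ideals commutes with filtered colimits, being the closure of the union of images of $A_i(p)A_i(q)$ in the colimit.

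Quotients are handled the same way. Let $0\to A\to B\to C\to 0$ be exact, with $A,B$ left-exact and $C(p)=B(p)/A(p)$. Then
\[ C(p)\cap C(q)=C(p)C(q)=\text{image of }B(p)B(q)=\text{image of }B(p\wedge q)=C(p\wedge q). \]
Also $C(\infty_{P})=C$. Hence $\incl$ preserves exact sequences, filtered colimits and finite products, so it is $E$-admissible. Presentability then follows as an accessible reflective localization of the presentable category $P\nCalg$ from \cref{khterthertgetrg9}.

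\emph{Main obstacle.} I expect the filtered colimit and quotient steps to be the crux: intersections of ideals do not obviously commute with these operations. The trick I rely on is the identity $I\cap J=\overline{IJ}$, which converts intersections into products, and products visibly commute with images and filtered colimits. The remaining subtlety is the existence of the left adjoint. I will obtain it from the adjoint functor theorem, using limit closure together with closure under $\aleph_{1}$-filtered colimits. This sidesteps the need to control the transfinite construction of $J$.
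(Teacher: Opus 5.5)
The route you actually commit to is the paper's proof, and it works. You show that $P^{\uc}\nCalg$ is closed in $P\nCalg$ under pointwise limits and filtered colimits, then use the adjoint functor theorem to get $L$ and presentability, and finally use closure under quotients for $E$-admissibility of $\incl$. Your identity $I\cap J=\overline{IJ}$ is exactly what underlies the paper's approximation by products $\prod_f a_f$ in the filtered-colimit step. For quotients the paper uses \cref{khoprtehegrtege9} instead, but your computation $C(p)\cap C(q)=\pi(\overline{B(p)B(q)})=\pi(B(p\wedge q))=C(p\wedge q)$ is an equally valid alternative. It needs $C(p)=\pi(B(p))$, which follows from $A(p)=A\cap B(p)$. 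Run both computations for finite families, $\bigcap_f I_f=\overline{\prod_f I_f}$, rather than only binary meets. In a general pointed poset an existing finite meet need not be built from existing binary meets.

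You should, however, delete the explicit ``backup'' description of $L$: it is wrong, not merely unnecessary. The reflector does not quotient the underlying algebra; it keeps $A(\infty_P)$ and enlarges the ideals.

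Here is a counterexample. Take $P=\{0<p,q<\infty\}$ with $p\wedge q=0$, and let $A(\infty_P)=A(p)=A(q)=\mathbb{C}$ and $A(0)=0$. Your $J$ must be all of $\mathbb{C}$, so your $L(A)=0$. Yet $\id_{\mathbb{C}}$ is a nonzero morphism from $A$ to the left-exact object $B$ with $B(r)=\mathbb{C}$ for all $r$. Hence $\Hom(L(A),B)\neq\Hom(A,B)$.

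The error is in your induction. From $x\in A(p)\cap A(q)$ you only obtain $f(x)\in B(p\wedge q)$, not $f(x)=0$. The premise is also false: the constant system $r\mapsto A(\infty_P)$ is always left-exact.

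A correct explicit formula is $L(A)(r)=\bigcap B(r)$. The intersection runs over all left-exact systems of ideals $B(-)\colon P\to I(A(\infty_P))$ with $A(-)\subseteq B(-)$. Pointwise intersections of left-exact systems are left-exact. For a morphism $f\colon A\to B'$ with $B'$ left-exact, $f^{-1}(B'(-))$ is such a system, and this gives the universal property. Since your main argument never uses the backup, the proof stands once it is removed.
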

 \begin{proof}
 We first show that $P^{\uc}\nCalg$ is closed in $P\nCalg$ under filtered colimits.
 We use that filtered colimits of exact sequences are exact (see \cref{hpzhjetophherth}). Let $(A_{i})_{i\in I}$ be a filtered family in $P^{\uc}\nCalg$ with colimit $A$ in $P\nCalg$. 
We must show that $A$ is again left exact.  Let $(p_{f})_{f\in F}$ be a finite family in $P$  such that $\bigwedge_{f\in F} p_{f} $ exists. Then  we have the system of exact sequences
$$0\to \bigcap_{f\in F} A_{i}(p_{f})  \to A_{i}(p)\to A_{i}(p)/A_{i}(\bigwedge_{f\in F} p_{f}) \to 0$$
since the $A_{i}$ are left-exact. Hence
$$0\to \colim_{i\in I} \bigcap_{f\in F} A_{i}(p_{f}) \to A(p)\to A(p)/ A(\bigwedge_{f\in F}p_{f}) \to 0\ .$$
We now use  that  $$ \colim_{i\in I}\bigcap_{f\in F} A_{i}(p_{f}) \cong \bigcap_{f\in F} A(p_{f})   \ .$$
To see this, we note that we  can realize the filtered colimit inside $A$.  Then
 the relation $\subseteq$ is clear.  Every element of $\bigcap_{f\in F} A(p_{f})  $ can be approximated by   products
 $\prod_{f\in F} a_{f}$ with $a_{f}\in A(p_{f})$.
 For fixed $f$ the element $a_{f}$ can be approximated by the image of an element $a_{i,f}$ in $A_{i}(p_{f})$.
Since $F$ is finite, we can take a common $i$ and then 
 $\prod_{f\in F} a_{i,f}  \in \bigcap_{f\in F} A_{i}(p_{f})$.
 This shows that $a$ belongs to $\colim_{i\in I} \bigcap_{f\in F} A_{i}(p_{f})  $.
  We conclude that
 $$A( \bigwedge_{f\in F}p_{f}) \cong  \bigcap_{f\in F} A(p_{f})   \ .$$

 Note that limits in $ P\nCalg$ are created in $\Fun(P,\nCalg)$ and are therefore taken pointwise.
Since left-exactness is  a condition   which can be phrased in terms of limits  involving the values of the functors,
this condition is preserved under limits. Consequently, $P^{\uc}\nCalg$ is closed in $P\nCalg$ under limits.

We can now apply the adjoint functor theorem and obtain the existence of the left-adjoint $L$.
Since $P^{\uc}\nCalg$ is an accessible left Bousfield localization
of a presentable category it is itself presentable.

In order to show that $\incl$ is $E$-admissibe it remains to show that it preserves finite products and exact sequences. 
Being a right adjoint,   it preserves finite products and  kernels;  we must show that it also preserves quotients.  To this end, 
it suffices to show that $P^{\uc}\nCalg$ is closed in $P\nCalg$ under quotients.   We consider a push-out $$\xymatrix{A\ar[r]\ar[d] &B \ar[d] \\ 0\ar[r] & C}$$ in $P\nCalg$ and assume that $A$ and $B$ are in $P^{\uc}\nCalg$. We must show that then, $C$ is also  in $P^{\uc}\nCalg$. We can replace $A$ by the kernel of $B\to C$.
%
Let $(p_{f})_{f\in F}$ be a finite family in $P$  such that $\bigwedge_{f\in F} p_{f} $ exists.
Then
\begin{align*} \bigcap_{f\in F}C(p_{f})  = \bigcap_{f\in F} \frac{B(p_{f})}{A(p_{f})} \stackrel{!}{=} \frac{ \bigcap_{f\in F} B(p_{f})}{  \bigcap_{f\in F} A(p_{f})}
\stackrel{!!}{=} \frac{B(\bigwedge_{f\in F}p_{f})}{  A(\bigwedge_{f\in F}p_{f})}=C( \bigwedge_{f\in F}p_{f})\ , \end{align*}where the  equality marked by $!$ is due to \cref{khoprtehegrtege9} and $!!$ follows from the left-exactness of $A$ and $B$.
 \end{proof}
 
 \begin{rem}
 One can check that the left Bousfield localization \eqref{fqwfqwedqewdqw} and $\incl$ are symmetric monoidal.  \hB \end{rem}

\begin{prop} \label{kophrthertgertgtrge} We have a  lax symmetric monoidal functor
 \begin{equation}\label{bsdfvdfvfst5t5t5tdvsdf}  (-)^{\uc}\nCalg :\Poset_{(*)}^{\puc}\to \Cat\end{equation} 
  to $E$-theory contexts, with the structure  of a subfunctor of $(-)\nCalg_{|\Poset_{(*)}^{\puc}}$.
\end{prop}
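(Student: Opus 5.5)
The plan is to mirror the proof of \cref{koprtzjrtzhrzthr}. We check the conditions of \cref{tohpertgertgrteg} for the subfunctor $P\mapsto P^{\uc}\nCalg$ of $(-)\nCalg_{|\Poset_{(*)}^{\puc}}$, and then transport $E$-admissibility along the $E$-admissible inclusions $P^{\uc}\nCalg\to P\nCalg$ from \cref{phertgertgetr}. There are three things to establish: (i) functoriality, i.e.\ that $f^{*}$ preserves left-exactness for $f$ partially left-exact; (ii) that the external tensor product $A\boxtimes B$ of left-exact algebras is left-exact on $P\times Q$; and (iii) that all resulting functors are $E$-admissible.

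For (i), let $f:P\to Q$ be partially left-exact and $A\in Q^{\uc}\nCalg$. Then $f^{*}A(p)=A(f(p))$, viewed as a $P$-$C^{*}$-algebra with underlying algebra $A(f(\infty_{P}))$. Since $f$ preserves non-empty finite meets and $A$ sends them to intersections of ideals, we get $A(f(\bigwedge p_{i}))=\bigcap A(f(p_{i}))$. Intersections of ideals of $A(f(\infty_P))$ coincide with those computed in $A(\infty_Q)$. The empty meet is $\infty_{P}$, which goes to the whole underlying algebra by definition. Hence $f^{*}A$ is left-exact. This is exactly why only \emph{partial} left-exactness is needed.

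For (ii), $(P\times Q)$ has componentwise meets, and $(A\boxtimes B)(p,q)=A(p)\otimes B(q)$, an ideal in $A(\infty_{P})\otimes B(\infty_{Q})$ by \cref{ihrfiwuoghewergwreg}. We must show
\[(A(p)\otimes B(q))\cap(A(p')\otimes B(q'))=A(p\wedge p')\otimes B(q\wedge q').\]
I would reduce this to the two-factor case $(I\otimes B)\cap(A\otimes J)=I\otimes J$ for ideals $I\subseteq A$ and $J\subseteq B$ in the maximal tensor product, together with $(I\otimes B)\cap(I'\otimes B)=(I\cap I')\otimes B$. For the latter I would use that $-\otimes_{\max} B$ preserves exact sequences, applied to $0\to I\cap I'\to I\to I/(I\cap I')\to 0$; via \cref{khoprtehegrtege9}-style quotient identifications this shows that the kernel of $A\otimes B\to (A/I\oplus A/I')\otimes B$ is $(I\cap I')\otimes B$. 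For the former, $I\otimes J=(I\otimes B)\cdot(A\otimes J)$, and an intersection of two ideals equals their product. Then $(I\otimes B)\cdot(A\otimes J)$ is the closed span of $I A\otimes BJ$, which is $I\otimes J$ by approximate units. I expect this ideal-lattice computation in the maximal tensor product to be the main obstacle, since exactness of $\otimes_{\max}$ has to be used carefully to identify intersections.

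For (iii), $E$-admissibility of $f^{*}$ and of $A\boxtimes-$ follows from \cref{koprtzjrtzhrzthr}. By \cref{phertgertgetr}, filtered colimits, finite products and exact sequences in $P^{\uc}\nCalg$ are computed as in $P\nCalg$: the inclusion is $E$-admissible, and $P^{\uc}\nCalg$ is closed under limits, filtered colimits and quotients there, so it also reflects these. Thus the restricted functors preserve the relevant structures. The unit condition holds since $*^{\uc}\nCalg=\nCalg$, and pointedness is inherited from the zero object. The inclusion into $(-)\nCalg$ is then a natural transformation of lax symmetric monoidal functors to $E$-theory contexts.
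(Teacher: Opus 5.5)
Your proposal is correct and follows the paper's proof. Functoriality holds because $f$ preserves non-empty finite meets; left-exactness of $A\boxtimes B$ comes from the identity $(I\otimes K)\cap(J\otimes L)=(I\cap J)\otimes(K\cap L)$ (the paper's \cref{kohpertherggetrg}); and $E$-admissibility is inherited from \cref{koprtzjrtzhrzthr} through the $E$-admissible inclusion of \cref{phertgertgetr}. The only variation is inside that identity: you get $(I\otimes B)\cap(A\otimes J)=I\otimes J$ from the fact that an intersection of ideals equals their product, plus approximate units, rather than from the paper's diagram chase with exact rows of maximal tensor products, and this works equally well.
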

\begin{proof}
 If $f:P\to Q$ is a morphism in $\Poset^{\puc}_{(*)}$, then
 $f^{*}$ sends $A$ in $Q^{\uc}\nCalg$ to an object of   
  $P^{\uc}\nCalg$ since $f$ preserves non-empty meets.
   
 For $A$ in $P^{\uc}\nCalg$ and $B$ in $Q^{\uc}\nCalg$ one checks using \cref{kohpertherggetrg} that
 the tensor product
 $A\boxtimes B$   belongs to $(P\times Q)^{\uc}\nCalg$.
  
 By \cref{phertgertgetr} the category $P^{\uc}\nCalg$ is closed in $ P\nCalg$ under   finite products and   filtered colimits,
and  the inclusion $P^{\uc}\nCalg\to P\nCalg$ is $E$-admissible.
    
It therefore follows from \cref{koprtzjrtzhrzthr} that $(-)^{\uc}\nCalg$ is a  lax symmetric monoidal functor to $E$-theory contexts, and that the inclusion $(-)^{\uc}\nCalg\to (-)\nCalg_{|\Poset_{(*)}^{\puc}}$ is a natural transformation between lax symmetric monoidal functors to $E$-theory contexts.
\end{proof}

\begin{rem}
If $f:P\to Q$ in $\Poset_{(*)}$ is partially left-exact, then $f(\infty_{P})$ is not necessarily a maximal element of $Q$.
For $A$ in $Q^{\uc}\nCalg$ the underlying $C^{*}$-algebra of $f^{*}A$ is therefore $A(f(\infty_{P}))$, which is an ideal in $A$. However,  if $f$ is left-exact, then $f^{*}$
preserves the underlying $C^{*}$-algebras. \hB
\end{rem}

\subsection{Regular $P$-$C^{*}$-algebras}

 A map between posets is a called preframe morphism if it preserves finite meets and filtered joins.
If it only preserves non-empty finite meets, then we add the prefix  "partial". 
 We let $\Poset_{(*)}^{\papr}$ denote the symmetric monoidal category of  pointed posets  and partial 
 preframe morphisms.

Let $P$ be  a pointed poset.
\begin{ddd}
\mbox{}
\begin{enumerate}
\item A left exact $P$-$C^{*}$-algebra   $A$   is called regular if the functor $A:P\to I(A)$  is a  preframe morphism.\item We let $P^{\reg}\nCalg$ denote the full subcategory of $P^{\uc}\nCalg$ consisting of regular  $P$-$C^{*}$-algebras.
\end{enumerate}
\end{ddd}

\begin{prop}\label{gwerwkeropfweferf}
The subcategory $P^{\reg}\nCalg$ of $P^{\uc}\nCalg$ is closed under finite products, filtered colimits, kernels, and quotients, and 
the inclusion $P^{\reg}\nCalg \to P^{\uc}\nCalg$  is $E$-admissible.
\end{prop}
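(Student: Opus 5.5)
The plan is to check each closure property directly. Since $P^{\uc}\nCalg$ is already known to behave well by \cref{phertgertgetr}, the only new condition is that filtered joins in $P$ are sent to closures of unions of ideals. Concretely, let $(p_j)_{j\in J}$ be a filtered family in $P$ whose join $p:=\bigvee_j p_j$ exists. For each candidate $A$ we must show
$$A(p)=\overline{\bigcup_{j} A(p_j)} .$$
The inclusion $\supseteq$ is automatic by monotonicity, so only $\subseteq$ needs proof. Throughout, limits and colimits in $P^{\uc}\nCalg$ are those of $\Fun(P,\nCalg)$, computed pointwise, by \cref{khterthertgetrg9} and \cref{phertgertgetr}.

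\textbf{Finite products and kernels.} For a finite product $A\times B$ we have
$$(A\times B)(p)=A(p)\times B(p)=\overline{\bigcup_j A(p_j)}\times\overline{\bigcup_j B(p_j)} .$$
Because the family is filtered, the union $\bigcup_j A(p_j)\times B(p_j)$ is increasing, and its closure is the product of the closures. For a kernel $A=\ker(B\to C)$ we have $A(q)=B(q)\cap A(\infty_P)$ pointwise. Take $a\in A(p)\subseteq B(p)$ and approximate it by $b_j\in B(p_j)$. Then $a$ lies in $B(p)\cap A$, and I would use an approximate unit of the ideal $A$ (or equivalently $B(p_j)\cap A = B(p_j)A$) to push the approximants into $B(p_j)\cap A=A(p_j)$, since $b_ju\to au\approx a$ for $u$ in an approximate unit of $A$. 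This is the step that needs some care.

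\textbf{Quotients.} If $C=B/A$ with $A,B$ regular, then
$$C(p)=\frac{B(p)+A}{A}$$
is the image of $B(p)=\overline{\bigcup_j B(p_j)}$ under the quotient map. Images of dense subsets are dense, so $C(p)=\overline{\bigcup_j C(p_j)}$.

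\textbf{Filtered colimits.} Let $A=\colim_i A_i$, realized so that $A$ is the closure of the union of the images $\phi_i(A_i)$. As in the proof of \cref{phertgertgetr}, $A(q)$ is the closure of $\bigcup_i\phi_i(A_i(q))$. For $a\in A(p)$, first approximate $a$ by some $\phi_i(x)$ with $x\in A_i(p)$. By regularity of $A_i$, approximate $x$ by $y\in A_i(p_j)$. Then $\phi_i(y)\in A(p_j)$, so $a$ lies in the closure of $\bigcup_j A(p_j)$. I expect this to be the main obstacle, because the identification of $A(q)$ with the closure of the union must be justified using \cref{hpzhjetophherth}.

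Finally, since the inclusion $P^{\reg}\nCalg\to P^{\uc}\nCalg$ is fully faithful and the subcategory is closed under all these constructions, finite products, filtered colimits and exact sequences (pairs of a kernel and a quotient) in $P^{\reg}\nCalg$ are computed as in $P^{\uc}\nCalg$. Hence the inclusion preserves them and is $E$-admissible.
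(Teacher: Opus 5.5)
Your proof is correct and has the same structure as the paper's: closure under finite products, filtered colimits and quotients is checked pointwise (the paper says regularity is a filtered-colimit condition that commutes with pointwise colimits, while you argue elementwise), and $E$-admissibility then follows from full faithfulness plus these closure properties. One small point: not all colimits in $P^{\uc}\nCalg$ are computed pointwise, but limits, filtered colimits and quotients are, and those are the only ones you use.

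The only real difference is the kernel case. There the paper invokes the distributivity law $A\cap\bigvee_i B(p_i)=\bigvee_i(A\cap B(p_i))$ in the frame $I(B)$. Your approximate-unit argument, with $b_ju\in B(p_j)A\subseteq A(p_j)$ close to $a$, is a direct elementwise proof of exactly that identity for filtered joins.
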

\begin{proof}
Since regularity is formulated as a condition involving filtered colimits of the evaluations, and since
finite products, filtered colimits, and quotients  in $P^{\uc}\nCalg$ are formed pointwise, 
we can conclude that the subcategory $ P^{\reg}\nCalg$ of $P^{\uc}\nCalg$ is closed under finite products, filtered colimits,   and quotients.
  For the kernels, we consider a pullback $$\xymatrix{A\ar[r]\ar[d] &B \ar[d] \\ 0\ar[r] & C} $$ in $P^{\uc}\nCalg$ and assume that
  $B$ and $C$ belong to $P^{\reg}\nCalg$.
  We must show that then,  $A$ also belongs to $P^{\reg}\nCalg$. We will use   the distributivity law in the frame $I(B)$.
  Let $(p_{i})_{\in I}$ be a filtered family in $P$  such that $p:=\bigvee_{i\in I}p_{i}$ exists.
  Then $A(p_{i})$ for every $i$ in $I$ is  also an ideal in $B$, and
  $$A(p)=A\cap B(p)=A\cap \bigvee_{i\in I}B(p_{i})=  \bigvee_{i\in I}(A\cap B(p_{i}))= \bigvee_{i\in I} A(p_{i})\ .$$
  \end{proof}

\begin{prop} \label{okpherhtrgertgetrg}We have a  lax symmetric monoidal functor
 \begin{equation}\label{bsdfvdfvfst5t5t5tdvsdf}  (-)^{\reg}\nCalg :\Poset_{(*)}^{\papr}\to \Cat\end{equation} 
  to $E$-theory contexts, with the structure of a  subfunctor of $(-)^{\uc}\nCalg_{|\Poset_{(*)}^{\papr}}$.
\end{prop}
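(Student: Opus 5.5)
The proof will follow the template of \cref{kophrthertgertgtrge}. We use \cref{kophrthertgertgtrge} restricted along the inclusion $\Poset_{(*)}^{\papr}\subseteq \Poset_{(*)}^{\puc}$, together with \cref{gwerwkeropfweferf}. We need to check three things: that $(-)^{\reg}\nCalg$ is a subfunctor, that it is closed under the external tensor product, and that the inclusion is $E$-admissible objectwise.

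\emph{Functoriality.} Let $f:P\to Q$ be a partial preframe morphism and let $A$ be in $Q^{\reg}\nCalg$. By \cref{kophrthertgertgtrge}, $f^{*}A$ is left exact with underlying algebra $A(f(\infty_{P}))$. For a filtered family $(p_{i})$ in $P$ with join $p$, we have $f(p)=\bigvee_{i} f(p_{i})$, and this is again a filtered join. Hence
$$A(f(p))=\overline{\bigcup_i A(f(p_i))}$$
in $I(A)$, and so also in the ideal $A(f(\infty_{P}))$, since joins of ideals contained in an ideal $J$ are the same computed in $I(J)$ or in $I(A)$. Thus $f^{*}A$ is regular.

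\emph{External tensor product.} Let $A$ be in $P^{\reg}\nCalg$ and $B$ in $Q^{\reg}\nCalg$. By \cref{kophrthertgertgtrge}, $A\boxtimes B$ is left exact on $P\times Q$. Let $(p_{i},q_{i})_{i\in I}$ be a filtered family with join $(p,q)$ in $P\times Q$. The join is computed componentwise, and both projections of the family are filtered with joins $p$ and $q$. We must show that $A(p)\otimes B(q)$ is the closure of $\bigcup_{i} A(p_{i})\otimes B(q_{i})$. Each $A(p_{i})\otimes B(q_{i})$ is an ideal in $A\otimes B$, by the earlier lemma on ideals in maximal tensor products used for \cref{koprtzjrtzhrzthr}. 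Because $I$ is filtered, $\bigcup_{i}A(p_{i})\otimes B(q_{i})$ contains all $A(p_{i})\otimes B(q_{j})$: choose $k\ge i,j$. Hence its closure contains the image of $\bigcup_{i} A(p_{i})\odot \bigcup_{j} B(q_{j})$, and this image is dense in $A(p)\otimes B(q)$ by regularity of $A$ and $B$. Alternatively, write $A(p)\otimes B(q)$ as the filtered colimit of the $A(p_{i})\otimes B(q_{i})$, using that $\otimes$ preserves filtered colimits as in \cref{kohprthertgretget}, and that filtered colimits of ideal inclusions are realized as closures of unions (\cref{hpzhjetophherth}).

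This tensor step is the only real obstacle. The delicate point is the density claim in the maximal tensor product: the maximal norm on $A(p)\otimes B(q)$ agrees with the restriction of the norm on $A\otimes B$, which requires the ideal lemma. If that is unclear, I would prefer the colimit argument via \cref{hpzhjetophherth}.

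\emph{Admissibility and the unit.} \cref{gwerwkeropfweferf} gives closure of $P^{\reg}\nCalg$ in $P^{\uc}\nCalg$ under finite products, filtered colimits and exact sequences, and shows that the inclusion is $E$-admissible. For the unit $*$, we have $*^{\reg}\nCalg=\nCalg$. So all conditions of \cref{tohpertgertgrteg} are inherited from \cref{kophrthertgertgtrge}. In particular, the functors $f^{*}$ and $A\boxtimes-$ restricted to regular objects are $E$-admissible, since the relevant colimits and exact sequences agree with those in the ambient category. The inclusion is therefore a natural transformation of lax symmetric monoidal functors to $E$-theory contexts.
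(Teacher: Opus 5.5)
Your proposal is correct and follows the paper's proof. The paper likewise restricts \cref{kophrthertgertgtrge} along $\Poset^{\papr}_{(*)}\subseteq\Poset^{\puc}_{(*)}$, checks that $f^{*}$ and $\boxtimes$ preserve regularity, and inherits $E$-admissibility from \cref{gwerwkeropfweferf}. The only difference is the tensor step: the paper just cites \cref{jgwoiergwergwreg}, whose proof is exactly your filtered-colimit argument, and the norm issue you flag is harmless because the estimate $\|a\otimes b\|\le\|a\|\|b\|$ already holds inside $A\otimes B$.
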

\begin{proof}
 If $f:P\to Q$ is a morphism in $\Poset^{\papr}_{(*)}$, then
 $f^{*}$ sends $A$ in $Q^{\reg}\nCalg$ to  an object of  $P^{\reg}\nCalg$ since $f$ preserves non-empty meets and filtered joins.
   
 For $A$ in $P^{\reg}\nCalg$ and $B$ in $Q^{\reg}\nCalg$ one checks using    \cref{jgwoiergwergwreg} that
 the tensor product
 $A\boxtimes B$   belongs to $(P\times Q)^{\reg}\nCalg$.
  
 By \cref{gwerwkeropfweferf} the category $P^{\reg}\nCalg$ is closed in $ P^{\uc}\nCalg$ under   finite products, and   filtered colimits, and the 
  inclusion $P^{\reg}\nCalg \to  P^{\uc}\nCalg$ is $E$-admissible.
  
It therefore follows from \cref{kophrthertgertgtrge} that $(-)^{\reg}\nCalg$ is a  lax symmetric monoidal functor to $E$-theory contexts, and that  the inclusion $(-)^{\reg}\nCalg\to (-)^{\uc}\nCalg_{|\Poset_{(*)}^{\papr}}$ is a natural transformation between lax symmetric monoidal functors to $E$-theory contexts.
\end{proof}

Under additional conditions on the poset $P$  we can present the category of  regular $P$-$C^{*}$-algebras as a right Bousfield localization of the category of left-exact ones and show that $P^{\reg}\nCalg$ is presentable. In the following we explain the details.

Let $P$ be a poset. We recall the following definitions from the theory of posets (see  \cite[Sec. 3]{Lehner:2026aa} for more background).
\begin{enumerate}\item For $p,q$ in $P$ we say that $p\ll q$ (way below) if  for every filtered subset $R$ in $P$ with $q=\bigvee R$ there exists  $r$ in $R$ such that $p\le r$.
\item $P$ is called  a preframe  if it admits finite meets and all filtered joins, and if meets distribute over  filtered joins.  
 \item  A    preframe $P$ is locally compact if for every $q$ in $P$ we have $$q=\bigvee \{p\in P\mid p\ll q\}\ . $$
\item A     preframe  $P$  is stably locally compact if $p\ll q$ and $p\ll r$ imply $p\ll q\wedge r$.
\end{enumerate}

Let $P$ be a poset 
\begin{ddd}\label{khortetrggtgeg}
We define a functor
$R:P\nCalg\to P\nCalg$  given on objects by 
$$A\mapsto \left(q\mapsto R(A)(q):=\bigvee_{p\ll q} A(p)\right)$$ for all $q$ in $P$, and on morphisms in the canonical manner.
\end{ddd}

We have a natural transformation
$$r:R\to \id:P\nCalg\to P\nCalg$$
induced by the obvious inclusions $R(A)(q)\subseteq A(q)$ of ideals for all $q$ in $P$.
 The functor $R$ in \eqref{gwerfwerfwerfwerferf2} below is the restriction of the functor from \cref{khortetrggtgeg}
 to $P^{\uc}\nCalg$, and implicitly we claim that it then takes values in $P^{\reg}\nCalg$.
  
    \begin{theorem}\label{jioretherthertgetrgertg}
  We assume that     $P$ is a stably locally compact preframe.
   \begin{enumerate} \item  We   have a right Bousfield localization
\begin{equation}\label{gwerfwerfwerfwerferf2}
\incl:P^{\reg}\nCalg\leftrightarrows P^{\uc}\nCalg: R\ .
\end{equation}
\item 
The functor $R$ in \eqref{gwerfwerfwerfwerferf2} is $E$-admissible.
\item The category 
 $P^{\reg}\nCalg$ is presentable.
 \end{enumerate}
   \end{theorem}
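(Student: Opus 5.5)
The plan is to verify the three claims in order, using the interpolation properties of the way-below relation in a locally compact preframe.

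\textbf{Step 1: $R$ maps $P^{\uc}\nCalg$ into $P^{\reg}\nCalg$.} Let $A$ be left exact. Then $R(A)(\infty_P)$ must be the underlying algebra, so we replace it by $A(\infty_P)$, or note that $\infty_P=\bigvee\{p\ll\infty_P\}$. The poset $\{p\mid p\ll q\}$ is directed; this uses stable local compactness together with the fact that $p\ll q$ and $p'\ll q$ imply $p\vee p'\ll q$ whenever the join exists. Hence $R(A)(q)$ is the closure of a directed union of ideals.

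\emph{Finite meets.} Suppose $p\ll q$ and $p'\ll r$. Then $p\wedge p'\ll q$ and $p\wedge p'\ll r$, so stability gives $p\wedge p'\ll q\wedge r$. Left exactness then yields
$$A(p)\cap A(p')=A(p\wedge p')\subseteq R(A)(q\wedge r).$$
Since intersections of ideals distribute over closures of directed unions in the frame $I(A)$, we get $R(A)(q)\cap R(A)(r)=R(A)(q\wedge r)$.

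\emph{Filtered joins.} Let $q=\bigvee_i q_i$ be a filtered join. If $p\ll q$, interpolate $p\ll p'\ll q$ (interpolation holds in locally compact preframes). By the definition of $\ll$ there is an $i$ with $p'\le q_i$, so $p\ll q_i$ and $A(p)\subseteq R(A)(q_i)$. This gives $R(A)(q)=\bigvee_i R(A)(q_i)$.

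\textbf{Step 2: the adjunction.} We show that $R$ is idempotent with $R(A)=A$ for regular $A$. For regular $A$ and $q=\bigvee\{p\ll q\}$, preservation of filtered joins gives $A(q)=\bigvee_{p\ll q}A(p)=R(A)(q)$. For a morphism $f\colon B\to A$ with $B$ regular, we have
$$f(B(q))=f\Bigl(\bigvee_{p\ll q}B(p)\Bigr)\subseteq\bigvee_{p\ll q}A(p)=R(A)(q).$$
So $f$ factors uniquely through the monomorphism $r_A\colon R(A)\to A$; the underlying map is the identity on algebras. This establishes the right Bousfield localization $\incl\dashv R$ with counit $r$.

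\textbf{Step 3: $E$-admissibility of $R$.} As a right adjoint, $R$ preserves finite products and kernels. Filtered colimits in $P^{\uc}\nCalg$ are pointwise closures of unions inside the colimit algebra, and $R$ is built from joins, so it commutes with them by an approximation argument like the one in \cref{phertgertgetr}.

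The main obstacle is quotients. Let $0\to A\to B\to C\to 0$ be exact in $P^{\uc}\nCalg$, so $C(q)=B(q)/A(q)$. We need
$$R(C)(q)=\bigvee_{p\ll q}\bigl(B(p)+A(\infty)\bigr)/A(\infty)$$
to equal $R(B)(q)/R(A)(q)$, with $R(A)=R(B)\cap A$ by Step 1 and kernel preservation. Images of joins of ideals under a quotient map are joins, so $R(C)(q)$ is the image of $R(B)(q)$. By left exactness of $B$, $R(B)(q)\cap A=R(A)(q)$, which identifies the quotient correctly. The delicate point is to check carefully the use of \cref{khoprtehegrtege9}-style identities for quotients of intersections and joins.

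\textbf{Step 4: presentability.} $P^{\reg}\nCalg$ is a coreflective subcategory of the presentable category $P^{\uc}\nCalg$ (\cref{phertgertgetr}), closed under colimits by \cref{gwerwkeropfweferf} and the coreflection. Since $R$ preserves filtered colimits, hence $\kappa$-filtered ones, it is an accessible coreflection. An accessible colocalization of a presentable category is presentable: it is the image of an accessible idempotent comonad and is closed under colimits.
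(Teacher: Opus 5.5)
Your argument follows the paper in Steps 1--3: the same meet computation via stability and distributivity of $I(A(\infty_P))$, the same interpolation/cofinality argument for filtered joins, the same coreflection with counit $r$, and the same pointwise check of exactness.

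In the quotient step your local arguments differ slightly from the paper's but are fine. Surjectivity of $R(B)(q)\to R(C)(q)$ holds because the image of a closed ideal under a surjective $*$-homomorphism is closed. The kernel identity $R(B)(q)\cap A(\infty_P)=\bigvee_{p\ll q}\bigl(B(p)\cap A(\infty_P)\bigr)=R(A)(q)$ follows from distributivity of $I(B(\infty_P))$ plus $A(p)=B(p)\cap A(\infty_P)$. Alternatively, use right-adjointness of $R$ and the fact that kernels in $P^{\reg}\nCalg$ are pointwise. The paper instead uses an approximate unit. Note that it is not left exactness of $B$ that enters here, and no \cref{khoprtehegrtege9}-type identity is needed.

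The genuine difference is Step 4. The paper builds generators $R(A_i)$ from $\kappa$-compact $A_i$ and asserts, from the explicit formula for $R$ with $\kappa>|P|$, that $R$ preserves $\kappa$-compact objects. You instead appeal to the general fact that an accessible coreflective subcategory of a presentable category is presentable, which sidesteps that size estimate. This is correct, but you should say why it holds. $P^{\reg}\nCalg$ is the full subcategory on which $r_A$ is invertible. That is, it is the preimage of the replete, accessible, colimit-closed subcategory of isomorphisms under the accessible functor $A\mapsto r_A$ into the arrow category, hence accessible. Cocompleteness then comes from coreflectivity.

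Two slips should be fixed.

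\textbf{The underlying algebra of $R(A)$.} Do not replace $R(A)(\infty_P)$ by $A(\infty_P)$, and $r_A$ is not the identity on underlying algebras. The underlying algebra of $R(A)$ is $R(A)(\infty_P)=\bigvee_{p\ll\infty_P}A(p)$, an ideal of $A(\infty_P)$ containing every $R(A)(q)$. So $R(A)$ is already a $P$-$C^{*}$-algebra, and $r_A$ is an ideal inclusion. Forcing the value $A(\infty_P)$ would violate regularity along the directed join $\infty_P=\bigvee_{p\ll\infty_P}p$. For example, take $P=\Open(\mathbb{R})$, $A(U)=C_0(U)$ for $U\neq\mathbb{R}$ and $A(\mathbb{R})=C_0(\mathbb{R})^{+}$. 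Then $R(A)(\mathbb{R})=C_0(\mathbb{R})\neq A(\mathbb{R})$. Your factorization argument in Step 2 only uses injectivity of $r_A$, so it survives.

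\textbf{Directedness of the way-below set.} Directedness of $\{p\mid p\ll q\}$ cannot be obtained from binary joins, which a preframe need not have; stability concerns meets and does not help. Directedness has to be part of what local compactness means, namely that $q=\bigvee\{p\ll q\}$ is a filtered join, the only kind a preframe provides. Both interpolation and the identity $A(q)=R(A)(q)$ for regular $A$ depend on it. The paper uses it implicitly as well.
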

   \begin{proof}
 We start by showing that $R$ sends $P^{\uc}\nCalg$ to $P^{\reg}\nCalg$. 
 Let $A$ be in $P^{\uc}\nCalg$. We first show that $R(A)$ is again left-exact.
 Let $p,q$ be in $P$. 
 Then we calculate
    \begin{eqnarray*}
    R(A)(p\wedge q)&= &\bigvee_{r\ll p\wedge q} A(r)\\ &\stackrel{!}{=}&
      \bigvee_{r\ll p \land s\ll q} A(r)\wedge A(s)\\&=&
        \bigvee_{r\ll p} A(r)\wedge  \bigvee_{ s\ll q} A(s)\\&=&
        R(A)(p)\wedge R(A)(q)\end{eqnarray*}
   where we used the facts that $P$ is stably locally compact and $A$ is left-exact  at the equality marked by $!$.   In order to see that $R(A)$ is regular, we calculate  \begin{eqnarray*}
 \bigvee_{p\ll q}R(A)(p)
&=& \bigvee_{p\ll q}  \bigvee_{r\ll p}A(r)\\
 &=& \bigvee_{r\ll q}A(r)
\end{eqnarray*}
since $(r)_{\exists p\mid r\ll p\ll q}$ is cofinal in $(r)_{r\ll q}$.
 
  We now provide the unit and the counit of the adjunction \eqref{gwerfwerfwerfwerferf2}.
We have a  natural transformation
$r:\incl \circ R\to \id$   induced by the  inclusions $R(A)(p)\to A(p)$.
We furthermore have an isomorphism
$R\circ \incl\cong \id$ since in this case the inclusions are equalities.

 Since $R$ is a right-adjoint, it preserves finite products and kernels.
We next show that $R$ preserves  filtered colimits and quotients.
 Filtered colimits in $P^{\uc}\nCalg$ and $P^{\reg}\nCalg$ are calculated pointwise.
 Since $R(A)(p)$ is given by a colimit of values of $A$, and since colimits commute with colimits,
we conclude that $R$ commutes with filtered colimits.

We  consider a   push-out diagram $$\xymatrix{A\ar[r]\ar[d] &B \ar[d] \\ 0\ar[r] & C} $$ in $P^{\uc}\nCalg$.
We can assume that $A$ is the kernel of $B\to C$.
We must show that $$\xymatrix{R(A)\ar[r]\ar[d] &R(B) \ar[d] \\ 0\ar[r] & R(C)} $$  is again a   push-out  diagram.
 Since quotients  in  $P^{\uc}\nCalg$ and $P^{\reg}\nCalg$ are calculated pointwise,
 it suffices to show that for every $q$ in $P$
 
  $$\xymatrix{R(A)(q)\ar[r]\ar[d] &R(B)(q) \ar[d] \\ 0\ar[r] & R(C)(q)} $$ is a push-out in $\nCalg$.   Let $c$ be in $R(C)(q)= \bigvee_{p\ll q}C(p)$.
Then then we can choose $c'$ in $  \bigcup_{p\ll q}C(p)$ approximating $c$.
We can then find a lift $b'$ in $ \bigcup_{p\ll q}B(p)$ of $c'$.  
It follows that $R(B)(q)\to R(C)(q)$ has a dense range, and is therefore surjective. 

Assume now that $b$   in $ R(B)(q)= \bigvee_{p\ll q}B(p)$
is sent to zero in $R(C)(q)$. Then $b$ in $A$.   
We can approximate $b$ by elements $b'$ in $\bigcup_{p\ll q}B(p)$.
Then also $ub'$ approximates $b$ for   suitable members $u$ of an approximate unit of $A$.
Then $ub'\in A(p)$ for some $p\ll q$. 
It follows that $b\in R(A)(q)$.

 We finally show that $P^{\reg}\nCalg$ is presentable. By \cref{phertgertgetr} there exists 
 $\kappa$ such that $P^{\uc}\nCalg$ is $\kappa$-presentable. An object of $P^{\reg}\nCalg$
which is $\kappa$-compact in $P^{\uc}\nCalg$ is also $\kappa$-compact in $P^{\reg}\nCalg$ since $\incl$ preserves filtered colimits, so in particular $\kappa$-filtered ones.
Since $R$ preserves filtered colimits, $\incl$ preserves $\kappa$-compact objects. Hence the
$\kappa$-compact objects  in $P^{\reg}\nCalg$ are precisely the $\kappa$-compact objects in $P^{\uc}\nCalg$ which are in addition regular. If $A$ is in $ P^{\reg}\nCalg$, then $\incl A\cong \colim_{i} A_{i}$ in $P^{\uc}\nCalg$ for a system of $\kappa$-compact objects  $A_{i}$ in $P^{\uc}\nCalg$.
Then also $A\cong \colim_{i} R(A_{i})$ in $ P^{\reg}\nCalg$. It remains to see that 
  $R$ preserves $\kappa$-compact objects. Using the explicit description of $R$, if we take 
  $\kappa$ 
  bigger than the cardinality of $P$, then this is the case. 
     \end{proof}

\subsection{Almost continuous $P$-$C^{*}$-algebras}

A morphism between posets is called an almost partial frame morphism if it preserves all non-empty finite meets and all non-empty joins.  The word "almost" thus indicates that we do not require the preservation of the minimal element. If it preserves all finite meets, then it is called an almost  frame morphism. We let $\Poset_{(*)}^{\pafr }$ denote the category of posets and partial almost  frame  morphisms.

Let $P$ be  a pointed poset.
\begin{ddd}
\mbox{}
\begin{enumerate}
\item A regular  $P$-$C^{*}$-algebra   $A$   is called  almost continuous if the functor $A:P\to I(A)$  is an almost  frame morphism.
\item We let $P^{\ac}\nCalg$ denote the full subcategory of $P^{\reg}\nCalg$ consisting  of almost continuous   $P$-$C^{*}$-algebras.
\end{enumerate}
\end{ddd}

  Let $P$ be a pointed poset.  
  \begin{lem}\label{kohperthertgertgetrg} 
  The subcategory $P^{\ac}\nCalg$ is closed in $P^{\reg}\nCalg$ under  finite products, filtered colimits, kernels, and  quotients, and the inclusion
$ P^{\ac}\nCalg\to  P^{\reg}\nCalg$ is $E$-admissible.     \end{lem}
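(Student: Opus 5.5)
We follow the pattern of \cref{gwerwkeropfweferf}. Since $P^{\reg}\nCalg$ already carries finite meets and filtered joins correctly, almost continuity of a regular $P$-$C^{*}$-algebra $A$ amounts to one extra condition: for every non-empty finite family $(p_{f})_{f\in F}$ in $P$ whose join $p=\bigvee_{f}p_{f}$ exists,
\[
A(p)=\overline{\sum_{f\in F}A(p_{f})}\ .
\]
Indeed, an arbitrary non-empty join is the filtered join of its finite sub-joins, and regularity handles the filtered part. So the plan is to check closure of this finite-join condition under each of the four operations. Once these closure properties hold, $E$-admissibility of the inclusion follows formally: finite products, filtered colimits and exact sequences in $P^{\ac}\nCalg$ are then computed as in $P^{\reg}\nCalg$. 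Hence, as in \cref{gwerwkeropfweferf}, they are computed pointwise.

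For finite products, filtered colimits and quotients we use that these are formed pointwise, since the inclusion of $P^{\reg}\nCalg$ into $P^{\uc}\nCalg$ is $E$-admissible.
\begin{itemize}
\item[\emph{Finite products.}] The sum of ideals in $A\times B$ is computed componentwise, so the condition passes to products.
\item[\emph{Filtered colimits.}] We realise the colimit $A=\colim_{i}A_{i}$ inside its underlying algebra, as in the proof of \cref{phertgertgetr}. Then $A(p)$ is the closure of $\bigcup_{i}\mathrm{im}(A_{i}(p))=\bigcup_{i}\mathrm{im}\bigl(\sum_{f}A_{i}(p_{f})\bigr)$. This lies in $\overline{\sum_{f}A(p_{f})}$, and the reverse inclusion is automatic.
\item[\emph{Quotients.}] If $C=B/A$ pointwise, then $C(p)=B(p)/A(p)$ is the image of $\overline{\sum_{f}B(p_{f})}$. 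Since the image of a closed ideal under a surjective $*$-homomorphism is closed, this image is $\overline{\sum_{f}C(p_{f})}$.
\end{itemize}

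The main obstacle is kernels, since there sums of ideals must be intersected with the kernel. Let $A=\ker(B\to C)$ with $B,C$ almost continuous. Then $A(q)=A\cap B(q)$ for all $q$, and we must show
\[
A\cap \overline{\sum_{f}B(p_{f})}=\overline{\sum_{f}A\cap B(p_{f})}\ .
\]
We will use the distributivity law of the frame $I(B)$, exactly as in \cref{gwerwkeropfweferf}: meets distribute over arbitrary joins, and in $I(B)$ joins are closures of sums. This gives the displayed identity directly. The only point to check carefully is that the join in $I(B)$ really is the closed sum, which holds because a sum of closed ideals is an ideal and its closure is the smallest closed ideal containing all of them.

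Alternatively, one can argue with approximate units. Take $x\in A$ approximated by $\sum_{f}b_{f}$ with $b_{f}\in B(p_{f})$, and let $u$ be an approximate unit of $A$. Then $ub_{f}\in A\cap B(p_{f})$, and $u\sum_{f}b_{f}$ approximates $ux\approx x$. Either way the kernel case reduces to the frame distributivity of $I(B)$.

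Finally, if almost continuity also requires sending the empty family to $0$ (it does not, by the word ``almost''), nothing further needs checking.
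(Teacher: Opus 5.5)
Your four closure arguments are the paper's: finite products, filtered colimits and quotients are computed pointwise, and the kernel case comes down to distributivity in the frame $I(B)$. The paper also notes that only $B$ needs to be almost continuous there.

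\textbf{The gap is your opening reduction.} You claim that, for regular $A$, almost continuity amounts to preserving existing \emph{finite} non-empty joins, because ``an arbitrary non-empty join is the filtered join of its finite sub-joins''. That needs the finite sub-joins to exist in $P$. For a general pointed poset they need not, and the lemma is stated for arbitrary pointed $P$.

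Here is a counterexample.
\begin{itemize}
\item Let $P$ consist of elements $a_{n},b_{n}$ ($n\in\mathbb{N}$), elements $x_{F},y_{F}$ ($F\subseteq\mathbb{N}$ finite, $|F|\ge 2$), and a top element $\infty_{P}$.
\item All of these are pairwise incomparable, except that $a_{n},b_{n}\le x_{F},y_{F}$ if and only if $n\in F$, and everything lies below $\infty_{P}$.
\item Then $\bigvee_{n}a_{n}=\infty_{P}$. But no two or more of the $a_{n}$ have a join, since $x_{F}$ and $y_{F}$ are incomparable minimal upper bounds.
\item Every existing non-empty finite meet is the least member of the family; the $b_{n}$ prevent $x_{F}\wedge x_{G}$ from being $a_{n}$. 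Every directed subset has a maximum.
\item A finite family with at least two members has a join only if it contains some $x_{F}$, $y_{F}$ or $\infty_{P}$.
\end{itemize}
Now let $A$ have underlying algebra $\mathbb{C}$, with $A(a_{n})=A(b_{n})=0$ and $A(p)=\mathbb{C}$ otherwise. Then $A$ is regular and satisfies your finite-join condition. Yet $A(\infty_{P})=\mathbb{C}\neq 0=\overline{\sum_{n}A(a_{n})}$. So as written you only prove closure of a strictly weaker property.

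\textbf{The repair.} Drop the reduction and run each argument for an arbitrary non-empty family $(p_{k})_{k\in K}$ whose join $p$ exists, with $\overline{\sum_{k}}$ in place of $\overline{\sum_{f}}$. Each step goes through verbatim.
\begin{itemize}
\item Filtered colimits: $A(p)$ is the closure of the images of $A_{i}(p)=\overline{\sum_{k}A_{i}(p_{k})}$, which lie in $\overline{\sum_{k}A(p_{k})}$.
\item Quotients: the image of the closed ideal $\overline{\sum_{k}B(p_{k})}$ is closed, so it equals $\overline{\sum_{k}C(p_{k})}$.
\item Kernels: in $I(B)$, meets distribute over arbitrary joins; your approximate-unit argument also works for infinite sums.
\item Products: componentwise.
\end{itemize}
This is what the paper does in its filtered-colimit and kernel steps. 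Its quotient step also speaks loosely of ``finite non-empty joins''; your closed-image argument is the precise version. Your reduction is valid only when $P$ has binary joins, for example when $P$ is a frame.
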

\begin{proof}  
Let $(A_{i})_{i\in I}$ be a filtered system in $P^{\ac}\nCalg$ and $A:=\colim_{i\in I}A_{i}$ in $P^{\reg}\nCalg$. We must show that $A$ is again almost continuous.
 Let $(p_{k })_{k\in K}$ be a family in $P$ whose join exists. It is clear, that
  $$\bigvee_{k\in K}A(p_{k})\subseteq A(\bigvee_{k\in K}p_{k})$$ and we must show the converse inclusion.
Consider $a$   in $A(\bigvee_{k\in K}p_{k})$. 
Then 
  $a$ can be approximated by images of elements in $A_{i}( \bigvee_{k}p_{k})$.
The latter are in $\bigvee_{k\in K} A_{i}(p_{k})$, and hence the approximants belong to $ \bigvee_{k\in K}A(p_{k})$.  Consequently,
$a$ is in  $ \bigvee_{k\in K}A(p_{k})$.

 Since we already know that quotients in $P^{\reg}\nCalg$ are formed pointwise and the additional condition for  being in  $P^{\ac}\nCalg$  is the preservation of  finite non-empty joins, i.e., some finite colimits, 
 it is clear that    $P^{\ac}\nCalg$ is closed in $P^{\reg}\nCalg$ under
 quotients. 
 
 The closedness of $P^{\ac}\nCalg$  in $P^{\reg}\nCalg$  under finite products is clear. 
 
 We now consider kernels.
      Assume  that $$\xymatrix{A\ar[r]\ar[d] &B \ar[d] \\ 0\ar[r] & C} $$ is a  pullback in $P^{\reg}\nCalg$
 such that
$B$    belongs  to $P^{\ac}\nCalg$. For the present purpose, we  could assume  that $C$ belongs to $P^{\ac}\nCalg$, but we do not need to do this.  
Let $(p_{i})_{i\in I}$ be a family in $P$ such that $p:=\bigvee_{i\in I}p_{i}$ exists. 
Then $A(p_{i})$ for every $i$ in $I$ is  also an ideal in $B$, and
  $$A(p)=A\cap B(p)=A\cap \bigvee_{i\in I}B(p_{i})=  \bigvee_{i\in I}(A\cap B(p_{i}))= \bigvee_{i\in I} A(p_{i})\ ,$$
  where we use that $I(A)$ is a frame and hence distributive.
%
   \end{proof}

\begin{prop} \label{koperhtetrhrgetr}We have a   functor
 \begin{equation}\label{bsdfvdfvfst5t5t5tdvsdf}  (-)^{\ac}\nCalg :\Poset_{(*)}^{\pafr}\to \EC\end{equation} 
  to $E$-theory contexts,   with the structure of  a subfunctor of  $(-)^{\reg}\nCalg_{|\Poset_{(*)}^{\pafr}}$.
\end{prop}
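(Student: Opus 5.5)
We follow the same pattern as the proofs of \cref{okpherhtrgertgetrg} and \cref{kophrthertgertgtrge}. Note that the statement asks only for a functor to $\EC$, not a lax symmetric monoidal one, which is why the tensor product step is absent. There are three steps: functoriality of the subcategory, the $E$-theory context property of each value, and $E$-admissibility of the transition maps.

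\textbf{Functoriality.} Let $f:P\to Q$ be a morphism in $\Poset_{(*)}^{\pafr}$. The functor $f^{*}$ sends $A$ in $Q^{\ac}\nCalg$ to $A\circ f$ in $P\nCalg$, whose underlying algebra is $A(f(\infty_{P}))$. Since $f$ preserves non-empty finite meets and non-empty joins, it is in particular a partial preframe morphism. Hence $f^{*}A$ lies in $P^{\reg}\nCalg$ by \cref{okpherhtrgertgetrg}.

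It remains to check almost continuity of $f^{*}A$, i.e.\ that non-empty joins in $P$ go to joins of ideals in $I(A(f(\infty_{P})))$. This holds because $f$ preserves them and $A$ sends the resulting non-empty joins in $Q$ to joins in $I(A)$. We also need that joins of ideals contained in the ideal $A(f(\infty_{P}))$ are the same whether computed in $I(A)$ or in $I(A(f(\infty_{P})))$. This is the one point that needs a little care: the closure of the sum of ideals of an ideal $J$ is again an ideal of $A$ contained in $J$, so the two joins agree. Finite meets are handled in the same way, using that intersections of ideals of $J$ are ideals of $A$.

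\textbf{$E$-theory contexts and $E$-admissibility.} Each $P^{\ac}\nCalg$ is pointed by the zero algebra and is an $\cN$-module via $A\otimes N$. For this we must know that $-\otimes N$ preserves almost continuity. Since $-\otimes N$ preserves filtered colimits, exact sequences and products, the value of $A\otimes N$ at a join is $A(\bigvee p_{k})\otimes N$. Writing the join as the closure of the sum of the $A(p_{k})$ as a filtered colimit of finite sums, $\otimes N$ commutes with it; this can be extracted from the argument for $\boxtimes$ with $Q=*$ in the lemmas invoked before (\cref{jgwoiergwergwreg}). With this in hand, $A\otimes-$ and $-\otimes N$ are $E$-admissible on $P^{\ac}\nCalg$. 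Indeed, by \cref{kohperthertgertgetrg} the inclusion $P^{\ac}\nCalg\to P^{\reg}\nCalg$ creates finite products, filtered colimits, kernels and quotients, and the corresponding functors on $P^{\reg}\nCalg$ are $E$-admissible by \cref{okpherhtrgertgetrg}.

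Finally, $f^{*}:Q^{\ac}\nCalg\to P^{\ac}\nCalg$ is $E$-admissible. Composing with the inclusion into $P^{\reg}\nCalg$ yields $f^{*}$ on the regular level composed with an inclusion, which is $E$-admissible by \cref{okpherhtrgertgetrg}. Since $P^{\ac}\nCalg$ is closed in $P^{\reg}\nCalg$ under the relevant constructions (\cref{kohperthertgertgetrg}), exact sequences, finite products and filtered colimits in $P^{\ac}\nCalg$ are those of $P^{\reg}\nCalg$. Hence preservation can be checked after inclusion.

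The naturality of the inclusion $(-)^{\ac}\nCalg\to(-)^{\reg}\nCalg_{|\Poset_{(*)}^{\pafr}}$ is evident, since both functors act by precomposition. The main obstacle I expect is the compatibility of joins with passing to the ideal $A(f(\infty_{P}))$ and with $-\otimes N$; everything else is formal from the earlier closure results.
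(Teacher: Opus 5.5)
Your proposal is correct and follows essentially the same route as the paper. You check that $f^{*}$ preserves almost continuity and that $A\otimes N$ stays almost continuous via \cref{jgwoiergwergwreg}, and then reduce all $E$-admissibility statements to the regular case using the closure properties of \cref{kohperthertgertgetrg} together with \cref{okpherhtrgertgetrg}. Your additional remark that joins and meets of ideals inside $A(f(\infty_{P}))$ agree whether computed in $I(A)$ or in $I(A(f(\infty_{P})))$ covers a point the paper passes over silently.
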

\begin{proof}
Note that we do not claim that this functor is lax symmetric monoidal. 

 If $f:P\to Q$ is a morphism in $\Poset^{\pac}_{(*)}$, then
 $f^{*}$ sends $A$ in $Q^{\ac}\nCalg$ to an object of $P^{\ac}\nCalg$ since $f$ preserves all non-empty finite meets and   non-empty joins.
   
 For $A$ in $P^{\ac}\nCalg$ and $B$ in $\nCalg$ one checks using \cref{jgwoiergwergwreg} that
 the tensor product
 $A\boxtimes B$    belongs to $P^{\ac}\nCalg$.
The functor $\cA$ takes values in $\Mod_{\nCalg}(\Cat)$, so, in particular, also in $\Mod_{\cN}(\Cat)$.
  
  By \cref{kohperthertgertgetrg}
 the category $P^{\ac}\nCalg$ is closed in $ P^{\reg}\nCalg$ under    finite products, and   filtered colimits, and
 the inclusion $P^{\ac}\nCalg \to  P^{\reg}\nCalg$ is $E$-admissible.  It therefore follows from \cref{okpherhtrgertgetrg} that $(-)^{\ac}\nCalg$ is a   functor to $E$-theory contexts, and that the inclusion
$(-)^{\ac}\nCalg\to (-)^{\reg}\nCalg_{|\Poset_{(*)}^{\pafr}}$ is a natural transformation between functors to $E$-theory contexts.
\end{proof}

\subsection{Continuous $P$-$C^{*}$-algebras}
A morphism between posets is called a partial frame morphism if it preserves non-empty   finite meets and all joins.
We let $\Poset^{\pfr}_{(*)}$ denote the category of pointed posets and partial frame morphisms.

\begin{ex}
Let $X$ be a topological space and $U$ be an open subset. Then, the inclusion $\Open(U)\to \Open(X)$ is a partial frame morphism. It corresponds to the partially defined map $X\supseteq U\to U$. This is the motivation for the word partial. Partially defined maps 
between locally compact Hausdorff spaces occur, for example, in the description of the Gelfand dual of a map between commutative $C^{*}$-algebras. \hB
\end{ex}

Let $P$ be  a pointed poset.
\begin{ddd}\label{zehjherthertherth9}
\mbox{}
\begin{enumerate}
\item An  almost continuous  $P$-$C^{*}$-algebra   $A$   is called  continuous if the functor $A:P\to I(A)$  is  a frame morphism. \item We let $P^{\rmc}\nCalg$ denote the full subcategory of $P^{\ac}\nCalg$ consisting  of continuous  $P$-$C^{*}$-algebras.
\end{enumerate}
\end{ddd}

Compared with almost continuity, the only new condition is that $A(0_{P})=0$. 
   \begin{lem}\label{kohperthertgertgetrg1} 
  The subcategory $P^{\rmc}\nCalg$ is closed in $P^{\ac}\nCalg$ under  finite products, filtered colimits, kernels, and  quotients, and the inclusion
$ P^{\rmc}\nCalg\to  P^{\ac}\nCalg$ is $E$-admissible.   \end{lem}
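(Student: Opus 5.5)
Since continuity differs from almost continuity only by the extra requirement $A(0_{P})=0$, I would check that this single condition survives each of the relevant constructions. I would use the descriptions of these constructions in $P^{\ac}\nCalg$ from \cref{kohperthertgertgetrg}: finite products, filtered colimits and quotients are computed pointwise (inherited from $P^{\reg}\nCalg$ and $P^{\uc}\nCalg$), and kernels are computed as in the proof of \cref{kohperthertgertgetrg}, i.e.\ $A(p)=A\cap B(p)$.

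The steps are as follows.

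\emph{Finite products.} Since $(\prod_{i} A_{i})(0_{P})\cong\prod_{i}A_{i}(0_{P})=0$, the product is again continuous.

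\emph{Filtered colimits.} Let $A=\colim_{i}A_{i}$ be a filtered colimit with all $A_{i}$ continuous. Then $A(0_{P})=\colim_{i}A_{i}(0_{P})=0$. Put more concretely, if we realize the colimit inside $A$, then $A(0_{P})$ is the closure of the union of the images of the $A_{i}(0_{P})$, and each of these images is zero.

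\emph{Kernels.} If $B$ is continuous and $A=\ker(B\to C)$, then $A(0_{P})=A\cap B(0_{P})=0$. This uses only the continuity of $B$; as in the almost continuous case, nothing needs to be assumed about $C$.

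\emph{Quotients.} If $0\to A\to B\to C\to 0$ is exact with $A$ and $B$ continuous, then $C(0_{P})=B(0_{P})/A(0_{P})=0$.

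Finally, the inclusion $P^{\rmc}\nCalg\to P^{\ac}\nCalg$ is full, and the subcategory is closed under finite products, filtered colimits and both halves of exact sequences. It therefore preserves finite products and filtered colimits, and it preserves and reflects bicartesian squares with a zero corner. By definition this means it is $E$-admissible.

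The only point that needs care, and which I regard as the main obstacle, is making sure that the categorical colimits and quotients in $P^{\rmc}\nCalg$ really agree with those computed in $P^{\ac}\nCalg$. This follows from the closedness statements together with fullness: a colimit cocone in the ambient category that lies in the full subcategory is also a colimit cocone there, and conversely, for any existing colimit in the subcategory, the comparison map from the ambient colimit is an isomorphism. The latter holds because the ambient colimit already lies in the subcategory and so corepresents the same functor on it.
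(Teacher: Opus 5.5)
Your proposal is correct and takes the same approach the paper intends. The paper's proof only says the assertions are straightforward to check; you carry out those checks by verifying that the single extra condition $A(0_{P})=0$ (vacuous when $P$ has no minimal element) survives pointwise products, filtered colimits, kernels and quotients. Your fullness-plus-closedness argument also supplies the routine bookkeeping for $E$-admissibility that the paper leaves implicit.
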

\begin{proof}
The assertions are straightforward to check.
\end{proof}

 \begin{prop} \label{gwererwfwerfwrf}We have a   functor
 \begin{equation}\label{bsdfvdfvfst5t5t5tdvsdf}  (-)^{\rmc}\nCalg :\Poset_{(*)}^{\pfr}\to \EC\end{equation} 
  to $E$-theory contexts with the structure of  a subfunctor of $(-)^{\ac}\nCalg_{|\Poset_{(*)}^{\pfr}}$.

\end{prop}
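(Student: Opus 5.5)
The argument follows the pattern of \cref{koperhtetrhrgetr}. Since $\Poset^{\pfr}_{(*)}$ is a wide subcategory of $\Poset^{\pafr}_{(*)}$ (partial frame morphisms preserve non-empty finite meets and all joins, in particular all non-empty joins), we may restrict the functor $(-)^{\ac}\nCalg$ from \cref{koperhtetrhrgetr} to $\Poset^{\pfr}_{(*)}$. We then check that $P\mapsto P^{\rmc}\nCalg$ defines a subfunctor of this restriction with values in $\EC$.

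\emph{Functoriality.} Let $f:P\to Q$ be a partial frame morphism and $A$ in $Q^{\rmc}\nCalg$. By \cref{koperhtetrhrgetr}, $f^{*}A$ lies in $P^{\ac}\nCalg$. Since $f$ preserves all joins, it preserves the empty join, so $f(0_{P})=0_{Q}$. Hence $(f^{*}A)(0_{P})=A(0_{Q})=0$, which is the only additional condition for continuity. Thus $f^{*}$ restricts to a functor $Q^{\rmc}\nCalg\to P^{\rmc}\nCalg$.

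\emph{$E$-theory context structure.} For $A$ in $P^{\rmc}\nCalg$ and $B$ in $\nCalg$, in particular in $\cN$, the product $A\boxtimes B$ lies in $P^{\ac}\nCalg$ by the argument of \cref{koperhtetrhrgetr} (using \cref{jgwoiergwergwreg}). Moreover $(A\boxtimes B)(0_{P})=A(0_{P})\otimes B=0\otimes B=0$, so it is continuous. The zero object is continuous, so $P^{\rmc}\nCalg$ is a pointed $\cN$-submodule of $P^{\ac}\nCalg$. By \cref{kohperthertgertgetrg1}, $P^{\rmc}\nCalg$ is closed in $P^{\ac}\nCalg$ under finite products, filtered colimits, kernels and quotients, and the inclusion is $E$-admissible.

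\emph{$E$-admissibility.} This is the step needing the most care. Since the subcategory is full and closed under these constructions, a finite product, filtered colimit or exact sequence existing in $P^{\rmc}\nCalg$ coincides with the one formed in $P^{\ac}\nCalg$. Given this, $E$-admissibility of $A\otimes -$, of $-\otimes N$, and of $f^{*}$ restricted to continuous objects follows from the corresponding statements for $(-)^{\ac}\nCalg$ from \cref{koperhtetrhrgetr}, by composing with the $E$-admissible inclusion. Consequently $(-)^{\rmc}\nCalg:\Poset^{\pfr}_{(*)}\to\EC$ is a functor, and the inclusions form a natural transformation to $(-)^{\ac}\nCalg_{|\Poset^{\pfr}_{(*)}}$ by $E$-admissible morphisms.
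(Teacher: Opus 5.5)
Your proposal is correct and follows the paper's proof. Like the paper, you restrict along $\Poset^{\pfr}_{(*)}\subseteq\Poset^{\pafr}_{(*)}$, check that $f^{*}$ and $A\boxtimes -$ preserve the one extra condition $A(0_P)=0$, and deduce $E$-admissibility from \cref{kohperthertgertgetrg1} together with \cref{koperhtetrhrgetr}. The only difference is that you spell out the $0_P$ computations the paper leaves implicit; if $P$ has no minimal element, there is simply nothing extra to check.
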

\begin{proof}
  If $f:P\to Q$ is a morphism in $\Poset^{\pfr}_{(*)}$, then
 $f^{*}$ sends $A$ in $Q^{\rmc}\nCalg$  to $P^{\rmc}\nCalg$ since $f$ preserves non-empty meets and   all  joins.
   
 For $A$ in $P^{\rmc}\nCalg$ and $B$ in $\nCalg$,  
 the tensor product
 $A\boxtimes B$     belongs to $P^{\rmc}\nCalg$.
  
By \cref{kohperthertgertgetrg1} the category $P^{\rmc}\nCalg$ is closed in $ P^{\ac}\nCalg$ under   finite products and   filtered colimits, and
 the inclusion $P^{\rmc}\nCalg \to  P^{\ac}\nCalg$  is $E$-admissible.    It therefore follows from \cref{koperhtetrhrgetr} that $(-)^{\rmc}\nCalg$ is a   functor to $E$-theory contexts, and that the inclusion
  $(-)^{\rmc}\nCalg\to (-)^{\ac}\nCalg_{|\Poset_{(*)}^{\pfr}}$
  is a   natural transformation between functors to $E$-theory contexts.
\end{proof}

\subsection{The $E$-theory functor for locales and topological spaces}

 Recall that a frame is a poset, which admits finite meets and all joins, and in which meets distribute over joins. A morphism between frames is a morphism of posets which preserves finite meets and all joins.
 The category of locales is defined by $$\Locale:=\Frame^{\op}\ .$$ 
 \begin{ex} The typical example of a frame is the poset of open subsets of a topological space $\Open(X)$. A continuous map $f:X\to Y$ between topological spaces  
 induces a frame morphism $f^{-1}:\Open(Y)\to \Open(X)$. For us, the motivation to introduce locales is the desire to 
 use  a uniform symbolic language for topological spaces and frames. \hB
 \end{ex}
 
 For a locale $X$ we let $\cP(X)$ denote the corresponding poset.
 \begin{ddd} \label{lkohperghtrgertgertg9} We define the functor  
 $$(-)\nCalg:=(\cP(-))^{\rmc}\nCalg:\Locale\to \EC\ , $$ where for   a map of locales 
 $f:X\to Y$, we use the notation $$f_{!}:=\cP(f)^{*}:X\nCalg\to Y\nCalg\ .$$ \end{ddd}

\begin{ddd}\label{ohkperrtgertgertg}
We define the $E$-theory  functor for locales  by  $$E:\Locale\stackrel{(-)\nCalg}{\to} \EC\stackrel{\EE}{\to} \Pr^{L}_{\st}\ , \quad X\mapsto E(X):=E(X\nCalg)\ , \quad f\mapsto f_{!}:=E(f_{!})\ .$$
\end{ddd}

We have a functor $\Top\to \Locale$ which sends a topological space $X$ to the locale $\Open(X)$ and a map $f:X\to Y$
of topological spaces to the map of frames $f^{-1}:\Open(Y)\to \Open(X)$.
\begin{ddd}For a topological space $X$ we write $$E(X):=\Open(X)^{c}\nCalg\ .$$
and for a morphism  
$f:X\to Y$ between topological spaces, we  set
$f_{!}:E(X)\to E(Y)$.\end{ddd}

  \begin{ex} \label{kohperrgrgrgretge} A primitive ideal in a  $C^{*}$-algebra $A$ is the kernel of an irreducible representation
$A \to B(H)$, where $B(H)$ is the $C^{*}$-algebra of bounded operators on some Hilbert space $H$. We let $\Prim(A)$ denote the topological space of primitive ideals, which is equipped with the
Jacobson topology \cite[3.1.1]{zbMATH04073740}. The closed subsets of $\Prim(A)$ in this topology are the sets $\{p\in \Prim(A)\mid  I\subseteq p\}$
for the closed ideals $I$ of $A$. It follows that
there is an isomorphism of frames,  $\Open(\Prim(A))\cong  I(A)$ which sends an open
 set $U $ to the closed ideal $A(U):=\bigcap_{p\in \Prim(A)\setminus U} p$.   We conclude that $$\Prim(A)\nCalg\simeq  I(A)^{\rmc}\nCalg\ ,$$
 where $\Prim(A)$ on the left-hand side is considered as a topological space or locale, and $I(A)$ on the right-hand side is a pointed poset.
 \hB
 \end{ex}

 \section{Adjunctions, localization and cosheaves}\label{kophtzhetrgtrge}

 \subsection{Continuous and almost continuous $P$-$C^{*}$-algebras}
 
Recall that a morphism $f:P\to Q$ between posets is a partial almost frame morphism if it preserves all non-empty 
finite meets and all non-empty finite joins. It is a partial frame morphism, if it, in addition, preserves the empty join.
By \cref{gwererwfwerfwrf} we have the  functor $(-)^{\rmc}\nCalg$ to $E$-theory contexts defined on
the category $\Poset_{(*)}^{\pfr}$  of pointed posets and partial frame morphisms.
 In the following,  we extend  its functoriality   to the category  $\Poset_{(*)}^{\pafr}$ of pointed posets and 
 partial almost frame morphisms.
\begin{prop}
 \label{kohpertghergrtgreg}
\mbox{}
\begin{enumerate}
\item\label{khopzrjtjrh} For every $P$ in $\Poset_{(*)}$ having a minimal element, we have a  left Bousfield localization \begin{equation}\label{goijweoifwerfwerfw}L:P^{\ac}\nCalg\leftrightarrows P^{\rmc}\nCalg:\incl\ .
\end{equation}
 \item  \label{hkpertgretggetg}For all  $P,Q$ in $ \Poset_{(*)}$ and every minimal element preserving morphism $f:P\to Q$ in $\Poset_{(*)}^{\pfr}$   the square
\begin{equation}\label{vifjiowvfevsf}\xymatrix{ Q^{\rmc}\nCalg\ar[r]^{\incl}\ar[d]^{f^{*}} & Q^{\ac}\nCalg \ar[d]^{f^{*}} \\ P^{\rmc}\nCalg \ar[r]^{\incl} &P^{\ac}\nCalg }  
\end{equation} 
 is horizontally left-adjoinable.
\item\label{lphkrthertgerg} The functors  $L$ and  $\incl$ in  \eqref{goijweoifwerfwerfw}  are    morphisms of $E$-theory contexts.
  \end{enumerate}

\end{prop}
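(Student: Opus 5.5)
The left adjoint $L$ will be written down explicitly. For $A$ in $P^{\ac}\nCalg$ let $0_{P}$ be the minimal element of $P$ and put
$$L(A):=A/A(0_{P}),\qquad L(A)(p):=A(p)/A(0_{P})\ .$$
Almost continuity makes $A(0_{P})$ an ideal contained in every $A(p)$ (since $0_{P}\le p$), so the quotient makes sense.

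The first task is to check that $L(A)$ is continuous. The ideal-lattice map $I(A)\ni J\mapsto (J+A(0_{P}))/A(0_{P})\in I(A/A(0_{P}))$ preserves joins. On ideals containing $A(0_{P})$ it also preserves finite intersections, by \cref{khoprtehegrtege9} as used in the proof of \cref{phertgertgetr}. Hence $L(A)$ is again a (partial) frame morphism on non-empty meets and non-empty joins, filtered joins included, and it satisfies $L(A)(0_{P})=0$.

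For the adjunction, a morphism $A\to B$ with $B$ continuous sends $A(0_{P})$ into $B(0_{P})=0$. It therefore factors uniquely through $A/A(0_{P})$, and the factorization automatically respects the filtrations. The unit is the quotient map and the counit is the identity, so $\incl$ is fully faithful and we get a left Bousfield localization.

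For \ref{hkpertgretggetg}, the claim is that the mate $L_{P}f^{*}\to f^{*}L_{Q}$ is an isomorphism. Both sides have underlying algebra $A(f(\infty_{P}))/A(f(0_{P}))$, because $f(0_{P})=0_{Q}$ and $A(0_{Q})\subseteq A(f(\infty_{P}))$. On the left we divide $A(f(\infty_{P}))$ by $(f^{*}A)(0_{P})=A(f(0_{P}))=A(0_{Q})$. On the right we divide $A(f(p))$ by $A(0_{Q})$. The two agree pointwise, so the mate is the identity.

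For \ref{lphkrthertgerg}, $\incl$ is already $E$-admissible by \cref{kohperthertgertgetrg1}, and it is visibly $\cN$-linear. For $L$, the $\cN$-linearity reduces to the isomorphism $(A\otimes N)/(A(0_{P})\otimes N)\cong (A/A(0_{P}))\otimes N$, which holds by exactness of the maximal tensor product; filtrations are compared pointwise in the same way. Preservation of filtered colimits and finite products follows because $L$ is a left adjoint whose colimits are computed pointwise (\cref{kohperthertgertgetrg}, \cref{kohperthertgertgetrg1}), and because quotients commute with finite products.

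The main obstacle is preservation of exact sequences. Given $0\to A\to B\to C\to 0$ in $P^{\ac}\nCalg$, realized pointwise by \cref{kohperthertgertgetrg}, we must show that
$$0\to A/A(0)\to B/B(0)\to C/C(0)\to 0$$
is exact. Surjectivity and the identity $C(0)=B(0)/A(0)$ are straightforward. The nontrivial step is $A\cap B(0)=A(0)$, which is exactly the kernel description $A(p)=A\cap B(p)$ used in the proof of \cref{kohperthertgertgetrg}. With that identity, a $3\times 3$-lemma diagram chase for ideals (the snake lemma in $\nCalg$) gives exactness of the quotient sequence. Exactness in $P^{\rmc}\nCalg$ is then pointwise, again by \cref{kohperthertgertgetrg1}.
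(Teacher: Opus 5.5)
Your proposal is correct and follows essentially the same route as the paper. Both use the explicit quotient $L(A)(p)=A(p)/A(0_P)$ with the quotient map as unit and the identity as counit, compute the mate pointwise using $f(0_P)=0_Q$, and obtain $E$-admissibility of $L$ from pointwise products, the $3\times 3$ diagram built from $0\to X(0_P)\to X(p)\to X(p)/X(0_P)\to 0$, and exactness of $\otimes$ for $\cN$-linearity. The small deviations are that you spell out why $L(A)$ is continuous, which the paper leaves to the reader, and that you get filtered colimits directly from $L$ being a left adjoint instead of passing to the colimit of the exact sequences $0\to A_i(0_P)\to A_i(p)\to A_i(p)/A_i(0_P)\to 0$.
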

\begin{proof}
Let $0_{P}$ denote the minimal element in $P$.
The functor $L$ sends $A$ in $P^{\ac}\nCalg$ to the functor
$$L(A):p\mapsto A(p)/A(0_{P})\ .$$
One checks that it belongs to $P^{\rmc}\nCalg$.
The unit of the  left Bousfield localization is induced by the canonical projections $A(p)\to A(p)/A(0_{P})$ for all $p$ in $P$.
If $A$ is in $P^{\rmc}\nCalg$, then $A(0_{P})=0$ and the counit is the isomorphism $L(\incl(A))\cong A$.
The triangle identities are straightforward to check. This shows Assertion \ref{khopzrjtjrh}.

Since $f$ in Assertion  \ref{hkpertgretggetg} preserves minimal elements, we have
$$(f^{*}A)(p)/(f^{*}A)(0_{P})\cong A(f(p))/A(f(0_{P}))\cong L(f^{*}A)(p)\ .$$
This implies that the  square in Assertion  \ref{hkpertgretggetg} is horizontally left adjointable.

We finally show  Assertion \ref{lphkrthertgerg}. In order to show that $L$ is $E$-admissible
 we use the fact  that finite products, and filtered colimits, and exact sequences in $P^{\rmc}\nCalg$ and $P^{\ac}\nCalg$ are formed pointwise. 
If $(A_{i})_{i\in I}$ is a filtered system in $P^{\ac}\nCalg$, then
for every $p$ in $P$ the colimit of the exact sequences 
$$0\to A_{i}(0_{P}) \to A_{i}(p)\to  A_{i}(p)/A_{i}(0_{P})\to 0$$
is the exact sequence
$$0\to A(0_{P}) \to A(p)\to  A(p)/A(0_{P})\to 0\ .$$
Looking at the third terms, we conclude that 
$$\colim_{i\in I} L(A)\cong L(\colim_{i\in I} A_{i})\ .$$
The argument for finite products is similar.
In order to see that $L$ preserves exact sequences,  assume that   $$0\to A\to B\to C\to 0$$ is exact in $P^{\ac}\nCalg$.
Then for $p$ in $P$ we get a web of vertical exact sequences
$$ \xymatrix{&0\ar[d]&0\ar[d]&0\ar[d]&\\
0\ar[r]&A(0_{P})\ar[r] \ar[d]&B(0_{P})\ar[d]\ar[r]&C(0_{P})\ar[d]\ar[r]&0\\
0\ar[r]&A(p)\ar[d]\ar[r] &B(p)\ar[r]\ar[d]&C(p)\ar[r]\ar[d]&0\\
0\ar[r] &A(p)/A(0_{P})\ar[r] \ar[d]&B(p)/B(0_{P})\ar[d]\ar[r]\ar[d]&C(p)/C(0_{P})\ar[d]\ar[r]&0\\
&0&0&0&}\ .$$
Since the upper two horizontal sequences are exact by assumption,  the lower horizontal sequence is also exact.

   In order to see that $L$ is a morphism of $\cN$-modules we use that for $N$ in $\cN$ and $p$ in $P$ we have the relations
$$L(A)(p)\otimes N\cong (A(p)/A(0_{P}))\otimes N\cong (A(p)\otimes N)/(A(0_{P})\otimes N)\cong L(A\otimes N)(p)\ .$$

It is straightforward to see that $\incl$ in \eqref{goijweoifwerfwerfw} is a morphism of $E$-theory contexts.
\end{proof}

\begin{prop} \label{kophertrtgertgtrge}We have an extension of functors
$$\xymatrix{\Poset^{\pfr}_{(*)}\ar[dr]_{\incl}\ar[rr]^{(-)^{\rmc}\nCalg} &&\EC\\&\Poset^{\pafr}_{(*)}\ar@{..>}[ur]_{(-)^{\rmc}\nCalg}&}\ .$$
 \end{prop}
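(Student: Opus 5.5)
The plan is to define the extension on a partial almost frame morphism $f:P\to Q$ as the composite
$$Q^{\rmc}\nCalg\xrightarrow{\incl} Q^{\ac}\nCalg\xrightarrow{f^{*}} P^{\ac}\nCalg\xrightarrow{L} P^{\rmc}\nCalg\ ,$$
where $f^{*}$ is the functoriality from \cref{koperhtetrhrgetr} and $L$ is the left Bousfield localization from \cref{kohpertghergrtgreg}.\ref{khopzrjtjrh}. Concretely, $(L f^{*}A)(p)=A(f(p))/A(f(0_{P}))$. For this to make sense $P$ needs a minimal element, and I will take the objects of $\Poset_{(*)}$ relevant here to have one. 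Alternatively one can replace $0_{P}$ by the empty join, which $f$ is not required to preserve; for $P$ without a minimal element the conditions "almost continuous'' and "continuous'' coincide, and one sets $L=\id$.

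By \cref{koperhtetrhrgetr} and \cref{kohpertghergrtgreg}.\ref{lphkrthertgerg}, each of the three functors is a morphism of $E$-theory contexts. Hence the composite is $E$-admissible and $\cN$-linear, so it is a morphism in $\EC$.

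Next I check agreement with the original functor on $\Poset^{\pfr}_{(*)}$. If $f$ preserves the empty join, then $f(0_{P})=0_{Q}$ and $A(0_{Q})=0$ for continuous $A$. So $f^{*}A$ is already continuous, the unit $f^{*}A\to Lf^{*}A$ is an isomorphism, and $L f^{*}\incl\cong f^{*}$ on $Q^{\rmc}\nCalg$.

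The main step is functoriality. For $g:Q\to R$ and $f:P\to Q$ I need a natural isomorphism
$$L_{P}f^{*}\incl_{Q}L_{Q}g^{*}\incl_{R}\cong L_{P}(gf)^{*}\incl_{R}\ .$$
Evaluating the left-hand side at $p$ gives
$$\bigl(A(gf(p))/A(g(0_{Q}))\bigr)\big/\bigl(A(gf(0_{P}))/A(g(0_{Q}))\bigr)\ .$$
Since $0_{Q}\le f(0_{P})$, we have $A(g(0_{Q}))\subseteq A(gf(0_{P}))$, so by the third isomorphism theorem this is canonically $A(gf(p))/A(gf(0_{P}))$. Abstractly, this amounts to showing that $L_{P}f^{*}$ sends the unit $\id\to\incl_{Q}L_{Q}$ to an isomorphism, i.e. that $f^{*}$ preserves $L$-local equivalences. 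That is exactly what the computation shows, and it is essentially the left-adjointability argument of \cref{kohpertghergrtgreg}.\ref{hkpertgretggetg} without assuming $f(0_{P})=0_{Q}$.

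Finally, these isomorphisms are canonical, coming from universal properties of quotients. They are therefore compatible with identities and with associativity for triple composites, since all the ideals involved lie in a single frame $I(A)$. This gives a (pseudo)functor, which is the dotted extension.
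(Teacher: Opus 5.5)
Your main argument is correct and follows the paper's proof. The paper also sends $f$ to $L_{P}\circ f^{*}\circ \incl$. It obtains the composition isomorphism from the natural map $L_{P}f^{*}g^{*}\to L_{P}f^{*}\incl_{Q}L_{Q}g^{*}$ induced by the unit of $L_{Q}$ (phrased there via a Beck--Chevalley map), and only asserts that "one checks" this map is an isomorphism. Your third-isomorphism-theorem computation, using $0_{Q}\le f(0_{P})$, is exactly that check. Your verification of agreement on $\Poset^{\pfr}_{(*)}$ makes explicit what the paper leaves implicit.

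You should, however, drop the alternative you propose for posets without a minimal element: setting $L_{P}=\id$ there breaks functoriality. Suppose $P$ has no minimal element, $f:P\to Q$, and $g:Q\to R$ satisfies $g(0_{Q})\neq 0_{R}$. Then the composite of the assigned functors sends $A$ in $R^{\rmc}\nCalg$ to $p\mapsto A(gf(p))/A(g(0_{Q}))$. The functor assigned to $gf$ sends it to $p\mapsto A(gf(p))$. These differ as soon as $A(g(0_{Q}))\neq 0$.

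For a concrete instance, take $P=\{p_{1},p_{2},\infty_{P}\}$ with $p_{1},p_{2}<\infty_{P}$ incomparable. Let $f$ be the constant map to the one-element poset $\{*\}$, and let $g:\{*\}\to[1]$ be given by $g(*)=1$. Both maps are partial almost frame morphisms. The composite functor is $0$, while $(gf)^{*}A$ is the constant functor with value $A$.

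So the extension genuinely requires the objects to have minimal elements. The paper tacitly assumes this by using $L_{P}$ from \cref{kohpertghergrtgreg}.\ref{khopzrjtjrh} for every $P$. Your first option is the right one.
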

 \begin{proof}
 The extension is given as follows:
 \begin{enumerate}
 \item objects: It sends $P$ in $\Poset_{(*)}^{\pafr}$ to $P^{\rmc}\nCalg$.
 \item morphisms: It sends $f:P\to Q$ in $\Poset_{(*)}^{\pafr}$ to the composition
 $$Q^{\rmc}\nCalg\stackrel{f^{*}}{\to} P^{\ac}\nCalg\stackrel{L}{\to} P^{\rmc}\nCalg\ .$$
 \item Compositions: Let $g:Q\to R$ be a second morphism in $\Poset_{(*)}^{\pafr}$. We add subscripts to the symbols for  the localizations $L$.
We have a natural transformation
 $$  L_{P}f^{*}g^{*}\simeq L_{P}L_{P}f^{*} g^{*}\to  L_{P}f^{*} L_{Q}g^{*} $$ induced by the Beck-Chevalley map $L_{P}f^{*}\to f^{*}L_{Q}$ for
 the square  \eqref{vifjiowvfevsf}. The latter might not be an isomorphism since
 $f$ may not preserve the minimal elements. But one checks that the whole composition is an isomorphism.
 The pentagon relation for the composition of three morphisms is satisfied.
   \end{enumerate}%
 \end{proof}

\subsection{Adjunctions between frames}
In this section we collect some facts about adjunctions between frames for later use.
We suggest to consult \cite[Sec. 2-3]{Lehner:2026aa} for a condensed presentationof most of the definitions and facts below.

Recall that an adjunction in $\Poset$ is a property of a pair of functors.
Assume that $f:P\to Q$ is a morphism of posets.  
\begin{lem}\label{hiuqhiufhweifqwefqwefqw} If $P$ is a frame and $f$ is  a frame morphism, then we have an adjunction 
 \begin{equation}\label{jibodfgbfgbwtrg}f:P\leftrightarrows Q:f_{\sharp} 
\end{equation}
 in $\Poset^{\uc}$. If $f$ is  surjective, then $\id=f_{\sharp}f$. If $f$ is injective, then 
 $f_{\sharp}f=\id$.

\end{lem}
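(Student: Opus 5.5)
Since $f$ preserves all joins and $P$ is a frame (so all joins exist), the right adjoint is forced by the adjoint functor theorem for posets. I would define it explicitly by
$$f_{\sharp}(q):=\bigvee\{p\in P\mid f(p)\le q\}\ ,$$
a join which exists in $P$. Monotonicity of $f_{\sharp}$ is immediate, since enlarging $q$ enlarges the indexing set. For the adjunction I check $f(p)\le q \iff p\le f_{\sharp}(q)$. The forward direction holds because $p$ is one of the elements in the join. For the converse, if $p\le f_{\sharp}(q)$ then $f(p)\le f(f_{\sharp}(q))=\bigvee\{f(p')\mid f(p')\le q\}\le q$, using that $f$ preserves joins. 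This is the only place where join preservation is used, and it is the key step.

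Next I show that $f_{\sharp}$ is left exact, so that the adjunction lives in $\Poset^{\uc}$. This is the formal fact that right adjoints preserve all existing meets. Concretely, $f(p)\le \top_{Q}$ for all $p$, so $f_{\sharp}(\top_{Q})=\top_{P}$. For binary meets, $p\le f_{\sharp}(q\wedge q')$ is equivalent to $f(p)\le q$ and $f(p)\le q'$, hence to $p\le f_{\sharp}(q)\wedge f_{\sharp}(q')$. The left adjoint $f$ is left exact by assumption, being a frame morphism. I should be careful about what the paper's $\Poset^{\uc}$ requires; I assume it means posets with finite meets and maps preserving them. Note that $Q$ need only have finite meets, and binary meets in $Q$ are needed for the meet computation above. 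If $Q$ is merely a poset, only the meets that exist are preserved, which is still what the argument gives.

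The triangle statements then follow from the general poset facts $p\le f_{\sharp}f(p)$ and $ff_{\sharp}(q)\le q$.

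If $f$ is surjective, write $q=f(p)$. Then $ff_{\sharp}(q)=ff_{\sharp}f(p)=f(p)=q$, because $f f_{\sharp} f=f$ holds for any adjunction of posets: one has $f(p)\le ff_{\sharp}f(p)\le f(p)$. So $ff_{\sharp}=\id_{Q}$. I read the statement's ``$\id=f_{\sharp}f$'' in the surjective case as a typo for $ff_{\sharp}=\id$.

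If $f$ is injective, then from $f_{\sharp}f(p)\ge p$ and $ff_{\sharp}f(p)=f(p)$, injectivity gives $f_{\sharp}f(p)=p$.

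No step is a real obstacle. The only subtle point is identifying the intended conventions of $\Poset^{\uc}$ and the direction of the identity in the surjective case.
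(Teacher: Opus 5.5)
Your proposal is correct and is the paper's proof in expanded form: the paper also defines $f_{\sharp}(q):=\bigvee\{p\in P\mid f(p)\le q\}$, records $ff_{\sharp}(q)\le q$ and $p\le f_{\sharp}f(p)$, and calls the remaining claims (left exactness of $f_{\sharp}$ and the two identities) straightforward. Your reading of the surjective case as $ff_{\sharp}=\id_{Q}$ is right. As stated, $f_{\sharp}f=\id$ fails for surjective $f$ in general; for instance, for the frame morphism onto the one-element frame, $f_{\sharp}f$ is constant with value $\top_{P}$.
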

\begin{proof} Since $P$ is a frame
 we can set  $$f_{\sharp}(q):=\bigvee_{\{p\in P\mid f(p)\le q\}}p$$ and observe that 
 $f f_{\sharp}(q)\le q$ and $p\le f_{\sharp}  f(p)$.
  The additional statements are straightforward. 
\end{proof}

\begin{ddd}\label{herthertgergtrgtrge}
A frame morphism $f:P\to Q$ is called perfect if $f_{\sharp}$ preserves filtered joins.
\end{ddd}

Let $\Poset^{\prfr}$ be the subcategory of preframe morphisms.
\begin{kor}\label{tklphertrtgergrgret}
If $f:P\to Q$ is perfect, then we have an adjunction   \begin{equation}\label{jibodfgbfgbwtrg}f:P\leftrightarrows Q:f_{\sharp} 
\end{equation}
in $\Poset^{\prfr}$.
\end{kor}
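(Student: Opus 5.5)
Since $f$ is perfect, $f$ is by assumption a frame morphism and $f_{\sharp}$ preserves filtered joins. By \cref{hiuqhiufhweifqwefqwefqw} we already have the adjunction $f\dashv f_{\sharp}$ in $\Poset^{\uc}$. It therefore remains to check that both functors are preframe morphisms, i.e.\ that both preserve finite meets and filtered joins. The adjunction property is a property of the pair in $\Poset$, so nothing more needs to be verified once both maps lie in $\Poset^{\prfr}$.

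For $f$ this is immediate. A frame morphism preserves all finite meets (including the top element) and all joins, in particular filtered ones.

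For $f_{\sharp}$, preservation of filtered joins is exactly the perfectness assumption of \cref{herthertgergtrgtrge}. Preservation of finite meets follows formally because $f_{\sharp}$ is a right adjoint. Explicitly, for $q,q'$ in $Q$ and $p$ in $P$ we have
$$p\le f_{\sharp}(q\wedge q')\iff f(p)\le q\wedge q'\iff f(p)\le q \text{ and } f(p)\le q'\iff p\le f_{\sharp}(q)\wedge f_{\sharp}(q')\ ,$$
and $f_{\sharp}(\top_{Q})=\bigvee P=\top_{P}$ because $f(p)\le \top_{Q}$ for all $p$.

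The only point needing care is that $Q$ admits the relevant meets and joins. Here we use that the target of a frame morphism in the sense of \cref{hiuqhiufhweifqwefqwefqw} is a frame, or at least has finite meets, so the meet computation above makes sense. I expect no real obstacle: the statement is essentially the observation that right adjoints preserve meets, combined with the definition of perfectness.
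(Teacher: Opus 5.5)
Your proposal is correct and is exactly the argument the paper leaves implicit. \cref{hiuqhiufhweifqwefqwefqw} already gives $f\dashv f_{\sharp}$ in $\Poset^{\uc}$ (so $f_{\sharp}$ preserves finite meets as a right adjoint, which your explicit computation re-derives), perfectness (\cref{herthertgergtrgtrge}) supplies preservation of filtered joins by $f_{\sharp}$, and $f$ is a frame morphism, so nothing is missing.
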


%
%
%
%

Recall that the category of locales is defined as the opposite of the category of frames. 
Open or closed  inclusions are usually considered for maps between locales. The following are the translations to frames.
Let $f:P\to Q$ be a frame morphism   between posets.

\begin{ddd}\label{kopzjtzjrtzhrzth}\mbox{}
\begin{enumerate}
\item \label{korpberthter}$f:P\to Q$  is a closed subframe  inclusion if there exists $p_{f}$ in $P$
such that $ f$ restricts to a frame isomorphism  $P_{p_{f}/}\stackrel{\cong}{\to} Q $.
 \item\label{korpberthter1}  $f:P\to Q$  is an   open subframe   inclusion if there exists $p_{f}$ in $P$ such that 
 $f$ restricts to a frame isomorphism $P_{/p_{f}}\stackrel{\cong}{\to} Q $.
 \end{enumerate}
\end{ddd}

\begin{rem}\label{kopjrtzjzthzhtzhrtzhtz}

Note that in these definitions, $Q$ is the subframe of $P$.

If $P$ and $Q$ are the frames of open subsets of topological spaces, then
the notions of an open or closed  inclusion become the usual ones.

The map between frames induced by a proper morphism between locally compact Hausdorff spaces  
is an example of a perfect map. \hB
\end{rem}

The proofs of the following two lemmas are straightforward.
\begin{lem}\label{ojgwpoergewrfweff}
An open subframe  inclusion $f:P\to Q$ fits into a right Bousfield localization   \begin{equation}\label{gjoiowegwefrweffw}f^{\flat}:Q\leftrightarrows P: f  
\end{equation} 
in $\Poset^{\pfr}$,
 where $ff^{\flat}(q)= q$ and $f^{\flat} $ is a partial frame morphism.
\end{lem}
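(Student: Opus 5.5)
The plan is to write down $f^{\flat}$ explicitly and check the required properties by direct computation in the frame $P$. Let $f:P\to Q$ be an open subframe inclusion, with $p_{f}\in P$ such that $f$ restricts to a frame isomorphism $\phi:P_{/p_{f}}\stackrel{\cong}{\to} Q$. Since $f$ is a frame morphism and $p\wedge p_{f}\in P_{/p_{f}}$, I first show that $f(p)=f(p\wedge p_{f})$ for all $p$. One way is to note that $f(p_{f})$ is the maximal element of $Q$, since $\phi$ is an isomorphism, so $f(p\wedge p_{f})=f(p)\wedge f(p_{f})=f(p)$. I then define
$$f^{\flat}:=\phi^{-1}:Q\to P_{/p_{f}}\subseteq P\ ,$$
that is, $f^{\flat}(q)$ is the unique $p\le p_{f}$ with $f(p)=q$. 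With this definition $ff^{\flat}(q)=q$ holds by construction.

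Next I verify the adjunction $f^{\flat}\dashv f$. For $q\in Q$ and $p\in P$ we need $f^{\flat}(q)\le p$ if and only if $q\le f(p)$.
\begin{itemize}
\item[] If $f^{\flat}(q)\le p$, then applying the monotone map $f$ gives $q=ff^{\flat}(q)\le f(p)$.
\item[] Conversely, if $q\le f(p)=f(p\wedge p_{f})=\phi(p\wedge p_{f})$, then applying the order isomorphism $\phi^{-1}$ gives $f^{\flat}(q)\le p\wedge p_{f}\le p$.
\end{itemize}
Since $ff^{\flat}=\id$, i.e. the counit is an iso, the left adjoint $f^{\flat}$ is fully faithful. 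So the pair is a right Bousfield localization with $f^{\flat}$ the inclusion of the colocal objects $P_{/p_{f}}$, and $f$ is the coreflection $p\mapsto p\wedge p_{f}$ transported along $\phi$.

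It remains to check that both maps lie in $\Poset^{\pfr}$, i.e. preserve non-empty finite meets and all joins.
\begin{itemize}
\item[] The map $f$ is a frame morphism by assumption, so this is clear for $f$.
\item[] For $f^{\flat}$: it is a left adjoint, so it preserves all joins, including the empty one, since $f^{\flat}(0_{Q})=0_{P}$. Alternatively, joins in $P_{/p_{f}}$ agree with joins in $P$ because $P_{/p_{f}}$ is down-closed.
\item[] Non-empty finite meets in $P_{/p_{f}}$ are computed as in $P$, and $\phi^{-1}$ preserves them. Hence $f^{\flat}$ preserves non-empty finite meets.
\item[] The empty meet is not preserved, since $f^{\flat}(1_{Q})=p_{f}$. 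This is exactly why $f^{\flat}$ is only partial.
\end{itemize}

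The only mildly delicate point is the identity $f(p)=f(p\wedge p_{f})$. This depends on reading "restricts to a frame isomorphism" as saying that the restriction sends $p_{f}$ to the top of $Q$. That reading is automatic, because an isomorphism of frames preserves top elements. Everything else is formal.
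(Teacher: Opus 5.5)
Your proof is correct. It is the direct verification that the paper leaves as \emph{straightforward}: $f^{\flat}$ is the inverse of the restricted isomorphism $P_{/p_{f}}\cong Q$ followed by the inclusion into $P$, and you check that both maps are partial frame morphisms. One terminological slip: for $f^{\flat}\dashv f$, the identity $ff^{\flat}=\id$ says that the unit $\id_{Q}\to ff^{\flat}$ is an isomorphism, not the counit, and it is the unit being an isomorphism that makes the left adjoint $f^{\flat}$ fully faithful.
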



\begin{kor} \label{okhpetrhkertgtrgergertge} A closed subframe inclusion $f:P\to Q $ 
fits into a left 
  Bousfield localization 
$$f :P\leftrightarrows Q:f_{\sharp} $$
in $\Poset^{\afr}$,  
where  $f_{\sharp}$ is determined by  $ f_{\sharp}(f (p)) =p\vee p_{f}$  and  is  an almost frame morphism.
 \end{kor}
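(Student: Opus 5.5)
The statement to prove is \cref{okhpetrhkertgtrgergertge}: a closed subframe inclusion $f:P\to Q$ gives a left Bousfield localization $f\dashv f_{\sharp}$ in which $f_{\sharp}$ is an almost frame morphism with $f_{\sharp}(f(p))=p\vee p_{f}$.

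By \cref{kopzjtzjrtzhrzth}.\ref{korpberthter} there is $p_{f}$ in $P$ such that $f$ restricts to a frame isomorphism $\phi:=f_{|P_{p_{f}/}}:P_{p_{f}/}\stackrel{\cong}{\to}Q$. Here $P_{p_{f}/}=\{p\mid p\ge p_{f}\}$ is a frame with minimal element $p_{f}$, maximal element $\infty_{P}$, and the induced meets and joins; non-empty joins and finite meets agree with those in $P$. I first identify $f$ with the composite $P\to P_{p_{f}/}\stackrel{\phi}{\to}Q$ of the retraction $c(p):=p\vee p_{f}$ with $\phi$. This identification is the one step that needs an argument, and I expect it to be the main (mild) obstacle. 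Since $f$ is a frame morphism, $f(p\vee p_{f})=f(p)\vee f(p_{f})$. Moreover $f(p_{f})=\phi(p_{f})$ is the minimal element of $Q$, because $\phi$ is an isomorphism of frames and so preserves empty joins. Hence $f(p)=f(p\vee p_{f})=\phi(c(p))$.

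Next I set $f_{\sharp}:=\iota\circ\phi^{-1}:Q\to P$, where $\iota:P_{p_{f}/}\to P$ is the inclusion. This gives $f_{\sharp}(f(p))=\phi^{-1}\phi(c(p))=p\vee p_{f}$, as claimed, and it agrees with the formula $f_{\sharp}(q)=\bigvee\{p\mid f(p)\le q\}$ from \cref{hiuqhiufhweifqwefqwefqw}. The adjunction is immediate: $f(p)\le q$ holds iff $\phi(c(p))\le q$, iff $c(p)\le\phi^{-1}(q)$, iff $p\le\phi^{-1}(q)$, where the last step uses $\phi^{-1}(q)\ge p_{f}$. For the Bousfield localization property, the counit $f f_{\sharp}(q)=\phi\phi^{-1}(q)=q$ is an isomorphism, so $f_{\sharp}$ is fully faithful. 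Equivalently, $f$ is surjective, which is the statement of \cref{hiuqhiufhweifqwefqwefqw}.

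Finally I check that both functors lie in $\Poset^{\afr}$. The map $f$ is a frame morphism, hence an almost frame morphism. For $f_{\sharp}$, the isomorphism $\phi^{-1}$ preserves finite meets and all joins. The inclusion $\iota$ preserves non-empty joins, the maximal element and binary meets, but it sends the empty join $0_{Q}$ to $p_{f}$ rather than to $0_{P}$. Therefore $f_{\sharp}$ preserves all finite meets and all non-empty joins, which is exactly the definition of an almost frame morphism, though in general not of a frame morphism.
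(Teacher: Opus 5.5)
Your proof is correct and is essentially the routine verification the paper leaves as ``straightforward''; the paper gives no argument, only placing the statement as a corollary of the general adjunction $f\dashv f_{\sharp}$ for frame morphisms together with surjectivity of $f$. Your factorization $f=\phi\circ c$ and $f_{\sharp}=\iota\circ\phi^{-1}$ makes the adjunction, the identity $f f_{\sharp}=\mathrm{id}$, the formula $f_{\sharp}(f(p))=p\vee p_{f}$ and the almost-frame property of $f_{\sharp}$ (failure only at the empty join) immediate.
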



\subsection{Canonical descent for $E$-theory of locales}

 We consider the functor
 $$(-)\nCalg:\Locale \to \EC\ ,  \quad X\mapsto X\nCalg\ , \quad f\mapsto f_{!}$$ from \cref{lkohperghtrgertgertg9}.
   Recall that for a locale $X$, we denote  by $\cP(X)$  the same poset considered as a frame.
 
 A map $j:U\to X$ of locales is an open sublocale inclusion if $\cP(j):\cP(X)\to \cP(U)$ is an open subframe inclusion as described in \cref{kopzjtzjrtzhrzth}.\ref{korpberthter1}.

\begin{kor}\label{okhpzrhtzjrtzj}
If $j:U\to X$ is an open  sublocale inclusion, then we have a right Bousfield localization
$$j_{!}:U\nCalg \leftrightarrows  X\nCalg:  j^{*}$$
and the right-adjoint $j^{*}$ is  a morphism between $E$-theory contexts.
\end{kor}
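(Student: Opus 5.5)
The plan is to build the adjunction explicitly and then read off its properties. Set $P:=\cP(X)$, $Q:=\cP(U)$ and $f:=\cP(j):P\to Q$. By \cref{kopzjtzjrtzhrzth}.\ref{korpberthter1} there is $u:=p_{f}\in P$ such that $f$ restricts to a frame isomorphism $P_{/u}\cong Q$. By \cref{ojgwpoergewrfweff}, $f^{\flat}:Q\to P$ is the partial frame morphism $q\mapsto$ (the preimage of $q$ in $P_{/u}$), and $ff^{\flat}=\id_{Q}$. By \cref{lkohperghtrgertgertg9} we have $j_{!}=f^{*}$ with $(j_{!}A)(q)=A(f(q))$. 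In the other direction I would set $j^{*}:=(f^{\flat})^{*}$, so $(j^{*}B)(p)=B(f^{\flat}(p))$. Concretely, $j^{*}B$ has underlying algebra $B(u)$ and is $p\mapsto B(p\wedge u)$. Because $f^{\flat}$ lies in $\Poset^{\pfr}_{(*)}$, \cref{gwererwfwerfwrf} already shows that $j^{*}$ sends $X\nCalg$ to $U\nCalg$ and is a morphism of $E$-theory contexts; this gives the second claim. The only thing to note is that $f^{\flat}$ preserves non-empty finite meets and all joins, which holds since $p\mapsto p\wedge u$ distributes over joins in a frame.

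Next I construct the adjunction with $j_{!}$ as the fully faithful left adjoint, which is the sense of a right Bousfield localization with $j_{!}\dashv j^{*}$. One composite is $j^{*}j_{!}=(ff^{\flat})^{*}=\id$, which gives the unit as an isomorphism. For the counit $j_{!}j^{*}B\to B$, observe that $(j_{!}j^{*}B)(p)=B(f^{\flat}f(p))=B(p\wedge u)$, which is an ideal of $B(p)$. The counit is then the family of ideal inclusions $B(p\wedge u)\subseteq B(p)$, using $f^{\flat}f(p)=p\wedge u\le p$. This is a morphism of $P$-$C^{*}$-algebras: on underlying algebras it is the inclusion $B(u)\hookrightarrow B$, and it maps $B(p\wedge u)$ into $B(p)$.

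The triangle identities reduce to poset identities. The composite $j_{!}\to j_{!}j^{*}j_{!}\to j_{!}$ is the identity because $f(p\wedge u)=f(p)$. The composite $j^{*}\to j^{*}j_{!}j^{*}\to j^{*}$ is the identity because $f^{\flat}(q)\wedge u=f^{\flat}(q)$. The part needing the most care is the bookkeeping of underlying algebras: $j_{!}$ changes the maximal element from $u$ to $\infty_{P}$, while $j^{*}$ cuts down to the ideal $B(u)$. I would check naturality directly on elements, which is routine because all the maps involved are restrictions of the same $C^{*}$-homomorphism. Fully faithfulness of $j_{!}$ then follows from the unit being an isomorphism, which completes the right Bousfield localization.
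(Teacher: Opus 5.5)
Your proposal is correct and follows the paper's route. The paper also sets $j^{*}=\cP(j)^{\flat,*}$ and invokes \cref{gwererwfwerfwrf} for $E$-admissibility; it leaves the adjunction $j_{!}\dashv j^{*}$ implicit as the image of the poset adjunction $\cP(j)^{\flat}\dashv\cP(j)$ from \cref{ojgwpoergewrfweff}, whereas you write out the unit and counit. Two small slips: $f^{\flat}$ is a partial frame morphism because joins and non-empty finite meets in $P_{/u}$ are computed in $P$ (as recorded in \cref{ojgwpoergewrfweff}), not because $p\mapsto p\wedge u$ distributes over joins (that is why $f$ is a frame morphism); and the variables in $(j_{!}A)(q)$ and $(j^{*}B)(p)$ are swapped.
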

\begin{proof}
Here $j^{*}=\cP(j)^{\flat,*}$ with $\cP(j)^{\flat}$ from \cref{ojgwpoergewrfweff}  and $\cP(j)$ is a partial frame morphism.
We now use the fact, that the functor to $E$-theory contexts $(-)^{\rmc}\nCalg$ is defined on $\Poset_{(*)}^{\pfr}$ by \cref{gwererwfwerfwrf}.
\end{proof}
A map $i:Z\to X$ is a closed  sublocale inclusion if $\cP(i):\cP(X)\to \cP(Z)$ is a closed subframe inclusion as described in \cref{kopzjtzjrtzhrzth}.\ref{korpberthter}.

\begin{kor}\label{okhpzrhtzjrtzj1}
If $i:Z\to X$ is a closed  sublocale inclusion, then we have a left Bousfield localization
$$i^{*}: X\nCalg \leftrightarrows   Z \nCalg: i_{*}:=i_{!}$$
and the left-adjoint $i^{*}$ is   a morphism between $E$-theory contexts.
\end{kor}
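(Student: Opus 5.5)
The proof will mirror the open case (\cref{okhpzrhtzjrtzj}), using \cref{okhpetrhkertgtrgergertge} together with the extension of the functoriality to partial almost frame morphisms from \cref{kophertrtgertgtrge}.

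Put $f:=\cP(i):\cP(X)\to\cP(Z)$. This is a closed subframe inclusion, so by \cref{okhpetrhkertgtrgergertge} we have a Bousfield localization $f\dashv f_{\sharp}$ in $\Poset^{\afr}$, where $f_{\sharp}(f(p))=p\vee p_{f}$ is an almost frame morphism and $ff_{\sharp}=\id$. The map $f_{\sharp}$ does not preserve the minimal element: it sends $0$ to $p_{f}$. It is nevertheless a morphism in $\Poset^{\pafr}_{(*)}$. By \cref{kophertrtgertgtrge} we may therefore define
$$i^{*}:=L\circ f_{\sharp}^{*}:X\nCalg\to \cP(Z)^{\ac}\nCalg\to Z\nCalg,$$
so that concretely $(i^{*}A)(W)=A(f_{\sharp}W)/A(p_{f})$. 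The other functor is $i_{!}=f^{*}$, given by $(i_{!}B)(U)=B(f(U))$.

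To establish the adjunction $i^{*}\dashv i_{!}$, I will first use the poset adjunction $f\dashv f_{\sharp}$, i.e. the inequalities $p\le f_{\sharp}f(p)$ and $ff_{\sharp}=\id$. These induce an adjunction $f_{\sharp}^{*}\dashv f^{*}$ between $\cP(Z)^{\ac}\nCalg$ and $X\nCalg$, because precomposition with an adjoint pair of posets reverses the order of the adjunction. Concretely, a morphism $f_{\sharp}^{*}A\to B$ is a homomorphism $A\to B$ with $A(f_{\sharp}W)\subseteq B(W)$ for all $W$. Since $p\le f_{\sharp}fp$, this is equivalent to the condition $A(p)\subseteq B(f p)$ for all $p$, which is exactly the condition for a morphism $A\to f^{*}B$. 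Here one has to check that the underlying algebra matches: $f_{\sharp}$ sends the top element to the top element, so it does.

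I then compose with the localization $L\dashv\incl$ of \cref{kohpertghergrtgreg}. This gives $L f_{\sharp}^{*}\dashv f^{*}\incl=i_{!}$. The counit $i^{*}i_{!}B\to B$ is the map $B(ff_{\sharp}W)/B(f p_{f})\to B(W)$. We have $ff_{\sharp}=\id$, and $f(p_{f})=0$ in $\cP(Z)$, so $B(f p_{f})=0$ by continuity of $B$. Hence the counit is an isomorphism, which means $i_{!}$ is fully faithful and the adjunction is a left Bousfield localization.

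Finally, $i^{*}$ is a morphism of $E$-theory contexts. This is the part of \cref{kophertrtgertgtrge} that already asserts the composite $L\circ f_{\sharp}^{*}$ is a morphism in $\EC$. That in turn follows from two facts: $f_{\sharp}^{*}$ is $E$-admissible because limits, colimits and exact sequences in these categories are formed pointwise, and $L$ is $E$-admissible by \cref{kohpertghergrtgreg}. The main point needing care is the identification of the underlying algebras and the precise handling of the non-preservation of $0$. Checking that the adjunction $f_{\sharp}^{*}\dashv f^{*}$ is compatible with the ideal conditions is routine.
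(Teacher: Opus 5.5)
Your proposal is correct and follows the paper's route. The paper likewise defines $i^{*}$ as the image of the almost frame morphism $\cP(i)_{\sharp}$ under the extended functor of \cref{kophertrtgertgtrge}, i.e.\ $L\circ\cP(i)_{\sharp}^{*}$, and reads off the localization and $E$-admissibility from that functoriality; you additionally verify the hom-set bijection and that the counit $i^{*}i_{!}B\to B$ is an isomorphism explicitly. One small imprecision is harmless: $f^{*}$ sends $\cP(Z)^{\ac}\nCalg$ to $\cP(X)^{\ac}\nCalg$, not to $X\nCalg$, but you only ever apply it to continuous $B$.
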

\begin{proof}
Here $i^{*}:=\cP(i)_{\sharp}^{*}$, where $\cP(i)_{\sharp}$ from \cref{okhpetrhkertgtrgergertge} is an almost frame morphism, and we  use the fact, that the functor to $E$-theory contexts $(-)^{\rmc}\nCalg$ is defined on $\Poset_{(*)}^{\afr}$ by \cref{kophertrtgertgtrge}.
\end{proof}

Let $X$ be a locale and $j:U\to X$ be an open sublocale inclusion corresponding to the frame morphism
$$\cP(j):P\to P_{/p}\ , \quad   p'\mapsto p\wedge p'$$ for   some uniquely determined $p$ in $P$. The map $i:Z\to X$ given by the frame morphism  $$\cP(i):P\to P_{p/} \ ,\quad p'\mapsto p\vee p'$$
is then called the complementary closed sublocale inclusion.

Recall the $E$-theory functor 
$$E:\Locale \to \Pr_{\st}^{L}$$ from \cref{ohkperrtgertgertg}.
\begin{theorem}\label{kohpertghertgtrege}
For an open sublocale $j:U\to X$ and its complementary closed sublocale $i:Z\to X$
we have a  recollement $$ \xymatrix{  E(U )\ar@/^0.8cm/[r]_{\perp}^{j_{!}}\ar@/_0.8cm/[r]^{\perp}_{j_{*} }&\ar[l]_{j^{*}} E(X )\ar@/^0.8cm/[r]_{\perp}^{i^{*}}\ar@/_0.8cm/[r]^{\perp}_{ j^{!}}&\ar[l]^{i_{*}} E(Z )}\ .$$
\end{theorem}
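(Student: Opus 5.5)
Since every functor in the diagram is obtained from the $C^{*}$-level adjunctions, the plan is to lift those adjunctions to $E$-theory and then prove a single localization fibre sequence.

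\textbf{Step 1: the adjoints.} By \cref{okhpzrhtzjrtzj}, $j_{!}\dashv j^{*}$ is a right Bousfield localization of $E$-theory contexts, and both functors are $E$-admissible. Applying \cref{kophertgretger9} (and the $2$-functoriality of \cref{kohpertgertgertge}), we get colimit-preserving functors $j_{!}, j^{*}$ on $E$-theory. The unit and counit of the $C^{*}$-level adjunction induce an adjunction $j_{!}\dashv j^{*}$ on $E$-theory with $j^{*}j_{!}\simeq \id$, so $j_{!}$ is fully faithful.

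Similarly, \cref{okhpzrhtzjrtzj1} gives $i^{*}\dashv i_{*}$ on $E$-theory with $i^{*}i_{*}\simeq\id$. Here we must check that $i_{*}=i_{!}$ is $E$-admissible. This holds because it is $\cP(i)^{*}$ for a frame morphism, by \cref{gwererwfwerfwrf}.

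Since all these functors lie in $\Pr^{L}_{\st}$, presentability gives the further right adjoints $j_{*}$ of $j^{*}$ and $i^{!}$ of $i_{*}$. Fully faithfulness of $j_{*}$ and $i_{*}$ follows formally from that of $j_{!}$ and $i_{*}$.

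\textbf{Step 2: the recollement axioms.} It remains to show three things:
\begin{itemize}
\item $j^{*}i_{*}\simeq 0$;
\item $(j^{*},i^{*})$ is jointly conservative;
\item equivalently, for every $A$ in $E(X)$ there is a cofibre sequence $j_{!}j^{*}A\to A\to i_{*}i^{*}A$.
\end{itemize}

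The vanishing $j^{*}i_{*}\simeq 0$ already holds at the $C^{*}$-level. For $B$ in $Z\nCalg$, the algebra $i_{*}B$ has value $B(p\vee q)$ at $q$. The $X$-algebra $j_{!}j^{*}i_{*}B$ is therefore evaluated at $p$, and there it equals $B(p)=B(0_{Z})=0$ by continuity.

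For the cofibre sequence, take $A$ in $X\nCalg$ and form the sequence of $X$-$C^{*}$-algebras
\[0\to j_{!}j^{*}A\to A\to i_{*}i^{*}A\to 0.\]
Pointwise at $q$ it reads
\[0\to A(p\wedge q)\to A(q)\to A(p\vee q)/A(p)\to 0.\]
We must check two things:
\begin{itemize}
\item this is exact, using $A(p\vee q)=A(p)+A(q)$ and $A(p)\cap A(q)=A(p\wedge q)$, i.e.\ the continuity of $A$;
\item it is an exact sequence in the categorical sense in $X\nCalg$.
\end{itemize}
For the second point I would use that kernels and quotients in $X\nCalg$ are computed pointwise, by the closure results \cref{khterthertgetrg9}–\cref{kohperthertgertgetrg1}.

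Since $\ee$ sends exact sequences to fibre sequences, we obtain a natural fibre sequence $\ee(j_{!}j^{*}A)\to \ee(A)\to \ee(i_{*}i^{*}A)$. Its naturality in $A$ identifies the maps with the counit of $j_{!}\dashv j^{*}$ and the unit of $i^{*}\dashv i_{*}$.

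\textbf{Step 3: extension to all of $E(X)$.} The full subcategory of $E(X)$ on which the map $\cofib(j_{!}j^{*}A\to A)\to i_{*}i^{*}A$ is an equivalence is closed under colimits, since all functors involved preserve colimits. It contains the generators $\ee(A)$, so it is everything.

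\textbf{Expected obstacle.} I expect the main difficulty to be making the unit and counit maps genuinely natural and compatible with the $C^{*}$-level exact sequence. A related subtlety is that $i_{*}$ commutes with $\ee$, i.e.\ $\EE(i_{!})\circ\ee\simeq \ee\circ i_{!}$; this should follow from \cref{kophertgretger9}.
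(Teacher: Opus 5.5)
Your proposal is correct and follows the paper's proof. The shared steps are: lift the $C^{*}$-level localizations $j_{!}\dashv j^{*}$ and $i^{*}\dashv i_{*}$ through the $2$-functor $\EE$, obtain $j_{*}$ and $i^{!}$ from presentability, check that $0\to j_{!}j^{*}A\to A\to i_{*}i^{*}A\to 0$ is pointwise exact using $A(p\wedge q)=A(p)\cap A(q)$ and $A(p\vee q)=A(p)+A(q)$, and then extend the resulting fibre sequence to all of $E(X)$. The only difference is in that last step: the paper extends the equivalence $\ee_{X}(j_{!}j^{*}A)\simeq \Fib(\ee_{X}(A)\to \ee_{X}(i_{*}i^{*}A))$ of homological functors via the universal property of $\ee_{X}$, where the null-homotopy of the composite comes for free from the strictly commuting $C^{*}$-level square, whereas you use a generation argument. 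For that argument you should also note that the subcategory is closed under desuspension, which holds because all the functors involved are exact.
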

\begin{proof}
The left Bousfield localization $j_{!}\dashv j^{*}$ from  \cref{okhpzrhtzjrtzj} and the right Bousfield localization
$i^{*}\dashv i_{*}$ from  \cref{okhpzrhtzjrtzj1}   induce corresponding localizations after applying the $E$-theory functor. The additional right-adjoints $j_{*}$ and $i^{!}$ exist since $E$ takes values in $\Pr_{\st}^{L}$ and sends partial or almost frame morphisms to
left-adjoint functors.

For every $A$ in $X\nCalg$ we have an exact sequence
$$0\to j_{!}j^{*}A\to A\to i_{*}i^{*}A\to 0\ .$$
 Indeed, let $p'$ be in $P$.
Then this  sequence evaluates  to 
$$0\to  A(p\wedge p')\to A(p')\to  A(p\vee p')/A(p)\to 0$$
which is exact, since
$A(p\wedge p')=A(p)\cap A(p')$ and $ A(p\vee p')=A(p)+A(p')$.
The functors
$$\Phi:A\mapsto  \ee_{X }(j_{!}j^{*}A)$$ and
$$\Psi:A\mapsto  \Fib(  \ee_{X }(A)\to  \ee_{X }(i_{*} i^{*}A))$$
from $X\nCalg$ to $E(X )$ are both  homological. The exactness of $\ee_{X}$
implies an equivalence
$\Phi\to \Psi$.  
By the universal property of $\ee_{X}$ this equivalence
  extends to an equivalence on the level of $E$-theory. More precisely,   we have a fibre sequence
of endofunctors
\begin{equation}\label{twuiotjioergergw}
 j_{!}j^{*} \to \id \to  i_{*}i^{*}
\end{equation}
of $E(X )$.
\end{proof}

\begin{theorem}  \label{okphterthertg}Let $X$ be a locale  and $(U_{i})_{i\in I}$ be a filtered system of open sublocales such  that $X=\bigcup_{i\in I} U_{i}$. Then, we have an equivalence
$$ E(X )\simeq \lim_{  i\in I} E(U_{i})\ .$$
 \end{theorem}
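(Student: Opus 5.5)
The plan is to identify $E(X)$ with the colimit in $\Pr^{L}_{\st}$ of the diagram $i\mapsto E(U_{i})$ along the fully faithful left adjoints $j_{i,!}$. By the usual duality between colimits in $\Pr^{L}$ and limits of right adjoints, this colimit is the limit along the restriction functors $j^{*}$. For $i\le k$ write $j_{ik}:U_{i}\to U_{k}$ and $j_{i}:U_{i}\to X$. By \cref{okhpzrhtzjrtzj}, each $j_{i,!}\dashv j_{i}^{*}$ is a right Bousfield localization already on the level of $X\nCalg$. Both functors are morphisms of $E$-theory contexts, so \cref{kophertgretger9} induces an adjunction $j_{i,!}\dashv j_{i}^{*}$ on $E$-theory with $j_{i}^{*}j_{i,!}\simeq \id$. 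By the argument in the proof of \cref{kohpertghertgtrege}, $j_{i,!}$ is fully faithful. The comparison functor is
$$\Phi: E(X)\to \lim_{i\in I}E(U_{i})\ , \qquad A\mapsto (j_{i}^{*}A)_{i}\ ,$$
and the candidate inverse is $\Psi:(B_{i})_{i}\mapsto \colim_{i}j_{i,!}B_{i}$, using $j_{i,!}\simeq j_{k,!}j_{ik,!}$ and the counits $j_{ik,!}j_{ik}^{*}\to\id$ to form the diagram.

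\textbf{Step 1 (algebra level).} For $A$ in $X\nCalg$, I will show that $\colim_{i}j_{i,!}j_{i}^{*}A\to A$ is an isomorphism in $X\nCalg$. By \cref{kohperthertgertgetrg} and \cref{kohperthertgertgetrg1}, filtered colimits there are computed pointwise, with closures of unions. The object $j_{i,!}j_{i}^{*}A$ evaluates at $V$ to $A(U_{i}\wedge V)$. By continuity of $A$ and frame distributivity, $\overline{\bigcup_{i}A(U_{i}\wedge V)}=A(\bigvee_{i}(U_{i}\wedge V))=A(V)$. In the same way I will check the base change $j_{m}^{*}j_{k,!}\cong j_{mk}^{*}$ for $m\le k$: both sides send $B$ to $V\mapsto B(U_{m}\wedge V)$, because $U_{m}\subseteq U_{k}$.

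\textbf{Step 2 (pass to $E$-theory).} The natural transformation $\colim_{i}j_{i,!}j_{i}^{*}\to\id$ of endofunctors of $E(X)$ consists of colimit-preserving functors. Since $\ee_{X}$ preserves filtered colimits, Step 1 shows it is an equivalence on the objects $\ee_{X}(A)$, which generate $E(X)$ under colimits; hence it is an equivalence everywhere. The base-change isomorphism likewise lifts to $E$-theory: both sides are induced by $E$-admissible functors, so by uniqueness in \cref{kophertgretger9} the lifts agree.

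\textbf{Step 3 (check the two composites).} Step 2 gives $\Psi\Phi\simeq\id$. For $\Phi\Psi\simeq\id$, take a compatible family $B_{k}\simeq j_{ik}^{*}B_{\ell}$ and fix $m$. Then
$$j_{m}^{*}\colim_{k\ge m}j_{k,!}B_{k}\simeq\colim_{k\ge m}j_{mk}^{*}B_{k}\simeq\colim_{k\ge m}B_{m}\simeq B_{m}\ ,$$
using that $j_{m}^{*}$ preserves colimits (it is a left adjoint in $\Pr^{L}_{\st}$), cofinality of $\{k\ge m\}$, and filteredness of $I$. The main obstacle is coherence: making $\Psi$ an honest functor out of the limit and making these identifications natural rather than objectwise. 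I would sidestep this by invoking the general statement that, for a filtered diagram of fully faithful left adjoints into $E(X)$ whose counits assemble to $\colim_{i}j_{i,!}j_{i}^{*}\simeq\id$, the induced functor $\colim^{\Pr^{L}}_{i}E(U_{i})\to E(X)$ is an equivalence (it is fully faithful by the base-change formula and essentially surjective by Step 2), and then translating this colimit into the limit along the right adjoints.
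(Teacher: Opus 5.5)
Your proposal is correct and matches the paper's proof. The paper likewise shows $\colim_{i}j_{i,!}j_{i}^{*}A\cong A$ in $X\nCalg$ from continuity of $A$ and distributivity, and lifts this via the universal property of $\ee_{X}$ to $\colim_{i}j_{i,!}j_{i}^{*}\simeq\id$ on $E(X)$ to get full faithfulness. It then gets essential surjectivity from $A:=\colim_{i}j_{i,!}A_{i}$, using that $j_{k}^{*}$ preserves colimits and $j_{k}^{*}j_{i,!}\simeq j_{U_{k}\to U_{i}}^{*}$. Your packaging via the colimit in $\Pr^{L}_{\st}$ along the $j_{ik,!}$ only makes explicit the coherence the paper leaves implicit. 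The compatibility condition in your Step 3 should read $B_{i}\simeq j_{ik}^{*}B_{k}$ for $i\le k$.
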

\begin{proof}
 Let $j_{i}:U_{i}\to X$ denote the inclusions. We are going to  check    that for every $A$ in $X\nCalg$ we have  
\begin{equation}\label{vwfedfvsdfvfdsvs} \colim_{i\in I}   j_{i,!}j_{i}^{*}A \cong A \ . \end{equation} 
Let $P=\cP(X)$ be the frame corresponding to the locale $X$.
For $p$ in $P$ we have 
$ (j_{i,!}j_{i}^{*}A)(p)=A(p\wedge p_{i})$, where $p_{i}$ is determined by $U_{i}$.
The condition $X=\bigcup_{i\in I}U_{i}$ translates to $\bigvee_{i\in I}p_{i}=\infty_{\cP(X)}$.
Then $$\colim_{i\in I}   j_{i,!}j_{i}^{*}A(p)\cong \colim_{i\in I} A(p\wedge p_{i})\cong
 A(\bigvee_{i\in I}(p\wedge p_{i}))\cong 
 A(  p\wedge \bigvee_{i\in I} p_{i} )\cong A(p)$$
 using the continuity of $A$ and distributivity of $P$.
 Since $\ee_{X}$ preserves colimits the equivalence  \eqref{vwfedfvsdfvfdsvs}  induces an equivalence of functors
 $$   \colim_{i\in I}   j_{i,!}j_{i}^{*} \ee_{X}\stackrel{\simeq}{\to} \ee_{X}:X\nCalg\to E(X)\ .$$
    By the universal property of $\ee_{X}$ 
 this equivalence   extends to
an equivalence of endofunctors
\begin{equation}\label{hguhiergwerfwerfwerfw}
\colim_{i\in I\ }   j_{i,!}j_{i}^{*} \simeq \id
\end{equation} of $E(X )$.
We conclude that the functor 
\begin{equation}\label{gkweropfwerfwerfw}E(X )\to \lim_{i\in I} E(U_{i} )
\end{equation} 
is fully faithful. We now show that it is also essentially surjective.
  For $(A_{i})_{i\in I}$ in $\lim_{i\in I } E(U_{i} )$
 we set
 $$A:=\colim_{i\in I}j_{i,!}A_{i}\ .$$
We then check that 
 $j_{k}^{*} A\simeq A_{k}$ for all $k$ in $I$.  
 To this end, we use the fact $j_{k}^{*}$ preserves colimits and the 
equivalences  $j_{k}^{*}j_{i,!}A_{i}\simeq j_{U_{k}\to U_{i}}^{*}A_{i}\simeq A_{k}$ provided that $k\le i$.
  \end{proof}

\subsection{Comparison with cosheaves}

The this section we relate the category $E(X)$ for a locale $X$ with the $\EE$-valued  cosheaves on $X$. Under finiteness assumptions on $X$ we get an equivalence. The main result is \cref{khoperthergtrge}.  

Let $X$ be  a  locale and let $\cC$ be a   presentable  stable $\infty$-category.
 \begin{ddd} \label{ohipeorthretgrtgrteg}A cosheaf on $X$ with values in $\cC$ is   
a functor $F:\cP(X)\to \cC$  such that:
\begin{enumerate}
\item $F(\emptyset)=0$
\item For every $p,q$ in $\cP(X)$, we have a  pushout
$$\xymatrix{F(p\wedge  q)\ar[r]\ar[d] & F(p) \ar[d] \\ F(q) \ar[r] & F(p\vee q)}\ . $$
\item For every filtered family $(p_{i})_{i}$ in $\cP(X)$, we 
have an equivalence
$$ \colim_{i} F(p_{i})  \stackrel{\simeq}{\to} F(\vee_{i}p_{i})\ .$$
 \end{enumerate}We let 
$\CoSh(X,\cC)$ denote the stable $\infty$-category of cosheaves on  $X$.

 \end{ddd}

A morphism of locales $f:X\to Y$ gives rise to a  colimit  preserving functor  $$f_{!}:\CoShv(X,\cC)\to \CoShv(Y,\cC)$$
by restricting the functor $\Fun(\cP(X),\cC)\to \Fun(\cP(Y),\cC)$ induced by the morphism
$\cP(f):\cP(Y)\to \cP(X)$. In the case of an open inclusion, it is fully faithful and  strongly cocontinuous and has two further right-adjoints $f^{*}$ and $f_{*}
$. In the case of a closed inclusion,   it is fully faithful and has a further left-adjoint $f^{*}$ and a right-adjoint $f^{!}$, and we set $f_{!}=f_{*}$.
These facts can be seen similarly to \cref{okhpzrhtzjrtzj} and \cref{okhpzrhtzjrtzj1}.

 We recall the following well-known fact.
 \begin{prop}\label{jiohrthetrhterhreehg93q}
Assume that $j:U\to X$ and $i:Z\to X$ are complementary inclusions of an open and a closed   sublocale.
Then we have a recollement
$$ \xymatrix{ \CoSh(U,\cC)\ar@/^0.8cm/[r]_{\perp}^{j_{!}}\ar@/_0.8cm/[r]^{\perp}_{ j_{*}}&\ar[l]_{j^{*}}\CoSh(X,\cC)\ar@/^0.8cm/[r]_{\perp}^{i^{*} }\ar@/_0.8cm/[r]^{\perp}_{ i^{!}}&\ar[l]^{i_{*}}\CoSh(Z,\cC)}\ .$$
 \end{prop}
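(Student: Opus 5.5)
We construct all six functors of the recollement explicitly and then verify the recollement axioms. By Definition \ref{ohipeorthretgrtgrteg}, a cosheaf on $X$ is a functor $\cP(X)\to\cC$; write $P=\cP(X)$, let $p$ be the element determining $U$, so $\cP(U)=P_{/p}$ and $\cP(Z)=P_{p/}$.

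\emph{The functors.} We take $j_{!}F(q):=F(q\wedge p)$ for $F$ a cosheaf on $U$, viewed as a functor on $P_{/p}$. We take $j^{*}$ to be restriction of a cosheaf on $X$ along $P_{/p}\subseteq P$. For $G$ a cosheaf on $Z$ we set $i_{*}G(q):=G(q\vee p)$; this vanishes at $\emptyset$ because $G(p)=G(\min P_{p/})=0$. We define $i^{*}F(q):=\Cofib(F(p)\to F(q))$ for $q\ge p$.

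\emph{Cosheaf conditions.} For $j_{!}$ these follow from distributivity of $P$ and the fact that filtered colimits commute with $-\wedge p$. For $i_{*}$ the pushout axiom uses $(q\vee p)\wedge(q'\vee p)=(q\wedge q')\vee p$, again by distributivity. For $i^{*}$ the pushout and filtered colimit axioms hold because cofibres commute with colimits in the stable category $\cC$.

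\emph{Adjunctions and full faithfulness.} The adjunction $j_{!}\dashv j^{*}$ holds with unit an equivalence, since $q\wedge p=q$ for $q\le p$. Likewise $i^{*}i_{*}\simeq\id$, because $G(p)=0$. The adjunction $i^{*}\dashv i_{*}$ is checked directly. A map $i^{*}F\to G$ amounts to a map $F|_{P_{p/}}\to G$ vanishing on $F(p)$. By the pushout property, $F(q\vee p)\simeq F(q)\sqcup_{F(q\wedge p)}F(p)$, so this amounts to a natural map $F(q)\to G(q\vee p)$, which is exactly a map $F\to i_{*}G$. The extra right adjoints $j_{*}$ and $i^{!}$ exist by the adjoint functor theorem. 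This requires $j^{*}$ and $i_{*}$ to preserve colimits. Colimits in $\CoShv(X,\cC)$ are computed pointwise: the cosheaf conditions are colimit conditions, so pointwise colimits of cosheaves are cosheaves, and $\CoShv(X,\cC)$ is presentable. Since $j^{*}$ and $i_{*}$ are defined pointwise, they preserve colimits.

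\emph{Recollement axioms.} It remains to check two things. First, $j^{*}i_{*}\simeq 0$: for $q\le p$ we get $i_{*}G(q)=G(p)=0$. Second, the pair $(j^{*},i^{*})$ is jointly conservative; equivalently, there is a cofibre sequence $j_{!}j^{*}F\to F\to i_{*}i^{*}F$. Evaluated at $q$, this reads
$$F(q\wedge p)\to F(q)\to\Cofib\big(F(p)\to F(q\vee p)\big).$$
It is a cofibre sequence precisely because the square in the pushout axiom is cocartesian: in a stable category, the cofibre of the left vertical map of a pushout square agrees with the cofibre of its right vertical map.

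\emph{Main obstacle.} The only step needing care is that $i^{*}$ and $j_{!}$ genuinely land in cosheaves, in particular the filtered-colimit axiom for $j_{!}$. That axiom requires $\bigvee_{k}(q_{k}\wedge p)=(\bigvee_{k}q_{k})\wedge p$, which is exactly frame distributivity.
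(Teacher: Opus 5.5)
Your proposal is correct and takes the paper's route. The functors are precompositions along $q\mapsto q\wedge p$, $q\mapsto q\vee p$ and the inclusions of $P_{/p}$, $P_{p/}$, with $i^{*}$ killing the value at $p$; the outer right adjoints $j_{*}$, $i^{!}$ come from presentability; and the key input is the cofibre sequence $j_{!}j^{*}F\to F\to i_{*}i^{*}F$ read off from the pushout axiom, exactly as the paper indicates.

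Two small points. First, closure under pointwise colimits only gives cocompleteness, so for the adjoint functor theorem you should add that $\CoShv(X,\mathcal{C})$ is presentable, being the joint kernel of a small set of colimit-preserving functors $\mathrm{Fun}(\cP(X),\mathcal{C})\to\mathcal{C}$. Second, $i^{*}\dashv i_{*}$ follows formally from the poset adjunction $(-\vee p)\dashv(P_{p/}\hookrightarrow P)$ together with $G(p)\simeq 0$, so the pushout axiom is not needed at that step.
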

 \begin{proof}
 The proof is very similar to the one  for \cref{kohpertghertgtrege}.
 We again use the  fibre sequences for cosheaves $F$ 
 $$ j_{!}j^{*} F\to F\to  i_{*}i^{*}F\ .$$
 
 \end{proof}
 
We further recall the following well-known continuity property of cosheaves.
 \begin{prop} \label{kopttrherthtr}Let $X$ be a locale  and $(U_{i})_{i\in I}$ be a filtered system of open sublocales such  that $X=\bigcup_{i\in I} U$. Then we have an equivalence  $$\CoShv(P ,\EE)\simeq \lim_{i\in I\ } \CoShv(U_{i} ,\EE)\ .$$
\end{prop}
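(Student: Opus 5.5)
The argument will mirror the proof of \cref{okphterthertg}, with $E(-)$ replaced by $\CoShv(-,\EE)$. Throughout, $P:=\cP(X)$, the open sublocale $U_{i}$ corresponds to $p_{i}\in P$, and $j_{i}:U_{i}\to X$ denotes the inclusion. For an open inclusion we have the adjunction $j_{i,!}\dashv j_{i}^{*}$ on cosheaves. Here $j_{i,!}$ is left Kan extension along the frame map; concretely $(j_{i,!}G)(p)=G(p\wedge p_{i})$, with $G$ viewed on $\cP(U_{i})=P_{/p_{i}}$. The functor $j_{i}^{*}$ is restriction of a cosheaf to $P_{/p_{i}}$. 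First I will check that these formulas preserve the cosheaf conditions of \cref{ohipeorthretgrtgrteg}. The key input is that $p\mapsto p\wedge p_{i}$ preserves finite meets, finite joins and filtered joins, by distributivity of the frame. I will then check that $j_{i,!}$ is fully faithful, which holds because $j_{i}^{*}j_{i,!}\simeq\id$.

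Next I will establish the cosheaf analogue of \eqref{vwfedfvsdfvfdsvs}: for every $F\in\CoShv(X,\EE)$, the canonical map $\colim_{i\in I} j_{i,!}j_{i}^{*}F\to F$ is an equivalence. Colimits in $\CoShv(X,\EE)$ need care, since a pointwise colimit of cosheaves might a priori fail to be a cosheaf. For filtered colimits this is not a problem. Conditions (1) and (2) of \cref{ohipeorthretgrtgrteg} are finite colimit conditions in a stable category, and condition (3) is a colimit condition, so all three commute with pointwise filtered colimits. Hence the colimit can be evaluated pointwise, giving
\[(\colim_{i} j_{i,!}j_{i}^{*}F)(p)\simeq \colim_{i}F(p\wedge p_{i})\simeq F\Big(\bigvee_{i}(p\wedge p_{i})\Big)\simeq F(p).\]
The middle equivalence uses condition (3) applied to the filtered family $(p\wedge p_{i})_{i}$. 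The last uses distributivity together with $\bigvee_{i}p_{i}=\infty_{P}$, which is how the hypothesis $X=\bigcup_{i}U_{i}$ reads in the frame.

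From here the argument is formal. The functor $\CoShv(X,\EE)\to\lim_{i}\CoShv(U_{i},\EE)$ is given by $F\mapsto(j_{i}^{*}F)_{i}$, with transition maps the restrictions along $U_{k}\subseteq U_{i}$. Full faithfulness follows from the displayed equivalence: one computes $\map(F,F')\simeq\lim_{i}\map(j_{i,!}j_{i}^{*}F,F')\simeq\lim_{i}\map(j_{i}^{*}F,j_{i}^{*}F')$. For essential surjectivity, given a compatible family $(F_{i})_{i}$, I set $F:=\colim_{i}j_{i,!}F_{i}$. This colimit is again computed pointwise and is filtered, via the maps $j_{k,!}F_{k}\simeq j_{i,!}j_{U_{k}\to U_{i},!}j^{*}_{U_{k}\to U_{i}}F_{i}\to j_{i,!}F_{i}$ for $k\le i$. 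I then check $j_{k}^{*}F\simeq F_{k}$. Since $j_{k}^{*}$ commutes with this pointwise colimit and $j_{k}^{*}j_{i,!}F_{i}\simeq j_{U_{k}\to U_{i}}^{*}F_{i}\simeq F_{k}$ for $i\ge k$, cofinality gives the claim.

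The step I expect to be the main obstacle is making the compatible family of colimit diagrams coherent at the $\infty$-categorical level, rather than just objectwise. I would handle it by viewing the limit as the category of sections of the cartesian fibration classified by $i\mapsto\CoShv(U_{i},\EE)$. All functors involved are restrictions and left Kan extensions along poset maps, so the required coherence is inherited from the functoriality of $\Fun(-,\EE)$ on posets. An alternative is to identify $\lim_{i}\CoShv(U_{i},\EE)$ with the full subcategory of functors on $\bigcup_{i}P_{/p_{i}}\subseteq P$ that are cosheaves on each piece, and then show that left Kan extension to $P$ is inverse to restriction.
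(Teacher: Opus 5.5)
Your proposal is correct and is essentially the paper's proof, which simply repeats the argument of \cref{okphterthertg} after observing that $\colim_{i\in I} j_{i,!}j_{i}^{*}\simeq \id$ holds on $\CoShv(X,\EE)$; your pointwise computation $\colim_{i}F(p\wedge p_{i})\simeq F(p)$ establishes exactly this. One wording nit: $j_{i,!}$ is left Kan extension along the inclusion $P_{/p_{i}}\hookrightarrow P$, i.e.\ precomposition with the frame map $p\mapsto p\wedge p_{i}$. Separately, your second coherence argument, via left Kan extension along $\bigcup_{i}P_{/p_{i}}\subseteq P$, is a clean way to settle the point the paper leaves implicit.
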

\begin{proof}
This has the same proof as  \cref{okphterthertg} starting with the observation that \eqref{hguhiergwerfwerfwerfw} holds for cosheaves.
\end{proof}

\begin{lem}\label{herthertgrtger}
There exists a canonical  natural transformation of functors \begin{equation}\label{trwet42t4t3}
s:E\to \Fun(\cP(-),\EE):\Locale\to \Pr_{\st}^{L}
\end{equation}  
which sends $A$ in $E(X)$ to the cosheaf $s_{X}(A):\cP(X)\ni p\mapsto \ee(A(p))$.
\end{lem}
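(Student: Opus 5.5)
The plan is to construct $s_X$ first on the level of $X$-$C^{*}$-algebras and then extend it using the universal property of \cref{koprtheggrtge}. For each $p$ in $\cP(X)$ the evaluation $X\nCalg\to\nCalg$, $A\mapsto A(p)$, is the functor $\iota_p^{*}$ with $\iota_p:\{*\}\to\cP(X)$ picking $p$. The underlying algebra of $\iota_p^*A$ is $A(p)$. For $p<\infty$ this is only a partial frame morphism of pointed posets. It is nevertheless a morphism of $E$-theory contexts $X\nCalg\to\nCalg$ by \cref{gwererwfwerfwrf}, since finite products, filtered colimits and exact sequences in $X\nCalg$ are computed pointwise by \cref{khterthertgetrg9}, \cref{phertgertgetr}, \cref{gwerwkeropfweferf}, \cref{kohperthertgertgetrg} and \cref{kohperthertgertgetrg1}. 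Assembling these evaluations gives a functor $\mathrm{ev}:X\nCalg\to\Fun(\cP(X),\nCalg)$. Composing pointwise with $\ee:\nCalg\to\EE$ gives $\ee\circ\mathrm{ev}:X\nCalg\to\Fun(\cP(X),\EE)$. This composite is homological, because limits and colimits in $\Fun(\cP(X),\EE)$ are pointwise and each $\ee\circ\iota_p^*$ is homological, by \cref{kophertgretger9} applied to $\iota_p^*$. The universal property then yields a unique colimit-preserving $s_X:E(X)\to\Fun(\cP(X),\EE)$ with $s_X\circ\ee_X\simeq\ee\circ\mathrm{ev}$. This is the lemma's formula $s_X(A)(p)=\ee(A(p))$.

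Next I would verify that $s_X(A)$ is a cosheaf. Since $\CoShv(X,\EE)\subseteq\Fun(\cP(X),\EE)$ is closed under colimits (the three conditions are colimit conditions) and $E(X)$ is generated under colimits by the image of $\ee_X$, it suffices to treat $A$ in $X\nCalg$. The first condition holds because $A(\emptyset)=0$ by continuity. For the second, use the exact sequence $0\to A(p\wedge q)\to A(p)\to A(p)/(A(p)\cap A(q))\to0$. By continuity and left-exactness, $A(p)/(A(p)\cap A(q))\cong (A(p)+A(q))/A(q)=A(p\vee q)/A(q)$. Applying the exact functor $\ee$ turns the square into a map of cofibre sequences with equivalent cofibres, hence a pushout. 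For the third, filtered joins go to closures of unions, so $A(\bigvee p_i)=\colim A(p_i)$ in $\nCalg$, and $\ee$ preserves filtered colimits.

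Finally I would establish naturality in $X$. For $f:X\to Y$, the functor $f_!=\cP(f)^*$ satisfies $\mathrm{ev}_Y\circ f_!=\cP(f)^*\circ\mathrm{ev}_X$ strictly at the level of algebras. Hence $s_Y\circ E(f_!)$ and $\cP(f)^*\circ s_X$ agree after precomposition with $\ee_X$. By uniqueness in the universal property of \cref{koprtheggrtge}, they are equivalent colimit-preserving functors. Coherence of these equivalences, giving a genuine natural transformation of $\Pr^L_{\st}$-valued functors, comes from performing the construction inside the 2-functor of \cref{kohpertgertgertge}. Concretely, the pointwise functor $\ee\circ\mathrm{ev}$ is a natural transformation $(-)\nCalg\to\Fun(\cP(-),\EE)$ of $\Cat_\infty$-valued functors. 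Its factorization through $E$ is then functorial, since $\EE(\cA)$ is a localization and $\ee$ is natural.

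I expect the main obstacle to be this coherence step rather than the pointwise checks. Where the 2-categorical bookkeeping is delicate, I would rely on the fact that mapping spaces of colimit-preserving functors out of $E(X)$ are identified with those of homological functors out of $X\nCalg$. This reduces all higher coherences to strict identities among functors of ordinary categories.
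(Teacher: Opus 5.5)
Your proposal is correct and follows the paper's proof. You obtain $s_X$ from the universal property of $\ee_X$ applied to the homological composite $X\nCalg\to\Fun(\cP(X),\nCalg)\xrightarrow{\ee\circ -}\Fun(\cP(X),\EE)$, you get naturality by running the same argument over all of $\Locale$, and your cosheaf check is exactly what the paper does in \cref{gweoirgjoergeferggr}.\ref{jiogwrtgergwerfwerf}.

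One slip needs fixing. The map $\iota_p:\{*\}\to\cP(X)$ is not a partial frame morphism: it does not preserve the empty join unless $p$ is the bottom element, and in any case $\{*\}^{\rmc}\nCalg\simeq 0$. So \cref{gwererwfwerfwrf} does not apply as you state it. You can instead use the partial frame morphism $[1]\to\cP(X)$, $0\mapsto\emptyset$, $1\mapsto p$. Alternatively, as your pointwise argument already shows, compose the $E$-admissible inclusion $X\nCalg\to\Fun(\cP(X),\nCalg)$ with evaluation at $p$, which is $E$-admissible by \cref{kohprthertgretget}.
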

\begin{proof}

Let $X$ be a locale. Since the right-down composition in  $$
\xymatrix{X\nCalg\ar[r]\ar[d]^{\ee_{X }} &\Fun(\cP(X),\nCalg) \ar[d]^{\ee\circ -} \\E(X ) \ar@{..>}[r]^-{s_{X}} &\Fun(\cP(X),\EE) }$$
is homological, we get the colimit-preserving factorization  $s_{X}$.
Applying the same argument to the whole diagram over $\Locale$ we get the 
  transformation of functors  \eqref{trwet42t4t3}.
 \end{proof}
 
 Let $X$ be a locale.
 \begin{theorem}\label{gweoirgjoergeferggr}\mbox{}
   \begin{enumerate} \item  \label{jiogwrtgergwerfwerf}
   
The functor
$s_{X}$ corestricts to a colimit-preserving  functor
$$s_{X}:E(X )\to\CoSh(X,\EE)\ .$$
\item  \label{jiogwrtgergwerfwerf1}
 If $X$ is finite, then $s_{X}$ is an equivalence. \end{enumerate}
\end{theorem}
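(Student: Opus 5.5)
The plan is to prove the two assertions separately. The first reduces to generators and uses exactness of $\ee$. The second is an induction on the size of the finite frame, using the two recollements of \cref{kohpertghertgtrege} and \cref{jiohrthetrhterhreehg93q}.

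\textbf{Assertion \ref{jiogwrtgergwerfwerf}.} The conditions in \cref{ohipeorthretgrtgrteg} are all statements that certain colimit-comparison maps are equivalences. Since $\EE$ is stable, these conditions are preserved under arbitrary colimits in $\Fun(\cP(X),\EE)$. So $\CoSh(X,\EE)$ is closed under colimits, and the inclusion preserves them. The functor $s_{X}$ preserves colimits, and $E(X)$ is generated under colimits (and shifts) by the objects $\ee_{X}(A)$ for $A$ in $X\nCalg$. It therefore suffices to check that $p\mapsto \ee(A(p))$ is a cosheaf for every continuous $A$. This follows from three observations.
\begin{enumerate}
\item $A(0_{P})=0$, so $\ee(A(0_{P}))\simeq 0$.
\item For $p,q$ we have $A(p\wedge q)=A(p)\cap A(q)$ and $A(p\vee q)=A(p)+A(q)$. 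This gives an exact sequence $0\to A(p\wedge q)\to A(p)\oplus A(q)\to A(p\vee q)\to 0$. Since $\ee$ is exact and preserves finite products, it yields the required pushout square in the stable category $\EE$.
\item For a filtered family $(p_{i})_{i}$, the ideal $A(\bigvee_{i}p_{i})$ is the closure of $\bigcup_{i}A(p_{i})$, which is the filtered colimit in $\nCalg$. Since $\ee$ preserves filtered colimits, the comparison map is an equivalence.
\end{enumerate}

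\textbf{Assertion \ref{jiogwrtgergwerfwerf1}.} We argue by induction on the cardinality of the finite frame $P=\cP(X)$.
\begin{itemize}
\item If $P$ is trivial, both sides vanish.
\item If $P=\{0,1\}$, then $X\nCalg\simeq\nCalg$ and $E(X)\simeq\EE$. A cosheaf is determined by its value on $1$, so $\CoSh(X,\EE)\simeq\EE$, and $s_{X}$ is the identity under these identifications.
\item Otherwise choose $p$ with $0_{P}<p<\infty_{P}$. Let $j:U\to X$ be the open sublocale with frame $P_{/p}$ and $i:Z\to X$ the complementary closed sublocale with frame $P_{p/}$. Both are strictly smaller, so $s_{U}$ and $s_{Z}$ are equivalences by induction.
\end{itemize}

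We then show that $s$ is a morphism of recollements, i.e.\ that it commutes with $j_{!},i_{*},j^{*},i^{*}$.
\begin{itemize}
\item Commutation with $j_{!}$ and $i_{*}=i_{!}$ is part of the naturality of $s$ in \cref{herthertgrtger}, since these are the functors $f_{!}$ on both sides.
\item For $j^{*}$ and $i^{*}$, both sides preserve colimits; $j^{*}$ does because it has the further right adjoint $j_{*}$. So it suffices to compare them on generators $\ee_{X}(A)$. For $j^{*}$ this is immediate, since $(j^{*}A)(q)=A(q)$ for $q\le p$. For $i^{*}$ we have $(i^{*}A)(q\vee p)=A(q\vee p)/A(p)$. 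Exactness of $\ee$ identifies its image with $\mathrm{Cofib}(\ee A(p)\to \ee A(q\vee p))$, which is exactly $i^{*}$ of the cosheaf $s_{X}(A)$.
\end{itemize}

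To conclude, apply $s_{X}$ to the fibre sequence $j_{!}j^{*}\to\id\to i_{*}i^{*}$ of \eqref{twuiotjioergergw} and compare it with the analogous sequence for cosheaves. Mapping spaces in $E(X)$ then decompose as in the recollement, so $s_{X}$ is fully faithful because $s_{U}$ and $s_{Z}$ are. It is essentially surjective because every cosheaf is an extension of $i_{*}i^{*}F$ by $j_{!}j^{*}F$, and both of these lie in the image.

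The main obstacle is to make the compatibility of $s$ with the left adjoints $i^{*}$ and $j^{*}$ coherent at the $\infty$-categorical level, rather than only objectwise on generators. I would obtain it from the universal property of $\ee_{X}$: the two composites $X\nCalg\to\CoSh(Z,\EE)$ are both homological, and they are identified naturally via the exact sequences $0\to A(p)\to A(q\vee p)\to (i^{*}A)(q\vee p)\to 0$. The universal property then promotes this identification to an equivalence of colimit-preserving functors.
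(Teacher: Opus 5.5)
Your strategy is the paper's: check the cosheaf conditions on generators, then induct on $|P|$ using the two recollements. But the concluding step has a genuine gap. The sentence ``mapping spaces decompose as in the recollement, so $s_{X}$ is fully faithful because $s_{U}$ and $s_{Z}$ are'' does not follow from what you have shown. Decompose source and target by $j_{!}j^{*}\to\id\to i_{*}i^{*}$. Then $\map_{E(X)}(A,B)$ is built from three pieces: $\map(j^{*}A,j^{*}B)$, $\map(i^{*}A,i^{*}B)$, and the cross term $\map_{E(X)}(i_{*}z,j_{!}a)\simeq\map_{E(Z)}(z,i^{!}j_{!}a)$. Here $i^{!}j_{!}\simeq\Omega i^{*}j_{*}$ is the gluing functor. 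It involves the right adjoints $j_{*}$ and $i^{!}$, and compatibility of $s$ with $j_{!},j^{*},i_{*},i^{*}$ says nothing about it. For instance, take $\Sp\times\Sp\to\Fun(\Delta^{1},\Sp)$, $(a,z)\mapsto(z\xrightarrow{0}a)$, from the split recollement to the arrow-category recollement with $i^{*}(x\to y)=x$ and $j^{*}(x\to y)=y$. It commutes with all four functors and is the identity on both pieces, yet it is not essentially surjective, since it misses $(x\xrightarrow{\id}x)$. The paper's own proof deduces the equivalence from a map of cofibre sequences in $\Cat^{\exa}_{\infty}$ with outer equivalences, and it needs the same supplement: there is no five lemma in $\Cat^{\exa}_{\infty}$.

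One way to supply the missing input within your induction is to take $p$ to be a coatom, so that $Z$ is a point and $E(Z)\simeq\EE$. A coatom of a finite distributive lattice is meet-prime, so $q_{0}:=\bigwedge\{q\in P\mid q\not\le p\}$ still satisfies $q_{0}\not\le p$. For $C$ in $\nCalg$, the algebra $i_{*}C$ has value $C$ at $q\not\le p$ and $0$ at $q\le p$. Hence a morphism $i_{*}C\to A$ is a homomorphism $C\to\bigcap_{q\not\le p}A(q)=A(q_{0})$, so $i_{*}$ is left adjoint to the $E$-admissible functor $A\mapsto A(q_{0})$; the unit is the identity and the counit the inclusion. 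Applying $\EE$ gives $\map_{E(X)}(i_{*}e,B)\simeq\map_{\EE}(e,s_{X}(B)(q_{0}))$. The same formula holds in $\CoShv(X,\EE)$, where $i_{*}e$ is the functor on the specialization poset of points corepresented by the closed point, tensored with $e$. Combine this with $\map(j_{!}a,B)\simeq\map(a,j^{*}B)$, $s_{X}j_{!}\simeq j_{!}s_{U}$ and $j^{*}s_{X}\simeq s_{U}j^{*}$. Then $s_{X}$ is an equivalence on mapping spectra out of $j_{!}E(U)$ and $i_{*}E(Z)$, hence out of every object, and your essential-surjectivity argument applies.

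A smaller point in Assertion 1: $0\to A(p\wedge q)\to A(p)\oplus A(q)\to A(p\vee q)\to 0$ is not a sequence of $*$-homomorphisms, since $(a,b)\mapsto a\pm b$ is not multiplicative, so $\ee$ cannot be applied to it. Use instead the map of extensions from $A(p\wedge q)\subseteq A(q)$ to $A(p)\subseteq A(p\vee q)$. Its quotients agree, $A(q)/(A(p)\cap A(q))\cong(A(p)+A(q))/A(p)$, so $\ee$ yields a map of fibre sequences with an equivalence on cofibres, i.e.\ a pushout.
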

\begin{proof}
 If $A$ is in $X\nCalg$, then $\ee(A(\emptyset)) \simeq \ee(0)\simeq 0$.
Since $A$ is continuous,  for $p$ and $q$ in $\cP(X)$ we  have, using the exactness of $\ee:\nCalg\to \EE$,  
a pushout
$$\xymatrix{\ee(A(p\wedge  q))\ar[r]\ar[d] & \ee(A(p)) \ar[d] \\ \ee(A(q)) \ar[r] & \ee(A(p\vee q))}\ . $$
 Similarly, using the facts that $\ee$ preserves filtered colimits and that $A$ is continuous,
for a filtered  family $(p_{i})_{i}$  in $\cP(X)$ we have
$$\ee(A(\vee_{i}p_{i}))\simeq \colim_{i} \ee(A(p_{i}))\ .$$
 These relations show that $s_{X}(A)$ is a cosheaf for every $A$ in $X\nCalg$.
 Using the universal property of $\ee_{X }$ these
 relations extend  to all objects of $E(X ) $. We conclude that $s_{X}$ takes values in cosheaves.
This finishes the verification of Assertion \ref{jiogwrtgergwerfwerf}.

We now show Assertion \ref{jiogwrtgergwerfwerf1} by induction on the number of elements of $P$. The start of the induction is at the locale corresponding to the frame $[1]$, where the assertion is obvious.

Consider now a finite locale $X$.
Let $p$ be a minimal non-minimal element of $\cP(X)$. It determines   an open sublocale inclusion $U\to X$.
We let $Z\to X$ be the closed complement.
 
Combining \cref{jiohrthetrhterhreehg93q} and \cref{kohpertghertgtrege}
 we have a map of cofibre sequences in $\Cat^{\exa}_{\infty}$
$$\xymatrix{  E(U )  \ar[r]\ar[d]_{s_{U}}^{\simeq} & E(X ) \ar[r]\ar[d]_{s_{X}} & E(Z  )\ar[d]_{s_{Z}}^{\simeq}   \\ \CoSh(U,\EE) \ar[r]  &\CoSh(X,\EE)\ar[r]& \CoSh(Z,\EE)    } \ .$$
By the induction hypothesis, the outer vertical maps are equivalences as indicated.
This implies that the middle map is an equivalence.
\end{proof}

\begin{rem}
The problem of calculating mapping  groups  in the $KK$ or $E$-theory of $X\nCalg$  for finite locales using methods from triangulated category theory
 has been considered  in 
\cite{filtrated}, \cite{zbMATH05657129}, \cite{Bentmann_2013}. In the case of $E$-theory the following provides a complete homotopy theoretic description of the whole mapping space. 
\begin{kor}\label{kohpreherthrtge} If $X$ is finite, then  
for $A,B$ in $E(X )$ we have
$$\map_{E(X )}(A,B)\simeq \map_{\CoShv(X,\EE)}(s(A),s(B))\ .$$
\end{kor}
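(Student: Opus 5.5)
This corollary follows directly from \cref{gweoirgjoergeferggr}.\ref{jiogwrtgergwerfwerf1}. For a finite locale $X$, that result says the colimit-preserving functor $s_{X}:E(X)\to \CoShv(X,\EE)$ is an equivalence of $\infty$-categories, and an equivalence is in particular fully faithful. So the plan is to take the map on mapping spaces induced by $s_{X}$,
$$\map_{E(X)}(A,B)\to \map_{\CoShv(X,\EE)}(s_{X}(A),s_{X}(B))\ ,$$
and observe that full faithfulness makes it an equivalence for all $A,B$ in $E(X)$. Here $s(A)$ in the statement means $s_{X}(A)$.

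The only point worth checking is that the induction in the proof of \cref{gweoirgjoergeferggr}.\ref{jiogwrtgergwerfwerf1} really yields an equivalence, and not merely essential surjectivity. The argument compares two recollements, from \cref{kohpertghertgtrege} and \cref{jiohrthetrhterhreehg93q}, which give a map of cofibre sequences in $\Cat^{\exa}_{\infty}$. By induction the outer maps $s_{U}$ and $s_{Z}$ are equivalences. For this to work, $s$ has to commute with $j_{!}$ and $i_{*}$, which holds by the naturality of $s$ from \cref{herthertgrtger}; it must also commute with the right adjoints $j^{*}$ and $i^{*}$, which one checks on generators $\ee_{X}(A)$ using the explicit formulas $(j^{*}A)(p')=A(p\wedge p')$ and $(i^{*}A)(p\vee p')=A(p\vee p')/A(p)$.

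Given these compatibilities, the middle map is an equivalence by the five-lemma for recollements. Each object $F$ sits in a fibre sequence $j_{!}j^{*}F\to F\to i_{*}i^{*}F$, and mapping spaces into or out of it split accordingly, so both full faithfulness and essential surjectivity of $s_{X}$ follow. Once this is in place, the corollary is immediate, and no step is a genuine obstacle.
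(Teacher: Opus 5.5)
Your first paragraph is exactly the paper's proof: the corollary is just the full faithfulness of the equivalence $s_{X}$ from \cref{gweoirgjoergeferggr}.\ref{jiogwrtgergwerfwerf1}, which you may cite as it stands. The re-check in your last two paragraphs is not needed, and the ``five-lemma for recollements'' as you formulate it is false: compatibility with $j_{!},j^{*},i^{*},i_{*}$ does not control the cross term $\map(i_{*}i^{*}F,j_{!}j^{*}G)$, which is governed by the gluing functor $i^{*}j_{*}$ (e.g.\ the functor $(B,A)\mapsto(A\xrightarrow{0}B)$ from the split recollement $\Sp\times\Sp$ to $\Fun(\Delta^{1},\Sp)$ commutes with all four functors and is the identity on both pieces, but is not an equivalence).
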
 \hB
\end{rem}
 
 \begin{ex} Every 
  finite topological space $X$ gives rise to a finite locale $\Open(X)$.
 Therefore  \cref{gweoirgjoergeferggr} and \cref{kohpreherthrtge} apply to $X$-$C^{*}$-algebras for finite topological spaces. \hB
 \end{ex}

 Combining \cref{kopttrherthtr}, \cref{okphterthertg} with \cref{gweoirgjoergeferggr} we get.
 \begin{kor}\label{khoperthergtrge}
   Let $X$ be a locale which is  a union of a filtered family of 
   finite open sublocales; then we have an equivalence
   $$s_{X}:E(X )\stackrel{\simeq}{\to} \CoShv(X,\EE)\ .$$
 \end{kor}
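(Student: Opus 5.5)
The plan is to reduce the statement to the finite case, \cref{gweoirgjoergeferggr}.\ref{jiogwrtgergwerfwerf1}, by comparing the two continuity results \cref{okphterthertg} and \cref{kopttrherthtr} through the natural transformation $s$ of \cref{herthertgrtger}.

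Write $X=\bigcup_{i\in I}U_{i}$ with $(U_{i})_{i\in I}$ a filtered family of finite open sublocales, and let $j_{i}:U_{i}\to X$ be the inclusions. First I would check that the transition functors in both limits are compatible with $s$. The limits in \cref{okphterthertg} and \cref{kopttrherthtr} are taken along the restriction functors $j^{*}$ for the inclusions $U_{k}\to U_{i}$ with $k\le i$. The transformation $s$ is natural for the covariant functoriality $f_{!}$, so I would verify that $s$ also commutes with $j^{*}$ for open inclusions. For $A$ in $X\nCalg$ we have $(j^{*}A)(q)=A(q)$ for $q\le p_{U}$. Hence $s_{U}(j^{*}A)$ is the restriction of the functor $q\mapsto \ee(A(q))$ to $\cP(U)=\cP(X)_{/p_{U}}$, and this is exactly $j^{*}s_{X}(A)$ on cosheaves. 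This identification extends from $X\nCalg$ to all of $E(X)$ by the universal property of $\ee_{X}$, because both composites $E(X)\to \CoShv(U,\EE)$ preserve colimits: $j^{*}$ on $E(X)$ is a left adjoint, since $j_{!}\dashv j^{*}$ is a localization with a further right adjoint $j_{*}$, and on cosheaves $j^{*}$ is strongly cocontinuous. Alternatively one can obtain the compatibility from the Beck--Chevalley mate of $s_{X}j_{!}\simeq j_{!}s_{U}$, using that $j_{!}$ is fully faithful on both sides.

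With this in hand, we get a commutative square whose top row is the equivalence $E(X)\to\lim_{i}E(U_{i})$ of \cref{okphterthertg} and whose bottom row is the equivalence $\CoShv(X,\EE)\to\lim_{i}\CoShv(U_{i},\EE)$ of \cref{kopttrherthtr}. The left vertical map is $s_{X}$, and the right vertical map is $\lim_{i}s_{U_{i}}$. Each $U_{i}$ is a finite locale, so each $s_{U_{i}}$ is an equivalence by \cref{gweoirgjoergeferggr}.\ref{jiogwrtgergwerfwerf1}. A limit of a natural family of equivalences is an equivalence, so $\lim_{i}s_{U_{i}}$ is one. By two-out-of-three, $s_{X}$ is an equivalence.

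The main obstacle is the coherence step: $s$ must assemble into a natural transformation of $I^{\op}$-indexed diagrams along the restrictions $j^{*}$, not merely commute objectwise up to some equivalence. I would handle this by passing to right adjoints. The diagrams $i\mapsto E(U_{i})$ and $i\mapsto\CoShv(U_{i},\EE)$ along $j^{*}$ are the right-adjoint diagrams of the $j_{!}$-diagrams over $I$, and $s$ is already natural for the $j_{!}$-diagrams via \cref{herthertgrtger} applied to the subcategory of open inclusions. The compatibility with $j^{*}$ then follows from the fact that the mates $j^{*}s\to s j^{*}$ are equivalences, which was the objectwise computation above. Alternatively, I would bypass the limit entirely and argue directly with \eqref{hguhiergwerfwerfwerfw}, which holds both for $E(X)$ and for cosheaves. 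For full faithfulness, $\map(A,B)\simeq\lim_{i}\map(j_{i}^{*}A,j_{i}^{*}B)$ on both sides, and $s$ intertwines $j_{i}^{*}$. For essential surjectivity, a cosheaf $F$ is $\colim_{i}j_{i,!}j_{i}^{*}F$, and each term is in the image of $s_{X}$ because $s_{U_{i}}$ is essentially surjective and $s$ commutes with $j_{i,!}$ and colimits.
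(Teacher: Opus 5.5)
Your proposal is correct and is essentially the paper's proof, which just combines \cref{okphterthertg}, \cref{kopttrherthtr} and the finite case of \cref{gweoirgjoergeferggr} through the transformation $s$. What you add is an explicit check that $s$ is compatible with the restriction functors $j^{*}$ defining the two limits, either via the universal property of $\ee_{X}$ or via the mates of the $j_{!}$-naturality from \cref{herthertgrtger}; the paper leaves this point implicit.
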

 
 \begin{ex} The locale corresponding to the poset  $\nat$ satisfies the assumption of \cref{khoperthergtrge}. \hB
 \end{ex}

 \section{Locally compact Hausdorff spaces}\label{kohperthertgertgertg}

\subsection{Statement of the main theorem}

%

 Let $\LCH$ denote the category of locally compact Hausdorff spaces.
 By restriction along $\LCH\to \Locale$,  the functor from \cref{lkohperghtrgertgertg9} gives rise to a functor  
 $$(-)\nCalg:\LCH\to \EC\ ,
\quad X\mapsto  X\nCalg\ , \quad  f\mapsto f_{!}\ .$$
In order to construct a six-functor formalism, we need a contravariant functor.
\begin{theorem}\label{kophejhthgerthetrh}
There is   a lax symmetric monoidal functor into $E$-theory contexts 
 $$(-)\nCalg:\LCH^{\op}\to  \Cat
\ , \quad X\mapsto  X\nCalg\ , \quad  f\mapsto f^{*}\ .$$
\end{theorem}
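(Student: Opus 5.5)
The plan is to construct the contravariant functoriality via the balanced tensor product with $C_{0}$-algebras, using the correspondence between continuous $\Open(X)$-$C^{*}$-algebras and $C_{0}(X)$-algebras for locally compact Hausdorff $X$.

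\textbf{Step 1: $X\nCalg$ as $C_{0}(X)$-algebras.} By the Dauns--Hofmann theorem, a continuous frame morphism $\Open(X)\to I(A)$ amounts to a nondegenerate central homomorphism $C_{0}(X)\to ZM(A)$. The dictionary is $A(U)=\overline{C_{0}(U)A}$. Under this dictionary, morphisms in $X\nCalg$ are exactly the $C_{0}(X)$-linear homomorphisms.

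\textbf{Step 2: pullback.} For $f:X\to Y$ in $\LCH$ and $A$ in $Y\nCalg$, I would define
$$f^{*}A:=C_{0}(X)\otimes_{C_{0}(Y)}A,$$
the maximal balanced tensor product, i.e.\ the quotient of $C_{0}(X)\otimes A$ by the ideal generated by $(\phi\circ f)g\otimes a-g\otimes \phi a$. Its $\Open(X)$-structure is $U\mapsto C_{0}(U)\cdot f^{*}A$. I would take functoriality in $f$, the isomorphisms $(gf)^{*}\cong f^{*}g^{*}$, and the coherence of these isomorphisms from \cite{Park_2000} and \cite{Popescu_2004}.

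\textbf{Step 3: external product.} For $A$ in $X\nCalg$ and $B$ in $Y\nCalg$, the product $A\boxtimes B:=A\otimes B$ (maximal tensor product) carries a $C_{0}(X\times Y)\cong C_{0}(X)\otimes C_{0}(Y)$-structure. The unit is $\C$ in $\pt\nCalg=\nCalg$. This supplies the lax structure maps, and their associativity and symmetry constraints are inherited from those of $\otimes$ in $\nCalg$. The symmetric monoidal functor $\cN\to\pt\nCalg$ required by \cref{tohpertgertgrteg} is the inclusion.

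\textbf{Step 4: $E$-admissibility.} It remains to check that $f^{*}$ and $A\boxtimes-$ preserve finite products, filtered colimits and exact sequences. By \cref{kohperthertgertgetrg1} and the preceding results, these structures are created pointwise in $\Fun(\Open(X),\nCalg)$, so they can be checked on underlying algebras. For $A\boxtimes -$ this follows from the corresponding properties of the maximal tensor product, together with the fact that $A\otimes -$ sends the ideal structure to the ideal structure and so stays in the subcategory.

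For $f^{*}$ the three properties behave differently:
\begin{itemize}
\item Finite products: $f^{*}$ preserves them because the balanced tensor product is additive.
\item Filtered colimits: $f^{*}$ is a quotient of $C_{0}(X)\otimes -$, a colimit-type construction, and the ideals involved pass to the colimit.
\item Exact sequences: this is the hard part. Given $0\to I\to A\to A/I\to 0$ in $Y\nCalg$, exactness of $C_{0}(X)\otimes-$ gives surjectivity onto $f^{*}(A/I)$. One must still show that the kernel is exactly $f^{*}I$, i.e.\ that the balancing ideal intersects $C_{0}(X)\otimes I$ correctly.
\end{itemize}

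\textbf{Main obstacle.} The exactness of $f^{*}$ is the step I expect to be hardest. First I would factor $f$ as the closed embedding $X\to X\times Y$ (its graph) followed by the projection $X\times Y\to Y$. Pullback along the projection is $C_{0}(X)\otimes-$, which is exact for the maximal tensor product. Pullback along a closed embedding $Z\subseteq W$ is the quotient $A\mapsto A/C_{0}(W\setminus Z)A$. By \cref{okhpzrhtzjrtzj1} this is the left adjoint $i^{*}$, an $E$-theory morphism, so it preserves exact sequences; in the continuous setting this is the $3\times 3$ argument from \cref{kohpertghertgtrege}. Finally I would check that this factorization agrees with the balanced tensor product definition, which also makes the coherence of Step 2 transparent.
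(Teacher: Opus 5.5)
Your proposal is correct, but it takes a different route from the paper in two places.

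\emph{The construction.} The paper does not build $f^{*}$ from the balanced tensor product; it only remarks that $f^{*}A\cong C_{0}(X)\otimes_{C_{0}(Y)}A$. Instead it defines $f^{*}A$ as the $C_{0}(X)$-span of the constant sections in $\prod_{x\in X}A(f(x))$. The fibre functors $N_{X}\colon X\nCalg\to\prod_{X^{\delta}}\nCalg$ are faithful by \cref{lpherhrtgert}. Hence $(-)\nCalg$ is a subfunctor of the evidently lax symmetric monoidal functor $X\mapsto\prod_{X^{\delta}}\nCalg$, and all coherence is inherited rather than imported from \cite{Park_2000}, \cite{Popescu_2004}.

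This subfunctor trick also supplies a datum your Step 3 omits: the compatibility $(f\times\mathrm{id})^{*}(A\boxtimes B)\cong f^{*}A\boxtimes B$, i.e.\ the filler of \eqref{fwewfef}. You need it for three reasons.
\begin{itemize}
\item It is part of the lax structure, namely naturality of $\boxtimes$ in $X$ and $Y$.
\item It is what makes $f^{*}$ a map of $\cN$-modules. This is required, since $E$-admissible functors are morphisms in $\Mod_{\cN}(\Cat)_{*}$.
\item You use it implicitly when you identify pullback along $X\times Y\to Y$ with $C_{0}(X)\otimes-$.
\end{itemize}
The isomorphism is true (for instance by comparing fibres and using \cref{lpherhrtgert}), but you should state and verify it.

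\emph{The $E$-admissibility of $f^{*}$.} You factor $f$ through its graph, $X\to X\times Y\to Y$. This reduces everything to two ingredients. The first is the maximal tensor product; note that transporting its exactness to $(X\times Y)\nCalg$ needs \cref{kophrtgretgggrethrthe}, not just \cref{kohperthertgertgetrg1}. The second is the quotient functor $i^{*}$ of \cref{okhpzrhtzjrtzj1}. The same factorization also covers filtered colimits and finite products, so your separate arguments for those are unnecessary.

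The paper instead factors $f$ into open inclusions and proper maps. Open inclusions are handled by \cref{okhpzrhtzjrtzj}. For proper maps it writes $f^{*}\simeq c\circ f^{\sharp}\circ\incl$. Exactness then requires that regular objects are good (\cref{kophtrtegertgertgte}) and that $c$ is exact on good objects (\cref{gjweroijgorefwerf}).

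Your argument is more elementary and self-contained for this theorem. The paper's argument develops the regularization and good-object machinery that it needs anyway. That machinery produces the $E$-admissible adjunctions to finite locales (\cref{rgrokpowergwergwerferwf}), which drive the conservativity result \cref{herthetrgertgetrg}.
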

The proof of this theorem will be given in \cref{kophejhthgt5t5erthetrh1}.
The interaction between the covariant functoriality  with the   contravariant functoriality  and the  tensor product 
will be formulated as follows:
  \begin{theorem}\label{okpherthrtgetgetg}
The lax symmetric monoidal functor \begin{equation}\label{rvewiojvofvdfvsdfvsdfv}  (-) \nCalg:\LCH^{\op} \to \Cat  \end{equation} from \cref{kophejhthgerthetrh}
 is a three-functor formalism in the sense of \cref{kojpjrtzjzhrtzh}.
\end{theorem}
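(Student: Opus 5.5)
The plan is to check the axioms of a three-functor formalism on the Nagata context $(\LCH,I,P)$ from \cref{kojpjrtzjzhrtzh}. For this Nagata context they come down to four points. First, the covariant functoriality $f\mapsto f_{!}$ must be available for open inclusions and proper maps; open inclusions give left adjoints $j_{!}\dashv j^{*}$, proper maps give $f_{!}=f_{*}$ right adjoint to $f^{*}$. Second, base change must hold for pullback squares along open inclusions and along proper maps. Third, the projection formula must hold. Fourth, the construction must be compatible with compactifications, i.e.\ $f_{!}$ for a general map factors as $\bar f_{*}\circ j_{!}$. All of these are to be checked at the level of the categories $X\nCalg$, before applying $\EE$.

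The concrete definitions I would work with are as follows. For $f:X\to Y$ in $\LCH$, $f^{*}A$ is the balanced tensor product $C_{0}(X)\otimes_{C_{0}(Y)}A$ with its induced $X$-structure. This is the construction from \cref{kophejhthgt5t5erthetrh1}, which I take as given together with \cref{kophejhthgerthetrh}. For an open inclusion $j:U\to X$, the functor $j_{!}$ from \cref{lkohperghtrgertgertg9} is extension by zero, and by \cref{okhpzrhtzjrtzj} it is left adjoint to $j^{*}$. I would first check that this $j^{*}$ agrees with the pullback $C_{0}(U)\otimes_{C_{0}(X)}-$, which amounts to $C_{0}(U)\otimes_{C_{0}(X)}A\cong A(U)$. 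For a proper map $f$, the frame map $f^{-1}$ is perfect by \cref{kopjrtzjzthzhtzhrtzhtz}. The covariant $f_{!}$ regards $A$ as a $Y$-algebra via $V\mapsto A(f^{-1}V)$. I would exhibit the adjunction $f^{*}\dashv f_{!}$ directly: the unit is $B\to C_{0}(X)\otimes_{C_{0}(Y)}B$, which exists because $f^{*}:C_{0}(Y)\to C_{b}$ is nondegenerate (this uses properness). The counit is the multiplication $C_{0}(X)\otimes_{C_{0}(Y)}A\to A$.

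Base change comes next. For an open $j:V\to Y$ and any $f$, the equation $f^{*}j_{!}\cong j'_{!}f'^{*}$ follows from $C_{0}(X)\otimes_{C_{0}(Y)}B(V)\cong (f^{*}B)(f^{-1}V)$. For a proper $g:Z\to Y$ and any $f:X\to Y$ with pullback $g':X\times_{Y}Z\to X$, I need $f^{*}g_{!}A\cong g'_{!}f'^{*}A$. This reduces to the associativity isomorphism $C_{0}(X)\otimes_{C_{0}(Y)}A\cong C_{0}(X\times_{Y}Z)\otimes_{C_{0}(Z)}A$, which in turn rests on $C_{0}(X)\otimes_{C_{0}(Y)}C_{0}(Z)\cong C_{0}(X\times_{Y}Z)$ for the maximal balanced tensor product. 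The projection formula $f_{!}(A\otimes_{X}f^{*}B)\cong f_{!}A\otimes_{Y}B$ has the same shape: both sides are quotients of $A\otimes_{\max}B$ by the relations identifying the $C_{0}(Y)$-actions. Here $\otimes_{X}$ is defined as $\Delta^{*}(-\boxtimes-)$ and $f^{*}$ is a tensor product, so the formula reduces to associativity of balanced tensor products. For compatibility with compactification, every $f$ factors as $X\to\bar X\times Y\to Y$, i.e.\ an open inclusion followed by a proper map. The composite $f_{!}=\bar f_{!}\circ j_{!}$ agrees with the locale-level $f_{!}$ by the functoriality of \cref{lkohperghtrgertgertg9}, so independence of the factorization is automatic.

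I expect the main obstacle to be the proper base change, specifically the identification of the maximal balanced tensor product $C_{0}(X)\otimes_{C_{0}(Y)}A$ with the $C_{0}(X\times_{Y}Z)$-version, along with its continuity, i.e.\ that the result is again a continuous (frame-morphism) $X$-algebra. I would prove this in two steps. First take $A$ of the form $C_{0}(Z)\otimes D$ and check it by Gelfand duality. Then pass to general $A$, using that all functors involved preserve filtered colimits and exact sequences (by \cref{kophejhthgerthetrh} they are $E$-admissible) together with a presentation of $A$ as a quotient of such algebras. If exactness on the nose is delicate, it would suffice for the three-functor statement to have the canonical Beck--Chevalley maps be isomorphisms, and these can be checked fibrewise on the evaluations $A(U)$.
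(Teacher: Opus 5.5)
Your decomposition into the individual axioms is sound, but you verify base change and the projection formula by a genuinely different route from the paper.

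\textbf{The paper's route.} The paper never works with the balanced tensor product. It defines $f^{*}A$ (\cref{kopghwthrwegregewrfref}) as the subalgebra of $\prod_{x\in X}A(f(x))$ generated by $C_{0}(X)$ times constant sections. As a result, every functor, unit, counit and Beck--Chevalley map that occurs is the restriction of an explicit map between products indexed by points. Base change then reduces to the index bijection $y\mapsto x_{y}$ supplied by the cartesian square (\cref{oigjworiegrefwerfewfref}, \cref{joihetrgertgertgerg}). The projection formula is proved once for arbitrary $f$, by comparing generating sections and checking injectivity on fibres (\cref{okhoperhtrgertgetg}). The coherences come for free from the product-of-fibres picture.

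\textbf{Your route.} You model $f^{*}$ as $C_{0}(X)\otimes_{C_{0}(Y)}-$. You reduce proper base change and the projection formula to associativity of maximal balanced tensor products and to $C_{0}(X)\otimes_{C_{0}(Y)}C_{0}(Z)\cong C_{0}(X\times_{Y}Z)$. You then extend from free objects $C_{0}(Z)\otimes D$ to all objects, using the presentation $0\to I\to C_{0}(Z)\otimes A\to A\to 0$ of \cref{lkopherthertgetg} and the $E$-admissibility of $f^{*}$ (\cref{hlepertgertger}, which does not depend on the present theorem) and of the covariant $g_{!}$. This reduction is valid. Suppose a natural transformation between exact functors is bijective on free objects. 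Then it is surjective on every $A$, hence on $I$. It is injective on $I$ because it is injective on $C_{0}(Z)\otimes A$. A diagram chase then gives bijectivity on $A$.

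Your approach is more structural and would survive where no fibre picture is available. It has two costs. First, the paper only asserts $f^{*}A\cong C_{0}(X)\otimes_{C_{0}(Y)}A$ in a remark. You would have to prove this identification compatibly with the pseudofunctor and lax monoidal data of \cref{ijiopgwergrgwferfw}, or else compute inside the Fell-bundle model. Second, you import associativity of maximal balanced tensor products from outside the paper.

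\textbf{Two points to tighten.}

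First, \cref{kojpjrtzjzhrtzh} requires that the \emph{canonical} Beck--Chevalley and projection maps are isomorphisms. These are the maps built from units, counits and the monoidality of $f^{*}$. An abstract associativity isomorphism is not enough in these $1$-categories. This is the actual requirement, not a fallback, and the paper devotes \cref{oigjworiegrefwerfewfref}, \cref{joihetrgertgertgerg} and the two projection-formula lemmas to it. Your exactness reduction handles it cleanly if you apply it to the canonical transformation itself and compute that transformation on free objects.

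Second, the definition has no compactification axiom. It does contain the mixed Beck--Chevalley condition for cartesian squares with $i,j$ open and $p,q$ proper, which asks that $j_{!}q_{*}\to p_{*}i_{!}$ be an isomorphism. The fact you invoke settles this: for proper $p$, the right adjoint of $p^{*}$ is the covariant locale functoriality $p_{!}$. So both sides are the covariant functoriality of $p\circ i=j\circ q$. This is exactly how the paper handles the condition, so phrase your fourth step in those terms.

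A minor correction: the unit $B\to f_{!}f^{*}B$ exists because a proper $f$ pulls $C_{0}(Y)$ back into $C_{0}(X)$. Nondegeneracy of $C_{0}(Y)\to C_{b}(X)$ holds for every $f$, so it is not where properness is used.
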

 
 The proof of this theorem is  given in \cref{okpherthrtgetgetg1}.

\subsection{Equivalent pictures of $X\nCalg$}
  
 Let $X$ be a locally compact Hausdorff space and recall that $X\nCalg$ is defined,  by combining   \cref{lkohperghtrgertgertg9} and \cref{zehjherthertherth9}, as the category of pairs $$(A,A(-):\Open(X)\to I(A))$$ such that $A(-)$ is a frame morphism.

\begin{ex}\label{okhperhrethregtrg}
We have the preferred object $C_{0}(X)_{X}$ in $X\nCalg$.  Its underlying $C^{*}$-algebra is $C_{0}(X)$ and the corresponding frame map
$\Open(X) \to I(C_{0}(X))$ sends $U$ to the ideal
$ C_{0}(U)$.    \hB
\end{ex}


The following proposition states a fact that is well-known in the literature on $C_{0}(X)$-algebras and essentially stated in   \cite{zbMATH05657129}. Since it is crucial for the understanding of the categorical properties of
$X\nCalg$ for locally compact Hausdorff spaces shown in \cref{okprherhrtgertge},
we give the  argument as a service to the reader.

  \begin{prop} \label{kophertgetrgtreeg} For a  locally compact Hausdorff space $X$, providing an  $X$-$C^{*}$-algebra
 structure on $A$ is equivalent to providing   a pair $(A,m)$ of  a $C^{*}$-algebra $A$ together with
 a surjective $A$-bilinear homomorphism
$m:C_{0}(X)\otimes A\to A$.\end{prop}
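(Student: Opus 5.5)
The plan is to pass through the intermediate notion of a nondegenerate central action $\phi:C_{0}(X)\to ZM(A)$, the center of the multiplier algebra, and show that both kinds of data in the statement are equivalent to it. First I would check that pairs $(A,m)$ and such actions $\phi$ determine each other. Given $\phi$, set $m(f\otimes a):=\phi(f)a$. This extends to $C_{0}(X)\otimes A$: since $C_{0}(X)$ is commutative, the maximal and minimal tensor products agree, and $\phi$ together with the inclusion $A\to M(A)$ are commuting representations. $A$-bilinearity holds because $\phi(f)$ is a central multiplier. Surjectivity follows from nondegeneracy of $\phi$ and the Cohen--Hewitt factorization theorem: every $a$ can be written as $\phi(f)b$. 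Conversely, given $m$, I would define $\phi(f)a:=m(f\otimes a)$ via approximate units, and use $A$-bilinearity to see that $\phi(f)$ is a central multiplier. Surjectivity of $m$ then gives nondegeneracy.

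Next, from $\phi$ I would build the frame morphism by setting $A(U):=\overline{\phi(C_{0}(U))A}$, a closed ideal because $\phi(f)$ is central. The frame axioms come from standard facts about $C_{0}(X)$:
\begin{enumerate}
\item $A(\emptyset)=0$, and $A(X)=A$ by nondegeneracy.
\item For arbitrary joins, $C_{0}(\bigcup_{i}U_{i})$ is the closed span of the $C_{0}(U_{i})$ by partitions of unity on compact supports. Hence $A(\bigcup_{i} U_{i})=\bigvee_{i}A(U_{i})$.
\item For binary meets, $\supseteq$ is clear. For $\subseteq$, take $a\in A(U)\cap A(V)$ and approximate units $(e_{U})$ of $C_{0}(U)$ and $(e_{V})$ of $C_{0}(V)$. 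Then $a=\lim \phi(e_{U})\phi(e_{V})a$, and $e_{U}e_{V}\in C_{0}(U\cap V)$.
\end{enumerate}
Morphisms match as well: a homomorphism $f:A\to B$ with $f(\phi_{A}(g)a)=\phi_{B}(g)f(a)$ maps $A(U)$ into $B(U)$. For the converse, one uses that $A(U)$ is generated by $\phi(C_{0}(U))A$, together with the next step.

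The main obstacle is the converse direction: producing $\phi$ from an abstract frame morphism $A(-):\Open(X)\to I(A)$. I would use \cref{kohperrgrgrgretge} to identify $I(A)\cong\Open(\Prim(A))$. Since $X$ is Hausdorff, hence sober, the frame morphism $\Open(X)\to\Open(\Prim(A))$ comes from a unique continuous map $\pi:\Prim(A)\to X$. The Dauns--Hofmann theorem identifies $C_{b}(\Prim(A))\cong ZM(A)$, and I would set $\phi(f):=f\circ\pi$. Then $\phi$ is nondegenerate, since $A(X)=A$ and $\pi^{-1}$ of an exhausting union of relatively compact opens covers $\Prim(A)$. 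It remains to check that the two constructions are mutually inverse, i.e.\ that $\overline{\phi(C_{0}(U))A}=A(U)$. Under Dauns--Hofmann, $\phi(f)a$ vanishes at every primitive ideal $\mathfrak p$ with $\pi(\mathfrak p)\notin U$, so it lies in the ideal with hull $\Prim(A)\setminus\pi^{-1}(U)$, which is $A(U)$. The reverse inclusion follows from an approximate unit of $C_{0}(U)$ acting as an approximate unit on $A(U)$. Finally, going from $\phi$ to $A(-)$ and back gives the same $\pi$, hence the same $\phi$.
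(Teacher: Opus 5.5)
Your proposal is correct and follows essentially the same route as the paper. Both use $I(A)\cong\Open(\Prim(A))$ and sobriety of $X$ to get $\pi:\Prim(A)\to X$, Dauns--Hofmann to turn $\pi$ into a central action $\phi:C_{0}(X)\to Z(M(A))$, and the correspondence between such actions and surjective $A$-bilinear $m$; your explicit formula $A(U)=\overline{\phi(C_{0}(U))A}$ just makes the composite of the paper's three bijections concrete.

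Two small remarks. First, working with nondegeneracy $\overline{\phi(C_{0}(X))A}=A$, rather than the annihilator condition on $Z(M(A))$ that the paper verifies under the name ``essential'', is what is actually needed to see that every character $\hat\rho\circ\phi$ is nonzero. Second, your step ``an approximate unit of $C_{0}(U)$ acts as an approximate unit on $A(U)$'' needs more than you wrote. It requires the quasi-compactness of $\{\rho\mid\|\rho(a)\|\ge\epsilon\}$, which is the same fact the paper uses to prove surjectivity of $m$. Alternatively, you can replace it by a hull comparison: no $\rho\in\pi^{-1}(U)$ contains $\overline{\phi(C_{0}(U))A}$, so this ideal contains $A(U)$.
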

\begin{proof} Recall the topological space $\Prim(A)$ from \cref{kohperrgrgrgretge}.
In this proof, we will use the following constructions and facts:
Let $A$ be in $\nCalg$
and  $\rho:A\to B(H)$ be in $\Prim(A)$.  Then $\rho(A)'\cong \C$, where $\rho(A)'$ is the  commutant of $\rho(A)$ in $B(H)$.
Since $\rho$ is irreducible, it   is essential in the sense that $\rho(A)H$ is dense in $H$. The representation $\rho$ therefore  has a unique  extension  $\hat \rho:M(A)\to B(H)$   to the multiplier algebra $M(A)$ of $A$.
 We get a homomorphism
$$ Z(M(A)) \stackrel{\hat \rho}{\to} \rho(A)'\cong \C\ ,$$
where $Z(M(A))$ denotes the center of $M(A)$.
This induces a homomorphism
\begin{equation}\label{gerfwerfewrfrggwergewr}Z(M(A))\to C_{b}(\Prim(A))\ , \quad  z\mapsto (\rho\mapsto  \hat \rho(z))\ .
\end{equation} 
We collect the following facts for later reference.
\begin{fact}\label{gjroepgfwergwerferfw}\mbox{}{\em \begin{enumerate}
\item  \label{klphprethtrherthtr}The  Dauns-Hofmann theorem states that the homomorphism in \eqref{gerfwerfewrfrggwergewr} is an isomorphism. 
\item\label{klphprethtrherthtr1} For every $a$ in $A$ we have $\|a\|=\sup_{\rho\in \Prim(A)} \|\rho(a)\|$  \cite[3.3.6]{zbMATH04073740}.  
\item \label{klphprethtrherthtr2} For $a$ in $A$ and $r $ in $(0,\infty)$  the subset $\{\rho\in \Prim(A)|\rho(a)\ge r\}$
is (quasi)compact  \cite[3.3.7]{zbMATH04073740}.   \end{enumerate}}\end{fact}

\begin{lem}[{\cite[Lem.2.25]{zbMATH05657129}}] \label{ojhpertherthrtegtrg0}
For every sober topological space,
the data of an $X$-$C^{*}$-algebra structure on $A$ is equivalent to the data  of a map  $\tilde \psi:\Prim(A)\to X$ of topological spaces.
\end{lem}
\begin{proof}

By definition, an $X$-$C^{*}$-algebra is a frame morphism $A:\Open(X)\to I(A)$.
 We have $I(A)\cong \Open(\Prim(A))$; hence,
this map is the same as a  frame morphism
$\Open(X)\to \Open(\Prim(A))$.
Since $X$ is sober, the data of such a  map is the same as a continuous map $\tilde \psi:\Prim(A)\to X$ of topological spaces.
\end{proof}
\begin{lem} [{\cite[Sec. 2.1]{zbMATH05657129}}]\label{ojhpertherthrtegtrg1}
If $X$ is locally compact Hausdorff, then the  data of  a continuous map $\tilde \psi:\Prim(A)\to X$ of topological spaces
  is equivalent the data of  an essential homomorphism
$C_{0}(X)\to Z(M(A))$.  
\end{lem}
\begin{proof}
 The map $\tilde \psi $ gives map $\psi:C_{0}(X)\to C_{b}(\Prim(A))\cong Z(M(A))$. In order to see that 
it is essential,  let $b$ be in $Z(M(A))$ and assume that $\psi(f)b=0$ for all  $f$ in $C_{0}(X)$.
Let $\rho:A\to B(H)$ be in $\Prim(A)$ and  consider $x:=\tilde \psi(b)$ in $X$.
 We then have $\hat \rho(\psi(f)b)=f(x)\hat \rho(b)$.
Since $X$ is locally compact Hausdorff,  we can find $f$ in $C_{0}(X)$ with $f(x)\not=0$. 
We can therefore conclude that  $\hat \rho(b)=0$ for all $\rho$ in $\Prim(A)$. This implies $b=0$ by the injectivity part of the Dauns-Hofmann theorem  \cref{gjroepgfwergwerferfw}.\ref{klphprethtrherthtr}.

Concersely,  assume that $\psi:C_{0}(X)\to   Z(M(A))$ is an essential homomorphism.
Then we get a continuous map $\tilde \psi:\Prim(A)\to X$. Its  Gelfand dual   is the composition  $$
\tilde \psi(\rho):C_{0}(X)\stackrel{\psi}{\to} Z(M(A))\stackrel{\hat \rho}{\to}
\rho(A)'\cong \C\ .$$  This homomorphism is indeed non-zero since $\psi$ is essential.  \end{proof}
The following  lemma is essentially explained in \cite{zbMATH05657129}.
\begin{lem} \label{ojhpertherthrtegtrg2}
If $X$ is a locally compact Hausdorff,
 then essential homomorphisms $\psi:C_{0}(X)\to Z(M(A))$ are in bijection with  surjective  $A$-bilinear homomorphisms
$m:C_{0}(X)\otimes A\to A$. \end{lem} 
\begin{proof}Given $\psi$ we get a homomorphism
$C_{0}(X)\otimes^{\alg} A\to A$ which induces $m$ by the universal property of the tensor product.
We first show that $m$ is essential. Let $a$ be in $A$ and assume that $m(f\otimes b)a=0$ for all $f$ in $ C_{0}(X)$ and $b$ in $A$.   Then    $b\psi(f)a=0$ for all $b$ in $A$ and hence $\psi(f)a=0$ for all $f$. This gives
$$\sup_{\rho\in \Prim(A)} \|f( \tilde \psi(\rho)) \rho(a)\|=0\ .$$
Since again for every $x$ in $X$ (so in particular for $x=\tilde \psi(\rho)$) there exists a function $f$ in $C_{0}(X)$ with $f(x)\not=0$ we conclude that 
$\sup_{\rho\in \Prim(A)} \| \rho(a)\|=0$ which implies $a=0$ by \cref{gjroepgfwergwerferfw}.\ref{klphprethtrherthtr1}.


 We now show that 
  $m:C_{0}(X)\otimes A\to A$ is surjective.
  It suffices to show that $m$ has a dense image. Let $a$ be in $A$ and fix $\epsilon$ in $(0,\infty)$.
Then $K:=\{\rho\in \Prim(A)\mid \|\rho(a)\|\ge \epsilon\}$ is compact by \cref{gjroepgfwergwerferfw}.\ref{klphprethtrherthtr2}.
Hence $\tilde \psi(K)$ is a compact subset of $X$. Using that $X$ is locally compact Hausdorff, we can 
 choose $f$ in $C_{0}(X)$ such that $\|f\|\le 1$ and $f_{|K}\equiv 1$.
 We then have
 $$\|a- m(f\otimes a)\|=\sup_{\rho\in \Prim(A)} \|\rho(a- \psi(f)a)\|
 =\sup_{\rho\in \Prim(A)} \|(1- f(\tilde  \psi(\rho)) \rho(a) \|\le \epsilon \ .$$

Vice versa, a surjective $A$-bilinear homomorphism $m:C_{0}(X)\otimes A\to A$ induces a homomorphism
$\psi:C_{0}(X)\to Z(M(A))$ by $\psi(f) a:=m(f\otimes a)$. The central multiplier property follows from $A$-bilinearity of $m$ by 
$\psi(f)(ab)=m(f\otimes ab)=m(f\otimes a)b=\psi(f)(a)b$ and  $\psi(f)(ab)=m(f\otimes ab)=am(f\otimes b)=a\psi(f)(b)$.
 We must check that $\psi$ is essential.
Let $z$ be in $Z(M(A))$ and assume that $\psi(f) z=0$ for all $f$ in $C_{0}(X)$. We must show that $z=0$. The assumption implies that 
$\psi(f)zab=\psi(f)azb= m(f\otimes a)zb=0$ for all $f$ in $C_{0}(X)$ and $a,b$ in $A$.
Since $m$ is surjective, we conclude that $zb=0$ for all $b$ in $A$, and thus $z=0$.
\end{proof}
%
%
%
%
%
%
%
%
%
%
%
%
%
%
%
%
%
%
%
%
%
%
%
%
%
%
%
%
%
%
%
%
%
%
The combination of \cref{ojhpertherthrtegtrg0}, \cref{ojhpertherthrtegtrg1}, and \cref{ojhpertherthrtegtrg2}
proves \cref{kophertgetrgtreeg}.
\end{proof}

\begin{rem}
In view of \cref{ojhpertherthrtegtrg2} and \cref{kophertgetrgtreeg},
our definition of $X\nCalg$ for a locally compact Hausdorff space $X$ coincides with \cite[Def. 2.1]{zbMATH05657129}.
The combination of \cref{ojhpertherthrtegtrg1} and \cref{ojhpertherthrtegtrg0} furthermore shows that,
in this case, our definition of $X\nCalg$ coincides with the standard definition of a $C_{0}(X)$-algebra as an algebra $A$ equipped 
with an essential homomorphism  $C_{0}(X)\to Z(M(A))$.
By \cref{ojhpertherthrtegtrg0}, our definition of $X\nCalg$ for a sober topological space $X$ coincides with
 \cite[Def. 2.3]{zbMATH05657129}. \hB
\end{rem}

\begin{rem}\label{herhttrgrtgertge}
In view of \cref{kophertgetrgtreeg}
we consider objects of  $X\nCalg$ equivalently  as pairs  $(A,m)$
of a $C^{*}$-algebra and an $A$-bilinear surjective homomorphism $
m:C_{0}(X)\otimes A\to A$. In this picture, the value of the functor $A:\Open(X)\to I(A)$ 
is given by \begin{equation}\label{wefqfoijoqwef}A(U)=\im(m:C_{0}(U)\otimes A\to A)\ .\end{equation}
 A morphism $f:(A,m)\to (A',m')$ is then  a homomorphism $f:A\to A'$ 
in $\nCalg$ such that the following diagram commutes: $$\xymatrix{C_{0}(X)\otimes A\ar[r]^{\id\otimes f}\ar[d]^{m} & C_{0}(X)\otimes A'\ar[d]^{m'} \\ \ar[r]^{f} A&A' }\ .$$ 
  A continuous map
$f:X\to Y$ between locally compact Hausdorff spaces induces the functor 
$$f_{!}:X\nCalg\to Y\nCalg\ ,  \quad (A,m)\mapsto (A,f_{!}m)\ ,$$
where $$f_{!}m:C_{0}(Y)\otimes A\to C_{b}(X)\otimes A \stackrel{\hat m}{\to} A\ ,$$
with 
$ \hat m$ being the natural extension of $m$.

 We have an inclusion
\begin{equation}\label{gerfwrgegrg}\Hom_{X\nCalg}((A,m),(A',m'))\subseteq \Hom_{\nCalg}(A,A')\ .
\end{equation} 
   \hB
\end{rem}

  \begin{rem}
  If $(A,m)$ is in $X\nCalg$, then the $A$-bilinear homomorphism
  $m:C_{0}(X)\otimes A\to A$ satisfies
  \begin{equation}\label{ijiofjwoerfwerf}
m(f\otimes m(g\otimes a))=m(fg\otimes a)\ .
\end{equation}
 To see this, we     use an approximate  unit $u$ of $A$ and 
\begin{eqnarray*}
m(f\otimes m(g\otimes a))&\sim& m(f\otimes u m(g\otimes a))\\&=&
m(f\otimes u)m(g\otimes a)\\
&=&m(fg\otimes ua)\\&\sim& m(fg\otimes a)\ .
\end{eqnarray*}
\hB
  \end{rem}

In the following proposition, we describe filtered colimits, kernels, and quotients in $ X\nCalg$ 
in terms of the pairs $(A,m)$.
For the the second and third statements we consider the squares 
 \begin{equation}\label{243rt243r342t2t4}\xymatrix{(A,m)\ar[r]\ar[d] &(B,n) \ar[d] \\ 0\ar[r] &(C,r) } 
\end{equation} and 
\begin{equation}\label{gewrfewfewferfreferf}\xymatrix{A(X)\ar[r]\ar[d] &B(X) \ar[d] \\ 0\ar[r] &C(X) } \ .\end{equation}
\begin{prop}\label{kophrtgretgggrethrthe} \mbox{}
\begin{enumerate} \item \label{jiggtre}
A filtered colimit  $(A,m)$ of a system $(A_{i},m_{i})_{i\in I}$ in $X\nCalg$  is given by   $A:=\colim_{i\in I}A_{i}(X)$ and the  canonically induced $A$-bilinear multiplication map $m:C_{0}(X)\otimes A\to A$. 
\item \label{jiggtre2} A square  \eqref{243rt243r342t2t4} in $X\nCalg$ is cartesian  
 if and only if the square \eqref{gewrfewfewferfreferf}
 is cartesian  in $\nCalg$.
  \item  \label{jiggtre3} A square  \eqref{243rt243r342t2t4} in $X\nCalg$ is cocartesian  
 if and only if the square \eqref{gewrfewfewferfreferf}
 is cocartesian  in $\nCalg$.
\end{enumerate}
\end{prop}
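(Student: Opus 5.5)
The plan is to use the description of $X\nCalg$ as pairs $(A,m)$ from \cref{herhttrgrtgertge} and \cref{kophertgetrgtreeg}, together with the fact that the forgetful functor $X\nCalg\to\nCalg$, $(A,m)\mapsto A=A(X)$, is faithful by \eqref{gerfwrgegrg}. For each assertion I will exhibit an explicit candidate in $X\nCalg$ built from the corresponding construction in $\nCalg$. Then I will verify its universal property by checking that the structure maps $m$ are automatically compatible.

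For \ref{jiggtre}, set $A:=\colim_i A_i(X)$ in $\nCalg$. The maps $\id_{C_0(X)}\otimes(-)$ commute with filtered colimits, since the maximal tensor product preserves filtered colimits (\cref{kophehrtrgertg}). The $m_i$ therefore induce $m:C_0(X)\otimes A\to A$.

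There are three things to check for $m$:
\begin{enumerate}
\item $A$-bilinearity passes to the colimit by density of the union of the images of the $A_i$.
\item Surjectivity holds because the image of $m$ is closed and contains the dense union of the images of the $A_i$.
\item The universal property: given compatible maps $(A_i,m_i)\to(B,n)$, the induced map $A\to B$ intertwines $m$ and $n$. It suffices to check this on the dense image of $C_0(X)\otimes^{\alg}A_i$.
\end{enumerate}

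For \ref{jiggtre2}, one direction is to assume \eqref{gewrfewfewferfreferf} is cartesian, so $A\cong\ker(B\to C)$ is an ideal of $B$. Define $m$ as the restriction of $n$. I need $n(C_0(X)\otimes A)\subseteq A$, which follows because $B\to C$ intertwines $n$ and $r$. For surjectivity onto $A$, use an approximate unit $u$ of $A$: $a\approx n(f\otimes b)u=n(f\otimes bu)$ with $bu\in A$. A map $(D,d)\to(B,n)$ that vanishes in $C$ then factors through $A$, and compatibility with $m$ is inherited via injectivity of $A\to B$.

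The converse direction is the \textbf{main obstacle}. Here the cartesian square in $X\nCalg$ must be shown to have the $\nCalg$-kernel as underlying algebra. My approach is to compare with the explicit kernel $(\ker,n|)$ constructed above. That object is itself a pullback in $X\nCalg$, so by uniqueness of limits it is isomorphic to $(A,m)$. The forgetful functor then sends the square to the kernel square.

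For \ref{jiggtre3}, the analogous strategy applies. Given a cocartesian square in $\nCalg$, replace $A$ by the kernel as in \cref{phertgertgetr}, so that $C=B/A$. Then $r$ is induced by $n$, since $n(C_0(X)\otimes A)\subseteq A$. Surjectivity and bilinearity descend, and the universal property is checked using surjectivity of $B\to C$. The converse again follows by uniqueness of colimits, comparing with the explicit quotient $(B/\overline{\im A},\bar n)$. I expect this explicit quotient to be the cokernel in $X\nCalg$ by the same argument.
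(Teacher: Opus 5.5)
Your argument is correct. Part 1 matches the paper's proof, but for parts 2 and 3 you take a different route.

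\textbf{The paper's route.} It works in the frame picture. The ``only if'' directions are immediate there. The closure results of \cref{gergwergwerioug9erwgwregw}, ending with \cref{kohperthertgertgetrg1}, show that kernels and quotients in $X\nCalg$ are computed pointwise in $\Fun(\Open(X),\nCalg)$, so one just evaluates at $X$. For the ``if'' directions the paper uses $A(U)=\im(C_{0}(U)\otimes A\to A)$ and an approximation argument. This shows $A(U)=A\cap B(U)$, respectively $B(U)/A(U)\cong C(U)$, for every open $U$. The square is therefore pointwise (co)cartesian, hence (co)cartesian in the full subcategory $X\nCalg$.

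\textbf{Your route.} You stay in the pair picture of \cref{herhttrgrtgertge}. You verify the universal properties directly from three facts: faithfulness of the forgetful functor, injectivity of $\iota:A\to B$ in the kernel case, and surjectivity of $\id\otimes\pi:C_{0}(X)\otimes B\to C_{0}(X)\otimes C$ in the quotient case. You then get the converses from uniqueness of (co)limits, once the explicit kernel and quotient are shown to be objects of $X\nCalg$. The only real computation is surjectivity of the restricted $n$, which you handle with an approximate unit.

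\textbf{Comparison.} Your version is more self-contained, since it bypasses the closure lemmas for $P^{?}\nCalg$ entirely. The paper's version yields the stronger statement that such squares are (co)cartesian at every open $U$, not just on underlying algebras. That is consistent with how exactness is treated for all the $P^{?}\nCalg$.

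\textbf{One correction.} The image of $\iota:A\to B$ need not be an ideal. So in the converse of part 3 the quotient must be $B/\langle \im A\rangle$, the quotient by the closed ideal generated by the image. Your $\overline{\im A}$ is just $\im A$, since images of $C^{*}$-homomorphisms are closed. The map $n$ preserves $\langle\im A\rangle$ because
\[
n(f\otimes b\,\iota(a)\,b')=b\,\iota(m(f\otimes a))\,b'
\]
for $b,b'$ in $B$ and $a$ in $A$. With this object the universal property can be checked directly, so replacing $A$ by the kernel in part 3 is unnecessary.
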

\begin{proof}
 Using that the maximal tensor product in $\nCalg$  preserves filtered colimits, we define $m$ by
$$C_{0}(X)\otimes A\cong C_{0}(X)\otimes \colim_{i\in I}A_{i}\cong \colim_{i\in I}  C_{0}(X)\otimes  A_{i} \stackrel{(m_{i})_{i}}{\to } \colim_{i\in I}   A_{i} \cong A\ .$$
A filtered colimit of surjective maps is again surjective. This shows that
$(A,m)$ is in $X\nCalg$. We now show that it represents the colimit.
Let $(B,n)$ be in $X\nCalg$.
We have $$\Hom_{\nCalg}(A,B)\cong\lim_{i\in I} \Hom_{\nCalg}(A_{i},B)\ .$$
Using \eqref{gerfwrgegrg}, this bijection  induces an injective map
$$\Hom_{X\nCalg}((A,m),(B,n))\to \lim_{i\in I} \Hom_{X\nCalg}((A_{i},m_{i}),(B,n))\ .$$
It remains to show that it is surjective.
Let $(f_{i})_{i\in I}$ be a compatible system in the limit $$ \lim_{i\in I} \Hom_{X\nCalg}((A_{i},m_{i}),(B,n))$$ and $f$ be its preimage in $\Hom_{\nCalg}(A,B)$.  We must show that $f$ actually belongs to the subset
$\Hom_{X\nCalg}((A,m),(B,n))$.
  For $i$ in $ I$ we consider the diagram
$$\xymatrix{\ar@/^-2cm/[ddd]A_{i}\ar[r]^{f_{i}}&\ar@{=}@/^2cm/[ddd]B\\ \ar[u]C_{0}(X)\otimes A_{i}\ar[r]^{\id\otimes f_{i}}\ar[d]&\ar[u]C_{0}(X)\otimes B \ar[d] \\ C_{0}(X)\otimes A\ar@{..>}[r]^{\id\otimes f}\ar[d]^{m} & C_{0}(X)\otimes B\ar[d]^{n} \\ \ar[r]^{f} A&B}\ .$$ 
We must show that the lower square commutes.
The outer square commutes by the definition of $f$.
By assumption, the upper square commutes for all $i$.
Hence, the composition of the lower two squares commutes for all $i$. 
  It follows from the universal property of  $C_{0}(X)\otimes A$ being the colimit of the system $(C_{0}(X)\otimes A_{i})_{i}$
  that the lower square commutes.
This finishes the proof of Assertion \ref{jiggtre}.
%
%
 
 We now show Assertions  \ref{jiggtre2} and   \ref{jiggtre3}.
  We already know   that   fibre and cofibre sequences in   $X\nCalg$
    are calculated   pointwise.      
    If the square \eqref{243rt243r342t2t4} is cartesian or cocartesian, then evaluating at $X$ we see that the square \eqref{gewrfewfewferfreferf} is cartesian or cocartesian, respectively.
    
For the converse   assume that \eqref{gewrfewfewferfreferf} is cartesian. 
We must show that $A(U)=\ker(B(U)\to C(U))$ for all $U$ in $\Open(X)$.
 We  have $\ker(B(U)\to C(U))=A\cap B(U)$. 
It remains to show that $A(U)=  A\cap B(U) $.
It is clear that $A(U)\subseteq  A\cap B(U)$.
 For surjectivity,
  consider $b$ in $A\cap B(U)$.  Then we can approximate $b$ by elements $n(f\otimes b)$ with $f$ in $C_{0}(U)$.
  We now note that $n(f\otimes b)\in A(U)$, so  also $b\in A(U)$.


We now assume that  \eqref{gewrfewfewferfreferf} is cocartesian.
 We must show that  for 
every $U$ in $\Open(X)$  the projection
$B(U)/A(U)\to C(A)$ is an isomorphism.
  It suffices to show that $A(U)$ maps onto the kernel of the second map $B(U)\to C(U)$ and that this map has a dense range.
 For the latter, note that 
 the subspace of elements $r(g\otimes c)$ with $c$ in $C$  and $g$ in $C_{0}(U)$ is dense in $C(U)$. For such elements, 
we find $b$ in $B$ such that $b\mapsto c$. Then $n(g\otimes b)\in B(U)$ and $n(g\otimes b)\mapsto r(g\otimes c)$.
 
Finally, assume that $b$ is in $\ker(B(U)\to C(U)$. We can approximate
$b$ by elements of the form
$n(f\otimes b)$ with $f$ in $C_{0}(U)$. These elements   are in the image of $A(U)\to B(U)$. Hence,
the image of $A(U)\to B(U)$ is the kernel of $B(U)\to C(U)$.
\end{proof}

\subsection{Presentability of  $X\nCalg$ for locally compact Hausdorff spaces}

For a general pointed frame $P$ we can define the $E$-theory context $P^{\rmc}\nCalg$ of continuous $P$-$C^{*}$-algebras,
but in contrast to  $P^{?}\nCalg$ for $?\in \{-,\uc\}$,  or also for $?=\reg$  (assuming that $P$ is stably locally compact), we have not yet shown  that the category $P^{\rmc}\nCalg$  is presentable. In this section, we show this presentability when $P=\Open(X)$ for a locally compact Hausdorff space.

%

 \begin{rem}
 If $X$ is a locally compact Hausdorff space, then $\Open(X)$ is a stably locally compact frame. For $U,V$ in $\Open(X)$ we write the way below relation as 
    $V\Subset  U$.  
  We have   $V\Subset  U $ 
    if and only  if there exists a function $\phi$ in $C_{c}(U)$ taking values in $[0,1]$ with 
  $\phi_{|V}\equiv 1$. We call $\phi$ a witness of the relation $V\Subset U$.
  
  \hB\end{rem}
%

The following notions will be used frequently below. Let $X$ be a topological space and $A$ be in $\Open(X)\nCalg$.

\begin{ddd}\label{hertrtgetrgertge} For a closed point $x$ in $X$,
we call $$A(x):=A(X)/A(X\setminus \{x\})$$ the fibre of $A$ at $x$ and
let $\ev_{x}:A\to A(x)$ denote the projection map.
\end{ddd}

\begin{theorem}\label{okprherhrtgertge} For a locally compact Hausdorff space $X$, 
we have an accessible  left  Bousfield localization
\begin{equation}\label{gioujgiotrgwergwerfwerf}c:\Open(X)^{\reg}\nCalg\leftrightarrows X\nCalg:\incl\end{equation}
and $X\nCalg$ is presentable.

\end{theorem}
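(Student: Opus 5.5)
The plan is to produce the left adjoint $c$ explicitly from the multiplication-map picture of \cref{kophertgetrgtreeg}. Then presentability follows, since $X\nCalg$ will be a reflective subcategory of the presentable category $\Open(X)^{\reg}\nCalg$ (presentable by \cref{jioretherthertgetrgertg}, because $\Open(X)$ is stably locally compact) that is closed under filtered colimits.

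\textbf{Construction of $c$.} For $A$ in $\Open(X)^{\reg}\nCalg$, first pass to $A/A(\emptyset)$ so that $A(\emptyset)=0$. Next define an action of $C_{0}(X)$ on the resulting algebra. For $\phi$ in $C_{c}(X)$ with support in a compact set, regularity of $A$ together with $V\Subset U$ lets us consider, on the ideal $A(U)$, the images of $\phi$ in quotients. The naive route is to let $\phi$ act on $A$ via central multipliers, but regular algebras need not carry such an action.

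I would therefore define $c(A)$ differently. Put
$$c(A):=\bigl(A\otimes_{\max} C_{0}(X)\bigr)\big/J,$$
where $J$ is the closed ideal generated by the elements $a\otimes \phi$ with $a\in A(U)$, $\phi\in C_{0}(X)$ and $\phi$ vanishing on $U$. Equivalently, $c(A)$ is the quotient that enforces ``the $X$-support of $a$ is contained in $U$''. The multiplication map is $m(f\otimes [a\otimes\phi]) = [a\otimes f\phi]$, which is surjective and $c(A)$-bilinear, so $c(A)$ lies in $X\nCalg$ by \cref{kophertgetrgtreeg}.

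The unit $A\to \incl\,c(A)$ is not a homomorphism from $A$ itself, because there is no unit in $C_{0}(X)$. Instead I would obtain it as $a\mapsto \lim_{\phi\to 1}[a\otimes\phi]$, taking the limit along an approximate unit of $C_{0}(X)$. The key claim is that this net converges. It does when $a\in A(V)$ with $V\Subset X$: choose a witness $\phi$ of $V\Subset X$; then for $\phi'\ge\phi$, the difference $a\otimes(\phi'-\phi)$ lies in $J$ because $\phi'-\phi$ vanishes on $V$. Regularity gives $A=A(X)=\overline{\bigcup_{V\Subset X}A(V)}$, so the map extends by continuity.

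\textbf{Adjunction and the remaining properties.} For the universal property, take $(B,n)$ in $X\nCalg$ and a morphism $g:A\to B$ of regular $\Open(X)$-algebras. The induced map $a\otimes\phi\mapsto n(\phi\otimes g(a))$ kills $J$: for $a\in A(U)$ we have $g(a)\in B(U)$, and $n(\phi\otimes b)=0$ whenever $b\in B(U)$ and $\phi|_{U}=0$, using \eqref{ijiofjwoerfwerf} and \eqref{wefqfoijoqwef}. Uniqueness holds because the $[a\otimes\phi]$ generate $c(A)$. We must also show that the counit $c(\incl B)\to B$ is an isomorphism when $B$ is continuous, i.e.\ that $J$ equals the kernel of $n$ on $B\otimes C_{0}(X)$. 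Accessibility is then automatic: the left adjoint preserves all colimits, and $X\nCalg$ is closed under filtered colimits in the ambient category by \cref{kophrtgretgggrethrthe}.\ref{jiggtre} together with \cref{kohperthertgertgetrg1}. Thus $X\nCalg$ is an accessible reflective subcategory of a presentable category, hence presentable.

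\textbf{Main obstacle.} The hardest step is showing $\ker(n)=J$ for continuous $B$. I would prove it fibrewise: using \cref{gjroepgfwergwerferfw}.\ref{klphprethtrherthtr1}, an element of $B\otimes C_{0}(X)$ is detected by irreducible representations, and each of these factors through a point $x=\tilde\psi(\rho)$. Modulo $J$, each such representation becomes evaluation of the $C_{0}(X)$-factor at $x$, which reduces the claim to the statement that $B\otimes C_{0}(X)\to B$ is the quotient by the ideal ``diagonal away from $x$''. A secondary point to check is that $c(A)$ is indeed continuous, that is, that $c(A)(U)$ is the image of $C_{0}(U)\otimes c(A)$, and that the unit respects the filtrations.
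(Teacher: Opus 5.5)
Your proof is correct, but it is organized differently from the paper's.

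The paper forms $c(A)=(C_{0}(X)\otimes A)/I$ with the fibrewise ideal $I$ of functions satisfying $\phi(x)\in A(X\setminus\{x\})$ for all $x$. It writes down an explicit counit $c\circ\incl\cong\id$; this needs $\ker(m)=I$ (\cref{herthertgetrgtr}), which in turn uses the injectivity of the fibre map from \cref{lpherhrtgert}. It then defines the unit via witnesses of $V\Subset U$ and checks both triangle identities.

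You instead quotient by the ideal $J$ generated by tensors with disjoint supports. This is exactly the ideal $J$ of \cref{koperthrtgetrgetgt}, and by \cref{kophtrtegertgertgte} it equals $I$ for regular $A$, so the two functors agree on $\Open(X)^{\reg}\nCalg$. You then verify the universal property of the unit directly. The vanishing of $J$ under $a\otimes\phi\mapsto n(\phi\otimes g(a))$ follows from \eqref{ijiofjwoerfwerf}. Uniqueness follows from $[a\otimes\phi]=m(\phi\otimes\eta(a))$ for $a\in A(V)$ with $V\Subset X$.

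There is a consequence of your setup that you did not use. Once $\eta_{A}$ is a reflection into the full subcategory $X\nCalg$, the counit $c(\incl B)\to B$ is automatically an isomorphism, because $\incl$ is fully faithful. So your ``main obstacle'' $\ker(n)=J$ is not needed for this theorem. Your sketch of it is nonetheless sound: the irreducible representations of $C_{0}(X,B)/J$ are exactly the $\rho\circ\mathrm{ev}_{\tilde\psi(\rho)}$, and on $\xi$ such a representation equals $\rho(n(\xi))$.

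Two of your other steps are also simpler than you suggest:
\begin{itemize}
\item Passing to $A/A(\emptyset)$ is redundant, since taking $U=\emptyset$ already puts $C_{0}(X)\otimes A(\emptyset)$ into $J$.
\item The filtration check for the unit is a one-liner: $\eta(a)=[a\otimes\psi]=m(\psi\otimes\eta(a))\in c(A)(U)$ for a witness $\psi$ of $V\Subset U$, extended by density using regularity.
\end{itemize}

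Your route gives a shorter, more formal proof of the adjunction. The paper's pointwise description via $I$ is what later lets $c$ preserve kernels (\cref{gjweroijgorefwerf}). So for the subsequent exactness results you would still need the comparison $I=J$ of \cref{kophtrtegertgertgte}.
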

\begin{proof}
We construct the adjunction in \eqref{gioujgiotrgwergwerfwerf} explicitly by describing the functor $c$ and the unit and counit. The functor $c$ is actually the restriction of a functor
\begin{equation}\label{gjiojowiejroijfwer}c:\Open(X)\nCalg\to X\nCalg
\end{equation}
to the subcategory $\Open(X)^{\reg}\nCalg$.
Let $A$ be in $\Open(X) \nCalg$. We use $A$ also to denote the $C^{*}$-algebra $A(X)$. 
Then we can form the  tensor product  $C_{0}(X)\otimes  A$.   We have a canonical isomorphism
$C_{0}(X)\otimes A\cong C_{0}(X,A)$. For every $x$ in $X$, we consider the evaluation
$$e_{x}: C_{0}(X,A)\stackrel{\phi\mapsto \phi(x)}{\to} A\stackrel{a\mapsto \ev_{x}(a)}{\to}  A( x)\ .$$ 
We let $I$ be the kernel of
$$(e_{x})_{x\in X}: C_{0}(X)\otimes A  \to \prod_{x\in X} A(x)\ .$$
 The elements of $I$ are thus functions $\phi$  in $C_{0}(X,A)$ with $\phi(x)\in A(X\setminus \{x\})$ for every $x$ in $X$.
We then define the underlying $C^{*}$-algebra of $c(A)$ by
$$c(A):=(C_{0}(X)\otimes A)/I\ .$$
We have a map of exact sequences
$$
\xymatrix{
0\ar[r]&C_{0}(X)\otimes I\ar[d]\ar[r]&C_{0}(X)\otimes C_{0}(X)\otimes A\ar[r]\ar[d]&C_{0}(X)\otimes c(A)\ar[r]\ar[d]^{m'}&0\\
0\ar[r]&I\ar[r]&C_{0}(X)\otimes A\ar[r]&c(A)\ar[r]&0}
$$
where the middle vertical map is induced by $C_{0}(X)\otimes C_{0}(X)\cong C_{0}(X\times X)\stackrel{\diag^{*}}{\to}
C_{0}(X)$. 
An element of $C_{0}(X)\otimes I$ can be considered as a function $\phi:X\times X\to A$ such that
$\phi(x,y)\in A(X\setminus \{y\})$ for every $x,y$ in $X$.
The middle vertical map sends this function to $x\mapsto \phi(x,x):X\to A$, which obviously belongs to $I$.
 Hence the  left vertical map is well-defined. 
We conclude the existence of the right vertical map, which is surjective
 since the middle map is surjective.  It is the $c(A)$-bilinear multiplication map
 witnessing  $(c(A),m')$ as an object of $X\nCalg$.

 We now construct the counit 
 $$\eta:c\circ \incl\stackrel{\cong}{\to} \id:X\nCalg\to X\nCalg\ .$$
   
\begin{lem}\label{lpherhrtgert} If $A$ is in $X\nCalg$, then   \begin{equation}\label{gergfwerferferfw}
\ev:=(\ev_{x})_{x\in X}:A  \to \prod_{x\in X} A (x)
\end{equation}
 is injective. \end{lem}
 \begin{proof}
Let $\tilde \psi:\Prim(A)\to X$ be the   map from \cref{ojhpertherthrtegtrg0} corresponding to the structure of $A$ and $m:C_{0}(X)\otimes A\to A$ the structure homomorphism from  \cref{kophertgetrgtreeg}.
For every $\rho$ in $\Prim(A)$ we have $A(X\setminus \{\tilde \psi(\rho)\})\subseteq \ker(\rho)$.
 To see this, note that,   by \eqref{wefqfoijoqwef}, we must show that
$\rho(m(f\otimes a))=0$ for all $ a$ in $A(X)$ and $f$ in $C_{0}(X\setminus \{\tilde \psi(\rho)\})$.
We have  $ \rho(m(f\otimes a))=f(\tilde \psi(\rho))\rho(a)=0$ since $f(\tilde \psi(\rho))=0$.
Hence we get a homomorphism $A( \tilde \psi(\rho))\to A(X)/\ker(\rho)$.
If $(\ev_{x})_{x\in X}(a)=0$, then also $0=([a]_{A(X)/\ker(\rho)})_{\rho\in \Prim(A(X))}$.
This implies, by \cref{gjroepgfwergwerferfw}.\ref{klphprethtrherthtr1}, that $a=0$.
  \end{proof}
    Assume   that  $A$ is in $X\nCalg$.   By \cref{kophertgetrgtreeg}
  we have the structure  morphism
$$m:C_{0}(X)\otimes A\to A\ .$$  

  \begin{lem} \label{herthertgetrgtr}
  The  kernel of $m$ is precisely the ideal $I$. \end{lem}
  \begin{proof}
 Using \cref{lpherhrtgert} we can  conclude  that
   the kernel of $m$ is precisely the kernel of
 $$e:C_{0}(X)\otimes A\stackrel{m}{\to} A\stackrel{(\ev_{x})_{x\in X}}{\to}  \prod_{x\in X} A (x)\ .$$
 We now show that the following   square
 $$\xymatrix{C_{0}(X)\otimes A\ar[ddrr]^{e}\ar[rr]^-{m}\ar[d]_{\cong }  &&A\ar[dd]^{(\ev_{x})_{x\in X}} \\ C_{0}(X,A)\ar[d]_{\phi\mapsto (\phi(x))_{x\in X}}&&\\ \prod_{x\in X} A \ar[rr]^{\prod_{x\in X}\ev_{x}}  &&  \prod_{x\in X} A (x)} $$ commutes.
It suffices to check  commutativity on elements of the form  $f\otimes a$ in $C_{0}(X)\otimes A$,  and this follows from \begin{equation}\label{gweropjiowergwe}\ev_{x}(m(f\otimes a))=f(x)\ev_{x}(a)=\ev_{x}(f(x)a)
\end{equation} in $A(x)$ for every $x$ in $X$.
 The ideal $I$ is by definition the kernel of the down-right composition, which coincides
 with the kernel of the right-down composition. This finishes the verification   that $I$ is the kernel of $m$.\end{proof}

By   \cref{herthertgetrgtr}  for $A$ in $X\nCalg$ we have a canonical isomorphism $$\eta_{A}:c\circ \incl(A) \stackrel{\cong}{\to} A\ , \qquad [f\otimes a]\mapsto m(f\otimes a)$$
of plain $C^{*}$-algebras.
We must check that this is a morphism in $X\nCalg$. To this end, we must check that   the square
$$\xymatrix{C_{0}(X)\otimes c\circ \incl(A)\ar[r]\ar[d]^{m'}&C_{0}(X)\otimes A\ar[d]^{m}\\ c\circ \incl(A)\ar[r]&A}$$ commutes, see \cref{herhttrgrtgertge}.
Let $f$ be in $C_{0}(X)$ and $[g\otimes a]$ be in $c\circ \incl(A)$. Then the square gives
$$\xymatrix{f\otimes [g\otimes a]\ar@{|->}[r]\ar@{|->}[d]&f\otimes m(g\otimes a)\ar@{|->}[d]\\ [fg\otimes a]\ar@{|->}[r]&m(fg\otimes a)=m(f\otimes m(g\otimes a))}$$
The equality at the lower corner is \eqref{ijiofjwoerfwerf}.  

We now define the unit 
$$\epsilon:\id\to \incl\circ c$$   as follows.
Let $A$ be in $\Open(X)^{\reg}\nCalg$. Then the  component at $A$ of the unit  of the adjunction   \eqref{gwerfwerfwerfwerferf2}  is an isomorphism
$$A \cong R( A ) \ ,$$ where we consider $R$ as defined in $\Open(X)\nCalg$ and omit to write inclusions explicitly.
Therefore for every
 $U$  in $\Open(X)$ we have 
 $$ \overline{\bigcup_{V\Subset  U}A(V) }=A(U)  \ .$$
 For 
$a$  in $A(U) $
we must define $\epsilon_{A}(a)$ in $c(A)(U)$.
Assume first that   $a\in \bigcup_{V\Subset  U}A(V)$. 
Then $a\in A(V)$ for some 
  $V\Subset  U$  witnessed by $\phi$.  
 In this case, we define  $\eta_{A}(a):=[\phi\otimes a]$ in $c(A)$.
  This is independent of the choice of $\phi$ and multiplicative. To see independence,
let $\phi'$ be a second choice. Then  for every $x$ in $X$, we have
$\ev_{x}((\phi-\phi')  a)\in A(X\setminus \{x\})$. Indeed, 
 $ \ev_{x}((\phi-\phi')  a)=0$ for $x$ in $V$ and $\ev_{x}((\phi-\phi')  a)\in A(V)$ for $x\not\in V$ and hence $V\subseteq X\setminus \{x\}$.
   In a similar manner, we show that this describes a $*$-algebra homomorphism.    We then extend this map to all of  $A(U)$ by continuity.

We next check the triangle identities.
We argue that for $A$ in $X\nCalg$ the composition
  $$c(A)\stackrel{c(\epsilon_{A})}{\to}  c(\incl(c(A)))\stackrel{\eta_{c(A)}}{\to} c(A)$$   
   is the identity.
     Let $[\psi\otimes a]$   be in $c(A)(U)$ with 
  $a$ in $A(V)$ for     
    $V\Subset   U$ witnessed by $\phi$, and $\psi\in C_{0}(U)$.
    The elements of this form generate $c(A)(U)$.
 The composition sends this to $[\psi \phi\otimes a]$, but
   $[\psi \phi\otimes a]=[\psi\otimes a]$. 
      
    Furthermore, for $B$ in $\Open(X)^{\reg}\nCalg$ we show that the composition
    $$\incl(B)\stackrel{\epsilon_{\incl(B)}}{\to} \incl(c(\incl(B))) \stackrel{\incl(\eta_{B})}{\to} \incl(B)$$
    is the identity.  Note that $\incl(B)(U)$ is generated by 
     $b$  in $B(V)$ for
    $V\Subset  U$ witnessed by $\phi$. The composition sends
    $b$ to $m(\phi\otimes b)$. We have
    $\ev_{x}(m(\phi\otimes b))=\phi(x) \ev_{x}(b)=\ev_{x}(b)$ since $\phi(x)=1$ for $x$ in $V$ and
    $\ev_{x}(b)=0$ for $x\not\in V$.
    This finishes the verification that \eqref{gioujgiotrgwergwerfwerf} is a left Bousfield localization. 
    
    We already know from a combination of \cref{kohperthertgertgetrg1} and \cref{kohperthertgertgetrg}  that $\incl$ preserves filtered colimits. Since $X\nCalg$ is an accessible left Bousfield localization
    of   $\Open(X)^{\reg}\nCalg$, which is presentable by \cref{jioretherthertgetrgertg}, we conclude that $X\nCalg$ is also presentable.
     \end{proof}

For later  use, we record the following corollary of the proof of \cref{okprherhrtgertge}.
 
\begin{kor} \label{lkopherthertgetg}If $X$ is a locally compact Hausdorff space and  $(A,m)$ is in $X\nCalg$, then
we have an exact sequence
$$0\to I\to C_{0}(X)\otimes A\stackrel{m}{\to} A\to 0$$
where the ideal $I$ consists   precisely of the functions $\phi$ in $C_{0}(X,A)\cong C_{0}(X)\otimes A$ with
$\phi(x)\in A(X\setminus\{x\})$ for all $x$ in $X$.
\end{kor}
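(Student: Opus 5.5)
The corollary is the content of \cref{herthertgetrgtr} combined with the surjectivity in \cref{kophertgetrgtreeg}. I would therefore prove it by assembling these two facts and identifying the ideal $I$ concretely.

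\textbf{Exactness on the right.} By \cref{kophertgetrgtreeg}, applied through \cref{ojhpertherthrtegtrg2}, the structure map $m:C_{0}(X)\otimes A\to A$ is a surjective $A$-bilinear homomorphism. Hence the sequence is exact at $A$.

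\textbf{Identifying the kernel.} Exactness in the middle is the claim $\ker(m)=I$, where $I$ is the kernel of
$$(e_{x})_{x\in X}:C_{0}(X)\otimes A\to \prod_{x\in X}A(x)\ .$$
This is exactly \cref{herthertgetrgtr}. Its two ingredients are:
\begin{enumerate}
\item the injectivity of $(\ev_{x})_{x\in X}$ on $A$, from \cref{lpherhrtgert}, which rests on the norm formula of \cref{gjroepgfwergwerferfw} and the inclusion $A(X\setminus\{\tilde\psi(\rho)\})\subseteq\ker\rho$;
\item the identity $\ev_{x}(m(f\otimes a))=f(x)\ev_{x}(a)$ from \eqref{gweropjiowergwe}. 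It suffices to check this on elementary tensors, and then the square relating $m$, $(\ev_{x})_{x}$ and pointwise evaluation commutes.
\end{enumerate}
Together these give $\ker(m)=\ker\bigl((\ev_{x})_{x}\circ m\bigr)=\ker\bigl((e_{x})_{x}\bigr)=I$.

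\textbf{Description of $I$.} Under $C_{0}(X)\otimes A\cong C_{0}(X,A)$ the map $e_{x}$ is $\phi\mapsto\ev_{x}(\phi(x))$. Since $\ev_{x}:A\to A(x)=A/A(X\setminus\{x\})$ is the quotient map, $\phi\in I$ exactly when $\phi(x)\in A(X\setminus\{x\})$ for every $x$. Exactness at $I$ is automatic because $I$ is an ideal.

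\textbf{Main obstacle.} The only subtle point is the injectivity lemma, which needs Dauns--Hofmann and the identification of primitive ideals over points. It is already established in \cref{lpherhrtgert}, so no new work is expected beyond citing it.
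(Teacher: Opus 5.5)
Your proposal is correct and follows the paper's route: the paper records this corollary as a direct consequence of the proof of \cref{okprherhrtgertge}. Surjectivity of $m$ comes from \cref{kophertgetrgtreeg}, and $\ker(m)=I$ is \cref{herthertgetrgtr}, which rests on the injectivity in \cref{lpherhrtgert} and the identity \eqref{gweropjiowergwe}, exactly as you assemble them.
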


\subsection{Fell bundles}\label{jiojgowerwregerfwrf}

Recall that for a locally compact Hausdorff space, we can equivalently describe objects of $X\nCalg$ as
pairs $(A,A(-))$ of a $C^{*}
$-algebra  equippped with a frame morphism $\Open(X)\to I(A)$, or as pairs $(A,m)$ consisting of a $C^{*}$-algebra $A$ equipped with a   surjective homomorphism $C_{0}(X)\otimes A\to A$. In this section,  we provide a third alternative picture of $X\nCalg$ in terms of upper semicontinuous $C^{*}$-bundles or Fell bundles (see \cref{hetrertgrtgrtegetg} below). This equivalence  is
well-known in the $C^{*}$-literature and  is essentially due to \cite{Nilsen_1996}. 
As the notation and some of the details are relevant for later constructions, we  
 reproduce the arguments for completeness. 
We  apply the Fell-bundle picture in order to describe the contravariant functorality
$$\LCH^{\op}\to \Cat\ , \quad  X\mapsto X\nCalg\ ,  \quad f\mapsto f^{*}\ .$$
 Let $X$ be a locally compact Hausdorff space, consider $A$ in $\Open(X)\nCalg$, and recall \cref{hertrtgetrgertge}.  
The $C^{*}$-algebra $C_{b}(X)$ acts by multipliers on the $C^{*}$-algebra $\prod_{x\in X} A(x)$.
 We have a canonical map $$\ev:A\to \prod_{x\in X} A(x)\ , \quad a\mapsto \ev(a):=(\ev_{x}(a))_{x\in X}\ .$$
The elements in the product  are called sections, and for 
 $a$ in $A$, we call $\ev(a)$ the constant section  associated to $a$. 


\begin{lem}\label{kopherhrtgrgrtegertgrgrtgerge}
Assume that  $A$ belongs to $\Open(X)^{\reg}\nCalg$. Then the following assertions are equivalent:
\begin{enumerate}
\item \label{wzerguwerg89989893} We have $A\in X\nCalg$.
\item \label{wzerguwerg899898931}  The map 
  $\ev:A\to \prod_{x\in X} A(x)$ is injective, its image   is preserved by
multiplication with elements of $C_{0}(X)$, and for every $U$ in $\Open(X)$ the  ideal
$\ev(A(U))$ is precisely the ideal generated by products with $C_{0}(U)$.
\end{enumerate} \end{lem}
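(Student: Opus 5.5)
For \ref{wzerguwerg89989893}$\Rightarrow$\ref{wzerguwerg899898931} I will use the pair picture $(A,m)$ of \cref{kophertgetrgtreeg}. Injectivity of $\ev$ is exactly \cref{lpherhrtgert}. For stability under $C_{0}(X)$, the identity \eqref{gweropjiowergwe}, $\ev_{x}(m(f\otimes a))=f(x)\ev_{x}(a)$, shows that $f\cdot\ev(a)=\ev(m(f\otimes a))$. Hence the image of $\ev$ is preserved by $C_{0}(X)$. For the last claim, \eqref{wefqfoijoqwef} gives $A(U)=\im(m:C_{0}(U)\otimes A\to A)$. Applying $\ev$ and the same identity, $\ev(A(U))$ is the closed span of the sections $f\cdot\ev(a)$ with $f\in C_{0}(U)$ and $a\in A$. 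This is the ideal of $\ev(A)$ generated by products with $C_{0}(U)$.

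For the converse \ref{wzerguwerg899898931}$\Rightarrow$\ref{wzerguwerg89989893}, I identify $A$ with its image $\ev(A)$, which is $C_{0}(X)$-stable by assumption. I then define $m:C_{0}(X)\otimes A\to A$ by $m(f\otimes a):=\ev^{-1}(f\cdot\ev(a))$. This is well defined on the algebraic tensor product and $A$-bilinear, because $C_{0}(X)$ acts centrally on $\prod_{x}A(x)$. It extends to the maximal tensor product, since the action is given by commuting representations $C_{0}(X)\to M(\ev(A))$ and $A\to \ev(A)$. The hypothesis for $U=X$ says that $A=A(X)$ is the ideal generated by $C_{0}(X)\cdot A$, so $m$ has dense range and is therefore surjective. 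By \cref{kophertgetrgtreeg} the pair $(A,m)$ is then an object of $X\nCalg$.

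It remains to show that its frame map $U\mapsto \im(m|_{C_{0}(U)\otimes A})$ agrees with the given $A(-)$. The hypothesis states exactly that $\ev(A(U))$ equals the ideal generated by $C_{0}(U)\cdot\ev(A)$, which is $\ev(\im m|_{C_{0}(U)\otimes A})$. Injectivity of $\ev$ then gives $A(U)=\im(m|_{C_{0}(U)\otimes A})$, so the given structure is the continuous one. Regularity of $A$ is not really used beyond making the hypothesis meaningful.

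The main obstacle is the construction of $m$, namely checking that fibrewise multiplication by $f(x)$ is a bounded operation on $\ev(A)$ that yields a homomorphism out of $C_{0}(X)\otimes A$. I would handle this by viewing $C_{0}(X)$ as central multipliers of the $C^{*}$-algebra $\ev(A)$, using that $\ev$ is an isometric $*$-embedding, and then invoking the universal property of the maximal tensor product for commuting homomorphisms into $M(\ev(A))$.
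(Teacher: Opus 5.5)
Your proof is correct and follows essentially the same route as the paper: the forward direction matches, and for the converse both arguments identify $A$ via $\ev$ with the $C_{0}(X)$-stable subalgebra of $\prod_{x\in X}A(x)$ generated by products of constant sections with $C_{0}(X)$, and then match the ideals $A(U)$ using the hypothesis and the injectivity of $\ev$. The only difference is packaging: the paper recognizes this subalgebra as $c(A)$ from the proof of \cref{okprherhrtgertge}, where the multiplication map $m'$ was already constructed, whereas you build $m$ directly from the central $C_{0}(X)$-action via the universal property of the maximal tensor product and then invoke \cref{kophertgetrgtreeg}.
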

\begin{proof} If $A$ is in $X\nCalg$  corresponding to a pair $(A,m)$ as in \cref{kophertgetrgtreeg}, then 
$\ev$ is injective by \cref{lpherhrtgert}. 
The algebra $A$ is identified by $\ev$ with   the algebra  of  constant sections.
In view of $\ev_{x}(m(f\otimes a))=f(x)\ev_{x}(a)$ (see \eqref{gweropjiowergwe})
and the surjectivity of $m$
 the image of $\ev$ is also the algebra generated by sections of the form $f \ev(a)$ for $f$ in $C_{0}(X)$ and $a$ in $A$. In particular, it is preserved by  the multiplication by $C_{0}(X)$.
Finally, for every $U$ in $\Open(X)$, the ideal  $\ev(A(U))$ is precisely the ideal generated by products with $C_{0}(U)$.
Hence Assertion  \ref{wzerguwerg89989893} implies Assertion  \ref{wzerguwerg899898931}.

Conversely,  assume Assertion  \ref{wzerguwerg899898931}. The sub-$C^{*}$-algebra  of $ \prod_{x\in X}A(x)$
generated by   products of constant sections  with elements of  $C_{0}(X)$
is precisely $c(A)$ considered in \cref{okprherhrtgertge}. Let $m:C_{0}(X)\otimes c(A)\to c(A)$ denote the obvious multiplication map. Then $(c(A),m)$ belongs to 
 $X\nCalg$.

 By assumption, the map $\ev$ induces an injective map $A\to c(A)$ of $C^{*}$-algebras.  But this map is also surjective by the definition of $c(A)$. 
 The conditions in \ref{wzerguwerg899898931} imply that $A\to c(A)$ is actually an isomorphism in $\Open(X)\nCalg$, and hence in $X\nCalg$.
%
%
%
%
%
%
%
%
%
%
%
%
%
%
 Hence Assertion  \ref{wzerguwerg899898931} implies Assertion \ref{wzerguwerg89989893}.
\end{proof}

%
%
%
%
\begin{constr} \label{okhptrhertgergergretg}{\em 
Assume   that $(A_{x})_{x\in X}$ is a family of $C^{*}$-algebras indexed by the points of a locally compact Hausdorff space $X$. We consider a subalgebra $A$ of $\prod_{x\in X}A_{x}$. We assume that $A$ is preserved by the multiplication with elements of $C_{0}(X)$ and   that the  $A$-bilinear multiplication map $m:C_{0}(X)\otimes A\to A$ is surjective.
For $U$ in $\Open(X)$, we define $A(U)$ as the subalgebra generated by products of $A$ with elements of $C_{0}(U)$. This is a closed $*$-ideal.  We let $\pi_{x}:A\to A_{x}$ denote the restriction of the projection to the factor with index $x$.}\hB\end{constr}
 \begin{lem}\label{hlkperthrtgertgertg}\mbox{} In the situation of \cref{okhptrhertgergergretg}, we have the following assertions.
 \begin{enumerate} \item  \label{gojpwegerfwerfwerfw} $A$ with the structure $\Open(X)\ni U\mapsto A(U)\in I(A)$ belongs to $X\nCalg$. \item \label{gojpwegerfwerfwerfw1} For every $x$ in $X$ and $\phi$ in $C_{0}(X)\otimes A\cong C_{0}(X,A)$ we have
 $$\pi_{x}(m(\phi))=\pi_{x}(\phi(x))\ .$$  
 \item  \label{gojpwegerfwerfwerfw2}  For every $a$ in $A$, the function $X\ni x\mapsto  \|\pi_{x}(a)\|_{A(x)}$ vanishes at $\infty$.
\end{enumerate}
\end{lem}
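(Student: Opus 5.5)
The plan is to use \cref{kophertgetrgtreeg} for Assertion \ref{gojpwegerfwerfwerfw}, then prove Assertion \ref{gojpwegerfwerfwerfw1} by a density argument, and finally derive Assertion \ref{gojpwegerfwerfwerfw2} from it. Since all three statements are linear and continuous in the relevant variables, it will mostly suffice to check them on elementary tensors $f\otimes a$.

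\textbf{Assertion \ref{gojpwegerfwerfwerfw}.} The multiplication map $m:C_{0}(X)\otimes A\to A$, $f\otimes a\mapsto fa$, is $A$-bilinear, because $C_{0}(X)$ acts on $\prod_{x}A_{x}$ by central multipliers: $(fa)b=f(ab)=a(fb)$. It is surjective by assumption. By \cref{kophertgetrgtreeg}, the pair $(A,m)$ therefore defines an object of $X\nCalg$. By \eqref{wefqfoijoqwef}, the associated ideals are $\im(m:C_{0}(U)\otimes A\to A)$. This image is the closed span of the products $fa$ with $f\in C_{0}(U)$, and it is an ideal because $m$ is bilinear and surjective. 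It therefore coincides with the $A(U)$ of \cref{okhptrhertgergergretg}. So the frame map $U\mapsto A(U)$ is the one coming from $(A,m)$, and Assertion \ref{gojpwegerfwerfwerfw} follows.

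\textbf{Assertion \ref{gojpwegerfwerfwerfw1}.} Both sides are continuous linear in $\phi$: the left side because $m$ and $\pi_{x}$ are continuous, the right side because evaluation at $x$ is continuous on $C_{0}(X,A)$. It is therefore enough to take $\phi=f\otimes a$. Then $\pi_{x}(m(f\otimes a))=\pi_{x}(fa)=f(x)\pi_{x}(a)=\pi_{x}(\phi(x))$, since the multiplier action of $f$ on the product is coordinatewise. Density of the span of elementary tensors gives the general case.

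\textbf{Assertion \ref{gojpwegerfwerfwerfw2}.} Let $a\in A$ and $\epsilon>0$. By surjectivity of $m$, choose $\phi\in C_{0}(X)\otimes A$ with $\|a-m(\phi)\|<\epsilon$. Approximate $\phi$ by a finite sum $\sum_{k}f_{k}\otimes a_{k}$ with $f_{k}\in C_{c}(X)$, which is possible because $C_{c}$ is dense in $C_{0}$. This yields $a'=\sum_{k}f_{k}a_{k}$ with $\|a-a'\|<2\epsilon$ and $\pi_{x}(a')=0$ outside the compact set $\bigcup_{k}\operatorname{supp}f_{k}$. Since the $\pi_{x}$ are contractive and $\|\cdot\|$ on $\prod_{x}A_{x}$ is the supremum norm, $\|\pi_{x}(a)\|<2\epsilon$ off that compact set.

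The main subtlety is in Assertion \ref{gojpwegerfwerfwerfw}: one must check that the ideal structure from \cref{kophertgetrgtreeg} agrees with the one defined in \cref{okhptrhertgergergretg}, in particular that the generated subalgebra is already closed and two-sided. Bilinearity and surjectivity of $m$ handle this.
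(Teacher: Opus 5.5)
Your proposal is correct and follows essentially the paper's route: Assertion 1 via \cref{kophertgetrgtreeg} together with the formula \eqref{wefqfoijoqwef}, Assertion 2 by checking on elementary tensors, and Assertion 3 by approximating $a$ uniformly by elements whose fibrewise norm function visibly vanishes at infinity. The only cosmetic difference is in Assertion 3: you use finite sums $\sum_k f_k a_k$ with $f_k\in C_c(X)$, obtained from surjectivity of $m$, whereas the paper approximates $a$ by $m(f\otimes a)$ and uses $|f(x)|\,\|\pi_x(a)\|\to 0$.
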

\begin{proof}
The Assertion  \ref{gojpwegerfwerfwerfw} is 
precisely the description of the functor $\Open(X)\to I(A)$ obtained from the pair $(A,m)$ via the equivalence 
asserted in 
 \cref{kophertgetrgtreeg}.

  It suffices to check the identity claimed in  Assertion  \ref{gojpwegerfwerfwerfw1} on elements of the form $\kappa\otimes a$ for  $\kappa$   in $C_{0}(X)$ and $a$   in $A$. By the definition of the multiplication map, we have $$\pi_{x}(m(\kappa\otimes a))=\kappa(x)\pi_{x}(a)\ .$$ On the other hand, $$\pi_{x}((\kappa\otimes a)(x))=\pi_{x}(\kappa(x) a)=\kappa(x) \pi_{x}(a)\ .$$
  
 We now show  Assertion \ref{gojpwegerfwerfwerfw2}.
  The function $x\mapsto  \|\pi_{x}(a)\|_{A_{x}}$ is  bounded by $\|a\|_{A}$.
We can approximate $a$ by elements of the form $m(f\otimes a)$ for $f\in C_{0}(X)$. Then by Assertion \ref{gojpwegerfwerfwerfw}
$$x\mapsto  \| \pi_{x}(m(f\otimes a))\|_{A_{x}}= |f(x)|\|
 \pi_{x}(a)\|_{A_{x}}$$ vanishes at $\infty$. It follows that the original function also vanishes at $\infty$.
 \end{proof}

The algebra $A$ described in  \cref{okhptrhertgergergretg} is an object of $X\nCalg$, but in this generality
the algebras $A_{x}$ are not necessarily the fibres defined in \cref{hertrtgetrgertge}.
The following result  provides a sufficient criterion for this property and clarifies the role of the condition that the functions 
 $x\mapsto \|\pi_{x}(a)\|_{A_{x}}$ are upper semi-continuous for all $a$ in $A$.

 \begin{prop}\label{jgwrieogjwefwefwrf}In the situation of \cref{okhptrhertgergergretg} the following assertions are equivalent:
\begin{enumerate}
 \item \label{ohkpetrhetrhe1}For every $x$ in $X$, the map $\pi_{x}$ induces an isomorphism $A(x)\stackrel{\cong}{\to} A_{x}$.
 \item\label{ohkpetrhetrhe2} For every $a$ in $A$, the function $x\mapsto \|\pi_{x}(a)\|_{A_{x}}$ is upper semi-continuous and $A\to A_{x}$ is surjective for all $x\in X$.
\end{enumerate}

\end{prop}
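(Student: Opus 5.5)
The key step is to identify the kernel of $\pi_{x}$ with $A(X\setminus\{x\})$; the two implications then split cleanly. By \cref{hlkperthrtgertgertg}.\ref{gojpwegerfwerfwerfw} the algebra $A$ lies in $X\nCalg$, and $A(X\setminus\{x\})$ is generated by products $fa$ with $f\in C_{0}(X\setminus\{x\})$. For such products $\pi_{x}(fa)=f(x)\pi_{x}(a)=0$. Hence $A(X\setminus\{x\})\subseteq\ker(\pi_{x})$ always, and $\pi_{x}$ descends to a map $\bar\pi_{x}:A(x)\to A_{x}$. Assertion \ref{ohkpetrhetrhe1} is therefore equivalent to two conditions: $\pi_{x}$ is surjective, and $\ker(\pi_{x})\subseteq A(X\setminus\{x\})$ for every $x$.

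For \ref{ohkpetrhetrhe1}$\Rightarrow$\ref{ohkpetrhetrhe2}, surjectivity is immediate. For upper semicontinuity, I use the quotient norm: $\|\pi_{x}(a)\|=\|\ev_{x}(a)\|_{A(x)}=\inf_{b\in A(X\setminus\{x\})}\|a-b\|$. Given $\epsilon>0$, choose $b$ of the form $m(f\otimes a')$ with $f$ compactly supported in $X\setminus\{x\}$, so that $\|a-b\|<\|\pi_{x}(a)\|+\epsilon$. Let $W$ be a neighbourhood of $x$ disjoint from the support of $f$. For $y\in W$ we get $\pi_{y}(b)=f(y)\pi_{y}(a')=0$, and so
$$\|\pi_{y}(a)\|=\|\pi_{y}(a-b)\|\le\|a-b\|<\|\pi_{x}(a)\|+\epsilon .$$
This is exactly upper semicontinuity at $x$. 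The density of finite sums of such $b$ follows from \eqref{wefqfoijoqwef}, and finite sums are handled the same way by intersecting the corresponding neighbourhoods.

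For \ref{ohkpetrhetrhe2}$\Rightarrow$\ref{ohkpetrhetrhe1}, only the inclusion $\ker(\pi_{x})\subseteq A(X\setminus\{x\})$ remains. Let $a\in A$ with $\pi_{x}(a)=0$ and fix $\epsilon>0$. By upper semicontinuity the set $K:=\{y\in X\mid \|\pi_{y}(a)\|\ge\epsilon\}$ is closed. By \cref{hlkperthrtgertgertg}.\ref{gojpwegerfwerfwerfw2} it is also relatively compact, so it is compact, and it does not contain $x$. Choose $f\in C_{c}(X\setminus\{x\})$ with values in $[0,1]$ and $f_{|K}\equiv1$. Then $m(f\otimes a)\in A(X\setminus\{x\})$. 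Since $A\subseteq\prod_{y}A_{y}$ carries the sup norm, \cref{hlkperthrtgertgertg}.\ref{gojpwegerfwerfwerfw1} gives
$$\|a-m(f\otimes a)\|=\sup_{y}\,(1-f(y))\,\|\pi_{y}(a)\|\le\epsilon .$$
Since $\epsilon$ was arbitrary, $a$ lies in the closed ideal $A(X\setminus\{x\})$.

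The main obstacle is this last step, namely producing the compact set $K$ that avoids $x$. Upper semicontinuity gives closedness and the vanishing-at-infinity property gives compactness, and both ingredients are needed there. A smaller point to check is that the norm on $A$ really is the sup over the $A_{y}$, which holds because $A$ is a subalgebra of the product.
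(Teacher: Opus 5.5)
Your proof is correct. For (2)$\Rightarrow$(1) it is the paper's argument with the details filled in: closedness of $\{y\mid\|\pi_y(a)\|\ge\epsilon\}$ comes from upper semicontinuity, compactness from vanishing at infinity, and then an Urysohn function in $C_c(X\setminus\{x\})$ gives the sup-norm estimate.

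For (1)$\Rightarrow$(2) you take a more direct route than the paper.

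\emph{The paper's route.} It lifts $a=m(\phi)$ with $\phi\in C_0(X,A)$, so that $\pi_y(a)=\pi_y(\phi(y))$. It then corrects $\phi$ by an element $\chi\otimes m(\kappa\beta)$ of $\ker(m)$, which makes $\|\phi(x_0)\|_A$ close to the quotient norm of $\pi_{x_0}(a)$. Finally it uses continuity of $y\mapsto\|\phi(y)\|_A$ together with $\|\pi_y(a)\|\le\|\phi(y)\|_A$.

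\emph{Your route.} You avoid the lift altogether. You approximate $a$ in the quotient norm by $a-b$ with $b=\sum_i f_ia_i'$ and $f_i\in C_c(X\setminus\{x\})$. Then $b$ vanishes on the whole neighbourhood $W=X\setminus\bigcup_i\mathrm{supp}(f_i)$ of $x$, so $\|\pi_y(a)\|=\|\pi_y(a-b)\|\le\|a-b\|$ for all $y\in W$.

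Both arguments rest on the same observation: a dense part of $A(X\setminus\{x\})$ consists of elements vanishing near $x$. Yours uses it once, needs no preimage under $m$ and no kernel-modification trick, and is shorter. The paper's version gains nothing essential here beyond staying in the $C_0(X,A)$-picture it uses elsewhere.
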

\begin{proof}
By the definition of $A(x)$, we have an exact sequence
$$0\to A(X\setminus \{x\})\to A \stackrel{\ev_{x}}{\to} A(x)\to  0\ .$$ Furthermore, $A(X\setminus \{x\})\subseteq \ker(\pi_{x})$ since it is generated by elements $m(\kappa\otimes a)$ with $\kappa(x)=0$, and using \cref{hlkperthrtgertgertg}.\ref{gojpwegerfwerfwerfw1}, we have  $\pi_{x}(m(\kappa\otimes a))=\kappa(x) \pi_{x}(a)=0$.
Therefore, the map $A(x)\to A_{x}$ induced by $\pi_{x}$  is well-defined.

We show that Assertion  \ref{ohkpetrhetrhe1} implies Assertion  \ref{ohkpetrhetrhe2}.
We consider $a$  in $A$ and must show that  $x\mapsto \|\pi_{x}(a)\|_{A_{x}}$ is upper semi-continuous.
Since the multiplication map $m:C_{0}(X)\otimes A\to A$ is surjective there exists $\phi$ in $C_{0}(X,A)$ such that $m(\phi)=a$.  By \cref{hlkperthrtgertgertg}.\ref{gojpwegerfwerfwerfw1} 
   for every $x$ in $X$,
 we have $$\pi_{x}(a)=\pi_{x}(m(\phi)) =\pi_{x}(\phi(x)) \ .$$ We now use that $A_{x}\cong A(x)\cong A/A(X\setminus \{x\})$.
  Given $\epsilon$ in $(0,\infty)$ and $x_{0}$ in $X$, we  find   $b$ in $A(X\setminus \{x_{0}\})$  such that $$ \|\phi(x_{0})+b\|_{A}  \le  \|\pi_{x_{0}}(a)\|_{A_{x_{0}}}+\epsilon/3    \ .$$  
Since $A(X\setminus \{x_{0}\})$ is the image of the multiplication map $C_{0}(X\setminus \{x_{0}\})\otimes A\to A$,
 we can find $\beta$ in $C_{0}(X\setminus \{x_{0}\})\otimes A$ such that
 $m(\beta)=b$. We can  find an open  neighborhood $V$ of $x_{0}$ and $\kappa $ in $ C_{0}(X\setminus \bar V  )\otimes A$ such that $\|\beta- \kappa \beta\|\le \epsilon/3$. We  have $ \|b-m(\kappa \beta)\|\le \epsilon/3$ and $m(\kappa \beta)\in A(X\setminus \bar V)$. Now let $\chi$ be in $C_{0}(V)$ such that $\chi(x_{0})=1$. Then $\chi \otimes m(\kappa \beta)\in \ker(m)$ since $\pi_{x}(m(\chi\otimes \kappa\beta))=\chi(x)\pi_{x}(\kappa\beta)=0$ for all $x$ in $X$. Indeed, 
 if $\chi(x)\not=0$, then $x\in V$ and $\pi_{x}(\kappa\beta)=0$ since $\kappa\beta\in A(X\setminus \bar V)\subseteq A(X\setminus\{x\})$.
 
 We can  replace $\phi$ by 
$ \phi+\chi \otimes m(\kappa \beta)$. Since $m(\chi \otimes m(\kappa \beta))=0$ we 
  still have  $m(\phi)=a$, but since $ \chi(x_{0})  m(\kappa\beta)= m(\kappa \beta)$ we now have 
$$\|\phi(x_{0})\|_{A}  \le \| \pi_{x_{0}}(a)\|_{A_{x_{0}}}+2\epsilon/3\ .$$

 By the continuity of $y\mapsto \|\phi(y)\|_{A}$  there exists a neighborhood $U$ of $x_{0}$ such that for all $x$ in $U$, we have
 $$ \| \pi_{x}(a)\|_{A_{x}}\le \|\phi(x)\|_{A }\le \|\phi(x_{0})\|_{A}+\epsilon/3\le   \|\pi_{x_{0}}(a)\|_{A_{x_{0}}}+\epsilon\ .$$
This finishes the verification that $x\mapsto \|\pi_{x}(a)\|_{A_{x}}$ is upper semi-continuous. 

We now show that Assertion  \ref{ohkpetrhetrhe2} implies Assertion   \ref{ohkpetrhetrhe1}.
We must show that $A(X\setminus \{x_{0}\}) =\ker(\pi_{x_{0}})$ for every $x_{0}$ in $X$.
As seen at the beginning of this proof, we have   $A(X\setminus \{x_{0}\})\subseteq \ker(\pi_{x_{0}})$. In order to show the converse
inclusion, consider $a$ in $\ker(\pi_{x_{0}})$. Then $\|\pi_{x_{0}}(a)\|_{A_{x_{0}}}=0$. Fix $\epsilon$ in $(0,\infty)$. Since
$x\mapsto \|\pi_{x}(a)\|_{A_{x}}$ is upper semi-continuous and vanishes at $\infty$ by \cref{hlkperthrtgertgertg}.\ref{gojpwegerfwerfwerfw2}, we can find $\kappa$ in $C_{0}(X\setminus \{x\})$ such that $$ \|m(\kappa\otimes a)-a\|_{A}=\sup_{x\in X}\| \kappa(x)\pi_{x}(a)- \pi_{x}(a) \|_{A_{x}}\le \epsilon\ .$$ But $m(\kappa\otimes a)\in A(X\setminus \{x\})$. Since $\epsilon$  can be chosen arbitrarily small and the ideal $A(X\setminus \{x\})$ is closed, we conclude that $a\in A(X\setminus \{x\})$.
\end{proof}

\begin{rem}
 In the situation of \cref{okhptrhertgergergretg},  assume that $A$ and $A'$ are subalgebras of 
 $\prod_{x\in X}A_{x}$ preserved by $C_{0}(X)$ with a surjective multiplication map.  
 
   \begin{lem}[{\cite[Prop. 3.1]{zbMATH01289894}}] If $A$ and $A'$  satisfy the equivalent conditions in \cref{jgwrieogjwefwefwrf}, and if
$A\subseteq A'$, then $A=A'$.
 \end{lem}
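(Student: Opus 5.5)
The plan is to show that the inclusion $A\subseteq A'$ is surjective fibrewise and then promote this to global surjectivity. The tool for the promotion is the upper semicontinuity of the norm functions together with a partition of unity argument. Write $\iota:A\to A'$ for the inclusion. It is a morphism in $X\nCalg$, since both structures are given by multiplication with $C_{0}(X)$ inside $\prod_{x\in X}A_{x}$. Because $\pi'_{x}\circ\iota=\pi_{x}$, and both $A$ and $A'$ satisfy \cref{jgwrieogjwefwefwrf}.\ref{ohkpetrhetrhe1}, the induced map $A(x)\to A'(x)$ on fibres is identified with $\id_{A_{x}}$. In particular it is an isomorphism for every $x$ in $X$.

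Next I show that $A$ is dense in $A'$; since $A$ is a closed subalgebra, this gives $A=A'$. Fix $a'$ in $A'$ and $\epsilon>0$. By \cref{hlkperthrtgertgertg}.\ref{gojpwegerfwerfwerfw2}, the set $K:=\{x\mid \|\pi_{x}(a')\|\ge\epsilon\}$ is compact. For each $y$ in $K$, fibrewise surjectivity of $A\to A_{y}$ gives some $a_{y}$ in $A$ with $\pi_{y}(a_{y})=\pi_{y}(a')$. The norm function $x\mapsto\|\pi_{x}(a'-a_{y})\|$ is upper semicontinuous, because $a'-a_{y}\in A'$ and $A'$ satisfies \cref{jgwrieogjwefwefwrf}.\ref{ohkpetrhetrhe2}. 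It vanishes at $y$, so it is $<\epsilon$ on some open neighbourhood $U_{y}$. Choose finitely many $U_{y_{1}},\dots,U_{y_{n}}$ covering $K$, together with a partition of unity $\phi_{1},\dots,\phi_{n}$ in $C_{c}(X)$ subordinate to them, such that $0\le\sum_{k}\phi_{k}\le1$ and $\sum_{k}\phi_{k}=1$ on $K$.

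Now set $a:=\sum_{k}\phi_{k}a_{y_{k}}$, which lies in $A$ because $A$ is preserved by $C_{0}(X)$. I then estimate pointwise in $\prod_{x}A_{x}$, using that the norm there is the supremum of the fibre norms:
\[
\|\pi_{x}(a'-a)\|\le \bigl(1-\textstyle\sum_{k}\phi_{k}(x)\bigr)\|\pi_{x}(a')\|+\textstyle\sum_{k}\phi_{k}(x)\|\pi_{x}(a'-a_{y_{k}})\|.
\]
For $x$ in $K$ the first term vanishes. For $x$ outside $K$ it is bounded by $\epsilon$. The second term is $<\epsilon$ in every case, because $\phi_{k}(x)\ne0$ forces $x\in U_{y_{k}}$. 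Altogether $\|a'-a\|\le2\epsilon$, which gives density.

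The main subtlety is the identification of the norm of $A'$ with the supremum norm over the $A_{x}$. This holds because $A'$ is, by hypothesis, a $C^{*}$-subalgebra of $\prod_{x}A_{x}$, so the required estimate is purely pointwise. The other point to check carefully is that upper semicontinuity is applied to differences $a'-a_{y}$, which lie in $A'$, rather than to elements of $A$. Apart from these two points, the argument is a routine compactness and partition of unity argument.
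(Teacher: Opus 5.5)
Your proof is correct and follows essentially the paper's argument: you take local lifts $a_y\in A$ via surjectivity of $\pi_y$, apply upper semicontinuity of $x\mapsto\|\pi_x(a'-a_y)\|$ (correctly inside $A'$), and glue finitely many lifts inside $A$ with a partition of unity, using $C_0(X)$-invariance. The only difference is cosmetic. The paper first approximates $a'$ by elements with $a'=\kappa a'$ for some $\kappa\in C_c(X)$ and covers the support of $\kappa$. You instead cover the compact set $K$ directly; it is compact because the norm function is upper semicontinuous, hence $K$ is closed, and it vanishes at infinity. You then absorb the remainder through the term $(1-\sum_k\phi_k)$, at the cost of a bound $2\epsilon$ instead of $\epsilon$.
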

 \begin{proof}
 Let $a'$ be in $A'$. 
 We must show that $a'\in A$. 
We can approximate $a'$ by elements satisfying  $a'=\kappa a'$ for some $\kappa$ in $C_{c}(X)$. Then $\ev_{x}(a')=0$ for $x\not\in \supp(\kappa)$.
For every $x$ in $X$, we can find $a_{x}$ in $A$ such that $\ev_{x}(a_{x})=\ev_{x}(a')$. Fix $\epsilon$ in $(0,\infty)$.
Then there exists a neighborhood $V_{x}$ of $x$ such that
$\|\ev_{y}(a'-a)\|_{A_{y}}\le \epsilon$ for all $y$ in $V_{x}$.
We choose a partition of unity $(\chi_{x})_{x\in X}$ subordinated to the covering $(V_{x})_{x\in \sup(\kappa)}$.
We choose a finite family  $(x_{i})_{i\in I}$ of points in  $X$ such that
$\supp(\kappa)\subseteq \bigcup_{i\in I} V_{i}$.
We then consider the element
$$\sum_{i\in I} \chi_{x_{i}} a_{x_{i}}$$ in $A$.
We have
$$ \|\sum_{i\in I} \chi_{x_{i}} a_{x_{i}}-a'\| \le \sup_{y\in X}\|\ev_{y} (\sum_{i\in I} \chi_{x_{i}} (a_{x_{i}} -a' ) )\|_{A_{y}}
\le \sum_{i\in I} \chi_{x_{i}}(x)   \sup_{y\in V_{x_{i}}}   \|\ev_{y}(a_{x})-a')\|_{A_{y}}\le \epsilon\ .$$
Since $\epsilon$ is arbitrary, we conclude that $a'\in A$.
 \end{proof}
 \hB
\end{rem}

Combining the   results  above,  we arrive at our third  characterization of $X$-$C^{*}$-algebras in terms of Fell bundles (compare with \cite[Thm. 2.3]{Nilsen_1996}).
\begin{kor}\label{hetrertgrtgrtegetg}
For a locally compact Hausdorff space and a   family $(A_{x})_{x\in X}$ of $C^{*}$-algebras,
the following data are equivalent:
\begin{enumerate}
\item An object $A$ in $X\nCalg$ with $A_{x}\cong A(x)$ for all $x$ in $X$.
\item  \label{wrgwerwfwerf} A subalgebra
$A\subseteq \prod_{x\in X}A_{x}$ that is invariant under and generated by multiplication by  functions in $C_{0}(X)$
and such that the map $x\mapsto \|\pi_{x}(a)\|_{A_{x}}$ is upper semi-continuous for every $a$ in $A$ and $\pi_{x}:A\to A_{x}$ is surjective for every $x$ in $X$.
\end{enumerate}
\end{kor}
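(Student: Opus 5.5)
The plan is to build mutually inverse constructions between the two kinds of data and then check that they are inverse; the substantive work was already done in \cref{kopherhrtgrgrtegertgrgrtgerge}, \cref{hlkperthrtgertgertg} and \cref{jgwrieogjwefwefwrf}.

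\emph{From an $X$-$C^{*}$-algebra to a subalgebra of sections.} Given $A$ in $X\nCalg$ together with isomorphisms $A(x)\cong A_{x}$, consider the map $\ev:A\to \prod_{x\in X}A(x)\cong \prod_{x\in X}A_{x}$. It is injective by \cref{lpherhrtgert}, so we may identify $A$ with its image. By \cref{kopherhrtgrgrtegertgrgrtgerge}, using that $X\nCalg\subseteq \Open(X)^{\reg}\nCalg$, this image is preserved by $C_{0}(X)$. By the surjectivity of $m$ and the relation \eqref{gweropjiowergwe}, it is also generated by products $f\ev(a)$. The coordinate projections $\pi_{x}$ are then the quotient maps $\ev_{x}$, and these are surjective. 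We are now in the situation of \cref{okhptrhertgergergretg}, and the structure from \cref{hlkperthrtgertgertg}.\ref{gojpwegerfwerfwerfw} recovers the original $A(U)$ by \cref{kopherhrtgrgrtegertgrgrtgerge}. Since $\pi_{x}$ induces the isomorphism $A(x)\cong A_{x}$, the implication \ref{ohkpetrhetrhe1}$\Rightarrow$\ref{ohkpetrhetrhe2} of \cref{jgwrieogjwefwefwrf} gives upper semicontinuity of $x\mapsto\|\pi_{x}(a)\|$.

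\emph{From a subalgebra of sections to an $X$-$C^{*}$-algebra.} Conversely, start from a subalgebra as in \ref{wrgwerwfwerf}. Invariance under $C_{0}(X)$ together with generation means that the multiplication map $m:C_{0}(X)\otimes A\to A$ has dense image, hence is surjective, since images of $C^{*}$-homomorphisms are closed. So \cref{okhptrhertgergergretg} applies, and \cref{hlkperthrtgertgertg}.\ref{gojpwegerfwerfwerfw} gives an object of $X\nCalg$. The implication \ref{ohkpetrhetrhe2}$\Rightarrow$\ref{ohkpetrhetrhe1} of \cref{jgwrieogjwefwefwrf} then gives $A(x)\cong A_{x}$ via $\pi_{x}$.

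\emph{The two constructions are inverse.} Going from a subalgebra to an $X$-$C^{*}$-algebra and back returns the same subalgebra, because $\ev_{x}$ corresponds to $\pi_{x}$ under the identification $A(x)\cong A_{x}$, so $\ev$ is the inclusion. Going the other way returns an object isomorphic to $A$ in $X\nCalg$, because $\ev$ is an isomorphism onto its image compatible with the ideals $A(U)$, again by \cref{kopherhrtgrgrtegertgrgrtgerge}.

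The main point to be careful about is what "equivalent data" means. The isomorphisms $A_{x}\cong A(x)$ must be part of the data in the first description, so that the embedding into $\prod_{x\in X} A_{x}$ is well defined and the round trip is an identity rather than merely an isomorphism. I will make this explicit. Beyond that, everything reduces to the cited results.
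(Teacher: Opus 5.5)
Your proposal is correct and follows the paper's route. The paper embeds $A$ into $\prod_{x\in X}A_{x}$ as constant sections in one direction (via \cref{lpherhrtgert} and \cref{kopherhrtgrgrtegertgrgrtgerge}), and in the other direction invokes \cref{hlkperthrtgertgertg} and \cref{jgwrieogjwefwefwrf}. You additionally make explicit what the paper's two-line proof leaves implicit: surjectivity of $m$, upper semicontinuity via the first implication of \cref{jgwrieogjwefwefwrf}, and the round-trip checks.
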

\begin{proof}
Given an object  $A$ in $X\nCalg$ with  $A_{x}\cong A(x)$ for all $x$ in $X$, we can view
$A$ as the subalgebra of constant sections of $ \prod_{x\in X}A_{x}$.

Conversely, let $A$ be in as in Assertion \ref{wrgwerwfwerf}. Then $A$ is in $X\nCalg$  and  $A_{x}\cong A(x)$ for all $x$ in $X$.
\end{proof}

\subsection{The $f^{*}$-functoriality and adjunctions}

We first describe the covariant  functoriality of $X\nCalg$ in the Fell bundle picture.  
Let $f:X\to Y$ be a continuous map between locally compact Hausdorff spaces, and $A$ be in $X\nCalg$.
\begin{lem}
The object $f_{!}A$ in $Y\nCalg$ can be identified with the subalgebra  $A$ of constant sections of 
$$\prod_{x\in X}A(x)\cong \prod_{y\in Y}\left( \prod_{x\in f^{-1}(\{y\})} A(x)\right)$$  
with the structure map $C_{0}(Y)\otimes f_{!}A\to A$ determined by this decomposition of the product.
\end{lem}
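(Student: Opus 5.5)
The plan is to unwind the definition of $f_{!}$ in the Fell bundle picture from \cref{lpherhrtgert} and \cref{hlkperthrtgertgertg}. The underlying algebra of $f_{!}A$ is $A$ itself. Since $A\in X\nCalg$, the map $\ev:A\to\prod_{x\in X}A(x)$ is injective by \cref{lpherhrtgert}, so $A$ is identified with its algebra of constant sections. The regrouping $\prod_{x\in X}A(x)\cong\prod_{y\in Y}\prod_{x\in f^{-1}(\{y\})}A(x)$ is just a reindexing of the product. It carries a multiplier action of $C_{b}(Y)$: a function $g$ acts on the $y$-block by the scalar $g(y)$, which is the same as multiplication by $f^{*}g=g\circ f\in C_{b}(X)$ on $\prod_{x}A(x)$.

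\textbf{Step 1: $A$ is stable under $C_{0}(Y)$.} I will show that the image of $A$ is preserved by this action. By \cref{herhttrgrtgertge}, $f_{!}m$ is the composite of $C_{0}(Y)\otimes A\to C_{b}(X)\otimes A$ with the extension $\hat m$. Here $\hat m(h\otimes a)$ is defined as $\lim_{u} m(hu\otimes a)$ along an approximate unit $u$ of $C_{0}(X)$, or equivalently via the central multiplier $\hat\psi(h)\in Z(M(A))$ from \cref{ojhpertherthrtegtrg1} extended to $C_{b}(X)$ by strict continuity.

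\textbf{Step 2: the action agrees with $f_{!}m$.} I then need $\ev_{x}(\hat m(h\otimes a))=h(x)\ev_{x}(a)$ for $h\in C_{b}(X)$. This follows from \eqref{gweropjiowergwe} for $hu$ together with norm convergence $m(hu\otimes a)\to\hat m(h\otimes a)$; strict convergence on $a$ is enough for this. Taking $h=f^{*}g$ gives
$$\ev_{x}\bigl((f_{!}m)(g\otimes a)\bigr)=g(f(x))\,\ev_{x}(a).$$
So $(f_{!}m)(g\otimes a)$ is exactly the blockwise product of $g$ with the constant section $\ev(a)$. In particular the image of $\ev$ is preserved by $C_{0}(Y)$, and the induced structure map is $f_{!}m$.

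\textbf{Step 3: surjectivity and ideals.} $f_{!}m$ is surjective because $f_{!}A$ lies in $Y\nCalg$, which was established via the frame morphism $\cP(f)^{*}$. The frame structure $V\mapsto A(f^{-1}(V))$ matches the ideal generated by $C_{0}(V)$-multiples, using \eqref{wefqfoijoqwef} for $f_{!}m$ together with the formula from Step 2. So the decomposition determines the structure map as claimed.

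The main obstacle is the claim in Step 2 that $\hat m$ really is the natural extension and is compatible with $\ev_{x}$. In particular one must check that the extension of $\psi:C_{0}(X)\to Z(M(A))$ to $C_{b}(X)$ acts fibrewise by $h(x)$. I would settle this with Dauns–Hofmann (\cref{gjroepgfwergwerferfw}): $\hat\rho(\hat\psi(h))=h(\tilde\psi(\rho))$ for every $\rho\in\Prim(A)$, passed to the fibres through the map $A(\tilde\psi(\rho))\to A/\ker\rho$ from the proof of \cref{lpherhrtgert}.
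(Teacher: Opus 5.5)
Your proposal is correct and is the same approach as the paper, whose proof is the one line ``this follows from unfolding definitions''; you carry out that unfolding explicitly: injectivity of $\ev$ identifies $A$ with its constant sections, and $\ev_{x}\bigl((f_{!}m)(g\otimes a)\bigr)=g(f(x))\,\ev_{x}(a)$ shows that $C_{0}(Y)$ acts blockwise by scalars with structure map $f_{!}m$. The one point you take from \cref{herhttrgrtgertge} rather than check, as the paper also does, is that $\im\bigl(f_{!}m|_{C_{0}(V)\otimes A}\bigr)=A(f^{-1}(V))$; this follows by a cutoff argument (for $k\in C_{c}(f^{-1}(V))$ choose $g\in C_{c}(V)$ with $g\equiv 1$ on $f(\supp(k))$, so that $(g\circ f)k=k$).
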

\begin{proof}
This follows from unfolding definitions.
\end{proof}

Note that here we use the homomorphism
$f^{*}:C_{0}(Y)\to C_{b}(X)$ and that $C_{b}(X)$ acts on $A$ by central multipliers.

\begin{kor}\label{kookphertgtrgertg}  The   fibre  $(f_{!}A)(y)$  is precisely the subalgebra of $ \prod_{x\in f^{-1}(\{y\})} A(x)$ generated by the constant sections
$f^{-1}(\{y\})\ni x\mapsto \ev_{x}(a)$ for $a$ in $A$.\end{kor}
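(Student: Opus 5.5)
The fibre $(f_{!}A)(y)$ is by definition the quotient $A/(f_{!}A)(Y\setminus\{y\})$. The plan is to identify this ideal with the kernel of the restriction map
$$\pi_{y}:A\to \prod_{x\in f^{-1}(\{y\})}A(x)\ ,\qquad a\mapsto (\ev_{x}(a))_{x\in f^{-1}(\{y\})}\ .$$
The image of $\pi_{y}$ is closed, being the image of a $*$-homomorphism. It is therefore exactly the subalgebra generated by the sections $x\mapsto \ev_{x}(a)$, and the claim follows from the isomorphism theorem.

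\textbf{Step 1: $(f_{!}A)(Y\setminus\{y\})\subseteq \ker(\pi_{y})$.} By \cref{herhttrgrtgertge} the ideal $(f_{!}A)(Y\setminus\{y\})$ is the closed span of the elements $m(f^{*}\kappa\otimes a)$ with $\kappa\in C_{0}(Y\setminus\{y\})$ and $a\in A$. Here $f^{*}\kappa\in C_{b}(X)$ acts by central multipliers. By \eqref{gweropjiowergwe}, extended to $C_{b}(X)$, we have $\ev_{x}(m(f^{*}\kappa\otimes a))=\kappa(f(x))\ev_{x}(a)$. This vanishes for $x\in f^{-1}(\{y\})$.

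\textbf{Step 2: the converse inclusion.} Take $a$ with $\ev_{x}(a)=0$ for all $x\in f^{-1}(\{y\})$ and fix $\epsilon>0$. By Fact \ref{gjroepgfwergwerferfw} (via \cref{lpherhrtgert}) we have $\|a\|=\sup_{x}\|\ev_{x}(a)\|$. The function $x\mapsto\|\ev_{x}(a)\|$ is upper semicontinuous and vanishes at infinity, by \cref{jgwrieogjwefwefwrf} and \cref{hlkperthrtgertgertg} applied to $A$ viewed inside $\prod_{x}A(x)$. Hence $K:=\{x\mid \|\ev_{x}(a)\|\ge\epsilon\}$ is compact and disjoint from $f^{-1}(\{y\})$. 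Then $f(K)$ is a compact set not containing $y$.

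Choose $\kappa\in C_{0}(Y\setminus\{y\})$ with $0\le\kappa\le1$ and $\kappa\equiv1$ on $f(K)$. Then
$$\|a-m(f^{*}\kappa\otimes a)\|=\sup_{x}|1-\kappa(f(x))|\,\|\ev_{x}(a)\|\le\epsilon\ .$$
Since $m(f^{*}\kappa\otimes a)$ lies in $(f_{!}A)(Y\setminus\{y\})$ and this ideal is closed, $a$ belongs to it.

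\textbf{Main obstacle.} The delicate point is Step 2, specifically the upper semicontinuity and the vanishing at infinity of the norm function. These are exactly what make $K$ compact, so that its image under the arbitrary continuous map $f$ is still compact and avoids $y$. An alternative for the same step is the primitive-ideal picture: $\Prim(A)\to X\to Y$ together with Fact \ref{gjroepgfwergwerferfw}.\ref{klphprethtrherthtr2}.
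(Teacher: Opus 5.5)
Your proof is correct, but it takes a more explicit route than the paper, which gives no separate proof of this statement. The paper deduces it from the preceding lemma, whose proof is ``unfolding definitions'': there $f_{!}A$ is just $A$, regarded as an algebra of constant sections of $\prod_{x\in X}A(x)\cong\prod_{y\in Y}\prod_{x\in f^{-1}(\{y\})}A(x)$, with $C_{0}(Y)$ acting through $f^{*}$. The fibre at $y$ is then read off as the image of $A$ in the $y$-th factor. Implicitly, this applies \cref{jgwrieogjwefwefwrf} to this embedding with $A_{y}:=\pi_{y}(A)$.

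Made rigorous, the paper's route needs the upper semicontinuity of $y\mapsto\sup_{x\in f^{-1}(\{y\})}\|\ev_{x}(a)\|$, and the paper leaves this unchecked. You instead prove $\ker(\pi_{y})=(f_{!}A)(Y\setminus\{y\})$ directly. Your key step is that $K=\{x\mid\|\ev_{x}(a)\|\ge\epsilon\}$ is compact, by upper semicontinuity and vanishing at infinity on $X$, so $f(K)$ is a compact set avoiding $y$. This is exactly the input that gives the needed upper semicontinuity on $Y$: the superlevel sets of the pushed-forward norm function are the compact sets $f(K)$, since suprema over fibres are attained.

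In effect, you run the implication from the second to the first assertion of \cref{jgwrieogjwefwefwrf}, transported along $f$. This makes the corollary self-contained and fills the analytic step hidden in ``unfolding definitions''. The paper's phrasing has the advantage of placing $f_{!}A$ directly in the Fell-bundle picture, which is used afterwards for $f^{*}$, the adjunctions and base change.

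One cosmetic point: what you write as $m(f^{*}\kappa\otimes a)$ is really $\hat m(f^{*}\kappa\otimes a)=(f_{!}m)(\kappa\otimes a)$. The identity $\ev_{x}(\hat m(g\otimes a))=g(x)\ev_{x}(a)$ for $g\in C_{b}(X)$ follows from \eqref{gweropjiowergwe}. To see this, check it on the elements $m(h\otimes a')$, where $gh\in C_{0}(X)$; these span a dense subspace of $A$.
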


Now  let $f:X\to Y$ be  a map, and $A$ be in $Y\nCalg$.
We use the Fell bundle picture from \cref{hetrertgrtgrtegetg} in order to construct
$f^{*}A$ such that $(f^{*}A)(x)=A(f(x))$ for all $x$ in $X$.

\begin{ddd}\label{kopghwthrwegregewrfref}
We define
 $f^{*}A$ as the subalgebra of 
$ \prod_{x\in X} A(f(x))$ generated by the products of  constant sections  with elements of  $C_{0}(X)$.
 \end{ddd}
 
 
This definition is justified  as follows. Recall  
 that a constant section  is a section of the form  $x\mapsto \ev_{f(x)}(a)$ for $a$ in $A$.
 By \cref{hlkperthrtgertgertg}.\ref{gojpwegerfwerfwerfw} we get the structure of $f^{*}A$ as an object of $X\nCalg$.
  Observe that $x\mapsto \|\ev_{f(x)}(a)\|_{A(f(x))}$ is upper semi-continuous.  Furthermore, 
 given $x$ and $b$ in $ A(f(x))$ there exists $a$ in $A$ such that $\ev_{f(x)}(a)=b$. 
 Hence $f^{*}A\to  A(f(x))$ is surjective. In view of \cref{jgwrieogjwefwefwrf} we get  \begin{equation}\label{fqweihiquwoefqwef}(f^{*}A)(x)\cong A(f(x))\ .
\end{equation}  
Thus, $f^{*}A$ has the desired fibres.

\begin{rem}
We can express  $f^{*}A$ in terms of the balanced tensor product 
$$f^{*}A\cong C_{0}(X)\otimes_{C_{0}(Y)}A\ ,$$ however, the above definition
is more amenable to verifying the functoriality,  providing the coherences  below, and describing the lax symmetric monoidal 
structure introduced in \cref{opkhetrhrtgergertg}.
\end{rem}


  Assume that $g:Y\to Z$ is a second map.
 \begin{lem}\label{kogpwererfwef}
 We have a canonical isomorphism $ \alpha_{g,f}: (g\circ f)^{*}\stackrel{\cong}{\to}f^{*}g^{*} $.
 \end{lem}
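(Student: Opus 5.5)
To produce $\alpha_{g,f}$ I will compare both sides inside one ambient product. For $A$ in $Z\nCalg$, \cref{kopghwthrwegregewrfref} gives $(g\circ f)^{*}A$ as the subalgebra of $\prod_{x\in X}A(g(f(x)))$ generated by $C_{0}(X)$-multiples of the constant sections $x\mapsto \ev_{g(f(x))}(a)$ with $a$ in $A$. On the other side, $g^{*}A\subseteq \prod_{y\in Y}A(g(y))$ has fibres $(g^{*}A)(y)\cong A(g(y))$ by \eqref{fqweihiquwoefqwef}. Hence $f^{*}g^{*}A$ is the subalgebra of $\prod_{x\in X}(g^{*}A)(f(x))\cong \prod_{x\in X}A(g(f(x)))$ generated by $C_{0}(X)$-multiples of the sections $x\mapsto \ev_{f(x)}(b)$ with $b$ in $g^{*}A$. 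Both algebras therefore live in the same product, and I define $\alpha_{g,f}$ to be the inclusion of the first into the second, after checking that it is an equality.

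\textbf{Inclusion $\subseteq$.} Take a generator $\kappa\cdot(x\mapsto\ev_{gf(x)}(a))$ with $\kappa$ in $C_{0}(X)$. Choose $\lambda$ in $C_{0}(Y)$ with $\lambda\circ f=1$ on $\supp\kappa$; this is possible when $\kappa$ has compact support, and the general case follows by density. Then $b:=\lambda\cdot(y\mapsto \ev_{g(y)}(a))$ lies in $g^{*}A$, and $\kappa\cdot \ev_{f(-)}(b)$ equals the given generator. So every generator of $(g\circ f)^{*}A$ lies in $f^{*}g^{*}A$.

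\textbf{Inclusion $\supseteq$.} Any $b$ in $g^{*}A$ is a norm limit of finite sums $\sum_{j}\mu_{j}\,\ev_{g(-)}(a_{j})$ with $\mu_{j}$ in $C_{0}(Y)$. The pullback of such a sum is $\sum_{j}(\mu_{j}\circ f)\,\ev_{gf(-)}(a_{j})$. Multiplying by $\kappa$ in $C_{0}(X)$ gives $\sum_{j}\kappa\,(\mu_{j}\circ f)\,\ev_{gf(-)}(a_{j})$, and each $\kappa\,(\mu_{j}\circ f)$ lies in $C_{0}(X)$. So these elements lie in $(g\circ f)^{*}A$.

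The \textbf{main subtle point} is passing to the limit in the previous step. Pulling back is implemented by the restriction map $\prod_{y}A(g(y))\to\prod_{x}A(g(f(x)))$, which is norm-contractive because the norm in each product is the supremum over fibres. Hence limits are preserved, $(g\circ f)^{*}A$ is closed, and the two subalgebras coincide.

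Since the identification involves only the ambient product and the sup-norm, it is also an equality of $C_{0}(X)$-structures, hence an isomorphism in $X\nCalg$. Naturality in $A$ is automatic: a morphism $A\to A'$ acts fibrewise on $\prod_{x}A(gf(x))\to\prod_{x}A'(gf(x))$, and both $(g\circ f)^{*}$ and $f^{*}g^{*}$ are restrictions of this same fibrewise map. Finally, the associativity (cocycle) condition for three maps holds because all the $\alpha$'s are identities of subalgebras of a common product.
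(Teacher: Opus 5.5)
Your proof is correct and is essentially the paper's argument: both identify $f^{*}g^{*}A$ and $(g\circ f)^{*}A$, via the fibre isomorphisms $(g^{*}A)(f(x))\cong A(g(f(x)))$, as subalgebras of $\prod_{x\in X}A(g(f(x)))$, and then compare generators. Your two inclusions make explicit the step the paper asserts without detail, namely that $C_{0}(X)$ and the products $(h\circ f)\cdot l$ with $h\in C_{0}(Y)$, $l\in C_{0}(X)$ generate the same subalgebra when combined with the constant sections; the first inclusion does this via the cutoff $\lambda\in C_{0}(Y)$ with $\lambda\circ f\equiv 1$ on the support of $\kappa$, and the second via contractivity of restriction for the limit passage.
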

 \begin{proof} Let $A$ be in $Z\nCalg$. Then
   $g^{*}A$ is by definition the subalgebra of $\prod_{y\in Y} A(g(y))$ generated by products of constant sections with elements of $C_{0}(Y)$. Furthermore,
 $f^{*}g^{*}A$ is by definition the subalgebra of
 $\prod_{x\in X} (g^{*}A)(f(x))$ generated by 
  products of constant sections  associated to  elements of $g^{*}A$ with functions in $C_{0}(X)$.
For every $x$  in $X$ by  \eqref{fqweihiquwoefqwef}
we have the isomorphism
   \begin{equation}\label{gjeorgwegwerfwerfwre} (g^{*}A)(f(x))\cong A(g(f(x))\ .
\end{equation}
   Thus, we can identify
   $f^{*}g^{*}A$ with a subalgebra of
   $\prod_{x\in X} A(g(f(x))) $
   generated by the products of constant sections associated to elements of $A$
 with products of the form $ f^{*}h\cdot l$ for $l$ in $C_{0}(X)$ and $h$ in $C_{0}(Y)$.
 But this is the same as the subalgebra generated by the products of constant sections associated to elements of $A$ with elements of $C_{0}(X)$.
The latter is precisely the  definition of $(g\circ f)^{*}A$. 
Consequently, $\alpha_{g,f}$ is the identification of the corresponding subalgebras
of $\prod_{x\in X} (g^{*}A)(f(x))$ and $\prod_{x\in X}A(g(f(x)))$, where these products are identified by   the family of isomorphisms 
\eqref{gjeorgwegwerfwerfwre} for all $x$ in $X$.
   \end{proof}
   
   Note  that $\Cat$ is a $2$-category.  Since we implicitly always work with $\infty$-categories, a
   functor from a $1$-category to $\Cat$ is what  is  classically  called a pseudofunctor.

\begin{prop}\label{kogpwtegrwref}
We have constructed a functor
$$\LCH^{\op}\to \Cat\ , \quad  X\mapsto \nCalg\ , \quad f\mapsto f^{*}\ .$$
\end{prop}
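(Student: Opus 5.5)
To construct the pseudofunctor in \cref{kogpwtegrwref}, I would assemble three kinds of data, each read off from the Fell bundle picture of \cref{hetrertgrtgrtegetg}: the functors $f^{*}$ on objects and morphisms, the identity constraints, and the composition isomorphisms $\alpha_{g,f}$ from \cref{kogpwererfwef}.

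The first step is to make $f^{*}$ a functor $Y\nCalg\to X\nCalg$. A morphism $\phi:A\to B$ in $Y\nCalg$ is compatible with the structure maps, so it sends $A(Y\setminus\{y\})$ into $B(Y\setminus\{y\})$. It therefore induces homomorphisms $\phi_{y}:A(y)\to B(y)$ on fibres with $\phi_{y}\circ\ev_{y}=\ev_{y}\circ\phi$. The product $\prod_{x\in X}\phi_{f(x)}:\prod_{x}A(f(x))\to\prod_{x}B(f(x))$ is $C_{b}(X)$-linear and sends constant sections to constant sections. Hence it maps $f^{*}A$ into $f^{*}B$, and by the description in \cref{herhttrgrtgertge} the restriction is a morphism in $X\nCalg$. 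Functoriality in $\phi$ is clear, since everything is defined pointwise in $x$.

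The second step supplies the unit constraint. For $f=\id_{Y}$, the algebra $\id^{*}A$ is the subalgebra of $\prod_{y}A(y)$ generated by $C_{0}(Y)\cdot\ev(A)$. By \cref{lpherhrtgert} and \cref{kopherhrtgrgrtegertgrgrtgerge}, $\ev$ identifies $A$ with exactly this subalgebra, which gives a natural isomorphism $u_{Y}:\id^{*}\cong\id$. Naturality in morphisms $\phi$ follows from $\phi_{y}\circ\ev_{y}=\ev_{y}\circ\phi$.

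The third step is the coherence. The isomorphisms $\alpha_{g,f}$ are natural in $A$ because they are restrictions of the product of the fibrewise identifications \eqref{gjeorgwegwerfwerfwre}, and those are natural in $\phi$. I then have to check the associativity pentagon (really a square) for a triple $X\xrightarrow{f}Y\xrightarrow{g}Z\xrightarrow{h}W$, together with the unit triangles. The idea is that every object involved is a subalgebra of $\prod_{x\in X}A(hgf(x))$ after transport along the fibre isomorphisms \eqref{fqweihiquwoefqwef}, and every constraint is an equality of subalgebras there. The coherence then reduces to coherence of the fibre identifications themselves. Concretely, the two ways of identifying $(f^{*}g^{*}h^{*}A)(x)$ with $A(hgf(x))$ must agree. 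Both are induced by $\ev$ maps on classes of constant sections, so they agree on the dense image of $A$, and hence everywhere.

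I expect this compatibility of the iterated fibre isomorphisms, and in particular pinning down exactly what \eqref{fqweihiquwoefqwef} is, to be the only real obstacle. I would handle it by stating explicitly that $(f^{*}A)(x)\cong A(f(x))$ is induced by the projection $\pi_{x}$, which is how it arises in \cref{jgwrieogjwefwefwrf}. Every comparison is then a composite of projections of products, and these commute on the nose. This makes $\LCH^{\op}\to\Cat$ a strictly coherent pseudofunctor.
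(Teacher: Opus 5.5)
Your proposal is correct and follows the same route as the paper. The paper takes the isomorphisms $\alpha_{g,f}$ of \cref{kogpwererfwef}, asserts the associativity square for a triple $f,g,h$ ``by unfolding definitions'', and treats $\id^{*}=\id$ and the unit relations as evident. You make explicit what the paper leaves implicit: the action of $f^{*}$ on morphisms via the fibrewise maps $\phi_{f(x)}$, the naturality of $\alpha_{g,f}$, the unitor $\id^{*}A\cong A$ via $\ev$ (\cref{lpherhrtgert}, \cref{kopherhrtgrgrtegertgrgrtgerge}), and the reduction of the coherence checks to the fact that all fibre identifications are induced by the projections $\pi_{x}$ and agree on classes of constant sections.
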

\begin{proof} By \cref{kogpwererfwef}
for every composable pair $f,g$ of morphisms in $\LCH$  we have defined an isomorphism 
$\alpha_{g,f}:f^{*}\circ g^{*}\stackrel{\cong}{\to} (g\circ f)^{*}$.
These isomorphisms satisfy the associator relation expressed by the commutativity of the diagram
\begin{equation}\label{}\xymatrix{&\ar[dl]_{\alpha_{hg,f}}(hgf)^{*}\ar[dr]^{\alpha_{h,gf}}&\\ \ar[dr]^{f^{*}\alpha_{h,g}}f^{*}(hg)^{*}&&(gf)^{*}h^{*}\ar[dl]_{\alpha_{g,f}h^{*}}\\ &f^{*}g^{*}h^{*}&}\ ,
\end{equation}
which is checked by unfolding definitions.
We also have $\id^{*}=\id$ and the compatibility of unit relations holds.
\end{proof}

  The following result extends the adjunction in  \cref{okhpzrhtzjrtzj1} from closed embeddings to proper maps between locally compact Hausdorff spaces.
  \begin{lem}\label{okhprethetgertgtge}
  If $f:X\to Y$ is a proper map between locally compact Hausdorff spaces, then we have an adjunction
  \begin{equation}\label{gerfwerfrebgw}f^{*}:Y\nCalg\leftrightarrows X\nCalg:f_{!}\ .
\end{equation} 
If $f$ is injective, then $f^{*}f_{!}\cong  \id$.
  \end{lem}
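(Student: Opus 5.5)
We will construct the unit and counit explicitly in the Fell bundle picture and check the triangle identities. The counit is the easier map. For $A$ in $X\nCalg$, the object $f^{*}f_{!}A$ is the subalgebra of $\prod_{x\in X}(f_{!}A)(f(x))$ generated by $C_{0}(X)$ times constant sections $x\mapsto \ev_{f(x)}(a)$. By \cref{kookphertgtrgertg}, $(f_{!}A)(f(x))$ is the subalgebra of $\prod_{x'\in f^{-1}(\{f(x)\})}A(x')$ generated by constant sections, so projecting to the factor $x'=x$ gives homomorphisms $(f_{!}A)(f(x))\to A(x)$. The product of these maps sends generators $\phi\cdot \ev_{f(-)}(a)$ to $\phi\cdot \ev(a)$, and the latter lies in $A\subseteq\prod_{x}A(x)$ by \cref{kopherhrtgrgrtegertgrgrtgerge}. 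This defines $\varepsilon_{A}:f^{*}f_{!}A\to A$, which is $C_{0}(X)$-linear and hence a morphism in $X\nCalg$. Properness is not needed here.

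The unit $\eta_{B}:B\to f_{!}f^{*}B$ for $B$ in $Y\nCalg$ should send $b$ to the constant section $x\mapsto \ev_{f(x)}(b)$ in $\prod_{x}B(f(x))$. This is where properness enters: we must show that this section lies in $f^{*}B$, i.e. in the closure of $C_{0}(X)\cdot\{\text{constant sections}\}$. The plan is to approximate $b$ by $m(\kappa\otimes b)$ with $\kappa\in C_{c}(Y)$. The pullback $f^{*}\kappa$ then lies in $C_{c}(X)$ because $f$ is proper, and the section $x\mapsto \kappa(f(x))\ev_{f(x)}(b)=f^{*}\kappa\cdot \ev_{f(-)}(b)$ is a generator of $f^{*}B$. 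Closedness then gives the claim.

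Next we check that $\eta_{B}$ is compatible with the $C_{0}(Y)$-structures. The structure on $f_{!}f^{*}B$ is via $f^{*}:C_{0}(Y)\to C_{b}(X)$, and $\eta_{B}(m(h\otimes b))=(h\circ f)\,\eta_{B}(b)$ holds fibrewise. The triangle identities reduce to fibrewise statements, since the maps into the products $\prod_{x}A(x)$ and $\prod_{y}B(y)$ are injective by \cref{lpherhrtgert}. On fibres both composites are visibly the identity: the first evaluates $\ev_{x}(a)$ back to $\ev_{x}(a)$, and the second sends $\ev_{y}(b)$ to the constant section with value $\ev_{y}(b)$ on $f^{-1}(\{y\})$ and then projects back.

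For the last claim, assume $f$ is injective. Then each fibre $f^{-1}(\{f(x)\})=\{x\}$, so $(f_{!}A)(f(x))\cong A(x)$ by \cref{kookphertgtrgertg}. Thus $\varepsilon_{A}$ is a fibrewise isomorphism, so it is injective. It is also surjective onto $A$, since $A$ is generated by $C_{0}(X)\cdot\ev(a)$. Hence $f^{*}f_{!}\cong\id$. Alternatively, this case reduces to \cref{okhpzrhtzjrtzj1}, because a proper injective map is a closed embedding.

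The main obstacle is the membership $\eta_{B}(b)\in f^{*}B$ together with continuity and well-definedness of the unit. I would try first the $C_{c}$-approximation argument above, using the properness of $f$ to keep $f^{*}\kappa$ compactly supported.
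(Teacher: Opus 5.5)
Your proposal is correct and follows essentially the same route as the paper. The counit is the restriction of evaluation at $x'=x$ (no properness needed) and the unit is the restriction of the diagonal map. Properness enters exactly through $f^{*}C_{0}(Y)\subseteq C_{0}(X)$, to see that the diagonal lands in $f^{*}B$. The triangle identities are checked fibrewise, and the injective case follows from the counit being an isomorphism. Your $C_{c}$-approximation simply spells out the step the paper summarizes as ``using this, we see that the map restricts to $B\to f_{!}f^{*}B$''.
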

\begin{proof}
We describe  the unit and counit and then check the triangle identities. 
For $A$ in $X\nCalg$ we consider the map
$$\prod_{x\in X}  \prod_{x'\in f^{-1}(\{x\})} A(x')\to \prod_{x\in X} A(x)$$
given by the evaluation at $x'=x$. It restricts to the counit
$f^{*}f_{!}A\to A$.
To this end, we observe that it preserves constant sections and is $C_{0}(X)$-linear.
If $f$ is injective, then it is an isomorphism.
 
For $B$ in $Y\nCalg$ we consider the map
$$ \ \prod_{y\in B} B(y) \to \prod_{y\in Y}   \prod_{x\in f^{-1}(\{y\})}B(y) $$
given by the obvious diagonal map. Since $f$ is proper, it sends $C_{0}(Y)$ to $C_{0}(X)$ via pull-back. Using this,  
we see that the map restricts to 
$$ B\to f_{!}f^{*}B\ .$$
The triangle identities are straightforward.  
\end{proof}

\begin{rem}
It is an interesting question whether, in the statement of \cref{okhprethetgertgtge}, one can
  show the existence of a left-adjoint of $f_{!}$ under weaker conditions on $X$.
But one  can consider other assumptions on $Y$ and $f$ (see \cref{rgrokpowergwergwerferwf}). \hB \end{rem} 
 
\begin{lem} \label{kopherhrtgertge}If $f:X\to Y$ is an open inclusion, then we have a right Bousfield localization
$$f_{!}:X\nCalg\leftrightarrows Y\nCalg:f^{*}\ .$$
\end{lem}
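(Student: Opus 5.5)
The plan is to reduce the statement to \cref{okhpzrhtzjrtzj}, where the corresponding right Bousfield localization $j_{!}\dashv j^{*}$ was established for open sublocale inclusions. The only remaining issue is to identify the right adjoint $j^{*}$ constructed there with the pullback $f^{*}$ of \cref{kopghwthrwegregewrfref}. Let $f:X\to Y$ be an open inclusion, so that $\Open(X)\cong \Open(Y)_{/X}$ and $f_{!}$ is the functor from \cref{lkohperghtrgertgertg9}. By \cref{okhpzrhtzjrtzj} we have a right Bousfield localization $f_{!}\dashv j^{*}$, where $j^{*}B$ is the $X$-$C^{*}$-algebra with underlying algebra $B(X)$ and structure $U\mapsto B(U)$ for $U$ in $\Open(X)$. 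Since adjoints are unique, it suffices to construct a natural isomorphism $j^{*}B\cong f^{*}B$ in $X\nCalg$.

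To build this isomorphism, I would use the Fell bundle picture. By definition, $f^{*}B$ is the subalgebra of $\prod_{x\in X}B(f(x))$ generated by the products $\kappa\cdot \ev_{f(-)}(b)$ with $\kappa$ in $C_{0}(X)$ and $b$ in $B$. The candidate map sends $b'$ in $B(X)$ to its section $x\mapsto \ev_{x}(b')$, where we view $x$ in $Y$. Three things need to be checked.
\begin{enumerate}
\item The map lands in $f^{*}B$. Elements of the form $m(\kappa\otimes b)$ with $\kappa$ in $C_{0}(X)$ are dense in $B(X)$ by \eqref{wefqfoijoqwef}, and by \eqref{gweropjiowergwe} such an element is sent to $\kappa\cdot \ev(b)$ restricted to $X$.
\item The map is injective. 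Since $j^{*}B$ is in $X\nCalg$, its fibres are $B(X)/B(X\setminus\{x\})$. Distributivity gives $B(X\setminus\{x\})=B(X)\cap B(Y\setminus\{x\})$, so these fibres are identified with $B(x)$. Injectivity then follows from \cref{lpherhrtgert} applied to $j^{*}B$.
\item The map is surjective. This holds because the generators $\kappa\cdot\ev(b)$ of $f^{*}B$ are hit by $m(\kappa\otimes b)$.
\end{enumerate}
Compatibility with the $C_{0}(X)$-actions, and hence with the ideals $U\mapsto(-)(U)$, is immediate from \eqref{gweropjiowergwe}. Naturality in $B$ is clear.

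As a consistency check, I would also write down the unit and counit directly. The counit $f_{!}f^{*}B\to B$ is the inclusion $B(X)\subseteq B$. The unit $A\to f^{*}f_{!}A$ is an isomorphism, because the fibres of $f_{!}A$ at points of $X$ agree with those of $A$ by \cref{kookphertgtrgertg}. Here one uses that $X$ is open, so $f^{-1}(\{y\})$ is a single point.

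I expect the main obstacle to be the fibre identification $B(X)/B(X\setminus\{x\})\cong B(Y)/B(Y\setminus\{x\})$. This is what makes both the injectivity and the matching of fibres work. I would prove it using the continuity of $B$, namely $B(X)+B(Y\setminus\{x\})=B(Y)$ and $B(X)\cap B(Y\setminus\{x\})=B(X\setminus\{x\})$, followed by the second isomorphism theorem for ideals.
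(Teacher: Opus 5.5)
Your proof is correct, but it is organized differently from the paper's. The paper also starts from the existence of a right adjoint (\cref{okhpzrhtzjrtzj}). It then directly writes down the unit $A\to f^{*}f_{!}A$ and the counit $f_{!}f^{*}B\to B$ in the Fell-bundle picture, as restrictions of diagonal and subproduct maps between products of fibres, using that $f^{-1}(\{y\})$ has at most one point. It then declares the triangle identities straightforward; this is essentially your ``consistency check''.

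You instead take the already established adjunction $f_{!}\dashv j^{*}$, where $j^{*}B=(B(X),\,U\mapsto B(U))$. You transport it along an explicit natural isomorphism $j^{*}B\cong f^{*}B$ and invoke uniqueness of adjoints. The real work in your route is therefore the fibre comparison
\[
B(X)/B(X\setminus\{x\})\cong B(Y)/B(Y\setminus\{x\}),
\]
which follows from $B(X)\cap B(Y\setminus\{x\})=B(X\setminus\{x\})$ and $B(X)+B(Y\setminus\{x\})=B(Y)$. You combine it with \cref{lpherhrtgert} for injectivity.

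What your route buys:
\begin{itemize}
\item The adjunction and the Bousfield property (unit an isomorphism) come for free from the poset level, so no triangle identities need checking.
\item You make explicit the identification of the Fell-bundle pullback $f^{*}B$ with the naive restriction $B(X)$. The paper leaves this implicit in its ``straightforward'' verification.
\end{itemize}
What the paper's route buys: it produces the explicit unit and counit formulas directly, and these are reused later, for example in \cref{oigjworiegrefwerfewfref} and in the projection formula for open inclusions. Your transported unit and counit agree with them, as your consistency check shows.

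A minor point: the equality $B(X\setminus\{x\})=B(X)\cap B(Y\setminus\{x\})$ comes from preservation of finite meets by $B$, not from distributivity. You attribute it correctly in your last paragraph.
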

\begin{proof}
We know  by 
\cref{okhpzrhtzjrtzj} that $f_{!}$ has a right-adjoint.
The point of the lemma is that this right-adjoint is given by the functor from \cref{kogpwtegrwref}.
To this end we write down the unit and counit of the adjunction explicitly.
The unit $A\to f^{*}f_{!}A$ for $A$ in $X\nCalg$ is  
 the isomorphism obtained by restricting the isomorphism 
$$\prod_{x\in X} A(x) \to \prod_{x\in X} \prod_{y\in f^{-1}(\{f(x)\}}A(x)   $$
to $A$.  The counit $f_{!}f^{*}B\to B$  for $B$ in $Y\nCalg$ the  inclusion of a subproduct obtained by restricting
$$  \prod_{y\in Y}  \prod_{x\in f^{-1}(\{y\})} B(y) \to  \prod_{y\in Y} B(y)$$
to $B$. Note   that $f^{-1}(\{y\})$ is either empty or consists of precisely one point.
The triangle identities are straightforward to check.
 \end{proof}
%
%

  
For the proof of \cref{okpherthrtgetgetg} we must verify various base-change properties relating the contravariant and covariant functorialities. 
  We  consider a cartesian diagram
\begin{equation}\label{gerwljewrg}\xymatrix{X\ar[r]^{j}\ar[d]^{g} &Z \ar[d]^{f} \\Y \ar[r]^{i} &W } 
\end{equation} in $\LCH$.

\begin{lem}\label{jptzlkphtzhrzhz}
 We have a canonical isomorphism 
\begin{equation}\label{fiqjiwoefqwefqewfqfw}   j_{!}g^{*}\stackrel{\cong}{\to} f^{*}i_{!}:  Y\nCalg \to Z\nCalg\ .
\end{equation}
\end{lem}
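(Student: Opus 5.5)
The plan is to work entirely in the Fell bundle picture of \cref{hetrertgrtgrtegetg}. Both sides of \eqref{fiqjiwoefqwefqewfqfw} will be realized as subalgebras of the same product of fibres, and the isomorphism will then be an identification of subalgebras. Fix $A$ in $Y\nCalg$ and write $X=Y\times_{W}Z$, so that points of $X$ are pairs $(y,z)$ with $i(y)=f(z)$. There are two sides to describe.

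\emph{Left-hand side.} By \cref{kopghwthrwegregewrfref}, $g^{*}A$ is the subalgebra of $\prod_{(y,z)\in X}A(y)$ generated by $C_{0}(X)\cdot\{(y,z)\mapsto \ev_{y}(a)\}$. By the covariant description above \cref{kookphertgtrgertg}, $j_{!}g^{*}A$ is the same algebra, now viewed inside
\[
\prod_{z\in Z}\ \prod_{y\in i^{-1}(f(z))}A(y)
\]
and equipped with the $C_{0}(Z)$-action via $j$.

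\emph{Right-hand side.} By \cref{kookphertgtrgertg}, $(i_{!}A)(w)$ is the closure of the constant sections $y\mapsto\ev_{y}(a)$ inside $\prod_{y\in i^{-1}(w)}A(y)$. So $f^{*}i_{!}A$ is the subalgebra of $\prod_{z\in Z}(i_{!}A)(f(z))$ generated by $C_{0}(Z)\cdot\{z\mapsto [\ev_{y}(a)]_{y\in i^{-1}(f(z))}\}$. Its ambient product includes into $\prod_{z}\prod_{y\in i^{-1}(f(z))}A(y)=\prod_{(y,z)\in X}A(y)$.

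The map \eqref{fiqjiwoefqwefqewfqfw} would be defined on generators by
\[
\phi\cdot(\ev_{y}(a))_{(y,z)}\ \longmapsto\ \phi\cdot(\ev_{y}(a))_{(y,z)},
\]
with both sides read in the common ambient product. Several things then need checking.
\begin{enumerate}
\item The map is injective and $C_{0}(Z)$-linear, since it is a restriction of an identity of ambient products.
\item It is compatible with the structure maps, so by \cref{herhttrgrtgertge} it is a morphism in $Z\nCalg$.
\item It is natural in $A$ and canonical. This is definitional.
\end{enumerate}

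The key point, and the main obstacle, is that the two subalgebras coincide. The right-hand side is generated by $C_{0}(Z)$-multiples of constant sections, while the left-hand side is generated by $C_{0}(X)$-multiples. Since $j^{*}C_{0}(Z)\subseteq C_{b}(X)$ acts on $g^{*}A$, the right-hand side is contained in the left.

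For the converse, I would approximate a generator $\phi\cdot\ev(a)$ with $\phi\in C_{c}(X)$. By Stone--Weierstrass, $C_{0}(X)$ is densely spanned by products $g^{*}\psi\cdot j^{*}\kappa$ with $\psi\in C_{0}(Y)$ and $\kappa\in C_{0}(Z)$, because $X$ is a closed subspace of $Y\times Z$ and such products separate points. Then $g^{*}\psi\cdot\ev(a)=\ev(m(\psi\otimes a))$ is again a constant section, so $\phi\cdot\ev(a)$ is approximated by elements $j^{*}\kappa\cdot\ev(a')$. This gives the reverse inclusion.

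Care is needed about norms. The sup-norm over $X$ on the left and the norm in $f^{*}i_{!}A$ computed fibrewise over $Z$ agree, because the fibre norm in $(i_{!}A)(f(z))$ is the sup over $i^{-1}(f(z))$ (\cref{kookphertgtrgertg}). So closures taken in either ambient algebra agree, and the identification of subalgebras is a $C^{*}$-isomorphism.
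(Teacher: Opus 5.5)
Your overall route is the paper's. Via the cartesian bijection $X\cong\{(y,z)\mid i(y)=f(z)\}$, both $j_{!}g^{*}A$ and $f^{*}i_{!}A$ sit inside $\prod_{x\in X}A(g(x))$, and the comparison map is the identity of this product. What must be shown is that it restricts to a $C_{0}(Z)$-linear isomorphism of the two subalgebras. The paper only asserts this, saying the map ``preserves constant sections and is compatible with the multiplication by $C_{0}(Z)$''. You correctly isolate the real point: $C_{0}(X)\cdot\ev(A)$ and $j^{*}C_{0}(Z)\cdot\ev(A)$ generate the same subalgebra. Your Stone--Weierstrass argument for $j_{!}g^{*}A\subseteq f^{*}i_{!}A$ is correct, and so are your remarks on norms.

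The gap is in the other inclusion, $f^{*}i_{!}A\subseteq j_{!}g^{*}A$, which is not justified by ``$j^{*}C_{0}(Z)\subseteq C_{b}(X)$ acts on $g^{*}A$''. The generators of $f^{*}i_{!}A$ are $j^{*}\kappa\cdot\ev(a)$, where $\ev(a)=(\ev_{g(x)}(a))_{x\in X}$ is a constant section over $X$.
\begin{itemize}
\item This section is in general not an element of $g^{*}A$, so the $C_{b}(X)$-action on $g^{*}A$ cannot be applied to it.
\item Nor is $j^{*}\kappa$ in $C_{0}(X)$, since $j$ need not be proper.
\end{itemize}
Concretely, take $W=\pt$, $X=Y\times Z$ with $Z$ non-compact, and $a\neq 0$. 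Then the norm of $(y,z)\mapsto\ev_{y}(a)$ does not vanish at infinity on $X$, whereas it does for every element of $g^{*}A$ by \cref{hlkperthrtgertgertg}.\ref{gojpwegerfwerfwerfw2}.

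The repair uses the same observation as your other direction. Since $m\colon C_{0}(Y)\otimes A\to A$ is surjective, $\ev(a)$ is a norm limit of finite sums of $\ev(m(\psi\otimes a'))=g^{*}\psi\cdot\ev(a')$ with $\psi\in C_{0}(Y)$. Hence $j^{*}\kappa\cdot\ev(a)$ is a limit of sums of $(g^{*}\psi\cdot j^{*}\kappa)\cdot\ev(a')$. Here $g^{*}\psi\cdot j^{*}\kappa$ is the restriction of $\psi\otimes\kappa\in C_{0}(Y\times Z)$ to the closed subset $X\subseteq Y\times Z$, so it lies in $C_{0}(X)$. Thus both sides are the closed linear span of the sections $(\psi\otimes\kappa)_{|X}\cdot\ev(a)$. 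With this fix, the rest of your argument goes through: isometry of the ambient inclusion, compatibility with the structure maps, and naturality.
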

\begin{proof} Let $A$ be in $Y\nCalg$.
We consider the map \begin{eqnarray*}
\prod_{z\in Z} \prod_{x\in j^{-1}(\{z\})} A(g(x))&\to   &     \prod_{z\in Z}  \prod_{x\in j^{-1}(\{z\})} \prod_{y\in i^{-1}(\{i(g(x))\}} A(g(x))\\&\cong &
    \prod_{z\in Z}  \prod_{x\in j^{-1}(\{z\})} \prod_{y\in i^{-1}(\{f(z)\}} A(y)
\\&\to& 
\prod_{z\in Z}   \prod_{y\in i^{-1}(\{f(z)\})} A(y)
\end{eqnarray*}
The first map is the obvious diagonal.
The component of the second map at $z$  is the restriction along $y\mapsto (x_{y},y)$, where
$y$ is in $i^{-1}(\{f(z)\})$ and $x_{y}$ is the unique point in $X$ with $g(x_{y})=y$ and $j(x_{y})=z$. Here we use that the square \eqref{gerwljewrg} is cartesian.
For every $z$ the map $y\mapsto x_{y}$ induces an identification of the index sets
of the inner products in the domain and target. Under this identification,
the whole composition becomes the identity.
 One now checks that
the composition restricts to an isomorphism
in $Z\nCalg$.
To this end we observe that it preserves constant sections and is compatible with the multiplication
by $C_{0}(Z)$.
\end{proof}
\begin{rem}
The isomorphism found in \cref{jptzlkphtzhrzhz} is not yet determined by
adjunction data. But the point of the following
\cref{oigjworiegrefwerfewfref} and \cref{joihetrgertgertgerg} is that this isomorphism is the Beck-Chevalley morphism in the cases where the adjunction data exists. \hB
\end{rem}

 \begin{lem}\label{oigjworiegrefwerfewfref} Consider a cartesian square \eqref{gerwljewrg} and assume that  $i$ and hence $j$ are open embeddings.
Then the square 
$$\xymatrix{W\nCalg\ar[r]^{i^{*}}\ar[d]^{f^{*}} & Y\nCalg\ar[d]^{g^{*}} \\ Z\nCalg\ar[r]^{j^{*}} &X\nCalg } $$
is horizontally left-adjoinable. 
\end{lem}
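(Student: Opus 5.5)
The plan is to use the right Bousfield localizations $i_{!}\dashv i^{*}$ and $j_{!}\dashv j^{*}$ from \cref{kopherhrtgertge}. Horizontal left-adjoinability means that the Beck--Chevalley transformation
$$\beta: j_{!}g^{*}\to j_{!}g^{*}i^{*}i_{!}\cong j_{!}j^{*}f^{*}i_{!}\to f^{*}i_{!}$$
is an isomorphism. It is built from the unit of $i_{!}\dashv i^{*}$, the commutativity isomorphism $g^{*}i^{*}\cong (i\circ g)^{*}=(f\circ j)^{*}\cong j^{*}f^{*}$ supplied by \cref{kogpwererfwef}, and the counit of $j_{!}\dashv j^{*}$. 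I will show that $\beta$ coincides with the explicit isomorphism of \cref{jptzlkphtzhrzhz}, which immediately gives the claim.

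The argument takes place entirely in the Fell bundle picture. Fix $A\in Y\nCalg$. Every functor involved is realized on subalgebras of products of fibres, as follows.
\begin{itemize}
\item The unit $A\to i^{*}i_{!}A$ is the restriction of the identification $\prod_{y}A(y)\cong\prod_{y}\prod_{y'\in i^{-1}(i(y))}A(y')$; since $i$ is injective, each inner index set is a single point.
\item The isomorphism $\alpha$ from \cref{kogpwererfwef} identifies subalgebras of $\prod_{x\in X}A(g(x))$ by means of the fibre isomorphisms \eqref{fqweihiquwoefqwef}.
\item The counit $j_{!}j^{*}B\to B$ is the inclusion of the subproduct over $z\in j(X)$ into $\prod_{z\in Z}B(z)$, extended by zero. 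Here $j^{-1}(z)$ is empty or a single point.
\end{itemize}

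Composing these three maps on the level of the ambient products $\prod_{z\in Z}\prod_{x\in j^{-1}(z)}A(g(x))\to\prod_{z\in Z}\prod_{y\in i^{-1}(f(z))}A(y)$, I will check fibrewise, at each $z\in Z$, that the composite is exactly the map from the proof of \cref{jptzlkphtzhrzhz}. For $z\notin j(X)$ both sides have empty index sets, because the square is cartesian and so $i^{-1}(f(z))=\emptyset$. For $z=j(x)$ both maps send the component at $x$ to the component at $y=g(x)$. It follows that $\beta$ is the restriction of the same map of products to the same subalgebras, and therefore it is the isomorphism of \cref{jptzlkphtzhrzhz}.

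The main obstacle is bookkeeping rather than mathematics. One has to make sure that the identifications of fibres $(g^{*}i^{*}A)(x)\cong A(i g(x))$ and $(j^{*}f^{*}i_{!}A)(x)\cong (i_{!}A)(f j(x))\cong A(y)$ used in $\alpha$ agree with those implicit in \cref{jptzlkphtzhrzhz}. I will handle this by tracking constant sections $x\mapsto \ev_{g(x)}(a)$, $a\in A$. These sections, multiplied by $C_{0}(X)$, generate everything in sight. On such sections every map above is visibly the identity $\ev_{g(x)}(a)\mapsto \ev_{y}(a)$, so the two transformations agree on generators and hence agree everywhere.
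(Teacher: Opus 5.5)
Your proposal is correct and follows the paper's proof. The paper likewise shows that the Beck--Chevalley composite $j_{!}g^{*}\to j_{!}g^{*}i^{*}i_{!}\cong j_{!}j^{*}f^{*}i_{!}\to f^{*}i_{!}$, built from the explicit unit and counit in the proof of \cref{kopherhrtgertge}, is exactly the isomorphism of \cref{jptzlkphtzhrzhz}; your fibrewise check and tracking of constant sections spell out the verification the paper leaves to the reader.
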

\begin{proof}
One checks,  using the proof of \cref{kopherhrtgertge}, that the isomorphism  in \eqref{fiqjiwoefqwefqewfqfw} is precisely 
the composition of left-adjoints 
$$j_{!}g^{*}\to j_{!}g^{*}i^{*}i_{!}\cong j_{!}j^{*}f^{*}i_{!}\to f^{*}i_{!} $$
induced by the respective unit and counit. 
%
%
\end{proof}


  We consider a   cartesian diagram \begin{equation}\label{rwefwerfwer}\xymatrix{X\ar[r]^{g}\ar[d]^{q} &Y \ar[d]^{p} \\Z \ar[r]^{f} &W } 
\end{equation}
 in $\LCH$.

\begin{lem}\label{joihetrgertgertgerg} Assume that  $p,q$ in the cartesian square \eqref{rwefwerfwer} are proper. Then
the square \begin{equation}\label{}\xymatrix{W\nCalg\ar[r]^{f^{*}}\ar[d]^{p^{*}} & Z\nCalg\ar[d]^{q^{*}} \\ Y\nCalg\ar[r]^{g^{*}} &X\nCalg } 
\end{equation}
is vertically right-adjoinable
\end{lem}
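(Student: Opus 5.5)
Since $p$ and $q$ are proper, \cref{okhprethetgertgtge} provides adjunctions $p^{*}\dashv p_{!}$ and $q^{*}\dashv q_{!}$, with explicit units and counits built from diagonal and evaluation maps of Fell bundle products. Vertical right-adjoinability means that the Beck--Chevalley transformation
$$f^{*}p_{!}\xrightarrow{\ \text{unit}\ } q_{!}q^{*}f^{*}p_{!}\cong q_{!}g^{*}p^{*}p_{!}\xrightarrow{\ \text{counit}\ } q_{!}g^{*}$$
is an isomorphism of functors $Y\nCalg\to Z\nCalg$. Here the middle identification is $\alpha$ from \cref{kogpwererfwef} applied to $p\circ g=f\circ q$. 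My approach mirrors \cref{oigjworiegrefwerfewfref}. I will write down an explicit isomorphism $f^{*}p_{!}\cong q_{!}g^{*}$ in the Fell bundle picture and then check that it coincides with the Beck--Chevalley map.

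First I construct the explicit comparison. Let $A$ be in $Y\nCalg$. By \cref{kopghwthrwegregewrfref}, $f^{*}p_{!}A$ is the subalgebra of $\prod_{z\in Z}(p_{!}A)(f(z))$ generated by $C_{0}(Z)$ times constant sections. By \cref{kookphertgtrgertg}, $(p_{!}A)(w)$ is the subalgebra of $\prod_{y\in p^{-1}(w)}A(y)$ generated by the images of $a$ in $A$. Hence $f^{*}p_{!}A$ embeds into
$$\prod_{z\in Z}\prod_{y\in p^{-1}(f(z))}A(y)\cong\prod_{z\in Z}\prod_{x\in q^{-1}(z)}A(g(x)),$$
where I use that the square is cartesian, so that $q^{-1}(z)\cong p^{-1}(f(z))$ via $x\mapsto g(x)$. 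The algebra $q_{!}g^{*}A$ is, by the covariant Fell bundle description and \cref{kopghwthrwegregewrfref}, the subalgebra of $\prod_{x\in X}A(g(x))$ generated by $C_{0}(X)$ times constant sections $x\mapsto \ev_{g(x)}(a)$. Both algebras therefore sit inside the same product. The $C_{0}(Z)$-actions agree, since $C_{0}(Z)$ acts on the second via $q^{*}:C_{0}(Z)\to C_{0}(X)$, using properness of $q$.

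The main obstacle is showing that these two subalgebras coincide. The first is generated by $q^{*}C_{0}(Z)\cdot\{\text{constant sections}\}$, the second by $C_{0}(X)\cdot\{\text{constant sections}\}$. The inclusion of the first into the second is clear. For the converse I must show that $\phi\cdot\ev_{g}(a)$ with $\phi\in C_{0}(X)$ lies in the closure of the first algebra. I would first try the following. Since $X\subseteq Z\times Y$ is closed, Stone--Weierstrass shows that sums of products $q^{*}h\cdot g^{*}k$ with $h\in C_{0}(Z)$ and $k\in C_{0}(Y)$ are dense in $C_{0}(X)$, and this density is uniform because $X\to Z$ is proper. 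Moreover $g^{*}k\cdot\ev_{g}(a)=\ev_{g}(m(k\otimes a))$ is again a constant section. Norm estimates on sections then give the converse inclusion. As a fallback, I would invoke the uniqueness lemma \cite[Prop.~3.1]{zbMATH01289894}: both algebras satisfy the conditions of \cref{jgwrieogjwefwefwrf} with the same fibres and one contains the other, so they are equal. This requires verifying upper semicontinuity for $f^{*}p_{!}A$, which follows from \cref{jgwrieogjwefwefwrf} applied to $f^{*}$.

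Finally, I unfold the units and counits from the proof of \cref{okhprethetgertgtge}. The unit for $q$ is the diagonal map into $\prod_{z}\prod_{x\in q^{-1}(z)}$, and the counit for $p$ is the evaluation at $y'=y$. Composing these under the identifications above gives exactly the identity map of the ambient product restricted to the subalgebras, as in the proofs of \cref{jptzlkphtzhrzhz} and \cref{oigjworiegrefwerfewfref}. Hence the Beck--Chevalley map is the isomorphism constructed above. Naturality in $A$ is evident from the construction.
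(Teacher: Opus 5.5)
Your proposal is correct and follows essentially the paper's route: both place $f^{*}p_{!}A$ and $q_{!}g^{*}A$ inside $\prod_{z\in Z}\prod_{x\in q^{-1}(\{z\})}A(g(x))$ via the cartesian bijection $q^{-1}(\{z\})\cong p^{-1}(\{f(z)\})$, and then observe that the Beck--Chevalley map (diagonal unit for $q$, evaluation counit for $p$) becomes the identity there. Your Stone--Weierstrass argument that products $q^{*}h\cdot g^{*}k$ span a dense subalgebra of $C_{0}(X)$ shows that $q^{*}C_{0}(Z)$-multiples and $C_{0}(X)$-multiples of constant sections generate the same subalgebra; this usefully spells out the surjectivity step, which the paper covers only by remarking that the map preserves constant sections and is $C_{0}(Z)$-linear.
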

\begin{proof} We must show that 
 the Beck-Chevalley canonical
transformation
\begin{equation}\label{kjoegwegergwfg}  f^{*} p_{!}   \to  q_{!}q^{*}f^{*}p_{!}\cong q_{!}  g^{*}p^{*}p_{!}  \to  q_{!}g^{*}  :Y\nCalg\to Z\nCalg
\end{equation} 
 is an isomorphism.
For $A$ in $Z\nCalg$ this map is the restriction of
\begin{eqnarray*}
\prod_{z\in Z} \prod_{y\in p^{-1}(\{f(z)\})} A(y)&\to&   \prod_{z\in Z}\prod_{x\in q^{-1}(\{z\})}  \prod_{y\in p^{-1}(\{f(z)\})} A(y)\\&\cong&  \prod_{z\in Z}\prod_{x\in q^{-1}(\{z\})}  \prod_{x'\in q^{-1}( \{z\})} A(g(x'))\\&\to&
 \prod_{z\in Z}\prod_{x\in q^{-1}(\{z\})}   A(g(x))
\end{eqnarray*}
where the first map is the obvious diagonal map and the second map is the evaluation at $x=x'$.
Since \eqref{rwefwerfwer} is cartesian, for a fixed $z$ in $Z$ there is a bijection
$p^{-1}(\{f(z)\})\ni y\mapsto x_{y}\in q^{-1}(z)$ uniquely determined by the condition $g(x_{y})=y$.
Using this bijection, we can identify,  for each $z$ in $Z$, the inner products in the domain and target. 
Under this identification, the composition becomes the identity.
We observe that it preserves constant sections and is compatible with the multiplication by $C_{0}(Z)$.
Consequently, the isomorphism restricts to the isomorphism \eqref{kjoegwegergwfg}.
%
\end{proof}

\begin{rem}
The proof of \cref{joihetrgertgertgerg}
shows that \eqref{kjoegwegergwfg} is the inverse of  \eqref{fiqjiwoefqwefqewfqfw}. \hB
\end{rem}

   \subsection{Symmetric monoidal structure}\label{opkhetrhrtgergertg}
   
  In this section, we equip the functor \begin{equation}\label{gweirhgierowferfwefwerf}(-)\nCalg:\LCH^{\op}\to \Cat\ , \quad X\mapsto X\nCalg\ , \quad f\mapsto f^{*}\end{equation}  from \cref{kogpwtegrwref}
  with a lax symmetric monoidal structure.  Part of the lax symmetric monoidal structure is, for every two locally compact Hausdorff spaces $X$ and $Y$, the datum of a bifunctor
$$- \boxtimes-  :X\nCalg\times Y\nCalg\to (X\times Y)\nCalg\ .$$
This bifunctor has already been considered   in \cite{zbMATH00854200}, \cite{zbMATH01213731}, \cite{Popescu_2004}.
Since  $\Cat$ is a $2$-category,  defining a lax symmetric monoidal functor to $\Cat$ requires  providing  a considerable amount of additional data and  checking  many coherences. The main point of this section is to 
 provide this data completely and to check the necessary coherences, thereby keeping the 
  necessary work at a reasonable amount. We will furthermore check the projection formulas, which go into the proof
  of \cref{okpherthrtgetgetg} stating that $(-)\nCalg$ is a three-functor formalism.

      \begin{prop} \label{ijiopgwergrgwferfw}The 
  functor  in 
 \eqref{gweirhgierowferfwefwerf} 
canonically extends to a lax symmetric monoidal functor.
\end{prop}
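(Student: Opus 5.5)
We use the Fell bundle picture of \cref{hetrertgrtgrtegetg}, since it makes the coherences transparent, exactly as in \cref{kogpwererfwef} and \cref{kogpwtegrwref}. For $A$ in $X\nCalg$ and $B$ in $Y\nCalg$ we define $A\boxtimes B$ as the subalgebra of
$$\prod_{(x,y)\in X\times Y} A(x)\otimes B(y)$$
generated by products of elements of $C_{0}(X\times Y)$ with the sections $(x,y)\mapsto \ev_{x}(a)\otimes \ev_{y}(b)$, for $a$ in $A$ and $b$ in $B$. Equivalently, it is the image of $A\otimes B$ under the maps $\ev_{x}\otimes\ev_{y}$, made $C_{0}(X\times Y)$-stable. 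By \cref{hlkperthrtgertgertg} this is an object of $(X\times Y)\nCalg$. The unit is $\C$ in $\pt\nCalg$, and the unit map $*\to \pt\nCalg$ is the symmetric monoidal functor $\cN\to\nCalg$ required in \cref{tohpertgertgrteg}.

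We would then check the following in order.
\begin{enumerate}
\item Functoriality of $\boxtimes$ in both variables. A pair of morphisms acts fibrewise on the product and preserves the generating sections.
\item The associator, symmetry and unit isomorphisms. These are restrictions of the evident isomorphisms of products, using associativity and symmetry of the maximal tensor product on fibres, so the pentagon and hexagon axioms are inherited from $\nCalg$.
\item The lax structure isomorphisms $\mu_{f,g}:f^{*}A\boxtimes g^{*}B\cong (f\times g)^{*}(A\boxtimes B)$. For this we need the fibres of $A\boxtimes B$. We would prove
$$(A\boxtimes B)(x,y)\cong A(x)\otimes B(y)$$
using the criterion \cref{jgwrieogjwefwefwrf}. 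Surjectivity onto the fibre is clear. Once the fibres are known, both sides of $\mu_{f,g}$ are subalgebras of $\prod A(f(x'))\otimes B(g(y'))$ generated by the same sections times $C_{0}(X'\times Y')$, so they coincide.
\item Compatibility of $\mu$ with the isomorphisms $\alpha_{g,f}$ of \cref{kogpwererfwef} and with associators. Since every map in sight is a restriction of a canonical identification of products, this reduces to unfolding definitions.
\end{enumerate}

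Finally, we verify the $E$-theory conditions of \cref{tohpertgertgrteg}. The functors $f^{*}$ should be $E$-admissible. Via \cref{kophrtgretgggrethrthe}, filtered colimits and exact sequences are detected on underlying algebras. Here we would use $f^{*}A\cong C_{0}(X)\otimes_{C_{0}(Y)}A$, a quotient of $C_{0}(X)\otimes A$, together with the fact that the maximal tensor product is exact and preserves filtered colimits. For $A\boxtimes -$ we argue similarly, presenting $A\boxtimes B$ as the quotient of $A\otimes B$ by the kernel of the fibrewise evaluation.

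The main obstacle is the upper semicontinuity needed in step 3, that is, showing that $(x,y)\mapsto \|\ev_{x}(a)\otimes\ev_{y}(b)\|$ (and its sums) is upper semicontinuous for the maximal tensor norm. This is the known subtle point in \cite{Popescu_2004} and \cite{zbMATH01213731}, since for minimal tensor products continuity can fail. We would first try to avoid a direct norm estimate. Instead, show that $(A\otimes B)\big((X\times Y)\setminus\{(x,y)\}\big)$ equals the kernel of $A\otimes B\to A(x)\otimes B(y)$. This kernel is $A(X\setminus\{x\})\otimes B+A\otimes B(Y\setminus\{y\})$ by exactness of $\otimes_{\max}$, and it is visibly contained in the ideal generated by $C_{0}((X\times Y)\setminus\{(x,y)\})$. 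This gives the fibres directly from \cref{kophertgetrgtreeg}, after which \cref{jgwrieogjwefwefwrf} yields semicontinuity for free.
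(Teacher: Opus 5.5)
Your proposal is correct and is essentially the paper's proof. The paper also realizes $A\boxtimes B$ as the image of $(A\otimes B,m\otimes n)$ in $\prod_{(x,y)}A(x)\otimes B(y)$. It obtains the fibres $A(x)\otimes B(y)$ by exactly your workaround: exactness of the maximal tensor product identifies $\ker(\ev_{x}\otimes\ev_{y})$ with $(A\otimes B)((X\times Y)\setminus\{(x,y)\})$, and \cref{lpherhrtgert} then gives injectivity of $\ev$, so no semicontinuity estimate is needed. It inherits all structure maps and coherences from the evident isomorphisms of products, packaged as restriction along the faithful transformation $A\mapsto (A(x))_{x\in X}$ into $X\mapsto\prod_{X}\nCalg$.

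Your last paragraph on $E$-admissibility belongs to \cref{kophejhthgerthetrh}, not to this statement. There, presenting $f^{*}A$ as a quotient of $C_{0}(X)\otimes A$ only gives right exactness, so injectivity of $f^{*}A\to f^{*}B$ needs a separate argument. One option is fibrewise injectivity $A(f(x))\hookrightarrow B(f(x))$ in the Fell bundle picture. The paper instead argues in \cref{hlepertgertger} via good objects (\cref{gjweroijgorefwerf}, \cref{kophtrtegertgertgte}) for proper maps, and via the adjunction for open inclusions.
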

\begin{proof}

     We start with the composition of lax symmetric monoidal functors
 $$\Phi:\LCH^{\op}\to \Set^{\op}\to \Cat\ , \qquad X\to X^{\delta}\to \prod_{X^{\delta}} \nCalg\ ,$$
 where
 the first functor sends $X$ to its underlying set $X^{\delta}$, and
 the second functor is the restriction of the lax symmetric monoidal functor $\Fun(-,\nCalg):\Cat^{\op}\to \Cat$ along $\Set\to \Cat$. The lax symmetric monoidal structure on the latter
  is given by the symmetric monoidal structure with the maximal tensor product
 on $\nCalg$ viewed as an object in $\CAlg(\Cat)$.  We denote the corresponding bifunctor by $\hat \boxtimes$. It is given for $X,Y$ in $\LCH$ and $(A_{x})_{x\in X}  $ in $\Phi(X)$ and $ (B_{y})_{y\in Y}$ in $\Phi(Y)$ by
$$(A_{x})_{x\in X}\hat \boxtimes (B_{y})_{y\in Y}=(A_{x}\otimes B_{y})_{(x,y)\in X\times Y}$$ in $\Phi(X\times Y)$.
 We  next construct  the  natural transformation
 $$N:(-)\nCalg\to \Phi\ , \quad X\mapsto \left(N_{X}:X\nCalg\to\Phi(X) , \quad A\mapsto  (A(x))_{x\in X}\right)\ .$$
 For a morphism  $f:X\to Y$ in $\LCH$ we have a canonical commutative square
 \begin{equation}\label{}
 \xymatrix{Y\nCalg\ar[r]^{f^{*}}\ar[d]^{N_{Y}} & X\nCalg\ar[d]^{N_{X}} \\ \Phi(Y) \ar[r]^{\Phi(f)} &\Phi (X)} 
\end{equation}
given on $B$ in $Y\nCalg$ by the  obvious identification  induced by \eqref{fqweihiquwoefqwef} at the lower right corner of
 \begin{equation}\label{}
 \xymatrix{ B\ar@{|->}[r]\ar@{|->}[d]  & f^{*}B \ar@{|->}[d] \\  (B(y))_{y\in Y}\ar@{|->}[r] &(B(f(x))_{x\in X}\cong ((f^{*}B)(x))_{x\in X}}\ .
\end{equation}
   If $g:Y\to Z$ is a second map, then the two fillers of the squares corresponding to $f$ and $g$ compose to the filler of the square corresponding to $g\circ f$. This finishes the construction of the transformation $N$.

 For every $X$ in $\LCH$  the functor $N_{X}$ is faithful since for every $A$ in $X\nCalg$ the homomorphism   $$\ev=(\ev_{x})_{x}:A\to \prod_{x\in X} A(x)$$
 is injective by \cref{lpherhrtgert}. So the map
$$\Hom_{X\nCalg}(A,B)\ni f\mapsto (f_{x}:=\ev_{x}(f):A(x)\to B(x))_{x\in X}\in \Hom_{\Phi(X)}(N_{X}(A),N_{X}(B))$$ is injective.
 
 The transformation $N$ thus  presents 
 $(-)\nCalg$ as a subfunctor of $\Phi$.
 We want to construct the lax symmetric monoidal structure on $(-)\nCalg$ by restriction along $N$. To this end, we must provide, for every $X,Y$ in $\LCH$, the bifunctor \begin{equation}\label{boijiotwgergwer}\boxtimes:X\nCalg\times Y\nCalg\to (X\times Y)\nCalg\end{equation}
 such that  \begin{equation}\label{boijiowgergwer}N_{X\times Y}(A\boxtimes B)\cong N_{X}(A)\hat \boxtimes N_{Y}(B)\ .
\end{equation} We then show that the structure maps can be inherited from those of $\Phi$.
 
   We start with the bifunctor  in \eqref{boijiotwgergwer} whose construction was  already proposed in \cite[Sec. 2]{zbMATH01213731}.
 For $A$ in $X\nCalg$ and $B$ in $Y\nCalg$ we provisionally  define the underlying $C^{*}$-algebra of
 $A\boxtimes B$ as the maximal tensor product $A\otimes B$.
 The structure of $A\boxtimes B$ as an $(X\times Y)$-algebra is  given by
 $$C_{0}(X\times Y)\otimes (A\otimes B)\cong (C_{0}(X)\otimes A)\otimes (C_{0}(Y)\otimes B)\stackrel{m\otimes n}{\to} A\otimes B\ , $$ where   $m$ and $n$ are the multiplication maps for  $A$ and $B$, and 
 we use the isomorphism
 $C_{0}(X\times Y)\cong C_{0}(X)\otimes C_{0}(Y)$, and the fact that the maximal tensor product of two surjections is again surjective. We have a family of maps $$\ev_{(x,y)}:=\ev_{x}\otimes \ev_{y} :A\otimes B\to A(x)\otimes B(y)\ , \quad (x,y)\in X\times Y$$
 which provide a map
 \begin{equation}\label{her90tiertgtegertgerg}\ev:A\otimes B\to \prod_{(x,y)\in X\times Y}A(x)\otimes B(y)\ .
\end{equation}
\begin{lem}[{\cite[Sec. 2]{zbMATH01213731}}]
The map $\ev$  in \eqref{her90tiertgtegertgerg} is injective, its image satisfies the equivalent conditions in \cref{jgwrieogjwefwefwrf}, and
we have an isomorphism  $$(A\boxtimes B)(x,y)=A(x)\otimes B(y)\ .$$
\end{lem}
\begin{proof}

 By the exactness of the maximal tensor product, for every  $(x,y)$ we have an exact sequence (see \cref{kophetrhrtgtrge})
 $$0\to A(X\setminus \{x\})\otimes B+A\otimes B(Y\setminus \{y\})\to A\otimes B\stackrel{\ev_{(x,y)}}{\to} A(x)\otimes B(y)\to 0\ .$$
 The kernel of $\ev_{(x,y)}$ is generated by
elements of the form
$ \pr_{X}^{*}f a\otimes  \pr_{Y}^{*}gb+ \pr_{X}f'a\otimes  \pr_{Y}^{*}g' b$, where $f,f'$  are in $C_{0}(X)$ with $f(x)=0$ and $g,g'$ are  in $C_{0}(Y)$ with $g'(y)=0$. 
We can write this element as
$$ (m\otimes n)((\pr_{X}^{*}f  \otimes  \pr_{Y}^{*}g + \pr_{X}f'\otimes  \pr_{Y}^{*}g' )\otimes  a\otimes b) \ .$$
The functions of the form $(\pr_{X}^{*}f  \otimes  \pr_{Y}^{*}g + \pr_{X}f'\otimes  \pr_{Y}^{*}g' )$ generate
$C_{0}(X\times Y\setminus \{(x,y)\})$.
It follows that
$\ker(\ev_{(x,y)})=(A\boxtimes B)(X\times Y\setminus \{(x,y)\})$.
We conclude that 
\begin{equation}\label{griegowerjkfoiwerfwerf}
(A\boxtimes B)(x,y)=A(x)\otimes B(y)\ .
\end{equation} and that
$\ev$ in \eqref{her90tiertgtegertgerg} is injective  by  \cref{lpherhrtgert}.
  \end{proof}
In view of the above considerations, we now change the definition of $\boxtimes$ by replacing 
$A\boxtimes B$  with its isomorphic  image under $\ev$ in $\prod_{(x,y)}A(x)\otimes B(y)$. Then $A\boxtimes B$
  is generated by the products of 
  the constant sections
 $ (\ev_{x}(a)\otimes \ev_{y}(b))_{(x,y)}$  with elements of $C_{0}(X\times Y)$.
  The relation \eqref{boijiowgergwer} follows from \eqref{griegowerjkfoiwerfwerf}.
 
By \cref{lkopherthertgetg}, we have an isomorphism \begin{equation}\label{gwepoiokgwerfewrfwergwegwerfwf}
 A\boxtimes B\cong C_{0}(X\times Y,A\otimes B)/I\ ,
\end{equation}
where $I$ consists of functions $\phi$ such that
$(\ev_{x}\otimes \ev_{y})(\phi) =0$ for all $(x,y)$ in $X\times Y$.

%

   Let $f:X\to Y$ be a morphism in $\LCH$ and $Z$ in $\LCH$.
 Then we have a commutative square
 \begin{equation}\label{}\xymatrix{\Phi(Y)\times \Phi(Z)\ar[r]^{\Phi(f)\times \id}\ar[d]^{\hat \boxtimes} &\Phi(X)\times \Phi(Z) \ar[d]^{\hat \boxtimes} \\ \Phi(Y\times Z) \ar[r]^{\Phi(f\times \id)} &\Phi(X\times Z) } \ .
\end{equation}

On $(A,B)$ in $ \Phi(Y)\times \Phi(Z)$ 
it is given by the obvious identity
 \begin{equation}\label{}\xymatrix{((A_{y})_{y\in Y} , (B_{z})_{z\in Z})\ar@{|->}[r]\ar@{|->}[d]  &((A_{f(x)})_{x\in X} ,(B_{z})_{z\in Z} ) \ar@{|->}[d]  \\  (A_{y}\otimes B_{z})_{(y,z)\in Y\times Z} \ar@{|->}[r] & (A_{f(x)}\otimes B_{z})_{(x,z) \in X\times Z} } 
\end{equation}
at the lower right corner. If $A$ is in $Y\nCalg$ and $B$ is in $Z\nCalg$ and $(A_{y})_{y\in Y}=(A(y))_{y\in X}$ and $(B_{z})_{z\in Z}=(B(z))_{z\in Z}$, then 
this identity is  the image
of an isomorphism $(f\times \id)^{*} (A\boxtimes B)\cong (f^{*}A)\boxtimes B$
 under $N_{X\times Z}$ of a morphism in $(X\times Z)\nCalg$ which defines the filler of 
 \begin{equation}\label{fwewfef}\xymatrix{ Y\nCalg\times Z\nCalg\ar[r]^{f^{*}\times \id}\ar[d]^{\boxtimes} &X\nCalg\times Z\nCalg \ar[d]^{\boxtimes} \\ (Y\times Z)\nCalg \ar[r]^{(f\times \id)^{*}} &(X\times Z) \nCalg} \ .
\end{equation}
Indeed, the identification preserves constant sections generated by elements $a\otimes b$ for $a$ in $A$ and $b$ in $B$ and commutes with the multiplication by functions in $C_{0}(X\times Z)$.
 This finishes the description of the data of the bifunctor $\boxtimes$. 
 The necessary relations are inherited from those of $\hat \boxtimes$.

We now consider the structure maps of the lax symmetric monoidal structure.
We consider the  associators. For $X,Y,Z$ in $\LCH$ we have the isomorphism
 $$(\Phi(X)\hat \boxtimes \Phi(Y))\hat \boxtimes \Phi(Z)\stackrel{\cong}{\to} \Phi(a)(
 \Phi(X)\hat \boxtimes (\Phi(Y)\hat \boxtimes \Phi(Z)))$$
 in $\Phi((X\times Y)\times Z)$
  induced by the associator on $\nCalg$,
 where $a:(X\times Y)\times Z\to X\times (Y\times Z)$ is the associator of the structure on $\LCH$.
  On objects $A,B,C$ in the respective categories, this associator map is given by
 the family of associators \begin{equation}\label{gwuegijeorgwreg} (A_{x}\otimes B_{y})\otimes C_{z}\stackrel{\cong}{\to} 
 A_{x}\otimes (B_{y}\otimes C_{z})
\end{equation}  for all $((x,y),z)$ in $(X\times Y)\times Z$.
 We must show that this map  is the image under $N_{(X\times Y)\times Z}$ of an isomorphism  
 \begin{equation}\label{gwerojgokpwergerwf}(A\boxtimes B)\boxtimes C\to a^{*}(A\boxtimes (B\boxtimes C))\ .
\end{equation} 
 The family of maps \eqref{gwuegijeorgwreg} induces a map 
 $$\prod_{((x,y),z)\in (X\times Y)\times Z} (A(x)\otimes B(y))\otimes C(z) \to \prod_{((x,y),z)\in (X\times Y)\times Z} A(x)\otimes (B(y)\otimes C(z))\ .$$
 This map is compatible with the multiplication by $C_{0}((X\times Y)\times Z)$ and preserves constant sections. 
 Consequently, it  restricts to  a map as in  \eqref{gwerojgokpwergerwf}, which under 
  $N_{(X\times Y)\times Z}$ is mapped to  the family \eqref{gwuegijeorgwreg}.
  
 We obtain the unit and symmetry constraints for $\boxtimes$ in a similar manner. The necessary relations are inherited from those of $\hat \boxtimes$.
 \end{proof}

 \begin{kor}\label{herthrtegtrgtrgtrhertherth}
 The functor $(-)\nCalg:\LCH^{\op}\to \Cat$
  refines to a functor with values in $\CAlg(\Cat )$.
  \end{kor}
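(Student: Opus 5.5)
The corollary is a formal consequence of \cref{ijiopgwergrgwferfw}, so the plan is to unwind what a lax symmetric monoidal functor $\LCH^{\op}\to\Cat$ means. The only inputs needed are that $\LCH^{\op}$ carries the cocartesian structure and that every object is commutative for it. The cartesian product is the categorical product in $\LCH$, hence the coproduct in $\LCH^{\op}$, so the symmetric monoidal structure on $\LCH^{\op}$ used in \cref{ijiopgwergrgwferfw} is cocartesian. In a cocartesian symmetric monoidal category every object $X$ carries a unique commutative algebra structure. Its multiplication $X\sqcup X\to X$ in $\LCH^{\op}$ is the diagonal $\diag:X\to X\times X$ in $\LCH$, and its unit is the map $X\to\pt$. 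This gives the standard equivalence $\CAlg(\LCH^{\op})\simeq\LCH^{\op}$, see \cite[Prop. 2.4.3.9]{HA}.

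A lax symmetric monoidal functor induces a functor on commutative algebra objects. Applying $\CAlg(-)$ to $(-)\nCalg$ therefore yields
$$\LCH^{\op}\simeq\CAlg(\LCH^{\op})\to\CAlg(\Cat)\ .$$
Composed with the forgetful functor $\CAlg(\Cat)\to\Cat$, this recovers the functor $X\mapsto X\nCalg$, $f\mapsto f^{*}$ of \cref{kogpwtegrwref}. This composite is the asserted refinement.

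It remains to record what the refinement is concretely. The symmetric monoidal structure on $X\nCalg$ has tensor product
$$A\otimes_{X}B:=\diag^{*}(A\boxtimes B)\ ,$$
and unit $p^{*}\C$ for $p:X\to\pt$, which is $C_{0}(X)_{X}$. The associativity and symmetry constraints come from the constraints of $\boxtimes$ constructed in \cref{ijiopgwergrgwferfw}, combined with the isomorphisms $\alpha$ of \cref{kogpwererfwef}. For a map $f$, the symmetric monoidal structure on $f^{*}$ uses the filler \eqref{fwewfef} together with $\alpha_{\diag,f}$ and naturality of $\diag$. None of this needs to be checked separately, since the coherence is inherited from the formal passage to $\CAlg$.

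The one point needing care is that $f^{*}$ becomes strongly, and not merely laxly, symmetric monoidal, as the fillers constructed in \cref{ijiopgwergrgwferfw} are isomorphisms. The whole argument is otherwise formal, and this is where the substance lies. The monoidality constraint of $(-)\nCalg$ is the comparison $A\boxtimes B\to$ value at $X\times Y$. Because the fillers are isomorphisms, each $f^{*}$ preserves tensor products up to isomorphism, so the induced map lands in symmetric monoidal functors. Granting this, the corollary follows.
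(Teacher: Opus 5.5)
Your proposal is correct and is the same argument the paper gives: because the monoidal structure on $\LCH^{\op}$ is cocartesian, every object is uniquely a commutative algebra (dually, every $X$ in $\LCH$ is a coalgebra via the diagonal). Hence $\CAlg(\LCH^{\op})\simeq\LCH^{\op}$, and the lax symmetric monoidal functor of \cref{ijiopgwergrgwferfw} induces $\LCH^{\op}\to\CAlg(\Cat)$. The strongness of $f^{*}$ that you single out as the substantive point is in fact automatic: in $\Cat$ regarded as an $\infty$-category, naturality squares commute up to invertible 2-cells, so morphisms in $\CAlg(\Cat)$ are strong symmetric monoidal functors.
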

\begin{proof} This is the standard fact  using that the symmetric monoidal structure on $\LCH^{\op}$ is induced by the cartesian symmetric monoidal structure on $\LCH$. More concretely, the reason is   that every object of $\LCH$ is a coalgebra with the diagonal as structure map.
\end{proof}

We let $\otimes_{X}$ denote the  induced symmetric monoidal structure on $X\nCalg$. In the literature, it is called the maximal  
  balanced tensor product over $X$ \cite{zbMATH00854200}, \cite{zbMATH01213731}, \cite{Park_2000}, \cite{Popescu_2004}.

\begin{rem}For $A,B$ in $X\nCalg$, we have
$$A\otimes_{X} B\cong d^{*}(A\boxtimes B)\ ,$$
where $d:X\to X\times X$ is the diagonal. 
For $f:X\to Y$, the  commutative diagram
\begin{equation}\label{}\xymatrix{X\ar[r]^{f}\ar[d]^{d_{X}} &Y \ar[d]^{d_{Y}} \\ X\times X\ar[r]^{f\times f}&Y\times Y }
 \end{equation}
again implies that $$f^{*}: Y\nCalg\to X\nCalg$$
canonically refines to a symmetric monoidal functor.
\hB
 \end{rem}


 For the proof of \cref{okpherthrtgetgetg}, we must verify various
  projection formulas.
 Let $f:X\to Y$ be a morphism in $\LCH$.
  \begin{lem}\label{okhoperhtrgertgetg}
Let $f:X\to Y$ be   a morphism  in $\LCH$, $A$ be in $X\nCalg$ and $B$ be in $Y\nCalg$. Then
 we have an isomorphism 
\begin{equation}\label{egwregwerfw}f_{!}(A)\otimes_{Y} B \stackrel{\cong}{\to} f_{!}(A\otimes_{X} f^{*}B)  \ .
\end{equation}   
 \end{lem}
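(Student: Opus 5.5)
The plan is to construct the isomorphism in the Fell bundle picture, as in \cref{jptzlkphtzhrzhz} and \cref{joihetrgertgertgerg}: write down a map between ambient products of fibres, check that it sends the generating constant sections to generating constant sections and is $C_{0}(Y)$-linear, and then restrict it to the relevant subalgebras.

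First identify both sides as subalgebras of one product. By the Remark after \cref{herthrtegtrgtrgtrhertherth}, $A\otimes_{X} f^{*}B\cong d_{X}^{*}(A\boxtimes f^{*}B)$. Its fibre at $x$ is $A(x)\otimes B(f(x))$, by \eqref{griegowerjkfoiwerfwerf} and \eqref{fqweihiquwoefqwef}. Applying $f_{!}$ and using the description of $f_{!}$ preceding \cref{kookphertgtrgertg}, the right-hand side is a subalgebra of
$$\prod_{y\in Y}\prod_{x\in f^{-1}(\{y\})} A(x)\otimes B(y)\ .$$
It is generated by $C_{b}(X)$-multiples (in particular $C_{0}(X)$-multiples) of sections $x\mapsto \ev_{x}(a)\otimes \ev_{f(x)}(b)$, with $a$ in $A$ and $b$ in $B$.

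Next describe the left-hand side. $f_{!}A\otimes_{Y}B$ is the subalgebra of $\prod_{y\in Y}(f_{!}A)(y)\otimes B(y)$ generated by $C_{0}(Y)$-multiples of the constant sections $y\mapsto \ev_{y}(a)\otimes\ev_{y}(b)$. By \cref{kookphertgtrgertg}, $(f_{!}A)(y)$ is the subalgebra of $\prod_{x\in f^{-1}(\{y\})}A(x)$ generated by the constant sections. This gives a canonical homomorphism
$$(f_{!}A)(y)\otimes B(y)\to \prod_{x\in f^{-1}(\{y\})}A(x)\otimes B(y)\ ,$$
componentwise $\ev_{x}\otimes\id$. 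Taking the product over $y$ gives a map $\Theta$ of ambient algebras. It sends the generator $\ev_{y}(a)\otimes\ev_{y}(b)$ to exactly the generator $\ev_{x}(a)\otimes\ev_{f(x)}(b)$ of the right-hand side, and it is $C_{0}(Y)$-linear. Hence $\Theta$ restricts to a surjective homomorphism $f_{!}A\otimes_{Y}B\to f_{!}(A\otimes_{X}f^{*}B)$ in $Y\nCalg$. Surjectivity uses two facts: the right-hand side as an algebra is just $A\otimes_{X} f^{*}B$, which is generated by constant sections times $C_{0}(X)$; and $C_{0}(X)\cdot A$ is dense in $A$, so these $C_{0}(X)$-factors can be absorbed into $a$. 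In other words, both algebras are quotients of $A\otimes B$, and the map is the induced one.

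The main obstacle is injectivity. This amounts to showing that the kernels of the two quotient maps from $A\otimes B$ coincide. By \cref{lpherhrtgert} it suffices to check injectivity fibrewise over $Y$, i.e.\ that
$$(f_{!}A)(y)\otimes B(y)\to \prod_{x\in f^{-1}(\{y\})}A(x)\otimes B(y)$$
is injective. Here I would try the following. Since the maximal tensor product is exact (\cref{kophetrhrtgtrge}), $(f_{!}A)(y)\otimes B(y)$ is the quotient of $A\otimes B$ by $A(X\setminus f^{-1}(y))\otimes B+A\otimes B(Y\setminus\{y\})$. On the other side, by \cref{lkopherthertgetg} applied to $A\otimes_X f^*B$, the kernel consists of elements whose images in all $A(x)\otimes B(f(x))$ vanish.

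To compare these kernels, use the Fell bundle description. The algebra $A\otimes_{X} f^{*}B$ defines, via $f_{!}$, an object of $Y\nCalg$ whose fibre at $y$ is the closure of constant sections over $f^{-1}(y)$, again by \cref{kookphertgtrgertg}. The inclusion $f_{!}A\otimes_Y B\subseteq$ (this Fell bundle) is then handled using upper semicontinuity and the uniqueness lemma from \cite[Prop. 3.1]{zbMATH01289894}. If that argument does not go through for general (non-exact-in-fibres) maximal tensor products, I would reduce to the Remark's identification $f_{!}(-)\otimes_Y B\cong d_Y^{*}((f\times \id)_{!}(-)\boxtimes B)$ and prove the statement for $\boxtimes$ directly, where \eqref{griegowerjkfoiwerfwerf} computes the fibres.
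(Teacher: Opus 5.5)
Your construction agrees with the paper's up to the fibrewise reduction of injectivity. You restrict the canonical map of ambient products and check that generators go to generators, which also gives surjectivity. You then use \cref{lpherhrtgert} to reduce injectivity to the maps $(f_{!}A)(y)\otimes B(y)\to\prod_{x\in f^{-1}(\{y\})}A(x)\otimes B(y)$. But you never prove that these fibre maps are injective, and this is the entire content of the step. The tool you propose cannot deliver it. The lemma from \cite[Prop. 3.1]{zbMATH01289894} compares two subalgebras $A\subseteq A'$ of one and the same product $\prod_{x}A_{x}$. Using it presupposes that $f_{!}A\otimes_{Y}B$, or its fibre, already embeds into $\prod_{y}\prod_{x\in f^{-1}(\{y\})}A(x)\otimes B(y)$, which is exactly the injectivity in question. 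Your hedge about ``non-exact'' maximal tensor products is also a red herring: exactness of $\otimes$ is what makes the step work.

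The missing idea is to recognise the fibre map as the evaluation map of an external product over the fibre. Put $K:=f^{-1}(\{y\})$, let $j:K\to X$ be the inclusion, and set $A':=j^{*}A$. Its underlying algebra is $(f_{!}A)(y)$, since by \cref{kookphertgtrgertg} both are generated by the restrictions to $K$ of constant sections. The fibre map then becomes $\ev:A'\otimes B(y)\to\prod_{x\in K}A'(x)\otimes B(y)$, the evaluation map of $A'\boxtimes B(y)$ over $K\times\pt$. By exactness (\cref{kophetrhrtgtrge}), the kernel of $\ev_{x}\otimes\id$ is $A'(K\setminus\{x\})\otimes B(y)=(A'\boxtimes B(y))(K\setminus\{x\})$. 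So the fibres of $A'\boxtimes B(y)$ are $A'(x)\otimes B(y)$ as in \eqref{griegowerjkfoiwerfwerf}, and \cref{lpherhrtgert} gives injectivity. This is the lemma established in the proof of \cref{ijiopgwergrgwferfw}. The paper does precisely this, phrased as base change along $\{y\}\to Y$, $j$ and $k:K\to\{y\}$: it identifies the fibre map with $(j^{*}A)(K)\otimes B(y)\to(j^{*}A\otimes_{K}k^{*}B(y))(K)$ and notes that this map is invertible.

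Your fallback, as written, works at the wrong level. The fibres of $f_{!}A\boxtimes B$ over $Y\times Y$ are $(f_{!}A)(y')\otimes B(y)$, and these do not resolve the points of $K$. It can be turned into a proof via base change rather than a fibre computation. With the graph $\Gamma_{f}=(\id\times f)\circ d_{X}$ one has $f_{!}(A\otimes_{X}f^{*}B)\cong f_{!}\Gamma_{f}^{*}(A\boxtimes B)$ and $f_{!}A\otimes_{Y}B\cong d_{Y}^{*}(f\times\id)_{!}(A\boxtimes B)$. The square formed by $\Gamma_{f}$, $f$, $f\times\id$ and $d_{Y}$ is cartesian, so \cref{jptzlkphtzhrzhz} applies. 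Concretely, both sides are the quotient of $A\otimes B$ by $(A\boxtimes B)(X\times Y\setminus\Gamma_{f}(X))$, because $d_{Y}$ and $\Gamma_{f}$ are closed embeddings.
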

\begin{proof}
By definition
$$f_{!}(A)\otimes_{Y} B\subseteq \prod_{y\in Y} \left(\prod_{x\in f^{-1}(\{y\})}    A(x)\right)\otimes B(y)$$
is generated by sections of the form  \begin{equation}\label{t234ju39ot2rrrr43}((\kappa(x) \ev_{x}(a))_{x\in f^{-1}(\{y\})}\otimes \ev_{y}( b) )_{y\in Y}\end{equation} for $a$ in $A$, $b$ in $B$ and $\kappa$ in $ C_{0}(X)$.
Similarly
 $$ f_{!}(A\otimes_{X} f^{*}B)\subseteq \prod_{y\in Y} \prod_{x\in f^{-1}(\{y\})} \left(A(x)\otimes B(f(x))\right)$$
 is generated by sections of the form \begin{equation}\label{t234ju39ot243}((\ev_{x}(a)\otimes \kappa(x)\ev_{f(x)}( b))_{x\in f^{-1}(\{y\})})_{y\in Y}
\end{equation}
 for  $a$ in $A$,  $b$ in $B$, and $\kappa$ in $C_{0}(X)$. The map
 \eqref{egwregwerfw} will be obtained by restriction of 
 the canonical map
\begin{equation}\label{hprtoekpoherthrtgert}\prod_{y\in Y} \left(\prod_{x\in f^{-1}(\{y\})}    A(x)\right)\otimes B(y) \to  \prod_{y\in Y} \prod_{x\in f^{-1}(\{y\})} \left(A(x)\otimes B(f(x))\right)\ .
\end{equation} 
This map sends a generator of the form \eqref{t234ju39ot2rrrr43}
 to the generator
$$((\ev_{x}(a)\otimes  \kappa(x)\ev_{f(x)}(b))_{x\in f^{-1}(\{y\})})_{y\in Y}\ ,
$$
which has the form \eqref{t234ju39ot243}.
Consequently, the map \eqref{hprtoekpoherthrtgert} restricts to a map \eqref{egwregwerfw}. Vice versa,
the generator \eqref{t234ju39ot243}  has a preimage of the form \eqref{t234ju39ot2rrrr43}.
 This already shows that \eqref{egwregwerfw} is surjective. We now show that this map is injective.
We consider an element $c$ in the kernel.    It suffices to check injectivity on the level of fibres at the points $y$ in $Y$.
We use the notation   $i:\{y\}\to Y$, $j:f^{-1}(\{y\})\to X$ and $k:f^{-1}(\{y\})\to \{y\}$.
The map is then given  by
\begin{align*}(f_{!}A\otimes_{Y} B)(y)\cong i^{*} (f_{!}A\otimes_{Y} B)  \cong  (f_{!}A)(y)\otimes B(y)\cong (j^{*}A)(f^{-1}(\{y\}))\otimes B(y)&\\ \to  (j^{*}A\otimes_{f^{-1}(\{y\})}  k^{*} B(y))(f^{-1}(\{y\}))\cong  j^{*}(A\otimes_{X} f^{*} B) (f^{-1}(\{y\}))\cong f_{!}(A\otimes_{X}f^{*}B)(y)\end{align*}
It suffices to show that 
$$(j^{*}A)(f^{-1}(\{y\}))\otimes B(y)\to  (j^{*}A\otimes_{f^{-1}(\{y\})}  k^{*} B(y))(f^{-1}(\{y\}))$$ is injective.
But this map has an inverse which sends
a generator $(\ev_{x}(a) \otimes \kappa(x) \ev_{y}(b))_{x\in f^{-1}(\{y\})}$
to $(\kappa(x)\ev_{x}(a) \otimes  \ev_{y}(b))_{x\in f^{-1}(\{y\})}$.
\end{proof}

\begin{rem}
 Note that the projection formula in \cref{okhoperhtrgertgetg} does not require any condition on the map $f$. But for the proof  of  \cref{okpherthrtgetgetg}, we must know that 
 for $f$ a proper map or an open inclusion, this isomorphism is given by the canonical map coming from the lax symmetric monoidal structure and the units or counits of the respective adjunctions. This is the content of \cref{kohperthertgertgertg}
 and \cref{okhoperhtrgertgetg1} below.
 \hB \end{rem}

  \begin{lem}\label{kohperthertgertgertg}
 If $f:X\to Y$ is an open inclusion, $A$ is in $X\nCalg$ and $B$ is in $Y\nCalg$,
then the canonical morphism
 \begin{equation}\label{gerfwefwfwererf45}f_{!}(A\otimes f^{*}B)\to f_{!}(A)\otimes B
\end{equation}  is an isomorphism.
 \end{lem}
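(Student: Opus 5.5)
The plan is to first pin down the canonical morphism \eqref{gerfwefwfwererf45} explicitly, and then show it is inverse to the isomorphism \eqref{egwregwerfw} of \cref{okhoperhtrgertgetg}. For an open inclusion $f$ we have the right Bousfield localization $f_{!}\dashv f^{*}$ of \cref{kopherhrtgertge}, whose unit $A\to f^{*}f_{!}A$ is an isomorphism and whose counit $f_{!}f^{*}B\to B$ is the inclusion of a subproduct. The canonical morphism is the adjoint of
$$A\otimes_{X} f^{*}B\xrightarrow{\cong} f^{*}f_{!}A\otimes_{X} f^{*}B\cong f^{*}(f_{!}A\otimes_{Y} B),$$
where the last isomorphism is the symmetric monoidal structure of $f^{*}$ (obtained from \cref{herthrtegtrgtrgtrhertherth} and the filler \eqref{fwewfef} together with the diagonal squares). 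Concretely, it is the composite of $f_{!}$ applied to this map with the counit $f_{!}f^{*}(f_{!}A\otimes_{Y}B)\to f_{!}A\otimes_{Y}B$.

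Next, I will compute this composite in the Fell bundle picture, where all objects sit inside products of fibres. Every map in the composite is the restriction of an obvious map between products:
\begin{itemize}
\item the unit of \cref{kopherhrtgertge} is the identity on $\prod_{x\in X}A(x)$, since fibres of $f$ are singletons or empty;
\item the monoidality isomorphism of $f^{*}$ is the identification $A(x)\otimes B(f(x))\cong (f_{!}A)(f(x))\otimes B(f(x))$ via \eqref{fqweihiquwoefqwef} and \eqref{griegowerjkfoiwerfwerf};
\item the counit is the inclusion $\prod_{y\in f(X)}\to\prod_{y\in Y}$.
\end{itemize}
Putting these together, the composite is the restriction of the map
$$\prod_{y\in Y}\prod_{x\in f^{-1}(\{y\})}A(x)\otimes B(f(x))\to \prod_{y\in Y}\Big(\prod_{x\in f^{-1}(\{y\})}A(x)\Big)\otimes B(y),$$
which is the inverse of \eqref{hprtoekpoherthrtgert}. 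Here we use that for $y\notin f(X)$ both sides vanish at $y$, and for $y=f(x)$ both sides equal $A(x)\otimes B(y)$.

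Since \cref{okhoperhtrgertgetg} shows that \eqref{hprtoekpoherthrtgert} restricts to an isomorphism between the two subalgebras, its inverse restricts to an isomorphism as well. Hence \eqref{gerfwefwfwererf45} is an isomorphism. To justify computing everything on sections, I will use that all objects embed faithfully via $N$ (by \cref{lpherhrtgert}), so morphisms are determined by their fibrewise components; it then suffices to check the composite on the generating sections \eqref{t234ju39ot243}.

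The main obstacle is the bookkeeping that identifies the abstract monoidality constraint of $f^{*}$ with the naive fibrewise identification. This constraint is built from $d^{*}$, the square \eqref{fwewfef}, and the identification $(f\times f)^{*}\cong$ fibres. I will handle it by observing that all these fillers were defined as restrictions of identities on the level of $\Phi$, so their composite is again the identity on fibres.
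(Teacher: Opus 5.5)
Your proposal is correct and follows the paper's proof: you write the canonical morphism as $f_{!}$ of the unit-plus-monoidality map followed by the counit, and compute each piece as the restriction of the obvious map between products of fibres. The only difference is the last step: the paper concludes directly that the product-level maps are isomorphisms because $f$ is injective, whereas you identify the composite with the inverse of \eqref{hprtoekpoherthrtgert} and invoke \cref{okhoperhtrgertgetg}. This is exactly the identification recorded in the paper's subsequent remark, and it also explicitly settles that the restriction is onto the subalgebra $f_{!}(A)\otimes B$.
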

\begin{proof}
Note that the canonical morphism \eqref{gerfwefwfwererf45} is 
$$f_{!}(A\otimes f^{*}B)\to f_{!}(f^{*}f_{!}A \otimes f^{*}B)\cong f_{!}f^{*}( f_{!}A\otimes  B)\to f_{!}A\otimes B\ ,$$
which uses the fact that $f^{*}$ is symmetric monoidal and the unit  and counit of the adjunction $f_{!} \dashv f^{*}  $.
  Using the explicit formulas for the operations and for the unit and counit from \cref{kopherhrtgertge},
 the canonical morphism   \eqref{gerfwefwfwererf45} is the restriction of the map 
 \begin{eqnarray*}\prod_{y\in Y}  \left(\prod_{x\in f^{-1}(\{y\})}A(x)\otimes B(f(x)) \right)&\to& \prod_{y\in Y} \left(\prod_{x\in f^{-1}(\{y\})}  \left(\prod_{x'\in f^{-1}(\{f(x)\})}A(x')\right)\otimes B(f(x)) \right)
 \\&\to  & \prod_{y\in Y}  \left(\prod_{x\in f^{-1}(\{y\})}A(x)\otimes B(f(x))\right)
 \end{eqnarray*}
 where the first map is the diagonal and the second map is the evaluation at $x=x'$.
 Using the fact  that $f$ is injective, these maps are both isomorphisms.
 \end{proof}
 
 \begin{rem}
 By an inspection one observes that  the  canonical map  \eqref{gerfwefwfwererf45} is equal to the inverse of the isomorphism  \eqref{egwregwerfw}. 
 \end{rem}

\begin{lem}\label{okhoperhtrgertgetg1}
If  $f:X\to Y$ is   a proper morphism  in $\LCH$, $A$ is in $X\nCalg$ and $B$ is in $Y\nCalg$, then
  the canonical morphism
 \begin{equation}\label{vijiowefvwefv}f_{!}(A)\otimes B \to f_{!}(A\otimes f^{*}B)  
\end{equation}is an isomorphism.
 \end{lem}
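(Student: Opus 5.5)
The plan is to mirror the proof of \cref{kohperthertgertgertg}, with the adjunction for open inclusions replaced by the adjunction $f^{*}\dashv f_{!}$ from \cref{okhprethetgertgtge}. We first write the canonical morphism \eqref{vijiowefvwefv} in terms of adjunction data and the symmetric monoidal structure on $f^{*}$:
$$f_{!}(A)\otimes_{Y} B\to f_{!}f^{*}\bigl(f_{!}(A)\otimes_{Y} B\bigr)\cong f_{!}\bigl(f^{*}f_{!}A\otimes_{X} f^{*}B\bigr)\to f_{!}(A\otimes_{X} f^{*}B)\ .$$
Here the first map is the unit $\id\to f_{!}f^{*}$, the middle isomorphism comes from the symmetric monoidal refinement of $f^{*}$ (\cref{herthrtegtrgtrgtrhertherth} and the remark after it), and the last map applies $f_{!}$ to the counit $f^{*}f_{!}A\to A$ tensored with $f^{*}B$.

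Next we make each map explicit in the Fell bundle picture, using the formulas from the proof of \cref{okhprethetgertgtge}. The unit is the restriction of the diagonal map $\prod_{y}(\dots)\to\prod_{y}\prod_{x\in f^{-1}(\{y\})}(\dots)$. The counit is the restriction of evaluation at $x'=x$ along the inner product over $f^{-1}(\{f(x)\})$. The monoidal identification of $f^{*}$ is the identity on fibres, $(f^{*}C)(x)\cong C(f(x))$. Composing these, the canonical morphism becomes the restriction of
$$\prod_{y\in Y}\Bigl(\prod_{x\in f^{-1}(\{y\})}A(x)\Bigr)\otimes B(y)\to \prod_{y\in Y}\prod_{x\in f^{-1}(\{y\})}\prod_{x'\in f^{-1}(\{y\})}\ldots\to\prod_{y\in Y}\prod_{x\in f^{-1}(\{y\})}A(x)\otimes B(f(x))\ .$$
We then check directly that this composite agrees with the canonical map \eqref{hprtoekpoherthrtgert} from the proof of \cref{okhoperhtrgertgetg}. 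It suffices to evaluate both on the generating sections \eqref{t234ju39ot2rrrr43}: both send such a section to the section $((\ev_{x}(a)\otimes\kappa(x)\ev_{f(x)}(b))_{x})_{y}$. Since the canonical morphism agrees with the map of \cref{okhoperhtrgertgetg}, which is an isomorphism for arbitrary $f$, the claim follows.

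The main obstacle is the middle step. Unlike the open-inclusion case, the intermediate object $f^{*}f_{!}A$ is not isomorphic to $A$ when $f$ is proper but not injective. Its fibre at $x$ is $(f_{!}A)(f(x))$, which by \cref{kookphertgtrgertg} is a subalgebra of $\prod_{x'\in f^{-1}(\{f(x)\})}A(x')$, so the intermediate terms genuinely involve double products. We therefore have to track carefully that the diagonal followed by evaluation at $x'=x$ gives the identity on the relevant index sets. We also have to check that the unit $B\to f_{!}f^{*}B$, which uses properness to pull $C_{0}(Y)$ back into $C_{0}(X)$, is compatible with the tensor factor $f_{!}A$. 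Because all maps are restrictions of maps between products that preserve constant sections and are $C_{0}$-linear, the identification reduces to a pointwise check on generators, and this is the computation we would carry out first.
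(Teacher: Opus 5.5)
Your proposal is correct and takes essentially the same route as the paper. You factor the canonical map through the unit and counit of $f^{*}\dashv f_{!}$ and the monoidality of $f^{*}$, identify the composite in the Fell bundle picture as the diagonal followed by evaluation at $x=x'$, recognize it as the map \eqref{hprtoekpoherthrtgert}, and conclude from \cref{okhoperhtrgertgetg}; the paper does exactly this, only without spelling out the check on generators that you add.
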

\begin{proof}
Note that the canonical morphism \eqref{vijiowefvwefv} is given by  \begin{equation}\label{gweoijowgei}f_{!}(A)\otimes B\to f_{!}f^{*}(f_{!}A\otimes B)\cong  f_{!}(f^{*}f_{!}A\otimes f^{*}B)\to f_{!}(A\otimes f^{*}B)
\end{equation}
 which uses the fact that $f^{*}$ is symmetric monoidal and the unit  and counit of the adjunction $f^{*} \dashv f_{!} $.
 We use the explicit formulas for the operations and units and counits in order to write \eqref{vijiowefvwefv} as the restriction of the composition \begin{eqnarray*}
\prod_{y\in Y} \left(\prod_{x\in f^{-1}(\{y\})}    A(x)\right)\otimes B(y)&\to &
 \prod_{y\in Y} \prod_{x'\in f^{-1}(\{y\})}  \left( \left( \prod_{x\in f^{-1}(\{f(x')\})}A(x)\right)\otimes B(y) \right)\\&\cong&
  \prod_{y\in Y} \prod_{x'\in f^{-1}(\{y\})}  \left( \left( \prod_{x\in f^{-1}(\{f(x')\})}A(x)\right)\otimes B(f(x')) \right)
  \\&\to& \prod_{y\in Y}\left( \prod_{x\in f^{-1}(\{y\})} A(x)\otimes B(y)\right)
\end{eqnarray*}
 where the first map is the diagonal and the second is the evaluation at $x=x'$.
This is precisely the map \eqref{hprtoekpoherthrtgert}.
Hence, the assertion follows from \cref{okhoperhtrgertgetg}.
%
%
%
%
%
%
\end{proof}

\subsection{Proof of \cref{okpherthrtgetgetg}}\label{okpherthrtgetgetg1}

By \cref{ijiopgwergrgwferfw} and \cref{herthrtegtrgtrgtrhertherth} we have a   functor
\begin{equation}\label{hrtophkpretgrtgtreg}(-)\nCalg:\LCH^{\op}\to \CAlg(\Cat)\ .
\end{equation}
We must verify the conditions from \cref{kojpjrtzjzhrtzh}. 

We argue that the functor in \eqref{hrtophkpretgrtgtreg}
  is compatible with $I$. The existence of left-adjoints for open embeddings
follows from  \cref{kopherhrtgertge}. The base change
and projection formulas have been verified in \cref{oigjworiegrefwerfewfref}
and \cref{kohperthertgertgertg}. 

The functor in \eqref{hrtophkpretgrtgtreg} is furthermore compatible with $P$.  The existence
of right-adjoints for proper maps follows from \cref{okhprethetgertgtge}. The base change
and projection formulas have been verified in \cref{joihetrgertgertgerg}
and \cref{okhoperhtrgertgetg1}.

Finally the mixed Beck-Chevalley condition  follows from the fact that
for proper maps $f$, the right-adjoint of $f^{*}$ is given by the  the covariant functoriality $f_{!}$.

    \subsection{Good objects}\label{koperthrtgetrgetgt}
    
    In this section, we analyze to what extend  the functor $c$ in \eqref{gjiojowiejroijfwer} preserves exact sequences.  Let $X$ be a locally compact Hausdorff space, and let $A$ be in $\Open(X)\nCalg$.
 Then we can consider $C_{0}(X)\otimes A\cong C_{0}(X,A)$.
 In this algebra, we have two ideals:
 \begin{enumerate}
 \item \label{ijgowerfwerf} The ideal $J$ is generated by tensors $\phi\otimes a,$ where
 $\phi\in C_{0}(U)$ and $a\in A(V)$  for some  open sets $U,V $ in $X$ with  $U\cap V=\emptyset$.
 \item The ideal $I$ (used in the proof of \cref{okprherhrtgertge})  which consists of functions $\phi$ in $C_{0}(X,A)$ with the property
 that $\phi(x)\in A(X\setminus \{x\})$ for all $x$ in $X$.
 \end{enumerate}
  
 \begin{lem}
 We have an inclusion $J\subseteq I$.
 \end{lem}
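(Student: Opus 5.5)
Since $I$ is a closed two-sided ideal of $C_{0}(X)\otimes A\cong C_{0}(X,A)$ and $J$ is by definition the closed ideal generated by the elementary tensors $\phi\otimes a$ with $\phi\in C_{0}(U)$, $a\in A(V)$ and $U\cap V=\emptyset$, it suffices to show that every such generator lies in $I$. Indeed, $I$ is closed, being the kernel of the homomorphism $(e_{x})_{x\in X}$ to $\prod_{x\in X}A(x)$. It is also a two-sided $*$-ideal. Hence $I$ contains the closed ideal generated by any subset of it.

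So fix open $U,V$ with $U\cap V=\emptyset$, $\phi\in C_{0}(U)$ and $a\in A(V)$. Under the identification $C_{0}(X)\otimes A\cong C_{0}(X,A)$ the tensor $\phi\otimes a$ is the function $x\mapsto \phi(x)a$. We must check that $\phi(x)a\in A(X\setminus\{x\})$ for every $x\in X$, and we split into two cases.

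If $x\notin U$, then $\phi(x)=0$ (we regard $C_{0}(U)\subseteq C_{0}(X)$ by extension by zero), so $\phi(x)a=0$, which lies in the ideal trivially. If $x\in U$, then $x\notin V$ because $U\cap V=\emptyset$, so $V\subseteq X\setminus\{x\}$. Since $A\in \Open(X)\nCalg$, the functor $A(-)$ is monotone, and therefore $a\in A(V)\subseteq A(X\setminus\{x\})$. As $A(X\setminus\{x\})$ is a linear subspace, $\phi(x)a$ also lies in it.

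This establishes the generator inclusion, and with it $J\subseteq I$. The only point needing care is that the closedness and ideal property of $I$ allow us to pass from generators to the generated closed ideal. Note that for $x\notin U$ we do not even need $x$ to be a closed point, but it is, since $X$ is Hausdorff; so $X\setminus\{x\}$ is open and $A(X\setminus\{x\})$ is defined. No step here should present a real obstacle.
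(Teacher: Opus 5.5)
Your proof is correct and follows the paper's argument exactly. You reduce to the generators $\phi\otimes a$, view each as the function $x\mapsto \phi(x)a$, and split into the cases $x\notin U$ (where $\phi(x)=0$) and $x\in U$ (where $V\subseteq X\setminus\{x\}$ gives $a\in A(X\setminus\{x\})$); your remark that $I$ is a closed two-sided $*$-ideal, being the kernel of the $*$-homomorphism $(e_{x})_{x\in X}$, usefully spells out the step from generators to the generated ideal that the paper leaves implicit.
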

 \begin{proof} Assume that $\phi\otimes a$ is a generator of $J$ as in \ref{ijgowerfwerf}.
Then $\phi\otimes a$ corresponds to the function $x\mapsto \phi(x)a$.
 If $x\not\in U$, then  $\phi(x)a=0\in A(X\setminus \{x\})$ since $\phi(x)=0$. If $x\in U$, then $x\not\in V$ and hence $V\subseteq X\setminus \{x\}$.
 Consequently, $\phi(x)a\in A(X\setminus\{x\})$ since $a\in A(X\setminus\{x\})$.
 This implies that all generators of $J$ belong to $I$ and therefore proves  the assertion.
 \end{proof}

   \begin{ddd}
An object of $A$ in $\Open(X)\nCalg$ is called good if $I=J$.
We let $\Open(X)^{\gd}\nCalg$ be the full subcategory of $\Open(X)\nCalg$ on 
objects that are good. 
 \end{ddd}

In contrast to the conventions used elsewhere in the present paper, a sequence in  $\Open(X)^{\gd}\nCalg$  will be called exact if it is  exact in $ \Open(X) \nCalg$.
Our main reason for introducing good objects is the following fact:

\begin{lem} \label{gjweroijgorefwerf}The restriction $$c_{|\Open(X)^{\gd}\nCalg}:\Open(X)^{\gd}\nCalg\to X\nCalg$$ of the functor $c$ in \eqref{gioujgiotrgwergwerfwerf} 
 preserves exact sequences.
\end{lem}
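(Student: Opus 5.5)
Let $0\to A\to B\to C\to 0$ be an exact sequence in $\Open(X)\nCalg$ with $A,B,C$ good. Exact sequences in $\Open(X)\nCalg$ are computed pointwise, so $A(U)=\ker(B(U)\to C(U))$ and $C(U)\cong B(U)/A(U)$ for all $U$. By \cref{kophrtgretgggrethrthe} it suffices to show that the underlying sequence
$$0\to c(A)\to c(B)\to c(C)\to 0$$
is exact in $\nCalg$. Write $c(D)=(C_{0}(X)\otimes D)/I_{D}$. By exactness of the maximal tensor product, $0\to C_{0}(X)\otimes A\to C_{0}(X)\otimes B\to C_{0}(X)\otimes C\to 0$ is exact. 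Hence the claim reduces, via the snake lemma (or a $3\times 3$ diagram of ideals), to showing that $0\to I_{A}\to I_{B}\to I_{C}\to 0$ is exact. Equivalently, we need:
\begin{enumerate}
\item $I_{A}=I_{B}\cap (C_{0}(X)\otimes A)$;
\item $I_{B}\to I_{C}$ is surjective.
\end{enumerate}

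\emph{Step 1.} I would verify (i) directly from the description of $I$ as functions. Let $\phi\in C_{0}(X,A)$ with $\phi(x)\in B(X\setminus\{x\})$ for all $x$. Then $\phi(x)\in A\cap B(X\setminus\{x\})=A(X\setminus\{x\})$, using the kernel property of exactness pointwise. So $\phi\in I_{A}$, and the reverse inclusion is clear. This step needs no goodness.

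\emph{Step 2.} Surjectivity of $I_{B}\to I_{C}$ is where goodness enters, and I expect it to be the main obstacle. Since $C$ is good, $I_{C}=J_{C}$ is generated as an ideal by tensors $\phi\otimes c$ with $\phi\in C_{0}(U)$, $c\in C(V)$, $U\cap V=\emptyset$. Since $B(V)\to C(V)$ is surjective, lift $c$ to $b\in B(V)$. Then $\phi\otimes b\in J_{B}\subseteq I_{B}$ maps to $\phi\otimes c$. Products of such generators with arbitrary elements also lift, because $C_{0}(X)\otimes B\to C_{0}(X)\otimes C$ is surjective. Hence the image of $I_{B}$ is an ideal containing a generating set of $J_{C}$. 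Since images of $C^{*}$-homomorphisms are closed, the image contains $J_{C}=I_{C}$. Goodness of $C$ is essential here; I would double-check whether goodness of $A$ and $B$ is also needed or merely harmless.

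\emph{Step 3.} With (i) and (ii), the map of short exact sequences
$$0\to I_{D}\to C_{0}(X)\otimes D\to c(D)\to 0,\qquad D\in\{A,B,C\},$$
gives exactness of $0\to c(A)\to c(B)\to c(C)\to 0$ by the nine lemma for $C^{*}$-algebras:
\begin{itemize}
\item injectivity of $c(A)\to c(B)$ follows from (i);
\item surjectivity of $c(B)\to c(C)$ is clear;
\item exactness in the middle follows from (ii) together with middle-exactness of the tensored sequence.
\end{itemize}
Finally, \cref{kophrtgretgggrethrthe}.\ref{jiggtre2} and \ref{jiggtre3} upgrade this underlying exactness to an exact sequence in $X\nCalg$.
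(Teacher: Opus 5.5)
Your proof is correct and is essentially the paper's argument. Like the paper, you reduce to underlying algebras via \cref{kophrtgretgggrethrthe}, compare the short exact sequences $I\to C_{0}(X)\otimes(-)\to c(-)$, get injectivity from the pointwise identification $\ker(I_{B}\to I_{C})=I_{A}$, and get middle exactness from surjectivity of $I_{B}\to I_{C}$ by lifting the generators of $I_{C}=J_{C}$ along $B(V)\to C(V)$. As in the paper, only the goodness of $C$ is actually used.
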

\begin{proof}    Assume that $$0\to A\to B\to C\to 0$$ is exact in $\Open(X)^{\gd}\nCalg$. Then we must show that 
$$0\to c(A)\to c(B)\to c(C) \to 0$$  is exact in $X\nCalg$. We use that by \cref{kophrtgretgggrethrthe}, exactness in $X\nCalg$ can be checked on the underlying algebras.
Let $I,I',I''$ and $J,J',J''$ be the ideals associated to $A,B$, and $C$ respectively.
We consider the map of vertical exact sequences 
 $$
\xymatrix{&0\ar[d]&\ar[d]0&\ar[d]0&\\
0\ar[r]&I\ar[d]\ar[r]&I'\ar[r]\ar[d]&I''\ar[r]\ar[d]&0\\0\ar[r]&C_{0}(X)\otimes A\ar[d]\ar[r]&C_{0}(X)\otimes B\ar[r]\ar[d]&C_{0}(X)\otimes C\ar[r]\ar[d]&0\\
0\ar[r]& c(A)\ar[r]\ar[d]&c(B)\ar[r]\ar[d]&c(C)\ar[r]\ar[d]&0\\&0&0&0&}\ .
$$
 
By homological algebra and the exactness of the middle sequence, we get isomorphisms
$$\coker(c(B)\to c(C))\cong 0\ ,$$
$$\ker(c(A)\to c(B))\cong \ker(I'\to I'')/\im(I\to I')\ ,$$ and $$\ker(c(B)\to c(C))/\im(c(A)\to c(B))\cong \coker(I'\to I'')\ .$$
Assume now that $\phi$ is in  $\ker(I'\to I'')$.
Then $\phi\in C_{0}(X,A)$ and $\phi(x)\in B(X\setminus \{x\})$ for all $x$ in $X$.
Hence $\phi(x)\in A(X\setminus \{x\})$ for all $x$ and hence $\phi\in I$.
It follows that $\ker(c(A)\to c(B))\cong 0$.
 
 In order to show that  $\ker(c(B)\to c(C))/\im(c(A)\to c(B))\cong 0$ we use
goodness. It implies that $ I''$ is generated by 
elements of the form $\phi\otimes c$ with $\phi\in C_{0}(U)$ and $c\in C(X\setminus \overline{U})$ for $U$ in $\Open(X)$.
Since $B(X\setminus   \overline{U} ) \to C(X\setminus  \overline{U})$ is surjective, we can  lift such elements to generators of $I'$.
Hence $I'\to I''$ is surjective, which implies the desired result.
 \end{proof}

 \begin{lem}\label{kophtrtegertgertgte} We have $\Open(X)^{\reg}\nCalg\subseteq \Open(X)^{\gd}\nCalg$.
 \end{lem}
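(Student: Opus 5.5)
The plan is to prove the reverse inclusion $I\subseteq J$ for every $A$ in $\Open(X)^{\reg}\nCalg$; the inclusion $J\subseteq I$ holds by the preceding lemma, so this gives $I=J$. Since $J$ is a closed ideal, it suffices to show that every $\phi\in I$ can be approximated in the sup-norm of $C_{0}(X,A)$ by elements of $J$. The argument will be a partition-of-unity approximation, using regularity to place the pointwise values of $\phi$ into ideals $A(V)$ whose closure $\overline{V}$ avoids the point.

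The first step is a reduction to compact support. For $\kappa\in C_{c}(X)$ with values in $[0,1]$, the function $\kappa\phi$ again lies in $I$, because $A(X\setminus\{x\})$ is an ideal. Since $\phi$ vanishes at infinity, $\kappa\phi$ approximates $\phi$ for suitable $\kappa$. So we may assume $\phi$ has compact support $K$.

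The key input is regularity. Since $X$ is locally compact Hausdorff, every open $U$ is the filtered join of the $V$ with $V\Subset U$. Regularity of $A$ then gives
$$A(U)=\overline{\bigcup_{V\Subset U}A(V)}\ .$$
Fix $\epsilon>0$ and apply this to $U=X\setminus\{x\}$ for each $x\in K$. We obtain $V_{x}\Subset X\setminus\{x\}$ and $a_{x}\in A(V_{x})$ with $\|\phi(x)-a_{x}\|<\epsilon$. Then $\overline{V_{x}}$ is compact and does not contain $x$. By continuity of $\phi$ we choose an open neighbourhood $W_{x}$ of $x$ with $W_{x}\cap \overline{V_{x}}=\emptyset$ and $\|\phi(y)-a_{x}\|<2\epsilon$ for all $y\in W_{x}$.

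Next we glue. Cover $K$ by finitely many $W_{x_{1}},\dots,W_{x_{n}}$ and choose $\chi_{i}\in C_{c}(W_{x_{i}})$ with values in $[0,1]$, $\sum_{i}\chi_{i}\le 1$ everywhere and $\sum_{i}\chi_{i}=1$ on $K$. Set
$$\psi:=\sum_{i}\chi_{i}\otimes a_{x_{i}}\ .$$
Each summand is a generator of $J$, since $\chi_{i}\in C_{0}(W_{x_{i}})$, $a_{x_{i}}\in A(V_{x_{i}})$ and $W_{x_{i}}\cap V_{x_{i}}=\emptyset$. Hence $\psi\in J$. For $y\in K$ we have $\phi(y)-\psi(y)=\sum_{i}\chi_{i}(y)(\phi(y)-a_{x_{i}})$, and only indices with $y\in W_{x_{i}}$ contribute, so the norm is at most $2\epsilon$. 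For $y\notin K$ we have $\phi(y)=0$, and $\|\psi(y)\|\le\sum_{i}\chi_{i}(y)\|a_{x_{i}}\|$ need not be small.

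This last point is the main obstacle: it needs care at the boundary of $K$. I would handle it by first shrinking the supports so that each $\chi_{i}$ is supported where $\phi$ is nearly as in the estimate. Concretely, replace $K$ by $K_{\delta}:=\{\|\phi\|\ge\delta\}$, whose values nearby are controlled, and multiply $\phi$ by a cutoff equal to $1$ on $K_{\delta}$ and supported in $\{\|\phi\|>\delta/2\}$. Alternatively, demand in addition $\|a_{x}\|\le\|\phi(x)\|+\epsilon$ and choose $W_{x}$ so that $\|\phi(y)\|$ stays within $\epsilon$ of $\|\phi(x)\|$ on $W_{x}$. Then off $K$ every point $y$ with $\chi_{i}(y)\neq0$ satisfies $\|a_{x_{i}}\|\le\|\phi(y)\|+2\epsilon=2\epsilon$. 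This yields $\|\phi-\psi\|\le C\epsilon$, hence $\phi\in J$, and therefore $A$ is good.
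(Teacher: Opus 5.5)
Your argument is correct and follows the paper's proof essentially step by step: a cutoff reduction to compact support, regularity applied to $A(X\setminus\{x\})$, local approximants $a_x$ on neighbourhoods disjoint from the relevant $A(V)$, and a partition of unity to glue them into an element of $J$. The boundary issue you flag is not a real obstacle: your bound $\|\phi(y)-a_{x_i}\|<2\epsilon$ holds on all of $W_{x_i}$, so $\phi(y)-\psi(y)=\sum_i\chi_i(y)(\phi(y)-a_{x_i})$ and the estimate $\|\phi(y)-\psi(y)\|\le 2\epsilon$ hold for every $y$, including $y\notin K$, exactly as in the paper, and neither of your proposed fixes is needed.
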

 \begin{proof}  
 We consider $A$  in $\Open(X)^{\reg}\nCalg$. Let
 $\phi$  be in $I$. We must show that  $\phi\in J$. It suffices to show that
 $\phi$ can be approximated by elements of $J$.
 
 We first show that elements $\phi$  in $I$ can be approximated by elements with compact support. Since the condition $\phi\in I$ is pointwise, for all  $\kappa$  in $C_{0}(X)$ we also have  $\kappa   \phi\in I$.
Fix $\epsilon$ in $(0,\infty)$. 
 The subset $\{x\in X\mid \|\phi(x)\|\ge \epsilon\}$ is compact.
  We can choose $\kappa $   in $C_{c}(X)$ such that $\|\kappa\|\le 1$ and $\kappa\equiv 1$ on this subset.  Then $\|\phi-\kappa \phi\|\le \epsilon  $. Note that $\kappa\phi$ has compact support.

From now one, we assume that $\phi$ has compact support and show that it can be approximated by elements of $J$.

Fix again $\epsilon$ in $(0,\infty)$. 
 For every $x$ in $X$
we have $\phi(x)\in A(X\setminus \{x\})$. By regularity of $A$
we have $$A(X\setminus \{x\})=\overline{\bigcup_{U\Subset  X\setminus \{x\}} A(U)}\ .$$
 Using this and continuity of $\phi$ we can therefore 
  fix an open neighborhood $U_{x}$ of $x$  and $a_{x}$ in $A(X\setminus \bar U_{x})$
  such that   $$\|\phi(x)-a_{x}\|\le \epsilon/2 \ ,\quad 
 \| \phi(x)- \phi(y)\|\le \epsilon/2 $$  for all $y$ in $U_{x}$.
In total, we have
$$\| a_{x}-\phi(y)\|\le \epsilon$$ for all $y$ in $U_{x}$.

Since $\supp( \phi)$ is compact, we can choose a finite
set $(x_{k})_{k\in K}$ of points in $X$ such that $\supp( \phi)\subseteq \bigcup_{k\in K}U_{x_{k}}$.
We further choose a partition of unity $ (\chi_{k})_{k\in K}$ subordinated to this covering.
Then for every $y$ in $X$ we have  $$ \phi(y)=\sum_{k\in K}\chi_{k}(y)  \phi(y)  \ .$$
We define
$$\tilde \phi(y):=\sum_{k\in K}  \chi_{k}(y) \otimes a_{x_{k}} \ .$$
Then,
$$\|\phi(y)-\tilde \phi(y)\|\le \sum_{k\in K} \chi_{k}(y)  \|\phi(y)- a_{x_{k}}) \|\le 
\sum_{k\in K}\chi_{k}(y) \epsilon=\epsilon\ .$$
 By construction, we have  $\tilde \phi\in J$ and $\| \phi -\tilde \phi\| \le \epsilon$.  

We conclude that every element of $I$ can be approximated by elements of $J$.  
\end{proof}

 Let $X$ be a locally compact Hausdorff space, let  $Y$ be a locale,  and  let $f:X\to Y$ be a map of locales. 
 We let $f^{-1}:\cP(Y)\to \Open(X)$ be the corresponding map of frames. The following result provides   an extension of  the adjunction obtained in \cref{okhprethetgertgtge} to a  bigger class of maps.
 In the following statement,  $f_{!}$ is the morphism of $E$-theory contexts  as in \cref{lkohperghtrgertgertg9}.
 \begin{lem} \label{rgrokpowergwergwerferwf}  If $f^{-1}$ is perfect (\cref{herthertgergtrgtrge}), then we have an adjunction  \begin{equation}\label{boiui0wru90gbgdfb}f^{*}:Y\nCalg\leftrightarrows X\nCalg:f_{!}
\end{equation} 
and $f^{*}$ is a morphism of $E$-theory contexts. 
 \end{lem}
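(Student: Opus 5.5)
Since $f^{-1}:\cP(Y)\to\Open(X)$ is perfect, \cref{tklphertrtgergrgret} gives an adjunction $f^{-1}\dashv f^{-1}_{\sharp}$ in $\Poset^{\prfr}$, so $f^{-1}_{\sharp}:\Open(X)\to\cP(Y)$ preserves finite meets and filtered joins. The plan is to define $f^{*}$ by pulling back along $f^{-1}_{\sharp}$ and then correcting the result with the localizations constructed earlier:
$$f^{*}:=c\circ (f^{-1}_{\sharp})^{*}:Y\nCalg\to \Open(X)^{\reg}\nCalg\to X\nCalg\ .$$
This makes sense because a continuous $Y$-algebra is in particular regular, and pullback along a preframe morphism preserves regularity (\cref{okpherhtrgertgetrg}). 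We must still check that $f^{-1}_{\sharp}$ is a morphism of pointed posets, i.e.\ that $f^{-1}_{\sharp}(X)=\infty_{\cP(Y)}$. This holds since $f^{-1}(\infty)=X$. The functor $c$ is the left adjoint from \cref{okprherhrtgertge}.

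The adjunction is obtained as a composite of two adjunctions. First, $c\dashv \incl$ by \cref{okprherhrtgertge}. Second, we need
$$(f^{-1}_{\sharp})^{*}:Y\nCalg\leftrightarrows \Open(X)^{\reg}\nCalg: (f^{-1})^{*}|\ ,$$
where the right adjoint $(f^{-1})^{*}$, restricted to $X\nCalg$, is exactly $f_{!}$. For this second adjunction I will write down the unit and counit directly from the poset inequalities:
\begin{itemize}
\item For $B\in Y\nCalg$ and $q\in\cP(Y)$, the unit inequality $q\le f^{-1}_{\sharp}f^{-1}(q)$ gives $B(q)\subseteq B(f^{-1}_{\sharp}f^{-1}q)$, which yields the unit $B\to f_{!}f^{*}B$ after composing with the unit of $c$.
\item For $A$ and $U\in\Open(X)$, the counit inequality $f^{-1}f^{-1}_{\sharp}(U)\subseteq U$ gives $A(f^{-1}f^{-1}_{\sharp}U)\subseteq A(U)$, which yields the counit.
\end{itemize}
Both are identity maps on the underlying algebras, and the triangle identities reduce to the poset triangle identities. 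Concretely, a morphism $B\to (f^{-1})^{*}A$ sends $B(q)$ into $A(f^{-1}q)$. Such a morphism is the same as a morphism $(f^{-1}_{\sharp})^{*}B\to A$ sending $B(f^{-1}_{\sharp}U)$ into $A(U)$, because $f^{-1}_{\sharp}$ is monotone and adjoint to $f^{-1}$.

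Finally, I will show that $f^{*}$ is $E$-admissible, checking each part in turn.
\begin{itemize}
\item $(f^{-1}_{\sharp})^{*}$ is a morphism of $\cN$-modules that preserves finite products, filtered colimits and exact sequences, because all of these are computed pointwise in $\Open(X)^{\reg}\nCalg$ and $Y\nCalg$ (\cref{gwerwkeropfweferf}, \cref{kohperthertgertgetrg}, \cref{kohperthertgertgetrg1}).
\item $c$ preserves filtered colimits and finite products because it is a left adjoint. For finite products this also follows from the explicit formula $(C_{0}(X)\otimes A)/I$, using that finite products are sums.
\item $c$ commutes with $-\otimes N$ by exactness of the maximal tensor product.
\item The main obstacle is exactness, since $c$ need not preserve exact sequences in general. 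Here I will invoke \cref{kophtrtegertgertgte}: regular objects are good. Then \cref{gjweroijgorefwerf} shows that $c$ restricted to good objects sends sequences exact in $\Open(X)\nCalg$ to exact sequences in $X\nCalg$. An exact sequence in $Y\nCalg$ is pointwise exact, so its pullback is pointwise exact in $\Open(X)^{\reg}\nCalg$, and hence exact in $\Open(X)\nCalg$.
\end{itemize}
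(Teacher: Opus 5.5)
Your proposal is correct and follows the paper's proof: $f^{*}=c\circ(f^{-1}_{\sharp})^{*}$, the adjunction is the composite of the one induced by $f^{-1}\dashv f^{-1}_{\sharp}$ on regular algebras with $c\dashv\incl$, and exactness of $f^{*}$ comes from regular objects being good (\cref{kophtrtegertgertgte}) together with \cref{gjweroijgorefwerf}. There are three minor points. First, left-adjointness of $c$ does not give preservation of finite products, since these are not coproducts here, so keep your explicit-formula argument instead. Second, the middle adjunction should be stated between $\cP(Y)^{\reg}\nCalg$ and $\Open(X)^{\reg}\nCalg$, as in the paper, because $(f^{-1})^{*}$ of a merely regular $X$-algebra need not be continuous; your hom-set bijection holds in that generality, and you restrict to $Y\nCalg$ only after composing with $c\dashv\incl$. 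Third, $c(A\otimes N)\cong c(A)\otimes N$ needs slightly more than exactness, e.g.\ $I=J$ for the regular objects $A$ and $A\otimes N$.
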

\begin{proof}
 
   By  \cref{hiuqhiufhweifqwefqwefqw}, we have the adjunction  
 \begin{equation}\label{gurt9gertgertg}f^{-1}:\cP(Y)\leftrightarrows \Open(X):f_{\sharp}\end{equation}   in $\Poset^{\uc}$, 
 where $\cP(Y)$ denotes the poset corresponding to $Y$. Since $(-)^{\uc}\nCalg$ is a functor to $E$-theory contexts defined on $\Poset^{\uc}$ we get   an adjunction
  \begin{equation}\label{oijviowfvwefd}f^{\sharp}:\cP(Y)\nCalg\leftrightarrows \Open(X)^{\uc}\nCalg:f_{!}\ ,
\end{equation}
 where $f_{!}=(f^{-1})^{*}$ and $f^{\sharp}=(f_{\sharp})^{*}$ are morphisms of $E$-theory contexts.
 If $f^{-1}$  is perfect, then 
 $f_{\sharp}$ preserves filtered joins,
 and the adjunction \eqref{gurt9gertgertg} lifts to one in $\Poset^{\prfr}_{(*)}$ (see \cref{tklphertrtgergrgret}). Since $(-)^{\reg}\nCalg$ is a functor to $E$-theory contexts defined on $\Poset_{(*)}^{\prfr}$ we then get 
 an adjunction
 \begin{equation}\label{vodsisdf0vuis0d9fuvsdfvsdfvs}f^{\sharp}:\cP(Y)^{\reg}\nCalg\leftrightarrows \Open(X)^{\reg}\nCalg:f_{!}\ ,
\end{equation}
where again $f_{!}$ and $f^{*}$ are morphisms of $E$-theory contexts. 
 In particular, $f^{\sharp}$ takes values in $\Open(X)^{\gd}\nCalg$ by \cref{kophtrtegertgertgte}.
 Composing the adjunctions  \eqref{vodsisdf0vuis0d9fuvsdfvsdfvs} and  \eqref{gioujgiotrgwergwerfwerf},
 we  finally get the adjunction
 \begin{equation}\label{boiui0wru90gbgdfb}f^{*}:Y\nCalg\leftrightarrows X\nCalg:f_{!}
\end{equation} 
 with $f^{*}=c\circ f^{\sharp}$ and $c$ as in \eqref{gioujgiotrgwergwerfwerf}. 
 Since $c$ preserves filtered colimits, finite products, and also exact sequences when restricted to 
 $\Open(X)^{\gd}\nCalg$ we see that $f^{*}$ is also a morphism of $E$-theory contexts.
   \end{proof}

\subsection{Proof of \cref{kophejhthgerthetrh} }\label{kophejhthgt5t5erthetrh1}
By \cref{ijiopgwergrgwferfw}, we have   the lax symmetric monoidal functor
$$(-)\nCalg:\LCH^{\op}\to  \Cat
\ , \quad X\mapsto  X\nCalg\ , \quad  f\mapsto f^{*}\ .$$
 We must show that it satisfies the conditions listed in \cref{tohpertgertgrteg}. 
 
 We clearly have $\{*\}\nCalg\simeq \nCalg$ as a symmetric monoidal category.

 Recall from \cref{gwergerfwerfw} the definition of an $E$-admissible map.
 \begin{lem} \label{hlepertgertger}For every morphism $f:X\to Y$ in $\LCH$
the functor 
$f^{*}:X\nCalg\to Y\nCalg$ is $E$-admissible. \end{lem}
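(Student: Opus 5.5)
We have to show that for $f:X\to Y$ in $\LCH$ the pull-back $f^{*}$ preserves finite products, filtered colimits and exact sequences. (The statement writes $f^{*}:X\nCalg\to Y\nCalg$; the intended direction is $f^{*}:Y\nCalg\to X\nCalg$.) The plan is to factor $f$ as the graph embedding followed by a projection,
$$X\stackrel{\gamma}{\to} X\times Y\stackrel{\pr_{Y}}{\to} Y\ ,$$
and use $f^{*}\cong \gamma^{*}\pr_{Y}^{*}$ from \cref{kogpwererfwef}. Then it suffices to treat two cases separately: closed embeddings, and projections $\pr_{Y}:X\times Y\to Y$. The graph $\gamma$ is a closed embedding because $Y$ is Hausdorff.

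\textbf{Projections.} I would use the lax symmetric monoidal structure from \cref{ijiopgwergrgwferfw}. There should be a natural isomorphism
$$\pr_{Y}^{*}B\cong C_{0}(X)_{X}\boxtimes B\ ,$$
with $C_{0}(X)_{X}$ as in \cref{okhperhrethregtrg}. To check it, compare fibres: both sides have fibre $B(y)$ at $(x,y)$, by \eqref{fqweihiquwoefqwef} on one side and \eqref{griegowerjkfoiwerfwerf} on the other. Both are generated inside $\prod_{(x,y)}B(y)$ by products of constant sections with $C_{0}(X\times Y)$, so they agree.

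It then remains to see that $C_{0}(X)_{X}\boxtimes -$ is $E$-admissible. By \cref{kophrtgretgggrethrthe}, filtered colimits and exact sequences in $X\nCalg$ are detected on underlying algebras. The underlying algebra of $A\boxtimes B$ is the maximal tensor product $A\otimes B$, and $C_{0}(X)\otimes -$ preserves filtered colimits, finite products and exact sequences in $\nCalg$. Finite products in these categories are formed on underlying algebras, and $\otimes$ preserves them.

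\textbf{Closed embeddings.} Let $i:Z\to W$ be a closed embedding. By \cref{okhprethetgertgtge} we have the adjunction $i^{*}\dashv i_{!}$, so $i^{*}$ preserves all colimits that exist, in particular filtered colimits and cokernels. I would identify $i^{*}B$ concretely with the quotient $B/B(W\setminus Z)$: its fibres are $B(z)$, and it is generated by constant sections, so \cref{jgwrieogjwefwefwrf} and \cref{kopherhrtgertgertg}\ldots\ rather \cref{hetrertgrtgrtegetg} apply. Equivalently, $i^{*}$ is the functor of \cref{okhpzrhtzjrtzj1}, which is already known to be a morphism of $E$-theory contexts.

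Preservation of finite products follows from the explicit formula, since $(B\times B')(U)=B(U)\times B'(U)$. Preservation of exact sequences follows from the $3\times 3$ lemma applied to
$$0\to A(W\setminus Z)\to B(W\setminus Z)\to C(W\setminus Z)\to 0\ ,$$
which is exact because exact sequences in $W\nCalg$ are pointwise (\cref{kophrtgretgggrethrthe}). The same argument is used in the proof of \cref{kohpertghertgtrege}.

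The main obstacle is the identification $\pr_{Y}^{*}B\cong C_{0}(X)_{X}\boxtimes B$, and in particular that it is natural in $B$ and compatible with the structure maps. I expect this to come from the filler of \eqref{fwewfef} applied to $X\to *$, together with the unit constraint $C_{0}(*)_{*}\boxtimes B\cong B$.

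An alternative route for the projection case is also available. Since $\pr_{Y}^{-1}$ is perfect only when $X$ is compact, I would first reduce to compact $X$ via the one-point compactification $X\subseteq X^{+}$. This uses that $j^{*}$ for an open embedding is a right Bousfield localization that is already $E$-admissible (\cref{okhpzrhtzjrtzj}). After the reduction, \cref{rgrokpowergwergwerferwf} gives $E$-admissibility directly.
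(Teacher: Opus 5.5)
Your proof is correct, but it uses a different decomposition than the paper. The paper factors $f$ as an open inclusion followed by a proper map. Open inclusions are handled as in your alternative route, via \cref{okhpzrhtzjrtzj}. For a proper map the paper writes $f^{*}\simeq c\circ f^{\sharp}\circ\mathrm{incl}$ and needs the regular/good machinery: $f^{\sharp}$ is pull-back along the preframe morphism $f_{\sharp}$, regular objects are good by \cref{kophtrtegertgertgte}, and $c$ preserves exact sequences on good objects by \cref{gjweroijgorefwerf}.

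Your graph factorization $X\to X\times Y\to Y$ avoids all of this. The non-proper projection is absorbed into the external product through the natural isomorphism $\mathrm{pr}_{Y}^{*}B\cong C_{0}(X)_{X}\boxtimes B$. This isomorphism is right: both sides are the same subalgebra of $\prod_{(x,y)}B(y)$, because $C_{0}(X\times Y)\cdot\mathrm{pr}_{X}^{*}C_{0}(X)$ is dense in $C_{0}(X\times Y)$. The $E$-admissibility of $C_{0}(X)_{X}\boxtimes-$ is then exactly \cref{khrptohertgertgrtge}. Its proof in the paper does not depend on the present lemma; it uses only the exactness of the maximal tensor product and \cref{kophrtgretgggrethrthe}, so there is no circularity.

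The only proper maps you need are closed embeddings. There the cleanest justification is your second one. The Fell-bundle $i^{*}$ of \cref{okhprethetgertgtge} and the functor of \cref{okhpzrhtzjrtzj1} are both left adjoint to the same $i_{!}$, so they agree, and the latter is already a morphism of $E$-theory contexts. With that, the explicit quotient description $B\mapsto B/B(W\setminus Z)$ and the $3\times 3$ argument become optional.

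Your route is more elementary for morphisms of locally compact Hausdorff spaces. The paper's route establishes $E$-admissibility of $c\circ f^{\sharp}$, which it needs anyway for maps to finite locales in \cref{rgrokpowergwergwerferwf} and \cref{herthetrgertgetrg}. No product or tensor trick is available there.
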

\begin{proof}
We must show that $f^{*}$ preserves finite products, filtered colimits, and exact sequences.

We first assume that  $f$ is proper. Then $f^{*}$ is a left-adjoint and preserves colimits.
In order to see that it preserves exact sequences and finite products,
we use the bold  commutative diagram and the adjoints as indicated.
 $$ \xymatrix{ \Open(X)^{\reg}\nCalg\ar[r]^{\perp}_{f_{!}}  \ar@/^-0.5cm/@{..>}[d]^{ \dashv}_{c} & \ar@/^-0.5cm/@{..>}[d]^{ \dashv}_{c}\ar@/^-0.5cm/@{..>}[l]_{f^{\sharp}}\Open(Y)^{\reg}\nCalg \\ \ar[u]_{\incl'}X\nCalg\ar[r]_{f_{!}}^{\perp} &  \ar@/^-0.5cm/@{..>}[l]_{f^{*}}  Y\nCalg\ar[u]_{\incl}} \ .
  $$
Since $\incl$ is fully faithful, we get $f^{*}\simeq c\circ  f^{\sharp}\circ \incl$, where $f^{\sharp}$ is the pull-back along the    right-adjoint in the poset adjunction $$f^{-1}:\Open(Y)\leftrightarrows  \Open(X):f_{\sharp}$$ which is a preframe morphism since $f$ is proper, and therefore $f^{-1}$ is a  perfect frame morphism (see \cref{tklphertrtgergrgret} and \cref{kopjrtzjzthzhtzhrtzhtz}).
Now $c$, $f^{\sharp}$ and $\incl$ are all $E$-admissible.   In detail, 
in order to see that  $c$ preserves  exact sequences, we use that regular objects are good and that $c$ restricted to good objects preserves
exact sequences, see  \cref{gjweroijgorefwerf}. Further $c$ preserves finite products by an inspection of the constructions.
Since $c$ is a left-adjoint, it preserves  colimits.

By a combination of
\cref{kohperthertgertgetrg}  and \cref{kohperthertgertgetrg1}, the functor $\incl$ 
  is $E$-admissible.
The morphism $f^{\sharp}$  is the pull-back morphism along a preframe morphism,
and is  therefore   $E$-admissible by \cref{okpherhtrgertgetrg}.
  We conclude that $f^{*}$ is $E$-admissible    provided $f$ is proper.

We now assume that $f:X\to Y$ is an open embedding. By  \cref{kopherhrtgertge},
the functor $f^{*}$ is the right-adjoint of the functor $f_{!}$. By  \cref{okhpzrhtzjrtzj},
the right-adjoint of $f_{!}$ is $E$-admissible. Therefore 
 $f^{*}$ is $E$-admissible.
 
Since every morphism $f$ in $\LCH$ is a composition of a proper map and an open inclusion, we conclude that $f^{*}$  is $E$-admissible  for every morphism $f$ in $\LCH$. 
   \end{proof}

\begin{lem}\label{khrptohertgertgrtge} For $X$ and $Y$ in $\LCH$ and
 every $A$ in $X\nCalg$ the functor $$A\boxtimes -:Y\nCalg\to (X\times Y)\nCalg$$   is $E$-admissible. 
\end{lem}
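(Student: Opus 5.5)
Since $A\boxtimes B$ has underlying $C^{*}$-algebra $A\otimes B$ (maximal tensor product, up to the identification with its image under $\ev$ in \eqref{her90tiertgtegertgerg}), and since limits, filtered colimits and exact sequences in $X\nCalg$, $Y\nCalg$ and $(X\times Y)\nCalg$ can be detected on underlying algebras by \cref{kophrtgretgggrethrthe}, the plan is to reduce everything to the known $E$-admissibility of $A\otimes -:\nCalg\to\nCalg$ (\cref{kophehrtrgertg}). Concretely, write $U_{X}:X\nCalg\to\nCalg$ for the forgetful functor $B\mapsto B(X)$. Then $U_{X\times Y}(A\boxtimes B)\cong A\otimes U_{Y}(B)$ naturally in $B$, with the $C_{0}(X\times Y)$-structure $m\otimes n$.

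First, filtered colimits. Let $(B_{i})_{i}$ be a filtered system in $Y\nCalg$ with colimit $B$. By \cref{kophrtgretgggrethrthe}.\ref{jiggtre}, $U_{Y}(B)\cong\colim_{i}U_{Y}(B_{i})$, and the structure map is the one induced on colimits. Since $A\otimes-$ preserves filtered colimits, $A\otimes B\cong\colim_{i}A\otimes B_{i}$. Moreover, the induced multiplication $C_{0}(X\times Y)\otimes\colim_{i}(A\otimes B_{i})\to\colim_{i}(A\otimes B_{i})$ agrees with $m\otimes n$. Applying \cref{kophrtgretgggrethrthe}.\ref{jiggtre} again on $X\times Y$ identifies $A\boxtimes B$ with $\colim_{i}A\boxtimes B_{i}$. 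The only thing to check is that the canonical comparison map is the one from the colimit description, which is a direct unfolding.

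Second, exact sequences. Given an exact square in $Y\nCalg$, \cref{kophrtgretgggrethrthe}.\ref{jiggtre2},\ref{jiggtre3} show that the underlying square $0\to B'\to B\to B''\to 0$ is exact in $\nCalg$. Exactness of the maximal tensor product makes $0\to A\otimes B'\to A\otimes B\to A\otimes B''\to0$ exact. The same proposition, applied in $(X\times Y)\nCalg$, then shows that the image square is both cartesian and cocartesian. Finite products are treated the same way: a product in $Y\nCalg$ has underlying algebra $\prod U_{Y}(B_{k})$, because products are limits and these are computed pointwise in the ambient categories. Since $A\otimes-$ preserves finite products, so does $A\boxtimes-$.

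The main obstacle is making sure that products and (co)limits in $Y\nCalg$ really are the ones seen on underlying algebras, i.e.\ that there are no unexpected categorical constructions, and that the image of $\ev$ does not distort this. For products I would argue directly: $\prod_{k}B_{k}$ with the diagonal $C_{0}(Y)$-action is again surjective, so it lies in $Y\nCalg$ and satisfies the universal property there by the inclusion \eqref{gerfwrgegrg}. For the identification of $A\boxtimes B$ with its image under $\ev$, I would invoke the lemma from the proof of \cref{ijiopgwergrgwferfw}, which gives injectivity of $\ev$. As a result, the replacement is an isomorphism in $(X\times Y)\nCalg$ and does not affect any of the above.
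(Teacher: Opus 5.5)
Your proposal is correct and follows essentially the same route as the paper: you reduce exactness and filtered colimits to the underlying algebras via \cref{kophrtgretgggrethrthe}, then use that the maximal tensor product $A\otimes-$ on $\nCalg$ is exact and commutes with filtered colimits. The only minor difference is for finite products. The paper appeals to the quotient description \eqref{gwepoiokgwerfewrfwergwegwerfwf}, while you argue directly that finite products in $Y\nCalg$ and $(X\times Y)\nCalg$ are formed on underlying algebras; this is equally valid and also follows from the earlier closure-under-finite-products lemmas for the subcategories in the chain \eqref{hrtgrtgerge}.
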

\begin{proof} We start by showing that it preserves exact sequences.
Let $$0\to B\to C\to D\to 0$$ be an exact sequence in $Y\nCalg$.
We must show that then
$$0\to A\boxtimes B\to A\boxtimes C\to A\boxtimes D\to 0$$ is exact in $(X\times Y)\nCalg$.
 By \cref{kophrtgretgggrethrthe}.\ref{jiggtre2} and \cref{kophrtgretgggrethrthe}.\ref{jiggtre3},  the exactness of this sequence can be checked  on the level of underlying algebras. In view of the construction of $\boxtimes$ given in the proof of \cref{ijiopgwergrgwferfw},
 the assertion follows from the fact that
$$0\to A\otimes B\to A\otimes C\to A\otimes D\to 0$$
is exact in $\nCalg$ since the maximal tensor product preserves exact sequences.

Using \eqref{gwepoiokgwerfewrfwergwegwerfwf} it is easy to see that $A\boxtimes -$ preserves  finite products.

In order to show that    that $A\boxtimes -$  preserves  filtered colimits,
we again use \cref{kophrtgretgggrethrthe}.\ref{jiggtre} and that the maximal tensor product $A\otimes - $
on $\nCalg$ preserves filtered colimits.  
  \end{proof}

\cref{hlepertgertger} and \cref{khrptohertgertgrtge} verify the remaining conditions of  \cref{tohpertgertgrteg}. 

The following consequence of  \cref{kophejhthgerthetrh}  is surely well-known, but
a proof is difficult to locate in the literature.

\begin{kor} For every locally compact Hausdorff space $X$ and  $A$ in $X\nCalg$
the  (maximal balanced) tensor  product $$A\otimes_{X}-:X\nCalg\to X\nCalg$$
preserves exact sequences and filtered colimits. 
 \end{kor}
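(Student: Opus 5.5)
The plan is to write $A\otimes_X -$ as the composite
\[
A\otimes_{X}-\;\cong\; d^{*}\circ (A\boxtimes -)\;:\; X\nCalg\to (X\times X)\nCalg\to X\nCalg ,
\]
where $d:X\to X\times X$ is the diagonal. This identification is the remark following \cref{herthrtegtrgtrgtrhertherth}. It is natural in the second variable, because both the bifunctor $\boxtimes$ and the pullback $d^{*}$ were built functorially inside the products $\prod_{x}A(x)\otimes B(x)$.

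Once the functor is written this way, both claimed properties follow formally, since $E$-admissible functors compose. First, \cref{khrptohertgertgrtge}, applied with $Y=X$, says that $A\boxtimes -$ is $E$-admissible; so it preserves finite products, filtered colimits and exact sequences. Second, \cref{hlepertgertger}, applied to the morphism $d$ in $\LCH$, says that $d^{*}$ is $E$-admissible. The composite therefore preserves exact sequences and filtered colimits, and also finite products.

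Concretely, I would check this in three steps:
\begin{enumerate}
\item Fix the natural isomorphism $A\otimes_X B\cong d^{*}(A\boxtimes B)$ from the definition of the induced symmetric monoidal structure in \cref{herthrtegtrgtrgtrhertherth}.
\item Invoke \cref{khrptohertgertgrtge}.
\item Invoke \cref{hlepertgertger}.
\end{enumerate}

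In the second step one has to remember that exact sequences in $X\nCalg$ are the bicartesian squares, and that by \cref{kophrtgretgggrethrthe} these coincide with exact sequences of the underlying algebras. So there is no ambiguity about which notion of exactness is being preserved.

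The only point needing care is the proof of \cref{hlepertgertger} in the case of $d$. The diagonal of a Hausdorff space is a closed embedding, so $d$ is proper. In that case the argument factors $d^{*}\simeq c\circ d^{\sharp}\circ\incl$ and uses that $c$ preserves exact sequences on good objects (\cref{gjweroijgorefwerf}, \cref{kophtrtegertgertgte}). This is already established, so I expect no genuine obstacle; the corollary is a formal consequence of \cref{kophejhthgerthetrh}.
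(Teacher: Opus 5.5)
Your proposal is correct and is exactly the paper's argument: the paper writes $A\otimes_{X}-\cong d^{*}\circ(A\boxtimes -)$ and concludes because $A\boxtimes-$ (\cref{khrptohertgertgrtge}) and $d^{*}$ (\cref{hlepertgertger}) are both $E$-admissible, so their composite is too. Your extra observation that $d$ is a closed embedding, hence proper, is accurate but not needed, since \cref{hlepertgertger} already covers every morphism in $\LCH$.
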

 \begin{proof}
 Using the diagonal $d:X\to X\times X$ we have an isomorphism
 of functors
  $$A\otimes_{X}-\cong  d^{*}\circ (A\boxtimes-)\ .$$ Since the right-hand side 
is a composition of two $E$-admissible functors, it is itself $E$-admissible. So $A\otimes_{X}-$  preserves in particular
filtered colimits and exact sequences.
 \end{proof}

\subsection{Approximation by finite spaces}

For a locally compact Hausdorff space
the collection of evaluations $$(X\nCalg\ni A\mapsto A(U)\in \nCalg)_{U\in \Open(X)}$$
is clearly conservative. The fact that this remains to be true on the level of $E$-theory
is a crucial input for the proof of our main \cref{trkopherthrrrrgertg}.
  In view of our construction of $E$-theory by forcing the universal properties,
  we do not have a direct access to the mapping spaces. The only way to get some control
  is to use adjunctions like in \cref{okpthpertherterg} below in order to relate mapping spaces in the $E$-theory
of complicated spaces with those   in the $E$-theory of simpler spaces.  
The proof of the following 
  theorem,  which states in a condensed way that the collection of evaluations
 $$(E(X)\ni A\mapsto A(U)\in \EE)_{U\in \Open(X)}$$ is jointly conservative,
 applies this idea.
  Recall the morphism $s_{X}:E(X)\to \CoShv(X,\EE)$ from  \cref{herthertgrtger}.
   
 \begin{theorem}\label{herthetrgertgetrg}
 For every $X$ in $\LCH$ the functor $s_{X}:E(X)\to \CoShv(X,\EE)$ is conservative.
\end{theorem}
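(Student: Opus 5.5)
The plan is to reduce to finite locales, where \cref{gweoirgjoergeferggr} gives full control, by approximating $X$ by its finite subframes. Since $E(X)$ and $\CoShv(X,\EE)$ are stable and $s_{X}$ is exact, it suffices to show: if $A$ is in $E(X)$ with $s_{X}(A)\simeq 0$, i.e.\ $A(U)\simeq 0$ for all $U$ in $\Open(X)$, then $A\simeq 0$.

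\textbf{Step 1 (finite quotients).} For every finite subframe $P_{i}\subseteq \Open(X)$, for instance the one generated under finite meets and joins by finitely many opens, let $Y_{i}$ be the corresponding finite locale and $f_{i}:X\to Y_{i}$ the locale map with $f_{i}^{-1}$ the inclusion. These form a filtered system. By naturality of $s$ (\cref{herthertgrtger}), $s_{Y_{i}}(f_{i,!}A)(p)\simeq s_{X}(A)(p)\simeq 0$ for all $p$ in $P_{i}$. By \cref{gweoirgjoergeferggr}.\ref{jiogwrtgergwerfwerf1}, $s_{Y_{i}}$ is an equivalence, hence $f_{i,!}A\simeq 0$ in $E(Y_{i})$ for every $i$.

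\textbf{Step 2 (a left inverse in the colimit).} We must recover $A$ from the $f_{i,!}A$. The maps $f_{i}^{-1}$ are in general not perfect, so \cref{rgrokpowergwergwerferwf} does not directly give a left adjoint of $f_{i,!}$. Instead we will build, at the level of $C^{*}$-algebras, functors
\[ g_{i}:Y_{i}\nCalg\to X\nCalg \]
that spread an object over $X$ using chosen witnesses of relations $V\Subset U$ among the generating opens. Concretely, we pick index data consisting of finite families of pairs $V_{k}\Subset U_{k}$ together with cut-off functions, take the regular $\Open(X)$-algebra obtained by declaring the value on $W$ to be the join of the values on those $p$ with $p\Subset W$ (as in the construction of $R$ in \cref{khortetrggtgeg}), and then apply the localization $c$ from \cref{okprherhrtgertge}. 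By \cref{kophtrtegertgertgte} and \cref{gjweroijgorefwerf}, $c$ is exact on regular objects, so $g_{i}$ should be $E$-admissible (compare the proof of \cref{hlepertgertger}). It therefore induces $E(g_{i}):E(Y_{i})\to E(X)$ by \cref{kophertgretger9}.

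\textbf{Step 3 (the colimit).} We then show that there are natural maps $g_{i}f_{i,!}B\to B$ for $B$ in $X\nCalg$, compatible in $i$, with
\[ \colim_{i} g_{i}f_{i,!}B\cong B . \]
This is checked on underlying algebras via \cref{kophrtgretgggrethrthe}.\ref{jiggtre}, using continuity and regularity of $B$: $B(U)=\overline{\bigcup_{V\Subset U}B(V)}$, and every element is approximated by finite partitions of unity, as in the proof of \cref{kophtrtegertgertgte}. As in the proof of \cref{okphterthertg}, the universal property of $\ee_{X}$ and the fact that the relevant functors preserve colimits upgrade this to an equivalence of endofunctors
\[ \colim_{i}E(g_{i})\circ f_{i,!}\simeq \id_{E(X)} . \]
Combined with Step 1 this gives $A\simeq \colim_{i}E(g_{i})f_{i,!}A\simeq 0$.

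The main obstacle is Step 2 together with the naturality in Step 3. We need $g_{i}$ genuinely $E$-admissible, in particular exact, and we need the transformations $g_{i}f_{i,!}\to\id$ to be natural in $i$ and to exhibit a filtered colimit. If refining the index set by the witnessing data does not give a strict filtered system, the fallback is to work with a cofinal filtered system and check only that the induced map on $\colim$ is an isomorphism pointwise on fibres via \cref{lpherhrtgert}.
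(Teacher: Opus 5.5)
Your argument is correct in outline. It departs from the paper exactly at the step you flag.

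\textbf{What the paper does.} The paper asserts that $f^{-1}:\cP(Y)\to\Open(X)$ is perfect whenever $Y$ is finite. From \cref{rgrokpowergwergwerferwf} it then gets genuine left adjoints $f_I^{*}=c\circ f_I^{\sharp}$ of $f_{I,!}$. It shows $\colim_I f_I^{*}f_{I,!}\simeq\id$ by a Hom-set computation. This makes $E(X)\to\lim_I E(Y_I)$ fully faithful and the $f_{I,!}$ jointly conservative, and its last step is your Step 1.

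\textbf{Your suspicion is justified.} If $f_\sharp$ preserved filtered joins, every $p\in P$ would be compact in $\Open(X)$. Indeed, for a filtered cover $(W_k)$ of $p$, the elements $f_\sharp(W_k\cap p)$ form a filtered family in the finite poset $P$ with join $f_\sharp(p)=p$, so one of them equals $p$. This already fails for $P=\{\emptyset,X\}$ with $X$ non-compact, and for every non-trivial finite subframe of $\Open([0,1])$. For $X=\mathbb{R}$ the functor $f_{!}:X\nCalg\to\nCalg$ does not even preserve countable products: the product of copies of $C_{0}(\mathbb{R})$ in $\mathbb{R}\nCalg$ misses the sequence of translated bumps. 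So no left adjoint exists at the level of contexts.

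\textbf{What your route buys.} You only need $E$-admissible functors $g_i$ with a compatible cocone $g_if_{i,!}\to\id$ whose colimit is $\id$, so the problem never arises. What you give up is full faithfulness of $E(X)\to\lim_IE(Y_I)$, which conservativity does not require.

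\textbf{Simplifying Step 2.} The witnesses and cut-off functions play no role. Put $\phi_i(W):=\bigvee\{p\in P_i\mid p\Subset W\}$. Three facts hold: this set is closed under finite joins; $p\Subset W$ and $p\Subset W'$ imply $p\Subset W\cap W'$; and $\overline{\phi_i(W)}$ is a compact subset of $W$. Hence $\phi_i:\Open(X)\to P_i$ preserves non-empty finite meets and filtered joins, though not the top. By \cref{okpherhtrgertgetrg}, $\phi_i^{*}$ (precomposed with the $E$-admissible inclusion $Y_i\nCalg\subseteq P_i^{\reg}\nCalg$) is $E$-admissible with values in $\Open(X)^{\reg}\nCalg$. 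Then $g_i:=c\circ\phi_i^{*}$ is $E$-admissible by \cref{kophtrtegertgertgte} and \cref{gjweroijgorefwerf}. Before applying $c$, its value on $W$ is exactly your join over $p\Subset W$.

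\textbf{Simplifying Step 3.} For $P_i\subseteq P_{i'}$, the inclusions $B(\phi_i(W))\subseteq B(\phi_{i'}(W))\subseteq B(W)$ form a strictly functorial cocone $\phi_i^{*}f_{i,!}B\to\incl(B)$ in $\Open(X)^{\reg}\nCalg$. Its pointwise filtered colimit $\overline{\bigcup_iB(\phi_i(W))}$ contains every $B(V)$ with $V\Subset W$, since each such $V$ lies in some $P_i$. By regularity it therefore equals $B(W)$. Applying the left adjoint $c$ and using $c\circ\incl\cong\id$ gives $\colim_i g_if_{i,!}B\cong B$ at once. You then need no partitions of unity, no fibrewise checks, and no fallback to a cofinal subsystem. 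The upgrade to $E(X)$ via the universal property of $\ee_X$ goes through as you describe.
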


\begin{proof} 
  Let $X$ be a locally compact Hausdorff space, and let  $f:X\to Y$ be a  map to a finite locale. Since $Y$ is finite, the map   $f^{-1}:\cP(Y)\to \Open(X)$ is perfect. Consequently, by 
 \cref{rgrokpowergwergwerferwf} 
 the adjunction  \begin{equation}\label{hrtehokpergtre}f^{*}:Y \nCalg \leftrightarrows X\nCalg:f_{!}\ .
\end{equation}
 is   an adjunction of $E$-theory contexts.

%
%
%
%

\begin{kor}\label{okpthpertherterg} If $X$ is a locally compact Hausdorff space, and $f:X\to Y$ is a morphism to a finite locale, then
  we have an adjunction 
\begin{equation}\label{hrtehovfvfvfvkpergtre}f^{*}:E(Y) \leftrightarrows E(X):f_{!}\ .
\end{equation}
\end{kor}

\begin{lem}\label{kohprthergetrgetrg}
If $X$  is a topological space, then
there exists a filtered system $(Y_{I})_{I\in \bI}$ of finite sober topological spaces such that
$$ \colim_{I\in \bI}  \Open(Y_{I})\cong   \Open(X)  $$ in $\Poset$.
\end{lem}
\begin{proof}
We let 
$\bI$ be the filtered system of finite subframes $(\Open(Y_{I}))_{I\in \bI}$ of  $\Open(X)$.
 The set of  points of $\Open(Y_{I})$ will be denoted by    $Y_{I}$. These points   are the frame morphisms $y:\Open(Y_{I})\to \{ 0\le 1\}$. For every $x$ in $X$ we get a point $$f_{I}(x): U\mapsto \left\{ \begin{array}{cc}0 & x\not\in U\\ 1& x\in U\end{array}\right.\ .$$  
  We claim that $\Open(Y_{I})$ is  spatial. In fact,  if $U,V$ are in $\Open(Y_{I})$ and $U\not\subseteq V$, then there exists $x$ in $V\setminus U$. Then $f_{I}(x)(V)=1$ and $f_{I}(x)(U)=0$.
  It follows that
 $\Open(Y_{I})$ is precisely the topology on the topological space $Y_{I}$ and
 $f_{I}:X\to Y_{i}$ is a continuous map.
 
 An inclusion $I\subseteq I'$ in $\bI$ is an inclusion of frames, and  hence induces a surjective map of topological spaces $f_{I',I}:Y_{I'}\to Y_{I}$.   They are the structure maps of the functor
 $$\bI^{\op}\to \Locale\ , \quad I\mapsto  Y_{I}\ .$$
 For every $I$ in $\bI$ we have  a surjective map of locales
 $f_{I}:X\to Y_{I}$.
 We claim that these maps present $\Open(X)$ as the colimit
 \begin{equation}\label{gwerfwrefwffwerf}\colim_{I\in \bI} \Open(Y_{I})\stackrel{\cong}{\to}\Open(X) \end{equation} in $\Poset$.
 Indeed, since every open subset of $U$ of $X$ belongs to  $\Open(Y_{I})$ for some $I$ in $\bI$  this is an isomorphism of underlying sets which turns out to be a poset-isomorphism.
%
%
%
%
%
%
%
%
\end{proof}

\begin{rem}
One must be careful, as \eqref{gwerfwrefwffwerf} is not true when one interprets the colimit in frames.
The colimit in frames is bigger.
On other words, we have a map of  locales
$$X\to \lim_{I\in \bI^{\op}} Y_{I}$$ which is not an isomorphism in general. \hB
\end{rem}

 We apply \cref{kohprthergetrgetrg} to a locally compact Hausdorff space $X$.
 \begin{lem}
If $X$ is a locally compact Hausdorff space, then the canonical functor
$$E(X)\to \lim_{I\in \bI^{\op}} E(Y_{I})$$
is fully faithful.
\end{lem}
\begin{proof}
In view of the presence of the adjunctions $f_{I}^{*}\dashv f_{I,!}$ obtained in \cref{okpthpertherterg},
 we   must show that for every $A$ in $E(X)$ the natural transformation
\begin{equation}\label{gkerogkwperfwerfrefwefwerfwef} \colim_{I\in \bI}f_{I}^{*}f_{I,!}A \to  A   \end{equation}   
is an equivalence.  These adjunctions come from adjunctions \eqref{hrtehokpergtre} on the level of $E$-theory contexts.
We  first show that for every $A'$ in $X\nCalg$ the transformation
$$ \colim_{I\in \bI}f_{I}^{*}f_{I,!}A' \to  A'$$ is an isomorphism. Since $c$ commutes with colimits
and $c\circ \incl\simeq \id$ for $\incl:X\nCalg\to \Open(X)^{\reg}\nCalg$
this is equivalent to the assertion that
$$  \colim_{I\in \bI}f_{I}^{\sharp}f_{I,!}  A' \to  A'$$ 
is an isomorphism in $ \Open(X)^{\reg}\nCalg$, where $f^{\sharp}_{I}$ is as in \eqref{vodsisdf0vuis0d9fuvsdfvsdfvs}  and we omitted the functor $\incl$ for better readability.
 Using the adjunctions \eqref{vodsisdf0vuis0d9fuvsdfvsdfvs},
we must show  for every $B'$ in $   \Open(X)^{\reg}\nCalg$ that
$$\Hom_{ \Open(X)^{\reg}\nCalg}(A',B')\cong  \lim_{I\in \bI}  \Hom_{\Open(Y_{I})^{\reg}\nCalg}(f_{I,!}A',f_{I,!}B')\ .$$
 The right-hand side is the intersection over $I$ in $\bI$  of the subsets of  $\Hom_{\nCalg}(A,B)$  of homomorphisms
 which are compatible with the $\Open(Y_{I})^{\reg}\nCalg$-structure. This intersection is precisely
$ \Hom_{ \Open(X)^{\reg}\nCalg}(A',B')$ since every open subset $U$ of $X$ belongs to $\Open(Y_{I})$ for some $I$ in $\bI$.

Since $\ee: X\nCalg\to E(X)$ preserves filtered colimits, we see that
the map \eqref{gkerogkwperfwerfrefwefwerfwef}, is an equivalence for   $A=\ee(A')$  for every $A'$ in $X\nCalg$. 
By the universal property of $\ee$ it is an eqivalence for all $A$ in $E(X)$.
 \end{proof}

\begin{kor}\label{gjkiowgwrefwerfrewf}
The collection of functors 
$$(f_{I,!}:E(X)\to E(Y_{I}))_{I\in \bI} $$ is jointly conservative.
\end{kor}

We now invoke  \cref{gweoirgjoergeferggr}  which states  the equivalences
$$E(Y_{I})\simeq \CoSh(Y_{I},\EE)$$ for all $I$ in $\bI$.
 Since equivalences on cosheaves are detected objectwise,  we conclude that
the collection 
$$\left(( F\mapsto F(U)) : E(Y_{I}) \to \EE\right)_{U\in \Open(Y_{I})}$$
is jointly conservative.
By  unfolding definitions, we see that for $U$ in $\Open(Y_{I})$ we have the equivalence
$$s_{X}(A)(U)\simeq s_{Y_{I}}(f_{I,!}A)(U) \ .$$
   Combining this with \cref{gjkiowgwrefwerfrewf}  we conclude that
 the functor $s_{X}:E(X)\to \CoShv(X,\EE)$ is conservative. This  finishes the proof of \cref{herthetrgertgetrg}
 \end{proof}

%
%
%
%


\begin{rem}
The classical  analog of \cref{herthetrgertgetrg} is \cite[Thm. 3.10]{Dadarlat_2012}. \hB
\end{rem}

\section{Functor formlisms}
 
\subsection{Three  functor formalisms }

 Following \cite{arXiv:2410.13038}, \cite{arXiv:2510.26269}, \cite{arXiv:2507.13537}, we consider the Nagata set-up $(\LCH,I,P)$ on  the category $\LCH$ of locally compact Hausdorff spaces, with the  subclasses of morphisms  
\begin{itemize}
\item $I$: the  open inclusions  
\item $P$: the proper maps.
\end{itemize}
We consider a functor
  \begin{equation}\label{vfdvsfvsdfvwr}D:\LCH^{\op}\to \CAlg(\Cat_{\infty}) \end{equation}
 \begin{rem} Since the symmetric monoidal structure on $\LCH^{\op}$ is induced by the cartesian structure on $\LCH$ this datum is equivalent to the datum of a lax symmetric monoidal functor
  $$D:\LCH^{\op}\to \Cat_{\infty}\ .$$  \hB\end{rem}
  
  \begin{rem}
  We will usually denote the contravariant functoriality of $D$ on morphisms $f$ in $\LCH$ by $f^{*}$,
but sometimes we also use the notation $D(f)$, in particular when there are different functors floating around. In the case of sheaves, we  will decorate the operations with $\hat{}$, i.e., we write $\hat f^{*}$. \hB \end{rem}

For $X$ in $\LCH$ we  will denote the tensor unit and tensor product in $D(X)$ by $1_{X}$ and $\otimes_{X}$.

We will consider the following groups of conditions on $D$. They are  standard conditions
considered in the theory of six-functor formalisms, as stated, e.g., \cite{arXiv:2410.13038},  \cite{arXiv:2510.26269}, \cite{arXiv:2507.13537}.
The wording is copied from \cite{buvo}.
 \begin{ddd}
We say that $D$ as in \eqref{vfdvsfvsdfvwr} is compatible with $I$ if:\begin{enumerate}
\item (I-la: left adjoints) For every $i:U\to X$ in $I$ the functor $i^{*}$ admits a left adjoint $i_{!}:D(U)\to D(X)$.
\item  (I-bc: base change)  For every cartesian square $$ \xymatrix{U\ar[r]^{i}\ar[d]^{g} &X \ar[d]^{f} \\ V\ar[r]^{j} &Y } $$ in $\LCH$ with $i,j$ in $I$
the square $$\xymatrix{ D(Y)\ar[r]^{j^{*}}\ar[d]^{f^{*}} & D(V)\ar[d]^{g^{*}} \\D(X) \ar[r]^{i^{*}} & D(U)} $$
is horizontally left-adjoinable. 
 \item (I-pf: projection formula) For every $i:U\to X$ in $I$ the canonical map $$i_{!}(-\otimes_{U} i^{*}(-))\to i_{!}(-)\otimes_{X} i^{*}(-)$$ is an equivalence.
 \end{enumerate}
\end{ddd}

\begin{ddd}\label{hkopertergtrgerg}
We say that $D$ as in \eqref{vfdvsfvsdfvwr}  is compatible with $P$ if:\begin{enumerate}
\item\label{kopherthetrgtr} (P-ra: right adjoint) For every $p:X\to Y$ in $P$ the functor $p^{*}$ has a right-adjoint $p_{*}$. \item (P-bc: base change)  For every cartesian square $$ \xymatrix{W\ar[r]^{f}\ar[d]^{q} &X \ar[d]^{p} \\ Z\ar[r]^{g} &Y } $$ in $\LCH$ with $p,q$ in $P$
the square $$\xymatrix{ D(Y)\ar[r]^{g^{*}}\ar[d]^{p^{*}} & D(Z)\ar[d]^{q^{*}} \\D(X) \ar[r]^{f^{*}} & D(W)} $$
is vertically right-adjoinable.
 \item (P-pf: projection formula) For every $p:X\to Y$ in $P$ the canonical map $$p_{*}(-)\otimes_{Y}(-) \to p_{*}(-\otimes_{X} p^{*}(-))$$ is an equivalence.
 \end{enumerate}
\end{ddd}

If $D$ is compatible with $I$ and $P$, then we make the following definition:
 \begin{ddd}
We say that $D$ satisfies the mixed Beck-Chevalley (mBC) condition if 
for every   cartesian square $$ \xymatrix{W\ar[r]^{i}\ar[d]^{q} &X \ar[d]^{p} \\ Z\ar[r]^{j} &Y } $$ in $\LCH$ with $p,q$ in $P$
and $i,j$ in $I$ 
the square  $$\xymatrix{ D(Z)\ar[r]^{j_{!}}\ar[d]^{p^{*}} & D(Y)\ar[d]^{q^{*}} \\D(W) \ar[r]^{i_{!}} & D(X)} $$  given by I-bc
is vertically right-adjoinable.\end{ddd}

Let \begin{equation}\label{bwerfwerwregw}\cB:D\to D':\LCH^{\op}\to \CAlg(\Cat_{\infty}) \end{equation}be a natural transformation between functors.  
 In the following it is useful to write $D(i)$ instead of $i^{*}$.
\begin{ddd}\label{htohopthrtgtrger} \mbox{}\begin{enumerate}\item\label{igh4iuhrtgiow4oigoir} (I($\cB$))
We say that $\cB$ is compatible with $I$ if for every morphism $j:U\to X$ in $I$
the square
$$\xymatrix{D(X)\ar[r]^{\cB_{X}}\ar[d]^{D(j)} &D'(X) \ar[d]^{D'(j)} \\D(U) \ar[r]^{\cB_{U}} & D'(U) } $$
is vertically left-adjoinable.\item  (P($\cB$)) We say that $\cB$ is compatible with $P$ if for every morphism $p:X\to Y$ in $P$ 
the square
$$\xymatrix{D(Y)\ar[r]^{\cB_{Y}}\ar[d]^{D(p)} &D'(Y) \ar[d]^{D'(p)} \\D(X) \ar[r]^{\cB_{X}} & D'(X) } $$
is vertically right-adjoinable. \end{enumerate}
\end{ddd}

\begin{ddd} \mbox{}\label{kojpjrtzjzhrtzh}\begin{enumerate}\item 
A    functor
$D:\LCH\to \CAlg(\Cat_{\infty})$ is called 
a three-functor formalism  on the Nagata context $(\LCH,I,P)$ if it is compatible with $I$, $P$, and satisfies the mBC condition.
\item A morphism between  three-functor formalisms on the Nagata context $(\LCH,I,P)$ is  a natural transformation
  as   in \eqref{bwerfwerwregw} 
which is compatible with $I$ and $P$.
\end{enumerate}
\end{ddd}

\begin{rem}
By \cite[Prop. 3.3.3]{arXiv:2410.13038}, the datum of  a  three-functor formalism  on the Nagata context $(\LCH,I,P)$ 
in the sense of \cref{kojpjrtzjzhrtzh} is equivalent to the datum
of a three-functor formalism  in the sense of  \cite[Def. 3.1.1]{arXiv:2410.13038}
on the pair $(\LCH,\All)$ defined as a lax symmetric monoidal functor $$D:\Corr(\LCH,\All)\to \Cat_{\infty}\ .$$   The three functors are $f^{*}$, $f_{!}$, and $\otimes_{X}$.\hB \end{rem}
 

\subsection{Six-functor formalisms}

In view of \cite[Lem. 3.2.5]{arXiv:2410.13038}, we adopt the following definition.
 \begin{ddd}\label{hertkprtgertgtregertg} \mbox{}
\begin{enumerate}
\item  A three-functor formalism  $D$ on the Nagata context $(\LCH,I,P)$ is    a presentable six-functor formalism  on $(\LCH,I,P)$ if  it
factorizes over  a functor $$D:\LCH^{\op}\to \CAlg(\Pr^{L})$$ and
the right-adjoints $p_{*}$ in P-ra (\cref{hkopertergtrgerg}.\ref{kopherthetrgtr}) are cocontinuous.
 
\item A   morphism between  three-functor formalisms on the Nagata context $(\LCH,I,P)$ is  morphism  between  presentable six-functor formalisms if 
it is implemented by left-adjoint functors.\end{enumerate}
 \end{ddd}
 
 Note that the factorization condition means that $D$ takes values in presentable $\infty$-categories and left adoint functors, and that for every $X$ in $\LCH$ the functor $\otimes_{X}$ preserves colimits in both arguments.

 \begin{rem}
 We will not need the more general definition of a $\Cat_{\infty}$-valued  six-functor formalism from \cite[Def. 3.1.2]{arXiv:2410.13038} which involves more conditions to be checked. All examples of six functor formalisms considered in the present paper are presentable.
 \hB \end{rem}

 Recall  \cref{kophejhthgerthetrh} stating that we have a lax symmetric monoidal functor to $E$-theory contexts
 $$(-)\nCalg:\LCH^{\op}\to \Cat\ .$$ We can now apply \cref{kohpehtregetrg} in in order to make the following definition.
\begin{ddd} \label{okrgpwerwerfewrferwf}We define   the  lax symmetric monoidal functor 
 \begin{equation}\label{vsdfvsfsdr} E(-):=\EE((-)\nCalg):\LCH^{\op}\to \Pr^{L}_{\st}\to \Cat_{\infty}\ . \end{equation} 
\end{ddd}

\begin{theorem}\label{tkphrethergertgertg}
The functor in \eqref{vsdfvsfsdr} is a presentable six-functor formalism.
\end{theorem}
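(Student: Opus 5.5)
The plan is to transport the three-functor formalism of \cref{okpherthrtgetgetg} from the level of $E$-theory contexts to the level of $E$-theory, using the universal property of $\ee$ and \cref{kophertgretger9}. By \cref{kophejhthgerthetrh} and \cref{kohpehtregetrg}, the functor $E(-)$ is lax symmetric monoidal with values in $\Pr^{L}_{\st}$. Since $\LCH^{\op}$ carries the cocartesian structure (every object is a coalgebra via the diagonal), it refines to a functor $\LCH^{\op}\to \CAlg(\Pr^{L}_{\st})$, exactly as in \cref{herthrtegtrgtrgtrhertherth}. Each $f^{*}$ is colimit preserving, and $\otimes_{X}=d^{*}(-\boxtimes-)$ is colimit preserving in each variable. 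Hence the factorization condition in \cref{hertkprtgertgtregertg} holds.

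Next come the adjunctions.

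\textbf{Open inclusions.} For $i:U\to X$ open, \cref{kopherhrtgertge} gives an adjunction $i_{!}\dashv i^{*}$ of $E$-theory contexts, with both functors $E$-admissible by \cref{okhpzrhtzjrtzj} and \cref{hlepertgertger}. Applying the 2-functor $\EE$ of \cref{kohpertgertgertge} yields $E(i_{!})\dashv E(i^{*})$, since 2-functors preserve adjunctions and the unit and counit are natural transformations of $E$-admissible functors.

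\textbf{Proper maps.} For $p$ proper, \cref{okhprethetgertgtge} gives $p^{*}\dashv p_{!}$ at the level of contexts, with $p_{!}$ $E$-admissible. Hence $p_{*}:=E(p_{!})$ is right adjoint to $E(p^{*})$ and is colimit preserving, as required.

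\textbf{Base change, projection formulas, mixed Beck--Chevalley.} These are isomorphisms of functors between $E$-theory contexts, namely \cref{oigjworiegrefwerfewfref}, \cref{joihetrgertgertgerg}, \cref{kohperthertgertgertg} and \cref{okhoperhtrgertgetg1}, which the paper identifies with the canonical Beck--Chevalley and projection morphisms. Applying $\EE$ preserves these identifications. For the single-variable statements we argue as follows. Two colimit-preserving functors out of $E(Y)$ that agree after precomposition with $\ee_{Y}$ agree, by the universal property \eqref{hrtgrtgetghertherhtr}. Moreover the Beck--Chevalley map in $E$-theory is the image of the context-level one, because the units and counits come from the context level.

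The step I expect to be the main obstacle is the projection formulas, which involve two variables. The canonical map $p_{*}(-)\otimes_{Y}(-)\to p_{*}(-\otimes_{X}p^{*}(-))$ is a transformation of functors $E(X)\times E(Y)\to E(Y)$ that are colimit preserving in each variable. I would reduce it to objects of the form $\ee_{X}(A)$ and $\ee_{Y}(B)$. To do this I would use that the lax monoidal structure of \cref{kohpehtregetrg} satisfies $\ee(A)\boxtimes\ee(B)\simeq\ee(A\boxtimes B)$. Then the fact that $E(X)$ is generated under colimits by the image of $\ee_{X}$ (it is a localization of presheaves on $X\nCalg$) lets the isomorphisms of \cref{okhoperhtrgertgetg1} and \cref{kohperthertgertgertg} extend to equivalences. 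The mixed Beck--Chevalley condition reduces, as in \cref{okpherthrtgetgetg1}, to the identification $p_{*}=E(p_{!})$ together with \cref{jptzlkphtzhrzhz}.
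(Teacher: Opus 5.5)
Your proposal is correct and follows the paper's own argument. You take the context-level three-functor formalism of \cref{okpherthrtgetgetg} and transport it through $\EE$, using that $i_{!}$ and $p_{!}$ are morphisms of $E$-theory contexts, so that $p_{*}=\EE(p_{!})$ and $\otimes_{X}$ are cocontinuous. Your extra check of the Beck--Chevalley and two-variable projection maps on the generators $\ee_{X}(A)$, extended to all of $E(X)$ by colimits (and shifts), is detail the paper leaves implicit when it writes ``we conclude that $E(-)$ is a three-functor formalism''.
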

\begin{proof}
By \cref{okpherthrtgetgetg}, the functor in $(-)\nCalg$ in \eqref{rvewiojvofvdfvsdfvsdfv} is a three-functor formalism.
We now observe that it takes values in $E$-theory contexts, and that the left-adjoints $i_{!}$ for $i$ in $I$ and the right-adjoints $p_{*}$ for $p$ in $P$ are morphisms of $E$-theory contexts.  We conclude that
$E(-)$ is a three-functor formalism. Since $E(-)$ takes values in $\Pr^{L}_{\st}$, and the right-adjoints  $\EE(p_{*})$ for $p$ in $P$ and the functors
$A\otimes_{X}-:E(X)\to E(X)$ for all $X$ in $\LCH$ and $A$ in $E(X)$   are cocontinuous,  we conclude that $E$ in \eqref{vsdfvsfsdr} is a presentable six-functor formalism. 
\end{proof}

\subsection{Comparison with sheaves}

In this section, we compare the presentable six-functor formalism $E$ from \cref{tkphrethergertgertg}
with the six-functor formalism $\Shv(-,\EE)$. We first show that $E$ is also a coefficient system. 
Since $\Shv(-,\EE)$ plays the role of  an initial coefficient system with value $\EE$ at the point
(this has been shown in  a precise sense in \cite{arXiv:2507.13537} under slightly stronger assumptions on coefficient systems), we get the natural comparison transformation  \cref{hrtgetgretget}.
The main result of  \cite{buvo}, stated here as \cref{thkoperthertegrtger}, provides a sufficient condition implying that the comparison transformation is an equivalence. We then show 
our main \cref{thkoperthertegrtger} by verifying these conditions for $E$.

\begin{ddd}
A  functor as in \eqref{vfdvsfvsdfvwr}  is presentable and stable if it takes values in $\CAlg(\Pr^{L}_{\st})$.
\end{ddd}

We consider a presentable and stable functor $D$ as in   \eqref{vfdvsfvsdfvwr}, which is compatible with $I$.
The following definition is taken from  \cite[Def. 4.11]{arXiv:2507.13537}.
\begin{ddd}\label{kojghptrertgt}
We say that $D$ has  canonical  descent if it satisfies:
\begin{enumerate} 
\item (zero) \label{hkopzetgerth}$D(\emptyset)\simeq 0$.
\item \label{kohperhgertgert} (localization)  \label{hkopzetgert1h} For every $X$ in $\LCH$, open subset $j:U\to X$ and closed complement $i:Z:=X\setminus U\to X$  we have a recollement
$$ \xymatrix{ D(U)\ar@/^0.8cm/[r]_{\perp}^{j_{!}}\ar@/_0.8cm/[r]^{\perp}_{j_{*}}&\ar[l]_{j^{*}}D(X)\ar@/^0.8cm/[r]_{\perp}^{i^{*}}\ar@/_0.8cm/[r]^{\perp}_{i^{!}}&\ar[l]^{i_{*}}D(Z)}\ .$$
 \item  (open exhaustions)
For every filtered family $(U_{i})_{i\in I}$ in $\Open(X)$ with $U=\bigcup_{i\in I} U_{i}$ we have an equivalence
\begin{equation}\label{giojewroigwerfewrfwer}D(U)\stackrel{\simeq}{\to} \lim_{i\in I^{\op}} D(U_{i})
\end{equation} in $\Pr^{L}_{\st}$ induced by contravariant functoriality for the inclusions.
 \hB
\end{enumerate}
\end{ddd}

\begin{rem} Compatibility of $D$ with $I$  first of all ensures by I-la that $j_{!}$ exists.
The Condition \ref{hkopzetgerth} together with $U\cap Z=\emptyset$  and  I-bc ensures that $i^{*}\circ j_{!}\simeq 0$. The Condition  \ref{hkopzetgert1h} then requires that
 the sequence $$D(U)\stackrel{j_{!}}{\to} D(X)\stackrel{i^{*}}{\to} D(Z)$$ is a fibre and cofibre sequence in $\Pr^{L}_{\st}$. 
 As a consequence, we get a cofibre sequence of endofunctors
 \begin{equation}\label{bwiojoievewrvfds}j_{!}j^{*}\to \id\to i_{*}i^{*}
\end{equation} 
of $D(X)$.
Furthermore, the pair of functors $(j^{*},i^{*})$ is jointly conservative. 
  \hB
\end{rem}

 The following notion is taken  from  \cite[Def. 3.1]{arXiv:2507.13537} (it is a version of the notion with the same name  introduced in \cite{zbMATH07666924} in the context of algebraic geometry).
\begin{ddd}\label{kohperthertgertg}\mbox{}\begin{enumerate}
\item 
A functor $D:\LCH\to \CAlg(\Cat_{\infty})$  is called a coefficient system 
 if it is presentable and stable, compatible with $I$, and satisfies canonical descent. 
 \item A morphism  between coefficient systems
 is a natural transformation of functors 
 which is compatible with $I$ (in the sense of \cref{htohopthrtgtrger}.\ref{igh4iuhrtgiow4oigoir}) and whose components  are morphisms in $\CAlg(\Pr^{L}_{\st})$. 
  \end{enumerate}
\end{ddd}
\begin{theorem}\label{thkoptrhergtrgege}
 The functor  $$E:\LCH^{\op}\to \CAlg(\Cat_{\infty})$$  introduced in  \cref{okrgpwerwerfewrferwf} is a coefficient system.
\end{theorem}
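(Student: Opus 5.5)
We have to check the conditions of \cref{kohperthertgertg}: that $E$ is presentable and stable, compatible with $I$, and has canonical descent in the sense of \cref{kojghptrertgt}.

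Presentability, stability and compatibility with $I$ come directly from \cref{tkphrethergertgertg}. There $E$ was shown to be a presentable six-functor formalism with values in $\CAlg(\Pr^{L}_{\st})$. In particular, I-la, I-bc and I-pf hold for $E$. They were deduced from \cref{kopherhrtgertge}, \cref{oigjworiegrefwerfewfref} and \cref{kohperthertgertgertg}, and pushed through $\EE$ using that the relevant adjoints are morphisms of $E$-theory contexts.

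For canonical descent we check the three conditions in turn.

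\textbf{Zero.} $\emptyset\nCalg$ consists only of the zero algebra, since continuity forces $A=A(\emptyset)=0$. Hence $E(\emptyset)\simeq 0$.

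\textbf{Localization.} This is \cref{kohpertghertgtrege}, applied to the locale $\Open(X)$, the open subset $j:U\to X$ and its closed complement $i:Z\to X$. We must make sure the functors there agree with the six-functor operations. The functor $j^{*}$ of \cref{okhpzrhtzjrtzj} is the right adjoint of $j_{!}$, and by \cref{kopherhrtgertge} it coincides with the contravariant functoriality from \cref{kogpwtegrwref}. For the closed embedding $i$, the left adjoint $i^{*}$ of $i_{!}=i_{*}$ from \cref{okhpzrhtzjrtzj1} coincides with the $f^{*}$ of \cref{kogpwtegrwref} by uniqueness of adjoints, using \cref{okhprethetgertgtge}, since $i$ is proper. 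A closed subspace of a locally compact Hausdorff space is again locally compact Hausdorff, and its open-set frame is the closed subframe $\Open(X)_{U/}$. So $E(Z)$ computed as a locale agrees with $E(Z)$ computed in $\LCH$. The recollement then transfers verbatim.

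\textbf{Open exhaustions.} This is \cref{okphterthertg} applied to the filtered union $U=\bigcup_{i} U_i$. Again we note that the restriction functors $j_i^{*}$ used there are the contravariant functoriality of $E$ on $\LCH$, by \cref{kopherhrtgertge}. The limit is taken in $\Pr^{L}_{\st}$ along these restrictions, which are left adjoints because $E$ takes values in $\Pr^{L}_{\st}$. Limits in $\Pr^{L}$ are computed in $\Cat_\infty$, so the equivalence from \cref{okphterthertg} is the required one.

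\textbf{Main point of care.} The only delicate step is the identification of operations in the localization condition. The locale-theoretic adjoints of Section~\ref{kophtzhetrgtrge} and the $\LCH$-functorialities of Section~\ref{kohperthertgertgertg} are constructed differently, one in the frame picture and one in the Fell bundle picture. They must agree already on the level of $E$-theory contexts, so that applying $\EE$ (via \cref{kophertgretger9}) gives the same functors. The plan is to compare them there by uniqueness of adjoints, using that both are adjoint to the same covariant functor $f_{!}$, and then invoke functoriality of $\EE$.
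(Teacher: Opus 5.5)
Your proposal is correct and follows the paper's proof. The paper likewise takes presentability, stability and $I$-compatibility from \cref{tkphrethergertgertg}, and the localization and open-exhaustion axioms from \cref{kohpertghertgtrege} and \cref{okphterthertg}. Your explicit check of the zero axiom and your identification of the locale-theoretic adjoints with the $\LCH$-functorialities, via uniqueness of adjoints (\cref{kopherhrtgertge}, \cref{okhprethetgertgtge}), fill in details the paper leaves implicit.
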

\begin{proof}By \cref{tkphrethergertgertg},
we have a six-functor formalism
 $$E:\LCH^{\op}\to \Pr_{\st}^{L}\ .$$ 
 It is presentable and stable and satisfies the axioms of canonical descent by \cref{kohpertghertgtrege} and \cref{okphterthertg}.
  \end{proof}

In \cite{buvo}, we have shown that for every 
  coefficient system $D$ there is a canonical natural $D(\pt)$-linear transformation of coefficient systems
\begin{equation}\label{bfgkbdfgbdfgbert}\cB:\Shv(-,D(\pt))\to D(-)\ .
\end{equation}

\begin{rem} Let $D:\LCH^{\op}\to \CAlg(\Cat_{\infty})$ be a six-functor formalism on the Nagata context $(\LCH,I,P)$.   For 
  $X$  in $ \LCH$ and    $U$ in $\Open(X)$ we consider  the inclusion $j_{U\to X}:U\to X$     and   the projection    $p_{U}:U\to *$.
If $D$ is,  in addition, a coefficient system, then the association
$$\Open(X)\ni U\mapsto j_{U\to X,!} p_{U}^{*}1_{D(\pt)}\in D(X)$$ defines a cosheaf
in $\CoShv(X,D(X))$. The colimit-preserving $D(\pt)$-linear functor $$\cB_{X}:\Shv(X,D(\pt))\to D(X) $$ corresponds to this cosheaf under the equivalence
$$\Fun_{D(\pt)}^{\colim}(\Shv(X,D(\pt)),D(X))\simeq \CoShv(X,D(X))\ .$$ \hB \end{rem}

Note that $\EE\simeq E(\pt)$.
\begin{kor}\label{hrtgetgretget}
We have a natural transformation \begin{equation}\label{gerwfergwerg}
\cB:\Shv(-,\EE)\to E:\LCH^{\op}\to \CAlg(\Pr^{L}_{\st})\ .
\end{equation}
\end{kor}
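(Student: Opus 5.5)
The corollary should follow by specialising the general comparison result of \cite{buvo}. There, for every coefficient system $D$ on the Nagata context $(\LCH,I,P)$, a canonical $D(\pt)$-linear morphism of coefficient systems $\cB:\Shv(-,D(\pt))\to D(-)$ is constructed; this is the transformation \eqref{bfgkbdfgbdfgbert}. The plan is to check the hypotheses for $D=E$, identify $E(\pt)$ with $\EE$, and read off \eqref{gerwfergwerg}.

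\emph{Step 1: hypotheses.} By \cref{thkoptrhergtrgege}, $E$ is a coefficient system. This means it is presentable and stable, takes values in $\CAlg(\Pr^{L}_{\st})$ through the lax symmetric monoidal refinement of \cref{kohpehtregetrg}, is compatible with $I$, and satisfies canonical descent by \cref{kohpertghertgtrege} and \cref{okphterthertg}. The zero axiom $E(\emptyset)\simeq 0$ also holds: $\emptyset\nCalg$ consists only of the zero algebra, since a continuous structure forces $A=A(\emptyset)=0$.

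\emph{Step 2: the value at the point.} We have $\pt\nCalg\simeq\nCalg$ as symmetric monoidal $E$-theory contexts, and this equivalence was already used in the proof of \cref{kophejhthgerthetrh}. Hence $E(\pt)\simeq \EE(\nCalg)=\EE$ in $\CAlg(\Pr^{L}_{\st})$, by \cref{kophehrtrgertg} and the uniqueness part of \cref{koprtheggrtge}. Transporting along this equivalence gives the functor $\Shv(-,\EE)$ appearing in the statement.

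\emph{Step 3: components.} For each $X$, the component $\cB_X$ is the colimit-preserving $\EE$-linear functor corresponding to the cosheaf
\[
U\mapsto j_{U\to X,!}\,p_U^{*}1_{\EE},
\]
by the universal property of $\Shv(X,\EE)$ recalled in the remark above. Naturality in $X$, compatibility with $I$, and the fact that each component lies in $\CAlg(\Pr^{L}_{\st})$ are all part of the output of \cite{buvo}.

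\emph{Main obstacle.} The only point needing care is that the notion of coefficient system used in \cite{buvo} agrees with \cref{kohperthertgertg}, namely values in $\CAlg(\Pr^{L}_{\st})$ with $j_!$ existing and satisfying base change and the projection formula. For $E$, the functors $j_!$ are colimit-preserving because they are images under $\EE$ of morphisms of $E$-theory contexts (\cref{okhpzrhtzjrtzj}). Base change and the projection formula are inherited from \cref{oigjworiegrefwerfewfref} and \cref{kohperthertgertgertg} through $\EE$, as in the proof of \cref{tkphrethergertgertg}. With this in place, the corollary follows directly.
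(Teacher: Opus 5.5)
Your proposal is correct and follows the paper's argument: the paper obtains the corollary by applying the comparison transformation of \cite{buvo} to $E$, which is a coefficient system by \cref{thkoptrhergtrgege}, and then identifying $E(\pt)\simeq\EE$. Your extra checks are accurate details that the paper leaves implicit, namely the zero axiom via $\emptyset\nCalg=\{0\}$ and the agreement between the two notions of coefficient system.
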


\begin{ddd}  \label{okhtperhertgrtgertgerg}
For $X$ in $\LCH$ and $U$ in $\Open(X)$ we  define the evaluation
\begin{equation}\label{hztzhtzhtzhtzrtzhj}\ev_{U}:=p_{U,*}j_{U\to X}^{*}:D(X)\to D(\pt)\ . \end{equation}
\end{ddd}
 \begin{ddd}\label{lkpohejztjrtz} \mbox{}
 \begin{enumerate} \item
 We say that $D$ is section-determined on $X$ in $\LCH$ if the collection $(\ev_{U})_{U\in \Open(X)}$ of evaluation   maps  is jointly conservative. \item  We say that $D$ is section-determined if it is section-determined on every $X$ in $\LCH$.
 \end{enumerate}
\end{ddd}

For a  six-functor formalism $D$  which is also a coefficient system $D$,   we define, for every $X$ in $\LCH$, a functor  \begin{equation}\label{tgretgrtgeertt}
s_{X}:D(X)\to \CoShv(X,D(\pt))\ , \quad s_{X}(A):U\mapsto p_{U,!} j_{U\to X}^{*}A\ .
\end{equation}
As in the proof of \cref{gweoirgjoergeferggr}, the cosheaf conditions are   implied by the assumption that $D$ is a coefficient system.
  Note that, in contrast to $\ev_{U}(A)=p_{U,*}j_{U\to X}^{*}A $, the definition of
  $s_{X}(A)(U) $ involves $p_{U,!}$. 
 \begin{prop}\label{hkopeththregte}
 Assume that $D$ is a  six-functor  formalism  on $(\LCH,I,P)$   which is also a coefficient system $D$.
 Then $D$ is section-determined if and only if, for every $X$ in $\LCH$ the functor $s_{X}$ in \eqref{tgretgrtgeertt}  is conservative.,
 \end{prop}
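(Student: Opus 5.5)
The plan is to compare the two families of functors $(\ev_{U})_{U\in\Open(X)}$ and $(s_{X}(-)(U))_{U\in\Open(X)}$ on $D(X)$, using only the recollement and open-exhaustion axioms from canonical descent (\cref{kojghptrertgt}). Conservativity of $s_{X}$ is the same as joint conservativity of the functors $A\mapsto p_{U,!}j_{U\to X}^{*}A$ for $U\in\Open(X)$, because equivalences of cosheaves are detected objectwise. Both directions of the equivalence will then follow if each family can be written in terms of the other by finite limits and colimits, or by limits and colimits over filtered systems, applied in a way that is natural in $A$.

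First I reduce to relatively compact opens. For $V\Subset U$, let $K=\overline{V}$, which is compact. Since $p_{K}$ is proper, $p_{K,!}=p_{K,*}$. The recollement for $U\setminus K\subseteq U$, applied to $j_{U\to X}^{*}A$, gives the fibre sequence
\[ p_{U\setminus K,!}(-)\to p_{U,!}(-)\to p_{K,*}(-|_{K}). \]
The dual recollement, with $j_{*}$ and $i^{!}$, gives
\[ p_{K,*}i^{!}(-)\to p_{U,*}(-)\to p_{U\setminus K,*}(-). \]
Open exhaustion then expresses the sections over $U$ as a limit over $V\Subset U$, and the compactly supported sections as a colimit over $V\Subset U$. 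I plan to use the relation $\colim_{V\Subset U} p_{V,!}\simeq p_{U,!}$ in the same way as in the proof of \cref{okphterthertg}, where the limit of categories translates into a colimit of $j_{!}j^{*}$.

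The key step is the following. Suppose $\ev_{U}(\phi)$ is an equivalence for all $U$, and let $C=\mathrm{cofib}(\phi)$. Then $p_{U,*}(C|_{U})\simeq 0$ for all $U$. I want to conclude $p_{U,!}(C|_{U})\simeq 0$. Since $p_{U,!}=\colim_{V\Subset U}p_{V,!}$ (cosheaf property), it suffices to prove $p_{V,!}(C|_{V})\simeq 0$ for relatively compact $V$. For such $V$ I would use the fibre sequence
\[ p_{V,!}\to p_{\overline V,*}\to p_{\partial V,*}. \]
I would then try to show that $p_{K,*}(C|_{K})\simeq \colim_{W\supseteq K}p_{W,*}(C|_{W})=0$ for compact $K$, with the colimit over open neighbourhoods $W$ of $K$. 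This continuity property of restriction to a compact subset is the main obstacle. I would try to derive it from open exhaustion together with the fibre sequence $j_{!}j^{*}\to\id\to i_{*}i^{*}$ applied to $W\setminus K\subseteq W$, or else from a proper-base-change argument. If it fails in general, I would aim for a weaker statement in which the roles of $!$ and $*$ are exchanged via the canonical map $p_{U,!}\to p_{U,*}$.

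For the converse, suppose $s_{X}$ is conservative and $p_{U,*}(C|_{U})\simeq 0$ for all $U$. I apply the same sequences the other way round: $p_{U,*}$ is a limit over an exhaustion, and the terms differ from $p_{\overline V,*}=p_{\overline V,!}$ by compactly supported pieces. So the vanishing of all $p_{W,*}$ should give the vanishing of all $p_{V,!}$ for relatively compact $V$, again via the compact-support comparison. By the symmetry of the argument, both directions rest on the same lemma relating $p_{K,*}$ of a compact set to its neighbourhoods. That lemma is where I expect the real work to lie.
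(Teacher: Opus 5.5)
There is a genuine gap, and it is the lemma you flag yourself. Your first direction, all $p_{U,*}(C|_U)\simeq 0$ $\Rightarrow$ all $p_{U,!}(C|_U)\simeq 0$ (that is, $s_X$ conservative $\Rightarrow$ section-determined), depends entirely on $p_{K,*}(C|_K)\simeq\colim_{W\supseteq K}p_{W,*}(C|_W)$, and you give no argument for it. The routes you suggest do not prove it. Applying $p_{W,*}$ to $j_!j^*\to\mathrm{id}\to i_*i^*$ for $W\setminus K\subseteq W\supseteq K$ only reduces it to $\colim_{W\supseteq K}p_{W,*}j_!(C|_{W\setminus K})\simeq 0$, which is the same problem. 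What is actually needed is to commute the filtered colimit over $W\supseteq K$ past the limit over compacts $L\subseteq W$ that computes $p_{W,*}$. In terms of the cosheaf $G=s_X(C)$, your lemma reads $\Cofib(G(U\setminus K)\to G(U))\simeq\colim_{W\supseteq K}\lim_{L\subseteq W}\Cofib(G(W\setminus L)\to G(W))$. That is the substantive content of Verdier duality, not something the recollement and exhaustion manipulations give you.

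The paper avoids this lemma entirely. It proves by adjunction that
\[ \mathrm{ev}_U(A)\simeq\lim_{K\subseteq U}\Cofib\bigl(s_X(A)(U\setminus K)\to s_X(A)(U)\bigr). \]
The steps are: $\mathrm{map}(B,\mathrm{ev}_U A)\simeq\mathrm{map}(j_{U\to X,!}p_U^*B,A)$; then $j_{U\to X,!}p_U^*B\simeq\colim_{V\Subset U}j_{V\to X,!}p_V^*B$; then pass to compacts $K\subseteq U$; then use $p_{K,*}=p_{K,!}$ and the recollement for $U\setminus K\subseteq U\supseteq K$. In other words, $U\mapsto\mathrm{ev}_U(A)$ is the sheaf $\mathcal{V}^{-1}(s_X(A))$. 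The paper then uses that Verdier duality $\mathcal{V}:\Shv(X,D(\pt))\simeq\CoShv(X,D(\pt))$ is an equivalence. Hence $s_X(f)$ is an equivalence iff every $\mathrm{ev}_U(f)$ is, which gives both implications at once.

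Separately, your ``converse'' paragraph argues the same implication a second time: you again assume all $p_{U,*}$ vanish and aim at vanishing of the $p_{V,!}$. The actual converse, section-determined $\Rightarrow$ $s_X$ conservative, needs all $p_{U,!}(C|_U)\simeq0$ $\Rightarrow$ all $p_{U,*}(C|_U)\simeq 0$. That direction is easy with your own tools and needs no continuity lemma. Open exhaustion gives $p_{U,*}(C|_U)\simeq\lim_{V\Subset U}p_{V,*}(C|_V)\simeq\lim_{K\subseteq U}p_{K,*}(C|_K)$, by interleaving $V\subseteq\overline V\subseteq V'$. The recollement gives $p_{K,*}(C|_K)\simeq p_{K,!}(C|_K)\simeq\Cofib\bigl(p_{U\setminus K,!}(C|_{U\setminus K})\to p_{U,!}(C|_U)\bigr)\simeq 0$. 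So the two directions are not symmetric. Only the first needs a nontrivial input, namely that $\mathcal{V}$ is an equivalence, so that $\mathcal{V}^{-1}(G)\simeq0$ forces $G\simeq 0$.
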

 \begin{proof}
   \begin{lem}\label{hrrtgrtgrtege} Let $D$ be a six-functor formalism on $(\LCH,I,P)$ which is also a coefficient system.
 For every $X$ in $\LCH$, $U$ in $\Open(X)$, and $A$ in $D(X)$ we have an equivalence
 \begin{equation}\label{gwrgerferffw}
\ev_{U}(A)\simeq \lim_{K\subset U} \Cofib(s_{X}(A)(U\setminus K)\to s_{X}(A)(U))\ ,
\end{equation}
 where $K$ runs over the compact subsets of $U$.
 \end{lem}
 \begin{proof}
 For every $B$ in $D(\pt)$ we have, by definition of $\ev_{U}$,
 $$\map_{D(\pt)}(B,\ev_{U}(A))\simeq \map_{D(\pt)}(B,p_{U,*}j_{U\to X}^{*}A)\ .$$
 We now use the defining adjunctions of the operations  to get   $$\map_{D(\pt)}(B,p_{U,*}j_{U\to X}^{*}A)\simeq 
 \map_{D(U)}(p_{U}^{*}B, j_{U\to X}^{*}A)\simeq  \map_{D(X)}(j_{U\to X,!}p_{U}^{*}B,  A)\ .$$
 We now use that $U=\bigcup_{V\Subset U} V$ and 
 the fact that for a coefficient system, we have $$\colim_{V\Subset U} j_{V\to X,!} j_{V\to X}^{*}\simeq j_{U\to X,!}j_{U\to X}^{*}$$ in order to
get 
 $$\map_{D(X)}(j_{U\to X,!}p_{U}^{*}B,  A)\simeq 
 \map_{D(X)}( \colim_{V\Subset U} j_{V\to X,!}p_{V}^{*}B,  A)\simeq 
 \lim_{V\Subset U} \map_{D(V)}(  j_{V\to X}^{*}p_{X}^{*}B,   j_{V\to X}^{*} A)\ .$$
  Using that $V\Subset U$ implies  $V\subseteq \bar V\subseteq U$ and that $\bar V$ is compact,
   we can rewrite the limit as 
$$ \lim_{V\Subset U} \map_{D(V)}(  j_{V\to X}^{*}p_{X}^{*}B,   j_{V\to X}^{*} A)
 \simeq \lim_{K\subseteq U} \map_{D(K)}(  j_{K\to X}^{*}p_{X}^{*}B,   j_{K\to X}^{*} A)\ .$$
Using that $K\to X$ is a closed inclusion   and $p_{K}:K\to *$
is proper, we get
 $$\lim_{K\subseteq U} \map_{D(K)}(  j_{K\to X}^{*}p_{X}^{*}B,   j_{K\to X}^{*} A)\simeq
  \lim_{K\subseteq U} \map_{D(\pt)}( B,   p_{K,!} j_{K\to X}^{*} A)\ .$$
  Using the sequence \eqref{twuiotjioergergw} for $U\setminus K\to U\leftarrow K$
  we get
  $$p_{K,!} j_{K\to X}^{*} A\simeq \Cofib(s(A)(U\setminus K)\to s(A)(U))\ .$$
Inserting this  and combining all the equivalences we get 
$$\map_{D(\pt)}(B,\ev_{U}(A))\simeq \map_{D(\pt)}(B, \lim_{K\subseteq U} \Cofib(s_{X}(A)(U\setminus K)\to s_{X}(A)(U)))\ .$$
This implies the assertion.
   \end{proof}
   We have a Verdier duality equivalence
   $$\cV:\Shv(X,D(\pt)) \leftrightarrows \CoShv(X,D(\pt)):\cV^{-1}\ .$$
   The right-hand side of the formula  \eqref{gwrgerferffw}
   is precisely the formula
   for $\cV^{-1}(s_{X}(A))(U)$.
   
   Assume that  $D$ is section-determined.
   Let $f:A\to B$ be a morphism in $D(X)$. If $s_{X}(f)$ is an equivalence,
   then by \cref{hrrtgrtgrtege}, we conclude that $\ev_{U}(f)$ is an equivalence for every $U$ in $\Open(X)$.
   By the assumption on $D$ we conclude that $f$ is an equivalence. 
   This shows that $s_{X}$ is conservative.
    
   Assume now that $s_{X}$ is conservative. If $\ev_{U}(f)$ is an 
   equivalence for every $U$ in $\Open(X)$, then $\cV^{-1}( s_{X}(f))$ is an equivalence. Hence, also $s_{X}(f)$ is an equivalence.
   We conclude that $f$ is an equivalence by the assumption on $D$.
   This shows that $D$ is section-determined.
   \end{proof}

 \begin{theorem}\label{trkopherthrgertg}
The functor $E$  in \eqref{vsdfvsfsdr} is section-determined.
\end{theorem}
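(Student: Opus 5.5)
The plan is to reduce the statement to \cref{herthetrgertgetrg} via \cref{hkopeththregte}. \cref{thkoptrhergtrgege} says that $E$ is a coefficient system, and \cref{tkphrethergertgertg} says it is a six-functor formalism on $(\LCH,I,P)$. So \cref{hkopeththregte} applies: $E$ is section-determined if and only if, for every $X$ in $\LCH$, the functor
$$s_{X}:E(X)\to \CoShv(X,\EE)\ , \qquad A\mapsto \bigl(U\mapsto p_{U,!}j_{U\to X}^{*}A\bigr)$$
from \eqref{tgretgrtgeertt} is conservative.

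\cref{herthetrgertgetrg} asserts conservativity for the functor $s_{X}$ of \cref{herthertgrtger}. That functor is defined differently: it is the colimit-preserving extension of $A\mapsto (U\mapsto \ee(A(U)))$. The main step is therefore to identify the two functors. I will show that for $A$ in $X\nCalg$ and $U$ in $\Open(X)$ there is a natural isomorphism $p_{U,!}j_{U\to X}^{*}A\cong A(U)$ in $\nCalg$, where $p_{U,!}$ and $j^{*}$ denote the functors on the level of $E$-theory contexts.

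\begin{itemize}
\item By \cref{kopherhrtgertge}, $j_{U\to X}^{*}A$ is the restriction of $A$ to $U$. In the Fell bundle picture it is the $U$-algebra with underlying algebra $A(U)$ and fibres $A(x)$ for $x\in U$; this uses that $j_{!}j^{*}A\to A$ is the inclusion $A(U)\subseteq A$.
\item $p_{U,!}$ is the covariant functoriality of \cref{lkohperghtrgertgertg9}. It forgets the structure and keeps the underlying algebra, so $p_{U,!}j_{U\to X}^{*}A\cong A(U)$.
\end{itemize}

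It remains to pass to $E$-theory. The covariant functor $E(f)=f_{!}$ is obtained from \cref{kophertgretger9}, and $j^{*}$ is its right adjoint, which by \cref{okhpzrhtzjrtzj} is induced from the $E$-admissible functor $j^{*}$ on contexts. Hence both $A\mapsto p_{U,!}j^{*}_{U\to X}A$ and $A\mapsto s_{X}(A)(U)$ are colimit-preserving functors $E(X)\to \EE$. Their restrictions along $\ee_{X}$ agree naturally in $U$, so the universal property \eqref{hrtgrtgetghertherhtr} gives an equivalence of the two versions of $s_{X}$.

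With this identification, \cref{herthetrgertgetrg} gives conservativity of $s_{X}$ for every $X$, and \cref{hkopeththregte} then yields section-determinedness. The point I expect to need the most care is the compatibility used in this identification: the $E$-theory functor $j^{*}$ occurring in the six-functor formalism must coincide with $\EE$ applied to the context-level $j^{*}$. This holds because an adjunction of $E$-theory contexts between admissible functors induces an adjunction after applying $\EE$, via the unit and counit transformations and the universal property.
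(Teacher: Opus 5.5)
Your proposal is correct and matches the paper's proof: \cref{thkoptrhergtrgege} and \cref{tkphrethergertgertg} let you apply \cref{hkopeththregte}, and \cref{herthetrgertgetrg} supplies conservativity of $s_X$. The paper leaves implicit that the six-functor version $U\mapsto p_{U,!}j^{*}_{U\to X}A$ of $s_X$ agrees with the version from \cref{herthertgrtger}; your argument fills this in correctly, via $p_{U,!}j^{*}_{U\to X}A\cong A(U)$ on the level of contexts, the fact that $\EE$ preserves the relevant adjunctions, and the universal property of $\ee_X$.
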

\begin{proof} By \cref{thkoptrhergtrgege}
  and  \cref{tkphrethergertgertg}, the $E$-theory functor is a six-functor formalism on $(\LCH,I,P)$ and a coefficient system.  In \cref{herthetrgertgetrg} we have shown that $s_{X}:E(X)\to \CoShv(X,\EE)$ is conservative for every $X$ in $\LCH$. We now apply \cref{hkopeththregte} in order to conclude that $E$ is section-determined.
\end{proof}

We consider a functor
$D:\LCH^{\op}\to \CAlg(\Pr^{L})$. Then, for every morphism $f$ in $\LCH$ we have an adjunction $f^{*}\dashv f_{*}$.
  \begin{ddd}\label{okphertgertgerg}
We   define the  cohomology functor associated with  $D$ by
$$\Gamma^{D}:\LCH^{\op}\to D(\pt)\ , \quad X\mapsto p_{X,*} p_{X}^{*}1\ .$$\end{ddd}
Here 
 $1$ is  the tensor unit  in $D(\pt)$.

Let  $H:\LCH^{\op} \to \cC $ be  some functor.    
\begin{ddd}\label{iugwoierwerfwefwef}\mbox{}
 We say that $H$ is finitary\footnote{In \cite{NKP} this property is called "profinite descent".}     if, for every cofiltered system $(X_{i})_{i\in I}$   in $\CH$ 
 with $X:=\lim_{i\in I}X_{i}$, we have
 an equivalence
 $$  \colim_{i\in I^{\op}} H(X_{i})\stackrel{ \simeq}{\to} H(X)  \ .$$ 
 \end{ddd}

 \begin{prop}\label{tkohperthretgertg}
 The  cohomology functor associated with  $E$ by is finitary.
 \end{prop}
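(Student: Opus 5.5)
The plan is to identify $\Gamma^{E}$ on compact Hausdorff spaces concretely and then use Gelfand duality together with the fact that $\ee:\nCalg\to\EE$ preserves filtered colimits. For $X$ in $\CH$ the projection $p_X:X\to\pt$ is proper. By \cref{okhprethetgertgtge} the right adjoint of $p_X^{*}$ on the level of $E$-theory contexts is $p_{X,!}$. Since this adjunction lives in $E$-theory contexts, it induces the adjunction $p_X^{*}\dashv p_{X,*}$ on $E(-)$ with $p_{X,*}\simeq\EE(p_{X,!})$. The unit $1$ of $\EE$ is $\ee(\C)$, and $p_X^{*}$ is a morphism of $E$-theory contexts compatible with $\ee$, so I get
$$\Gamma^{E}(X)=p_{X,*}p_X^{*}1\simeq \ee\bigl(p_{X,!}\,p_X^{*}\C\bigr).$$
Unfolding \cref{kopghwthrwegregewrfref}, $p_X^{*}\C$ is the subalgebra of $\prod_{x\in X}\C$ generated by products of constant sections with $C_0(X)=C(X)$. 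This is $C(X)_X$ from \cref{okhperhrethregtrg}, and its underlying algebra, which is $p_{X,!}$ of it, is $C(X)$. Hence $\Gamma^{E}(X)\simeq\ee(C(X))$, naturally in $X$. For naturality in maps $f:X\to Y$ of $\CH$, I would check that the induced map is $\ee(f^{*}:C(Y)\to C(X))$. This comes from tracing the unit of $f^{*}\dashv f_{!}$ in the Fell bundle picture, since that unit is the diagonal map.

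Now let $(X_i)_{i\in I}$ be cofiltered in $\CH$ with $X=\lim_i X_i$. Gelfand duality (the contravariant equivalence between $\CH$ and unital commutative $C^{*}$-algebras) gives $C(X)\cong\colim_{i\in I^{\op}}C(X_i)$. Gelfand duality turns limits in $\CH$ into colimits in commutative unital $C^{*}$-algebras. I then need this colimit to agree with the filtered colimit in $\nCalg$. It does, because a filtered colimit in $\nCalg$ of commutative unital algebras along unital maps is again commutative and unital. Concretely, the union of the images $f_i^{*}C(X_i)$ is a unital $*$-subalgebra of $C(X)$, and it separates points because points of $\lim X_i$ are separated by some projection. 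So by Stone--Weierstrass it is dense, and I would identify the colimit norm with the sup norm on that image. Applying $\ee$, which preserves filtered colimits by definition, gives
$$\colim_{i\in I^{\op}}\Gamma^{E}(X_i)\simeq\colim_i\ee(C(X_i))\simeq\ee(C(X))\simeq\Gamma^{E}(X).$$

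I expect the main obstacle to be the first step: carefully identifying $p_{X,*}$ on $E(X)$ with $\EE(p_{X,!})$ and the unit with $\ee(C(X)_X)$, naturally in $X$. This needs \cref{okhprethetgertgtge} to be an adjunction of $E$-theory contexts; for $X\to\pt$ this also follows from \cref{rgrokpowergwergwerferwf}, since $p_X^{-1}$ is perfect. It also needs the functor $\EE$ from \cref{kohpertgertgertge} to carry such adjunctions to adjunctions, together with their units. The colimit identification is routine by comparison.
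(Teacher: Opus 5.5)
Your proposal is correct and follows the paper's proof. You identify $\Gamma^{E}$ on $\CH$ with $X\mapsto \ee(C(X))$ through the $C^{*}$-level adjunction $p_{X}^{*}\dashv p_{X,!}$, and then combine Gelfand duality $\colim_{i}C(X_{i})\cong C(X)$ in $\nCalg$ with the fact that $\ee$ preserves filtered colimits. Your extra checks fill in details the paper leaves implicit: that the 2-functor $\EE$ carries this adjunction and its unit to $p_{X}^{*}\dashv p_{X,*}$, and that the filtered colimit in $\nCalg$ agrees with the one in commutative unital algebras.
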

\begin{proof}
We have a functor
$$\Gamma^{(-)\nCalg}:\CH^{\op}\to \nCalg\ , \quad  X\mapsto p_{X,*}p_{X}^{*}1\ .$$
Explicitly, this functor is given by $X\mapsto C(X)$. We conclude that the restriction
$$\Gamma^{E}_{|\CH^{\op}}:\CH^{\op }\to \EE$$  of the cohomology functor to compact Hausdorff spaces is given by 
$$X\mapsto \ee(C(X)) , \quad \left(f:X\to Y)\mapsto \ee(f^{*}):\ee(C(Y))\to \ee(C(X))\right)\ .$$
For every cofiltered system $(X_{i})_{i\in I}$   in $\CH$ 
 with $X:=\lim_{i\in I}X_{i}$, we have, by Gelfand duality, 
 $$\colim_{i\in I^{\op}}C(X_{i}) \cong C(X)$$
 in $\nCalg$.
 Since  the functor $\ee:\nCalg\to \EE$ preserves filtered colimits, we conclude that
 $$\colim_{i\in I^{\op}}\ee(C(X_{i}) )\simeq  \ee(C(X)) \ .$$
  This shows that $\Gamma^{E}$ is finitary.
  \end{proof}

The following theorem is shown in \cite{buvo}.
  \begin{theorem} \label{thkoperthertegrtger} Assume that 
 $D$ is a presentable and stable six-functor formalism on the Nagata context $(\LCH,I,P)$ which is a coefficient system, section-determined, and is such that $D(\pt)$ is     dualizable and its 
 associated cohomology $\Gamma^{D}$  is finitary. 
 Then the transformation in \eqref{bfgkbdfgbdfgbert}
 is an equivalence of six-functor formalisms. 
  \end{theorem}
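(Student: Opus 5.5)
The plan is to prove that each component $\cB_{X}:\Shv(X,D(\pt))\to D(X)$ of \eqref{bfgkbdfgbdfgbert} is an equivalence. Because $\cB$ is already a morphism of coefficient systems, this is enough, once I also check compatibility with $P$ (see the last paragraph).

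\textbf{Generators and essential surjectivity.} Since $D(\pt)$ is dualizable, $\Shv(X,D(\pt))\simeq\Shv(X,\Sp)\otimes D(\pt)$. It is therefore generated under colimits by the objects $j_{U\to X,!}p_{U}^{*}B$ for $U$ in $\Open(X)$ and $B$ in $D(\pt)$, and $\cB_{X}$ sends these to the objects of $D(X)$ with the same name.

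Assume for the moment that $\cB_{X}$ is fully faithful. Its essential image is then closed under colimits, since $\cB_{X}$ is a colimit-preserving left adjoint. Let $A$ in $D(X)$ be right orthogonal to this image. Then for all $U$ and $B$,
$$\map_{D(\pt)}(B,\ev_{U}(A))\simeq\map_{D(X)}(j_{U\to X,!}p_{U}^{*}B,A)\simeq 0 .$$
Hence $\ev_{U}(A)\simeq0$ for every $U$, and section-determinedness (\cref{lkpohejztjrtz}) gives $A\simeq0$. By the adjoint functor theorem in $\Pr^{L}_{\st}$, $\cB_{X}$ is then essentially surjective.

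\textbf{Full faithfulness on generators.} Fix generators $j_{U,!}p_{U}^{*}B$ and $j_{V,!}p_{V}^{*}B'$. By adjunction, the mapping space out of the first is $\map(B,\ev_{U}(j_{V,!}p_{V}^{*}B'))$, both for sheaves and in $D$. I would rewrite $\ev_{U}$ as a limit over compact subsets $K\subseteq U$, exactly as in the proof of \cref{hrrtgrtgrtege}, using canonical descent together with I-bc and P-bc. This turns both sides into limits, over compact $K$, of cofibres built from $p_{K,*}(j_{K}^{*}(j_{V,!}p_{V}^{*}B'))$. Localization along $K\cap V\subseteq K$ and the projection formula then reduce the comparison to the following claim: for every compact Hausdorff $K$ and every $B'$, the map
$$\Gamma^{\Shv}(K)\otimes B'\to\Gamma^{D}(K)\otimes B'$$
induced by $\cB$ is an equivalence. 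Here dualizability of $D(\pt)$ is used to pull $B'$ out of $p_{K,*}$. Via the recollement, the same claim also covers pairs $(K,K\setminus W)$.

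\textbf{The main obstacle: comparing cohomologies of compacta.} I expect the main difficulty to be showing that $\Gamma^{\Shv(-,D(\pt))}\to\Gamma^{D}$ is an equivalence on $\CH$. The plan has three steps.
\begin{enumerate}
\item On finite discrete spaces both sides are $\prod1$, by the zero and localization axioms.
\item By finitarity of both sides (for sheaves this is \cite[Thm. 3.10]{NKP}), the comparison extends to all profinite sets.
\item For a general compact $K$, choose the Stone–Čech surjection $S_{0}=\beta K^{\delta}\to K$ from a profinite set. Its Čech nerve $S_{\bullet}$ consists of compact spaces that are again covered by profinite sets.
\end{enumerate}
What remains is descent along proper surjections for $D$, i.e.\ $\Gamma^{D}(K)\simeq\lim_{\Delta}\Gamma^{D}(S_{\bullet})$. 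This is the step I am least sure of. I would first try to deduce it from finitarity by writing $K$ as a cofiltered limit of finite closed covers (nerve approximations). For finite closed covers, descent follows from iterated localization recollements, and finitarity then passes to the limit. The point to check is that the relevant colimits commute with the totalization; this should hold because finite closed covers only require finite limits in stable categories.

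\textbf{Compatibility with $P$.} Finally, $\cB$ must commute with $p_{*}$ for proper $p$. Using the generators and P-bc, this reduces once more to the compact-cohomology comparison just established.
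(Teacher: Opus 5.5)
The paper does not prove this theorem; it quotes it from \cite{buvo}. Your argument therefore has to stand on its own, and it has a genuine gap exactly where you suspect.

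\textbf{The compact comparison.} Your third step needs $\Gamma^{D}(K)\simeq\lim_{\Delta}\Gamma^{D}(S_{\bullet})$ for the \v{C}ech nerve of a profinite cover $f\colon S_{0}\to K$. That is proper descent for $D$, which is not among the axioms, and your proposed derivation does not give it. For a clopen partition $P$ of $S_{0}$, the finite closed cover $\{f(S_{\alpha})\}_{\alpha\in P}$ gives $\Gamma^{D}(K)\simeq\lim_{\Delta}\Gamma^{D}(C^{P}_{\bullet})$. Finitarity gives $\Gamma^{D}(S_{n})\simeq\colim_{P}\Gamma^{D}(C^{P}_{n})$. But you must then interchange $\colim_{P}$ with $\lim_{\Delta}$. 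For a cover with $m$ pieces the totalization is a finite limit, but its size grows with $m$, and $m$ is unbounded along $P$. There is no uniform finiteness, and filtered colimits do not commute with totalizations in a stable $\infty$-category.

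You can bypass descent entirely. Let $F$ be the cofibre of $\Gamma^{\Shv(-,D(\pt))}\to\Gamma^{D}$ on $\CH^{\op}$. It has the following properties:
\begin{enumerate}
\item $F(\emptyset)\simeq 0$ and $F$ is finitary.
\item $F$ vanishes on profinite sets, by your steps 1--2.
\item $F$ satisfies Mayer--Vietoris for closed covers $K=K_{1}\cup K_{2}$. Localization and base change identify both $\Fib(\Gamma(K)\to\Gamma(K_{1}))$ and $\Fib(\Gamma(K_{2})\to\Gamma(K_{1}\cap K_{2}))$ with $p_{W,!}p_{W}^{*}1$, where $W:=K\setminus K_{1}$.
\end{enumerate}
Now cut $T\times[0,1]^{b+1}$, with $T$ profinite, into $2^{n}$ dyadic slabs in the last coordinate. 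The overlaps are copies of $T\times[0,1]^{b}$, on which $F$ vanishes by induction on $b$ (for all profinite $T$). This gives $F(T\times[0,1]^{b+1})\simeq\prod_{k}F(\mathrm{slab}_{k})$, compatibly in $n$. Finitarity as $n\to\infty$ then gives $F(T\times[0,1]^{b+1})\simeq F((T\times C)\times[0,1]^{b})\simeq 0$, where $C$ is the Cantor set. Hence $F$ vanishes successively on:
\begin{enumerate}
\item cubes;
\item compact polyhedra, by closed Mayer--Vietoris over simplices;
\item compact metrizable spaces, by Freudenthal's polyhedral approximation plus finitarity;
\item all of $\CH$, by finitarity along the metrizable quotients of $K$ given by the Gelfand spectra of the separable unital subalgebras of $C(K)$.
\end{enumerate}

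\textbf{Full faithfulness.} There is a second, fixable gap: you only compare mapping spaces between two generators. The generators $j_{U,!}p_{U}^{*}B$ are not compact. Moreover $\map(j_{U,!}p_{U}^{*}B,-)\simeq\map(B,\ev_{U}(-))$, and $\ev_{U}$ does not commute with colimits: already $\Gamma(\mathbb{N},-)$ on the discrete space $\mathbb{N}$ turns an infinite sum of skyscrapers into a product. So agreement on pairs of generators does not imply full faithfulness.

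What you need is that $\ev_{U}(G)\to\ev_{U}(\cB_{X}G)$ is an equivalence for every $G$ in $\Shv(X,D(\pt))$. Full faithfulness then follows by resolving the source by generators. This does follow from your own ingredients. The proof of \cref{hrrtgrtgrtege} gives $\ev_{U}\simeq\lim_{K\subseteq U}p_{K,*}j_{K\to X}^{*}$ in any formalism of this kind. Each $p_{K,*}j_{K\to X}^{*}$ is cocontinuous because $p_{K,*}$ is, so the comparison for these functors can be tested on generators. There it reduces to the compact cohomology comparison, as you describe.

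\textbf{Minor points.} Pulling $B'$ out of $p_{K,*}$ is the projection formula P-pf and needs no dualizability. Your last paragraph is superfluous: once all $\cB_{X}$ are equivalences every mate is invertible, so compatibility with $P$ is automatic.
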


  \begin{kor}\label{trkohjperthretgertgertg}
  The transformation  \eqref{gerwfergwerg} is an equivalence of six-functor formalisms
  $$\cB:\Shv(-,\EE)\stackrel{\simeq}{\to} E(-):\LCH^{\op}\to \Pr^{L}_{\st}\ .$$
  \end{kor}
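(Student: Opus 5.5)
The statement follows by applying \cref{thkoperthertegrtger} to $D:=E$ from \cref{okrgpwerwerfewrferwf}, with the comparison map being the transformation $\cB$ of \cref{hrtgetgretget}. So the plan is simply to check the hypotheses of \cref{thkoperthertegrtger} one by one, each time pointing to the result already proved in the paper.

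\emph{First}, $E$ is a presentable and stable six-functor formalism on $(\LCH,I,P)$. By \cref{tkphrethergertgertg} it is a presentable six-functor formalism. It factors through $\CAlg(\Pr^{L}_{\st})$ because $E$ takes values in $\Pr^{L}_{\st}$ by \cref{kohpehtregetrg}, and the tensor products are cocontinuous in each variable by construction of the localization.

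\emph{Second}, $E$ is a coefficient system by \cref{thkoptrhergtrgege}. This rests on the recollement of \cref{kohpertghertgtrege}, the open exhaustion property of \cref{okphterthertg}, and $E(\emptyset)\simeq 0$. The last holds because $\emptyset\nCalg$ consists only of the zero algebra, since a continuous $\Open(\emptyset)$-algebra satisfies $A=A(\emptyset)=0$.

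\emph{Third}, $E$ is section-determined by \cref{trkopherthrgertg}. That result combines the conservativity of $s_{X}$ from \cref{herthetrgertgetrg} with \cref{hkopeththregte}.

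\emph{Fourth}, the cohomology functor $\Gamma^{E}$ is finitary by \cref{tkohperthretgertg}.

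\emph{Fifth}, $E(\pt)\simeq\EE$ must be dualizable in $\Pr^{L}_{\st}$. The paper does not prove this itself but cites it from \cite{budu}. I will quote this, together with the identification $\{*\}\nCalg\simeq\nCalg$, which gives $E(\pt)\simeq\EE$ compatibly with the symmetric monoidal structure.

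With all hypotheses in place, \cref{thkoperthertegrtger} shows that $\cB:\Shv(-,E(\pt))\to E(-)$ is an equivalence of six-functor formalisms. Substituting $E(\pt)\simeq\EE$ gives exactly the transformation \eqref{gerwfergwerg} asserted in the statement.

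The only step that needs some care is the bookkeeping. The transformation $\cB$ produced in \cite{buvo} for the coefficient system $E$ is $E(\pt)$-linear and has source $\Shv(-,E(\pt))$. I have to make sure this is the same $\cB$ as in \eqref{gerwfergwerg}, and that the equivalence $E(\pt)\simeq\EE$ is symmetric monoidal, so that $\Shv(-,E(\pt))\simeq\Shv(-,\EE)$ as six-functor formalisms. Since \cref{hrtgetgretget} was defined precisely as this transformation, I expect this to be a matter of unwinding definitions rather than a genuine obstacle.
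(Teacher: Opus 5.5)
Your proposal is correct and is the paper's proof: you obtain the corollary by checking the hypotheses of \cref{thkoperthertegrtger} for $E$, namely that it is a presentable stable six-functor formalism, a coefficient system by \cref{thkoptrhergtrgege}, section-determined by \cref{trkopherthrgertg}, that $\EE$ is dualizable by \cite{budu}, and that $\Gamma^{E}$ is finitary by \cref{tkohperthretgertg}. Your additional check that $E(\emptyset)\simeq 0$ is correct (since $\emptyset\nCalg$ contains only the zero algebra) and fills in a point the paper leaves implicit.
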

  \begin{proof}
  The assumptions of \eqref{thkoperthertegrtger} are satisfied:
  \begin{enumerate}
  \item $E$ is stable and presentable by construction.
  \item $E$ is a coefficient system by \cref{thkoptrhergtrgege}.
  \item $E$ is section-determined by \cref{trkopherthrgertg}.
  \item $E(\pt)\simeq \EE$ is dualizable by \cite{budu}.
  \item $\Gamma^{E}$ is finitary by \cref{tkohperthretgertg}.
  \end{enumerate}

  \end{proof}

 \section{Technical lemmas}
 
 The following algebraic and categorical properties of $C^*$-algebras and their maximal tensor products are well-known standard results. We sketch the arguments  for the reader's convenience.
 We state the results precisely in the form in which they are used  in \cref{gergwergwerioug9erwgwregw}. 
 A good reference for the exactness of the maximal tensor product is \cite[Prop. 3.7.1]{zbMATH05256855}.
 
 \begin{lem}\label{hpzhjetophherth}
 Filtered colimits preserve exact sequences.
 \end{lem}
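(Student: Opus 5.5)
We must show: for a filtered system of exact sequences $0\to A_i\to B_i\to C_i\to 0$ in $\nCalg$, the colimit sequence $0\to A\to B\to C\to 0$, with $A=\colim_i A_i$ etc., is exact. The plan is to use the concrete model of filtered colimits of $C^{*}$-algebras: $\colim_i B_i$ is the completion of the algebraic colimit $\colim^{\alg}_i B_i$ with respect to the seminorm $\|[b_i]\|:=\lim_{j\ge i}\|\phi_{ji}(b_i)\|$ (a limit of a non-increasing net), after dividing out the null vectors. We then check the three parts of exactness separately.

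\emph{Surjectivity of $B\to C$.} The image of $\colim^{\alg} B_i\to \colim^{\alg} C_i$ is surjective, because each $B_i\to C_i$ is surjective. Hence $B\to C$ has dense image. Since homomorphisms of $C^{*}$-algebras have closed range, $B\to C$ is surjective.

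\emph{Injectivity of $A\to B$.} For $a\in A_i$ we have $\|\phi_{ji}(a)\|_{A_j}=\|\phi_{ji}(a)\|_{B_j}$, because injective homomorphisms are isometric. So the seminorms agree on $\colim^{\alg}A_i$, and $A\to B$ is isometric on a dense subalgebra. It is therefore isometric, hence injective with closed image.

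\emph{Exactness in the middle.} The composite $A\to C$ vanishes. The main step is to show that every $b\in B$ mapping to $0$ in $C$ lies in the image of $A$. Since the image of $A$ is closed, it suffices to approximate $b$ by elements of $\operatorname{im}(A)$.

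1. Approximate $b$ by the class of some $b_i\in B_i$, with $\|b-[b_i]\|<\epsilon$.
2. The image $[c_i]$ of $[b_i]$ in $C$ has norm $<\epsilon$. By the definition of the colimit seminorm there is $j\ge i$ with $\|\phi_{ji}(c_i)\|_{C_j}<\epsilon$.
3. Since $C_j\cong B_j/A_j$ carries the quotient norm, there is $a_j\in A_j$ with $\|\phi_{ji}(b_i)-a_j\|_{B_j}<\epsilon$.
4. Passing to $B$ (a contraction), $\|b-[a_j]\|<2\epsilon$.

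Letting $\epsilon\to 0$ shows $b\in\operatorname{im}(A)$. The only delicate points are the quotient-norm identity $C_j\cong B_j/A_j$ and the fact that the colimit norm is the limit of the norms at finite stages; both are standard.

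Finally, in the categorical formulation used in the paper, an exact sequence is a square that is both cartesian and cocartesian. In $\nCalg$ such squares correspond precisely to short exact sequences (kernel and cokernel), so the statement above suffices.
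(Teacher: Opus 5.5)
Your argument is correct and complete. The paper gives no proof of its own here: it calls the statement well known and refers to \cite[Lem.~7.20]{KKG}, where the analogous fact is proved even for filtered colimits of $C^{*}$-categories.

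What you give is the standard direct argument for $C^{*}$-algebras. It uses the explicit model of a filtered colimit as the completion of the algebraic colimit for the seminorm $\inf_{j}\|\phi_{ji}(-)\|$. Each of your three steps is sound: dense plus closed range gives surjectivity, isometry gives injectivity, and the quotient-norm approximation gives exactness in the middle.

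If you want to shorten the argument, note that the cocartesian half is formal, because colimits commute with pushouts. The colimit square is therefore automatically a pushout, so $B\to C$ is the quotient map onto $B/\overline{\langle \operatorname{im}A\rangle}$. Suppose you know two things: that $A\to B$ is isometric, and that $\operatorname{im}A=\overline{\bigcup_{i}\operatorname{im}(A_{i}\to B)}$ is a closed ideal of $B$. The latter can be checked on the dense union, using that each $A_{j}$ is an ideal of $B_{j}$. Then the identification of the kernel follows without your $\epsilon$-approximation.

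Either way, your version has the merit of being self-contained. The cited reference buys the generality of $C^{*}$-categories, which the paper mentions but does not need.
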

 \begin{proof}
 Let $$(0\to I_{i}\to A_{i}\to Q_{i}\to 0)_{i\in I}$$ be a filtered system of exact sequences in $\nCalg$.
 Then the assertion is that
 $$0\to \colim_{i\in I}I_{i}\to \colim_{i\in I}A_{i}\to \colim_{i\in I} Q_{i}\to 0$$
 is again exact. This is a well-known fact from $C^{*}$-algebra theory. See e.g. \cite[Lem. 7.20]{KKG}
 for an argument that even works for $C^{*}$-categories.
 \end{proof}

 If $C'\to C$ is a homomorphism of $C^{*}$-algebra,  we let $\bar C'$ denote the image of
the homomorphism and $\langle \bar C'\rangle$ be the ideal generated by $\bar C'$.
Recall that $\otimes$ denotes the maximal tensor product in $\nCalg$.
 \begin{lem}
  \label{ihrfiwuoghewergwreg}
  If $ A'\to A$ and $B' \to B$  are homomorphisms of $C^{*}$-algebras,
  then  we have
 a canonical isomorphism of ideals  $$ \langle \overline{A'\otimes B'} \rangle\cong \langle\bar A' \rangle \otimes\langle \bar B'\rangle$$ in $A\otimes B$.
  \end{lem}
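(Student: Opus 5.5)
The plan is to realize all the objects inside $A\otimes B$ and to use exactness of the maximal tensor product twice. First I replace $A'\to A$ and $B'\to B$ by their images $\bar A'\subseteq A$ and $\bar B'\subseteq B$. The image of $A'\otimes B'\to A\otimes B$ is the closure of the image of the algebraic tensor product $A'\odot B'$, and every elementary tensor $a'\otimes b'$ maps into $\bar A'\odot\bar B'$. Hence $\overline{A'\otimes B'}$, the image in $A\otimes B$, and the image of $\bar A'\otimes\bar B'$ generate the same closed ideal. So I may assume that $A'\subseteq A$ and $B'\subseteq B$ are sub-$C^{*}$-algebras.

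Next, set $I:=\langle A'\rangle$ and $J:=\langle B'\rangle$. Since the maximal tensor product is exact (the reference \cite[Prop. 3.7.1]{zbMATH05256855}), it sends ideal inclusions to ideal inclusions. The map $I\otimes B\to A\otimes B$ is injective with ideal image, because it is the kernel of $A\otimes B\to (A/I)\otimes B$. Likewise $I\otimes J\to I\otimes B$ is injective with ideal image. So $I\otimes J$ is an ideal of $I\otimes B$, which is an ideal of $A\otimes B$. An ideal of an ideal of a $C^{*}$-algebra is again an ideal, so $I\otimes J$ is a closed ideal of $A\otimes B$. It is the closed span of the products $a\otimes b$ with $a\in I$ and $b\in J$.

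It then remains to prove the equality $\langle \overline{A'\otimes B'}\rangle=I\otimes J$ inside $A\otimes B$.
\begin{itemize}
\item[$(\subseteq)$] $A'\otimes B'$ maps into $I\otimes J$, and $I\otimes J$ is a closed ideal, so the ideal generated by the image is contained in it.
\item[$(\supseteq)$] It suffices to show that every elementary tensor $a\otimes b$ with $a\in I$ and $b\in J$ lies in $K:=\langle\overline{A'\otimes B'}\rangle$. The ideal $I$ is the closed linear span of the products $x a' y$ with $a'\in A'$ and $x,y$ in the unitization $A^{+}$, and $J$ is described in the same way using $B'$ and $B^{+}$. The map $(a,b)\mapsto a\otimes b$ is bilinear and bounded by $\|a\|\|b\|$ in the maximal norm, and $K$ is closed. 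So it suffices to treat the case $a=xa'y$ and $b=ub'v$. In that case
$$a\otimes b=(x\otimes u)(a'\otimes b')(y\otimes v),$$
where $x\otimes u$ and $y\otimes v$ are multipliers of $A\otimes B$. The elements $1\otimes 1$, $x\otimes 1$, $1\otimes u$ and $x\otimes u$ with $x\in A$ and $u\in B$ all act as multipliers. Since $K$ is a closed two-sided ideal, it is stable under multipliers: approximate-unit arguments show that $mk=\lim m e_\lambda k$ with $me_\lambda\in A\otimes B$. Hence $a\otimes b\in K$.
\end{itemize}

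The main obstacle is the structural input that $I\otimes J\to A\otimes B$ is injective with closed ideal image. This is precisely where the maximal tensor product is needed: for the minimal tensor product, injectivity would hold but ideal exactness could fail. I would cite exactness of $\otimes_{\max}$ for this step. The rest is the density argument above. Canonicity is automatic, since everything happens inside $A\otimes B$.
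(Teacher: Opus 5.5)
Your proof is correct and follows essentially the same route as the paper. Exactness of the maximal tensor product makes $\langle\bar A'\rangle\otimes\langle\bar B'\rangle$ a closed ideal of $A\otimes B$, the inclusion $\subseteq$ is immediate, and $\supseteq$ follows by writing the generators $xa'y\otimes ub'v$ as $a'\otimes b'$ multiplied on both sides. The only cosmetic difference is that you use unitizations and the stability of closed ideals under multipliers, whereas the paper inserts approximate units $u,v$ explicitly via $(u\otimes y)(x\otimes v)(\overline{a\otimes b})(x'\otimes v)(u\otimes y')$.
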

 \begin{proof}
 It follows from the exactness of $\otimes$ that it preserves ideal inclusions in both arguments.
 Consequently  $\langle\bar A' \rangle \otimes\langle \bar B'\rangle$ is naturally an ideal in $A\otimes B$.
 The ideal $ \langle \overline{A'\otimes  B'} \rangle$ is the smallest ideal
 of $A\otimes B$ that contains the images $\overline{ a\otimes b}$ 
 of $a\otimes b$ in $A'\otimes^{\alg} B'$ for all
 $a$ in $A'$ and $b$ in $B'$. Such an  element is also the image  in $A\otimes  B$ of 
 $\bar a\otimes \bar b$ in $A\otimes^{\alg}B$. 
 This implies
  $$ \langle \overline{A'\otimes B'} \rangle \subseteq  \langle\bar A' \rangle \otimes \langle \bar B'\rangle\ .$$
 Every element of $ \langle\bar A' \rangle \otimes \langle \bar B'\rangle$ be be approximated by sums of elements of the form
 $x \bar ax'\otimes y \bar b y'$ for $a$ in $A'$, $B$ in $B'$, $x,x'$ in $A$ and $y,y'$ in $B$. 
 We consider the elements
 $(u\otimes y)(x\otimes v)(\overline{a\otimes b}) (x'\otimes v)(u\otimes y')$ in
 $\langle \overline{A'\otimes B'} \rangle $.
Letting $u$ and $v$ run over approximate units in $A$ and $B$, respectively,
we see that these elements approximate  $x \bar ax'\otimes y \bar b y'$.
We conclude that $$\langle\bar A' \rangle \otimes \langle \bar B'\rangle\subseteq \langle \overline{A'\otimes  B'} \rangle\ .$$
  \end{proof}
 
 \begin{lem} \label{kophertgertgertg}If
 $0\to A\to B\to C\to 0$ is an exact sequence in $\nCalg$, $J$ is an ideal in $B$
 and $I:=A\cap J$, then the canonical map $J/I\to C$ is the inclusion of an ideal.
\end{lem}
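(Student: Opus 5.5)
The claim reduces to two facts: the canonical map $J/I\to C$ is injective with closed image, and that image is an ideal of $C$. Let $\pi:B\to C$ denote the quotient map with kernel $A$.

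First consider the restriction $\pi_{|J}:J\to C$. It is a $*$-homomorphism with kernel $J\cap A=I$, so it factors through an injective $*$-homomorphism $J/I\to C$. Since $I$ is a closed ideal of the $C^{*}$-algebra $J$, the quotient $J/I$ is a $C^{*}$-algebra. An injective $*$-homomorphism between $C^{*}$-algebras is isometric, so its image $\pi(J)$ is closed in $C$. Hence $J/I\to C$ is an isomorphism onto the closed $*$-subalgebra $\pi(J)$.

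It remains to show that $\pi(J)$ is a two-sided ideal of $C$. Take $c\in C$ and $j\in J$. Since $\pi$ is surjective, we can write $c=\pi(b)$ for some $b\in B$. Then
$$c\,\pi(j)=\pi(bj)\in\pi(J)\ ,\qquad \pi(j)\,c=\pi(jb)\in\pi(J)\ ,$$
because $J$ is an ideal in $B$. Together with closedness and the fact that $\pi(J)$ is $*$-closed, this makes $\pi(J)$ a closed two-sided $*$-ideal. Equivalently, $\pi(J)$ can be identified with $(J+A)/A$, which is closed because $J+A$ is a closed ideal of $B$ (the sum of two closed ideals is closed).

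There is no real obstacle here. The only non-algebraic input is closedness of the image, and that follows from the automatic isometry of injective $*$-homomorphisms.
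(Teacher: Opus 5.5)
Your proof is correct and follows essentially the same route as the paper: injectivity of $J/I\to C$ comes from the kernel computation $\ker(\pi_{|J})=J\cap A=I$ (the paper's diagram chase), and the ideal property comes from lifting $c\in C$ to some $b\in B$ and using that $J$ is an ideal. You additionally make two points explicit that the paper leaves implicit: the image is closed because injective $*$-homomorphisms are isometric, and $c\,\pi(j)$ really means $\pi(bj)$ for a lift $b$ of $c$.
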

\begin{proof}
We have a map of exact sequences 
$$\xymatrix{0\ar[r]  &I \ar[r]\ar[d] &J\ar[r]\ar[d] &J/I\ar[r]\ar[d] &0 \\ 0\ar[r] &A\ar[r]&B\ar[r] &C\ar[r]   &0 } \ .$$
By a diagram chase, we see that the right vertical map is injective.
If $[j]$ is in $J/I$ and $c$ is in $C$, then $c[j]=[cj]\in J/I$. Hence $J/I$ is an ideal in $C$.
\end{proof}
 
For the following statements, we  let  $A,B$ be in $\nCalg$ and $I,J$ be  ideals in $A$, and let $K,L$ be ideals in $B$.

 \begin{lem}\label{kophertgertgrtege}
 The canonical map $I\otimes K\to A\otimes B$ is an inclusion.
 \end{lem}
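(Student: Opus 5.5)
The plan is to derive the statement from the exactness of the maximal tensor product, which the paper already uses repeatedly (e.g. in \cref{ihrfiwuoghewergwreg} and \cref{khrptohertgertgrtge}; see \cite[Prop. 3.7.1]{zbMATH05256855}). The claim that $I\otimes K\to A\otimes B$ is injective splits into two one-variable steps. First, factor the map as
$$I\otimes K\to A\otimes K\to A\otimes B\ ,$$
where the first map is $\incl\otimes\id_{K}$ and the second is $\id_{A}\otimes\incl$. A composition of injective homomorphisms is injective, so it suffices to show that tensoring an ideal inclusion with a fixed algebra stays injective.

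For the second map, apply the exact functor $A\otimes-$ to the exact sequence $0\to K\to B\to B/K\to 0$. This gives an exact sequence
$$0\to A\otimes K\to A\otimes B\to A\otimes (B/K)\to 0\ .$$
In particular $A\otimes K\to A\otimes B$ is injective, and its image is the closed ideal generated by the elementary tensors $a\otimes k$. For the first map, apply $-\otimes K$ to $0\to I\to A\to A/I\to 0$ in the same way, which yields injectivity of $I\otimes K\to A\otimes K$. Composing the two, the image of $I\otimes K$ is an ideal of an ideal in $A\otimes B$. Since ideals of ideals in $C^{*}$-algebras are ideals, this image is in fact an ideal in $A\otimes B$, which matches how the statement is used later.

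The only point needing care is what exactness of $\otimes_{\max}$ actually provides. It is exactness in the middle and on the right together with injectivity on the left. The left injectivity is the nontrivial input: it holds for the maximal tensor product precisely because of the ideal case. One argues that every representation of $A\otimes^{\alg} K$ extends to a representation of $A\otimes^{\alg} B$, using the unique extension of a nondegenerate representation of the ideal $K$ to $B$ through the multiplier algebra $M(K)$. Hence the maximal norm on $A\otimes^{\alg} K$ is the restriction of the maximal norm on $A\otimes^{\alg} B$. This extension argument is the main obstacle. I would cite it rather than reprove it, in keeping with the paper's stance that these technical lemmas are standard.
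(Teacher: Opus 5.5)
Your proof is correct and is essentially the paper's: exactness of the maximal tensor product applied to an ideal inclusion in each variable separately, followed by composing the two resulting injections. The only differences are cosmetic. The paper factors the map through $I\otimes B$ rather than $A\otimes K$. Your remark on why left exactness holds for ideal inclusions (extending representations of $K$ to $B$ via $M(K)$) makes explicit what the paper simply cites as exactness of $\otimes$.
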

\begin{proof}
The functor $-\otimes B$ is exact. It sends the exact sequence
$$0\to I\to A\to A/I\to 0$$ to $$0\to I\otimes B\to A\otimes B\to A/I\otimes B\to 0\ .$$

We can apply this also to the functor $I\otimes -$.
Hence
$$I\otimes K\to I\otimes B\to A\otimes B$$
is a composition of inclusions.

\end{proof}

\begin{lem}\label{kophertgertge}
The natural inclusion $(I\cap J)\otimes B\to (I\otimes B)\cap (J\otimes B)$ is an equality.
\end{lem}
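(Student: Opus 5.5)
We reduce the statement to the exactness of the maximal tensor product, and to \cref{kophertgertgertg} and \cref{kophertgertgrtege}. By \cref{kophertgertgrtege} all three algebras $(I\cap J)\otimes B$, $I\otimes B$ and $J\otimes B$ are ideals in $A\otimes B$. The inclusion $(I\cap J)\otimes B\subseteq (I\otimes B)\cap (J\otimes B)$ is clear, so only the reverse inclusion needs an argument.

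The idea is to pass to the quotient by $J$. Exactness of $-\otimes B$ applied to $0\to J\to A\to A/J\to 0$ gives
\[ (J\otimes B)=\ker\bigl(A\otimes B\to (A/J)\otimes B\bigr)\ .\]
Hence $(I\otimes B)\cap (J\otimes B)$ is the kernel of the composition $I\otimes B\to A\otimes B\to (A/J)\otimes B$.

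Next we identify this kernel. By \cref{kophertgertgertg}, applied to $0\to J\to A\to A/J\to 0$ and the ideal $I$, the map $I/(I\cap J)\to A/J$ is the inclusion of an ideal. The composite $I\to A\to A/J$ therefore factors as
\[ I\to I/(I\cap J)\hookrightarrow A/J\ .\]
Tensoring with $B$ and using exactness once more, the sequence
\[ 0\to (I\cap J)\otimes B\to I\otimes B\to (I/(I\cap J))\otimes B\to 0\]
is exact. Moreover $(I/(I\cap J))\otimes B\to (A/J)\otimes B$ is injective by \cref{kophertgertgrtege}, since it is the tensor product of an ideal inclusion with $B$.

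It follows that the kernel of $I\otimes B\to (A/J)\otimes B$ equals the kernel of $I\otimes B\to (I/(I\cap J))\otimes B$, which is $(I\cap J)\otimes B$. This gives the reverse inclusion and hence the equality. The only point requiring care is that every map in the argument is compatible with the embeddings into $A\otimes B$, so that kernels computed inside $I\otimes B$ agree with intersections computed inside $A\otimes B$. This holds because all maps are induced from the inclusions $I\to A$ and $I\cap J\to I$ by applying $-\otimes B$ functorially.
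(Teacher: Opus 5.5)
Your proof is correct and is essentially the paper's argument: both use exactness of the maximal tensor product to show that $(I\otimes B)/((I\cap J)\otimes B)$ injects into a quotient by $J\otimes B$, which gives $(I\otimes B)\cap(J\otimes B)\subseteq (I\cap J)\otimes B$. The only difference is that you embed $I/(I\cap J)$ into $A/J$ via \cref{kophertgertgertg}, whereas the paper uses the isomorphism $I/(I\cap J)\cong (I+J)/J$ and the square over $(I+J)\otimes B$; your version has the small advantage of not needing the fact that $I+J$ is closed.
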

\begin{proof}
We have a cartesian square 
  $$\xymatrix{I\cap J\ar[r]\ar[d] &I \ar[d] \\ J\ar[r] & I+J}$$
   of ideals in $A$, meaning in particular that $I+J$ is again a closed ideal.
  The canonical map of quotients $I/(I\cap J)\to (I+J)/J$ is an isomorphism.
   
  We get a square of ideals 
  $$\xymatrix{(I\cap J)\otimes B\ar[r]\ar[d] &I\otimes B \ar[d] \\ J\otimes B\ar[r] & (I+J)\otimes B}\ .$$
  All maps remain injective, and the canonical map of quotients
  $$I\otimes B/((I\cap J)\otimes B) \to ((I+J)\otimes B)/(J\otimes B)$$ is still an isomorphism.
  This implies that the square is still cartesian which implies the assertion.
  \end{proof}

 \begin{lem}\label{kohpertherggetrg}
 The natural inclusion $$(I\cap J)\otimes (K\cap L)\to (I\otimes K)\cap (J\otimes L)$$ of ideals in $A\otimes B$ is an equality. 
 \end{lem}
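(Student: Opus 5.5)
The plan is to reduce the statement to two one-variable applications of \cref{kophertgertge}, once in each tensor factor, together with the flatness facts from \cref{kophertgertgrtege}. By \cref{kophertgertgrtege} all the algebras $I\otimes K$, $J\otimes L$, $(I\cap J)\otimes(K\cap L)$, $(I\cap J)\otimes B$, $A\otimes(K\cap L)$, and so on, are ideals in $A\otimes B$. The inclusion $\subseteq$ is then clear, and only the reverse inclusion needs proof.

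First I intersect in the $A$-variable. I would use the identities
\[
I\otimes K=(I\otimes B)\cap(A\otimes K),\qquad J\otimes L=(J\otimes B)\cap(A\otimes L).
\]
The first one is the cartesian square argument of \cref{kophertgertge}, applied with the two ideals $I\otimes B$ and $A\otimes K$. More precisely, the quotient $(I\otimes B)/(I\otimes K)\cong I\otimes (B/K)$ injects into $(A\otimes B)/(A\otimes K)\cong A\otimes(B/K)$. This injectivity is exactness of $-\otimes(B/K)$ applied to the ideal inclusion $I\to A$. A diagram chase with the two exact rows then gives the identity. With these identities,
\[
(I\otimes K)\cap(J\otimes L)=(I\otimes B)\cap(J\otimes B)\cap(A\otimes K)\cap(A\otimes L).
\]

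Next I apply \cref{kophertgertge} in each variable. It gives $(I\otimes B)\cap(J\otimes B)=(I\cap J)\otimes B$, and by symmetry of the maximal tensor product $(A\otimes K)\cap(A\otimes L)=A\otimes(K\cap L)$. So the intersection above equals $((I\cap J)\otimes B)\cap(A\otimes(K\cap L))$. Applying the identity from the first step once more, now with the ideals $I\cap J$ and $K\cap L$, this equals $(I\cap J)\otimes(K\cap L)$, which proves the claim.

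The main obstacle is the auxiliary identity $I\otimes K=(I\otimes B)\cap(A\otimes K)$. Everything else is formal bookkeeping of intersections of ideals. For that identity, I would first try the diagram chase above with the map of exact sequences
\[
0\to I\otimes K\to I\otimes B\to I\otimes B/K\to 0
\]
mapping into
\[
0\to A\otimes K\to A\otimes B\to A\otimes B/K\to 0,
\]
where both rows are exact by exactness of the maximal tensor product. Injectivity of the right vertical map then yields $(I\otimes B)\cap(A\otimes K)=I\otimes K$.
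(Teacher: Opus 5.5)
Your proof is correct and follows essentially the same route as the paper: \cref{kophertgertge} in each tensor variable, plus a diagram chase between two exact rows (using that the maximal tensor product preserves ideal inclusions) to show $\bigl((I\cap J)\otimes B\bigr)\cap\bigl(A\otimes(K\cap L)\bigr)\subseteq (I\cap J)\otimes(K\cap L)$. The paper runs that chase with the quotient in the first variable rather than the second, and in the first step it only uses the trivial inclusions $I\otimes K\subseteq I\otimes B$ and $I\otimes K\subseteq A\otimes K$, so your full identity $I\otimes K=(I\otimes B)\cap(A\otimes K)$ is needed only at the final step.
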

 \begin{proof}
 We consider the diagram
 $$\xymatrix{(I\cap J)\otimes (K\cap L)\ar@{^{(}->}[r] \ar@{^{(}->}[d] & A\otimes (K\cap L)\ar[r]\ar@{^{(}->}[d]&A/(I\cap J)\otimes (K\cap L)\ar@{^{(}->}[d] \\ (I\cap J)\otimes B \ar@{^{(}->}[r]&A\otimes B\ar[r]&A/(I\cap J)\otimes B\\\ar@/^2cm/@{^{(}..>}[uu] A\otimes (K\cap L)\cap (I\cap J)\otimes B \ar[uur]\ar[u]& &}\ .$$
 The two horizontal sequences are exact.  By a diagram chase, we get the dotted arrow.
 We now show that
 $$(I\otimes K)\cap (J\otimes L)\subseteq A\otimes (K\cap L)\cap (I\cap J)\otimes B\ .$$
 We have inclusions
 $$(I\otimes K)\cap (J\otimes L)\subseteq (I\otimes B)\cap (J\otimes B)=(I\cap J)\otimes B\ ,$$
 where we use \cref{kophertgertge} for the second equality.
 Analogously $$
 (I\otimes K)\cap (J\otimes L)\subseteq A\otimes (K\cap L)\ .$$
 This implies
 $$(I\otimes K)\cap (J\otimes L)\subseteq (I\cap J)\otimes (K\cap L)$$ and hence the assertion.
 \end{proof}

 \begin{lem}\label{jgwoiergwergwreg}
 Let  $(I_{i})_{i}$ be a family of ideals  in a $C^{*}$-algebra $A$, and $B$ be a further  $C^{*}$-algebra. Then  
 we have
 $$\overline{\sum_{i\in I} I_{i}}\otimes B= \overline{\sum_{i\in I} I_{i}\otimes B}$$ inside
 $A\otimes B$. 
 \end{lem}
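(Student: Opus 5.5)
Both sides are closed subspaces of $A\otimes B$; by \cref{kophertgertgrtege} each $I_{i}\otimes B$ is an ideal there. Set $S:=\overline{\sum_{i\in I} I_{i}}$, a closed ideal of $A$, so $S\otimes B$ is also an ideal in $A\otimes B$. I would prove the two inclusions separately.

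For $\supseteq$: for each $i$ we have $I_{i}\subseteq S$. Exactness of $-\otimes B$ (via \cref{kophertgertgrtege}, applied to the ideal $I_{i}$ of $S$) gives $I_{i}\otimes B\subseteq S\otimes B$ inside $A\otimes B$. Since $S\otimes B$ is closed, it contains $\overline{\sum_{i} I_{i}\otimes B}$.

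For $\subseteq$: the algebraic tensor product $S\otimes^{\alg}B$ has dense image in $S\otimes B$. It therefore suffices to approximate an elementary tensor $s\otimes b$ with $s\in S$, $b\in B$. Choose $s'=\sum_{k=1}^{n} a_{k}$ with $a_{k}\in I_{i_{k}}$ and $\|s-s'\|\le\epsilon$. Then
\[
\|s\otimes b-s'\otimes b\|\le \epsilon\|b\| ,
\]
because the maximal tensor norm is a cross norm and hence bounded on elementary tensors by the product of norms. Moreover $s'\otimes b=\sum_{k} a_{k}\otimes b$ lies in $\sum_{k} I_{i_{k}}\otimes B$. This shows $s\otimes b\in\overline{\sum_{i} I_{i}\otimes B}$. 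Finite sums of such tensors are then in the closure as well, and passing to the closure gives the inclusion.

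There is no real obstacle here. The only point needing care is identifying the subspaces inside $A\otimes B$: the image of $S\otimes B$ must be the closure of the image of $S\otimes^{\alg}B$, and the elementary tensor $a_{k}\otimes b$ formed in $I_{i_{k}}\otimes B$ must map to the same element as when formed in $S\otimes B$. Both follow from the fact that the canonical maps are injective $*$-homomorphisms. This injectivity is the exactness input of \cref{kophertgertgrtege}, and it is the step I would state explicitly.
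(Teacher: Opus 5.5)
Your proof is correct, but it takes a different route from the paper's.

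\textbf{The paper's route.} The proof there is a one-line categorical argument. The ideal $\overline{\sum_{i}I_{i}}$ is the filtered colimit (closure of the union) of the finite sums $\sum_{i\in F}I_{i}$. Since $-\otimes B$ preserves ideal inclusions, finite sums of ideals and filtered colimits, both sides are the closure of the union of the ideals $\bigl(\sum_{i\in F}I_{i}\bigr)\otimes B=\sum_{i\in F}I_{i}\otimes B$.

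\textbf{Your route.} You work element by element. Density of $S\otimes^{\mathrm{alg}}B$ together with the cross-norm estimate $\|(s-s')\otimes b\|\le\|s-s'\|\,\|b\|$ handles the finite and the infinite part of the sum in one step.

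\textbf{Comparison.} The paper's version is formal once the preservation properties of the maximal tensor product are granted; it uses these throughout anyway, and the argument would apply to any functor with these properties. On the other hand, the finite-sum step $(I+J)\otimes B=I\otimes B+J\otimes B$ is only asserted there. Proving it takes essentially your density argument, or an exactness argument via $(I+J)/J\cong I/(I\cap J)$. Your version is self-contained and elementary.

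\textbf{A small correction to your last paragraph.} The two identifications you flag do not actually use injectivity.
\begin{itemize}
\item The image of $S\otimes B$ in $A\otimes B$ is closed and contains the image of $S\otimes^{\mathrm{alg}}B$ as a dense subset, simply because it is the image of a $*$-homomorphism.
\item The compatibility of the elementary tensors $a_{k}\otimes b$ is just functoriality of $\otimes$ along $I_{i_{k}}\to S\to A$.
\end{itemize}
Injectivity (\cref{kophertgertgrtege}) is only needed to read the statement as an equality of ideals of $A\otimes B$ rather than as an equality of images.
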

 \begin{proof}We use that $-\otimes B$ preserves ideal inclusions, finite sums of ideals,  and filtered colimits. 
\end{proof}

\begin{lem}\label{khoprtehegrtege9}
Let $A$ be a $C^{*}$-algebra and let $I,J,K$ be ideals in $A$.
Then  $(I\cap J)/(K\cap I\cap J)= I/(K\cap I)\cap J/(K\cap J) $ in $A/K$.
\end{lem}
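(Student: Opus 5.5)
The statement is the last technical lemma: for ideals $I,J,K$ in a $C^{*}$-algebra $A$, the images satisfy $(I\cap J)/(K\cap I\cap J)= I/(K\cap I)\cap J/(K\cap J)$ inside $A/K$. I will regard $I/(K\cap I)$ as the image $q(I)$ of $I$ under the quotient map $q:A\to A/K$. The map $I/(K\cap I)\to A/K$ is injective with image $q(I)=(I+K)/K$, which is a closed ideal. Similarly, the left-hand side is $q(I\cap J)$. So the claim reduces to
$$q(I\cap J)=q(I)\cap q(J),$$
equivalently $(I\cap J)+K=(I+K)\cap(J+K)$ as closed ideals of $A$. Here I use that sums of closed ideals are closed and that $q^{-1}(q(I))=I+K$.

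The inclusion $q(I\cap J)\subseteq q(I)\cap q(J)$ is immediate, so the work is in the reverse inclusion. For closed ideals in a $C^{*}$-algebra, intersection equals product: $I\cap J=\overline{IJ}$. I will justify this by factoring $x\in I\cap J$ as $x=yz$ with $y\in I\cap J$, for instance via Cohen factorization or an approximate unit argument in $I\cap J$.

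Now take $c\in q(I)\cap q(J)$. I will write $c\in q(I)\cap q(J)=\overline{q(I)q(J)}$, which holds because $q(I)$ and $q(J)$ are closed ideals in the $C^{*}$-algebra $A/K$. Since $q(I)q(J)=q(IJ)\subseteq q(I\cap J)$ and $q(I\cap J)$ is closed (being the image of a $C^{*}$-homomorphism), it follows that $c\in q(I\cap J)$.

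An alternative is to use the frame structure: $I(A)\cong\Open(\Prim(A))$, and $I(A/K)$ corresponds to the open subsets of the closed set $\Prim(A)\setminus U_K$. The map $I\mapsto I+K$ then corresponds to $U\mapsto U\cup U_K$, and this preserves intersections by distributivity. I will present the product argument as the main proof and mention the frame argument as a remark.

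The only real obstacle is the identity $I\cap J=\overline{IJ}$ for closed ideals, together with the closedness of images. Both are standard $C^{*}$-facts of the kind the paper already cites without proof.
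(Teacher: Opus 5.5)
Your proof is correct, and it takes a genuinely different route from the paper's.

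The paper argues by a bare element chase. It picks $a\in I$ and $b\in J$ with $[a]=[b]$ in $A/K$, asserts that $d=b-a$ lies in $K\cap J$, and concludes that $b\in I\cap J$. That step is not justified: one only knows $d\in K$, and the subsequent claim $d\in K\cap I$ is circular.

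In fact no purely ring-theoretic argument can work. Taking preimages under $q$, the statement is equivalent to $(I\cap J)+K=(I+K)\cap(J+K)$, and this fails for ideals of general rings. For example, take $R=k[x,y]$ over a field $k$ with $I=(x)$, $J=(y)$, $K=(x+y)$. In $R/K\cong k[t]$ the images of $I$ and $J$ are both $(t)$, while the image of $I\cap J=(xy)$ is $(t^{2})$.

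Your argument supplies exactly the missing analytic input. You use $I'\cap J'=\overline{I'J'}$ for closed ideals, applied to $q(I),q(J)$ in $A/K$; an approximate unit of $I'$ suffices here, so Cohen factorization is not needed. You combine this with the closedness of $q(I\cap J)$ as the image of a $*$-homomorphism.

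This is essentially the approximate-unit argument the paper gives one lemma later for distributivity (\cref{ojgowepregwfw}). So your proof stands on its own and also repairs the paper's argument. The frame-theoretic alternative via $I(A)\cong\Open(\Prim(A))$ that you mention is also valid. Your first-paragraph reduction to $(I\cap J)+K=(I+K)\cap(J+K)$ is harmless but not used by your main argument.
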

\begin{proof}
We have a canonical inclusion map
$(I\cap J)/(K\cap I\cap J)\to  I/(K\cap I)\cap J/(K\cap J) $. We must show that it is surjective.
Let $q$ be in $ I/(K\cap I)\cap J/(K\cap J) $. Then there exists 
$a$ in $I$ and $b$ in $J$ such that $[a]=q=[b]$.
Hence $ a+d=b$ for some $d$ in $K\cap J$. It follows $a=b-d$ which implies $d\in K\cap I$.
Hence $d\in K\cap I\cap J$ and $[b]=[a+d]=q$. Furthermore, $b\in I\cap J$.
The class of $b$ is the desired preimage of $q$.

\end{proof}

\begin{lem} \label{ojgowepregwfw}
Let $I,J,K$ be ideals in $A$.
Then 
$$(I\cap J)+K =(I+K)\cap (J+ K)\ .$$
\end{lem}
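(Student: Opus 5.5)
We prove the modular-type identity $(I\cap J)+K=(I+K)\cap(J+K)$ by reducing it to \cref{khoprtehegrtege9} through the quotient map $\pi:A\to A/K$. First recall that sums of closed two-sided ideals in a $C^{*}$-algebra are closed, as noted in the proof of \cref{kophertgertge}. So both sides are closed ideals of $A$, and both contain $K$. Ideals of $A$ containing $K$ correspond bijectively and order-preservingly to ideals of $A/K$ via $L\mapsto \pi(L)$ and $M\mapsto \pi^{-1}(M)$. It therefore suffices to show that the two sides have the same image in $A/K$.

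Next compute the images. For the left side, $\pi((I\cap J)+K)=\pi(I\cap J)\cong (I\cap J)/(K\cap I\cap J)$. For the right side, $\pi((I+K)\cap(J+K))\subseteq \pi(I+K)\cap\pi(J+K)=\pi(I)\cap\pi(J)$. This inclusion is in fact an equality: if $\pi(x)\in\pi(I)\cap\pi(J)$ with $x\in A$, then $x\in\pi^{-1}\pi(I)=I+K$ and likewise $x\in J+K$. Hence
\[
\pi((I+K)\cap(J+K))=\pi(I)\cap\pi(J)=I/(K\cap I)\cap J/(K\cap J)
\]
inside $A/K$.

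By \cref{khoprtehegrtege9} the two images coincide, so pulling back along $\pi$ gives the desired equality.

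The inclusion $\subseteq$ is also immediate directly. The only point needing care is the correspondence between ideals containing $K$ and ideals of the quotient, specifically $\pi^{-1}(\pi(L))=L+K$ for a closed ideal $L$. This uses that $L+K$ is closed, which is the standard fact that $(L+K)/K\cong L/(L\cap K)$ is the image of a $*$-homomorphism and hence closed. I expect this closedness step to be the only non-formal ingredient. The rest is the element chase already carried out in \cref{khoprtehegrtege9}.
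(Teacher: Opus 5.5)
Your quotient reformulation is correct as far as it goes: both sides contain $K$, $\pi^{-1}(\pi(L))=L+K$ (this is a set-theoretic identity and needs no closedness), $\pi((I\cap J)+K)=\pi(I\cap J)$, and $\pi((I+K)\cap(J+K))=\pi(I)\cap\pi(J)$. But because every one of these steps is formal, together they show that the statement is \emph{equivalent} to \cref{khoprtehegrtege9}. Invoking that lemma therefore assumes what is to be proved, and the actual content is missing from your proposal. Your diagnosis that closedness of sums is the only non-formal ingredient is also wrong. Take the polynomial ring $k[x,y]$ with $I=(x)$, $J=(y)$, $K=(x+y)$. There $(I\cap J)+K=(xy,x+y)\neq(x,y)=(I+K)\cap(J+K)$. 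Equivalently, $\pi(I\cap J)=(x^{2})\neq(x)=\pi(I)\cap\pi(J)$ in $k[x,y]/(x+y)\cong k[x]$, and no closedness issue arises. So no purely algebraic element chase can prove \cref{khoprtehegrtege9}. Indeed, the chase written there breaks at the assertion $d\in K\cap J$: from $[a]=[b]$ one only gets $d=b-a\in K$.

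The missing ingredient is genuinely $C^{*}$-algebraic, and the paper's proof of this statement supplies it via approximate units. Given $x\in(I+K)\cap(J+K)$, write $x=j+k'$ with $j\in J$ and $k'\in K$. Let $(u_{\lambda})$ be an approximate unit of the $C^{*}$-algebra $I+K$, and write $u_{\lambda}=i_{\lambda}+k_{\lambda}$. Then $u_{\lambda}x\to x$, and
\[
u_{\lambda}x=i_{\lambda}j+(i_{\lambda}k'+k_{\lambda}j+k_{\lambda}k')\in IJ+K\subseteq (I\cap J)+K .
\]
The right-hand side is closed, so $x\in(I\cap J)+K$. The same device repairs \cref{khoprtehegrtege9} directly. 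Take $a\in I$ and $b\in J$ with $a-b\in K$, and an approximate unit $(e_{\lambda})$ of $I$. Then $e_{\lambda}b\in I\cap J$ and $\pi(a)=\lim_{\lambda}\pi(e_{\lambda}b)$, while $\pi(I\cap J)$ is closed. With that in place your reduction becomes a valid, if roundabout, proof. Alternatively, the frame isomorphism $I(A)\cong\Open(\Prim(A))$ gives distributivity at once.
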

\begin{proof} The lemma says that 
the ideals in a $C^{*}$-algebra  constitute a distributive lattice. 
This follows from the identification of the poset $I(A)$ with $\Open(\Prim(A))$ (see \cite[Sec. 2.9]{zbMATH04073740}).
First observe that 
 $$(I\cap J)+K \subseteq (I+K)\cap (J+ K)\ .$$
Assume now that
$$i+k=j+k'\in (I+K)\cap (J+ K)\ .$$
Let $(u)$ be an approximate unit of $I+K$.
We have $\lim_{u} u(i+k)=i+k$. On the other hand,
$$u(i+k)=u(j+k')\in (I+K)(J+K)=IJ+K\subseteq (I\cap J)+K\ .$$
Hence $i+k\in  (I\cap J)+K$.

\end{proof}

\begin{lem}\label{kophetrhrtgtrge}
If $0\to I\to A\to Q\to 0$ and $0\to J\to B\to P$ are exact sequences, then
$$0\to I\otimes B+A\otimes J\to A\otimes B\to Q\otimes P\to 0$$ is exact.
\end{lem}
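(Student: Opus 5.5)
The plan is to obtain the sequence by factoring the quotient map $A\otimes B\to Q\otimes P$ into two steps, each of which is handled by the exactness of the maximal tensor product in a single variable. Write $\pi:A\to Q$ and $\rho:B\to P$ for the quotient maps, so that the map in question is $\pi\otimes\rho=(\id_{Q}\otimes\rho)\circ(\pi\otimes\id_{B})$. Surjectivity is immediate, because the algebraic tensor products are dense and both $\pi$ and $\rho$ are surjective. What remains is to identify the kernel of $\pi\otimes \rho$. We also check that $I\otimes B+A\otimes J$ is a closed ideal in $A\otimes B$: by \cref{kophertgertgrtege} both summands are ideals of $A\otimes B$, and a sum of two closed ideals in a $C^{*}$-algebra is closed (this was already used in the proof of \cref{kophertgertge}).

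\emph{Step 1.} Since $-\otimes B$ is exact, the sequence $0\to I\otimes B\to A\otimes B\to Q\otimes B\to 0$ is exact. Hence $\ker(\pi\otimes\id_{B})=I\otimes B$.

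\emph{Step 2.} Since $Q\otimes-$ is exact, the sequence $0\to Q\otimes J\to Q\otimes B\to Q\otimes P\to 0$ is exact. Hence $\ker(\id_{Q}\otimes\rho)=Q\otimes J$, viewed as an ideal of $Q\otimes B$ by \cref{kophertgertgrtege}.

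\emph{Step 3.} The kernel of the composite is the preimage of $Q\otimes J$ under the surjection $\pi\otimes\id_{B}$. The claim is that this preimage equals $I\otimes B+A\otimes J$. The inclusion $\supseteq$ follows from the fact that elementary tensors $i\otimes b$ and $a\otimes j$ map to $0$ or into $Q\otimes J$. For $\subseteq$, observe that the restriction $\pi\otimes\id_{J}:A\otimes J\to Q\otimes J$ is surjective, again by density of algebraic tensors together with the fact that images of $C^{*}$-homomorphisms are closed. Its composite with the inclusion into $Q\otimes B$ agrees with $\pi\otimes \id_{B}$ restricted to the ideal $A\otimes J\subseteq A\otimes B$. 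Now take $x$ with $(\pi\otimes\id_{B})(x)\in Q\otimes J$. Choose $y\in A\otimes J$ with the same image. Then $x-y\in\ker(\pi\otimes \id_{B})=I\otimes B$, and so $x\in I\otimes B+A\otimes J$.

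The main point needing care is Step 3: we must make sure the identifications of $A\otimes J$ and $Q\otimes J$ as subalgebras of $A\otimes B$ and $Q\otimes B$ are compatible with $\pi\otimes\id$. This is exactly the naturality of the injective maps supplied by \cref{kophertgertgrtege}, so the obstacle is only bookkeeping. Alternatively, one could pass to the quotient $(A\otimes B)/(I\otimes B+A\otimes J)$ and verify the universal property of $Q\otimes P$ directly, but the factorization argument above is simpler.
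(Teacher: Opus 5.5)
Your proof is correct and follows the paper's approach. The paper's entire proof is the one-line remark that the statement follows from exactness of the maximal tensor product; your factorization $\pi\otimes\rho=(\id_{Q}\otimes\rho)\circ(\pi\otimes\id_{B})$, together with the preimage computation in Step 3, is the standard way of deducing the two-variable statement from exactness in each variable separately. You also read the second sequence correctly as $0\to J\to B\to P\to 0$; the missing final $\to 0$ is a typo in the statement.
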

\begin{proof}
This follows from the exactness of the maximal tensor product.
\end{proof}

    \bibliographystyle{alpha}
\bibliography{forschung2021}

\end{document}